\theoremstyle{plain}
\newtheorem{theorem}{Theorem}[section]
\newtheorem{remark}[theorem]{Remark}
\newtheorem{proposition}[theorem]{Proposition}
\newtheorem{lemma}[theorem]{Lemma}
\newtheorem{corollary}[theorem]{Corollary}
\newtheorem{definition}[theorem]{Definition}
\newtheorem*{lemma*}{Lemma}
\theoremstyle{definition}
\newcommand{\labeltext}[2]{%
  \@bsphack
  \csname phantomsection\endcsname 
  \def\@currentlabel{#1}{\label{#2}}%
  \@esphack
}
\newcommand{\strike}[1]{\textcolor{red}{\sout{#1}}}
\newfont\bbf{msbm10 at 12pt}
\def\eps{\varepsilon}
\def\R{{\mathbb R}}
\def\N{{\mathbb N}}
\def\B{{\mathcal B}}
\def\P{{\mathcal P}}
\def\F{{\mathcal F}}
\def\G{{\mathcal G}}
\def\D{{\mathcal D}}
\def\L{{\mathcal L}}
\def\Q{{\mathcal Q}}
\def\cont{{\mathcal C}}
\def\cR{{\mathcal R}}
\def\es{{\emptyset}}
\def\sm{\setminus}
\def\lev{\textnormal{level}}
\def\tag#1{\hfill \qquad  #1}
\def\diam{\mbox{\rm diam} }
\def\crit{{\mathcal C\;\!\!r}}
\def\bd{\partial }
\def\le{\leqslant}
\def\ge{\geqslant}
\newcommand{\Crit}{\textnormal{Crit}}
\newcommand{\hI}{\mathring{I}}
\newcommand{\f}{\mathring{f}}
\newcommand{\hDelta}{\mathring{\Delta}}
\newcommand{\hPi}{\mathring{\Pi}}
\newcommand{\hPhi}{\mathring{\Phi}}
\newcommand{\Lp}{\mathcal{L}}
\newcommand{\vf}{\varphi}
\newcommand{\ve}{\varepsilon}
\newcommand{\tpsi}{\tilde{\psi}}
\newcommand{\hLp}{\mathring{\Lp}}
\newcommand{\hF}{\mathring{F}}
\newcommand{\hf}{\mathring{f}}
\newcommand{\hY}{\mathring{Y}}
\newcommand{\hX}{\mathring{X}}
\newcommand{\he}{\mathring{e}}
\newcommand{\hg}{\mathring{g}}
\newcommand{\bhg}{\mathring{G}}
\newcommand{\hcR}{\mathring{\cR}}
\newcommand{\hm}{\hat{m}}
\newcommand{\dlj}{\Delta_{\ell,j}}
\newcommand{\beq}{\begin{equation}}
\newcommand{\eeq}{\end{equation}}
\newcommand{\hatf}{\hat{f}}
\newcommand{\hatI}{\hat{I}}
\newcommand{\hatpi}{\hat{\pi}}
\def\lmin{\lambda_{\text{min}}}
\def\lmax{\lambda_{\text{max}}}
\def\order{d}
\def\M{\mathcal{M}}
\newcommand{\vertiii}[1]{{\left\vert\kern-0.25ex\left\vert\kern-0.25ex\left\vert #1 
    \right\vert\kern-0.25ex\right\vert\kern-0.25ex\right\vert}}
\newcommand{\invertiii}[1]{{\vert\kern-0.25ex\vert\kern-0.25ex\vert #1 
    \vert\kern-0.25ex\vert\kern-0.25ex\vert}}
\numberwithin{equation}{section}
\begin{document}

\title{Asymptotic escape rates and limiting distributions for multimodal maps}
\author[M.F. Demers]{Mark F. Demers}
\address{Mark F. Demers\\ Department of Mathematics \\
Fairfield University\\
Fairfield, CT 06824 \\
USA}\email{\href{mailto:mdemers@fairfield.edu}{mdemers@fairfield.edu}}
\urladdr{\url{http://faculty.fairfield.edu/mdemers}}

\author[M. Todd]{Mike Todd}
\address{Mike Todd\\ Mathematical Institute\\
University of St Andrews\\
North Haugh\\
St Andrews\\
KY16 9SS\\
Scotland
} \email{\href{mailto:m.todd@st-andrews.ac.uk}{m.todd@st-andrews.ac.uk}}
\urladdr{\url{http://www.mcs.st-and.ac.uk/~miket/}}

\date{\today}

\begin{abstract} 
We consider multimodal maps with holes and study the evolution of the open systems with respect to
equilibrium states for both geometric and H\"older potentials.  For small holes, we show that a large
class of initial distributions share the same escape rate and converge to a unique absolutely continuous conditionally 
invariant measure; we also prove a variational principle connecting
the escape rate to the pressure on the survivor set, with no conditions on the placement of the hole.  
Finally, introducing a weak condition on the centre of the hole, we prove scaling limits for the 
escape rate for holes centred at both periodic and nonperiodic points, as the diameter of the hole goes to zero.  
\end{abstract}

\thanks{Part of this work was completed at CIRM, Luminy in 2017 and 2018, at ICMS, Scotland in 2018, and during visits of MT to Fairfield University in 2017, 2018 and 2019.  MD is partially supported by NSF grant DMS 1800321.  }

\maketitle

\section{Introduction}
\label{sec:intro}

Dynamical systems with holes arise naturally in the study of systems whose domain is not
invariant under the dynamics.  They have been studied in connection with absorbing states in Markov chains
\cite{vere, FKMP},
metastable states in deterministic systems
\cite{DolWri, BahVai, GHW} and neighbourhoods of nonattracting invariant sets \cite{young dev}, as well as in components
of large systems of interacting components in non-equilibrium statistical mechanics \cite{DGKK}.

In the present paper, for a class of  multimodal maps with holes in the form of intervals, we study the escape 
rates and limiting behaviours of
the open systems
with respect to equilibrium states and conformal measures for broad classes of potentials.  The systems in question
have exponential rates of escape,\footnote{For systems with subexponential rates of escape, the results are
qualitatively different since there can be no conditionally invariant limiting distribution
\cite{DemFer}.  See \cite{DetGeo, FMS, APT, DetRah, DemTodMP, BDT} for examples of studies in the subexponential regime.}   in which the 
escape rate and limiting behaviour of the open system is expressed through the existence and
properties of a physical conditionally invariant measure, absolutely continuous with respect to
a given conformal measure. In this setting, given a map $f : I \circlearrowleft$ and identifying a set $H \subset I$ as a hole, one defines the open system by $\f = f|_{\hI^1}$, where 
$\hI^1 = (I\setminus H) \cap f^{-1}(I\setminus H)$.
A {\em conditionally invariant measure} $\mu$ is a Borel probability measure satisfying,
\[
\mu(A) = \frac{\hf_*\mu(A)}{\hf_*\mu(I)}
\quad \mbox{for all Borel $A \subset I$.}
\]
The evolution of measures in the open system is described by  the sequence
$\hf^n_*\mu_0/\hf^n_*\mu_0(I)$ for initial distributions $\mu_0$.  If the limit of such a sequence
exists and is independent of $\mu_0$ for a reasonable class of initial distributions, we call the
resulting measure a limiting (or physical) conditionally invariant measure.
For open systems with exponential rates of escape, the typical agenda of strong dynamical
properties includes a common
 rate of escape for natural
classes of densities, the convergence of such densities to a limiting
conditionally invariant measure under iteration of the dynamics,
and a variational principle connecting the escape rate to the pressure of the open system on the
survivor set, the (singular) set of points which never enters the hole.

Such results have been obtained primarily for uniformly
hyperbolic systems, beginning with expanding maps \cite{PiaYor, CollMar, LivMau},
Anosov diffeomorphisms \cite{CheMar, CheMarTrou}, finite \cite{FerPol}
and countable \cite{Du} state topological Markov chains, and dispersing billiards
\cite{DWY, Dembill}.  Their extension to 
nonuniformly hyperbolic systems has been primarily restricted to uni- and multi-modal
interval maps \cite{BDM, DemTod17, PolUrb} and intermittent maps \cite{DemTodMP}.

The purpose of the present paper is to prove strong hyperbolic properties for 
open systems associated with multimodal maps in greater generality and removing many of the technical 
assumptions made in previous works.  As such, the present paper represents
a significant simplification and extension of results available in the context of nonuniformly
hyperbolic open systems.  Previous works in the setting of unimodal maps with holes
have required strong conditions both on the map (Misiurewicz maps in \cite{Demlog}; 
a Benedicks-Carleson condition in \cite{BDM, DemTod17}; a topologically tame
condition in \cite{PolUrb}), and on the placement of the hole (slow approach to (see \cite{BDM, DemTod17}),
or complete avoidance of (see \cite{PolUrb}),  the
post critical set by the boundary or centre of the hole).

The principal innovation we introduce to the study of open systems in this paper is the use of 
Hofbauer extensions, a type
of Markov extension of the original system. 
Introduced in \cite{Hof86}, they have been used extensively in the study of interval maps.  However,
to date, they have not been implemented for systems with holes.  
In this paper we construct
Hofbauer extensions of our open system, with additional cuts added to our partition depending
on the boundary and centre of our hole.  Doing so enables us to consider the lift of the
hole as a union of 1-cylinders in the extension.  Leveraging recent estimates on
complexity from \cite{DobTod15}, we proceed to build an induced map and related Young tower 
over the Hofbauer extension in order to apply the framework developed in
\cite{DemTodMP} for Young towers with holes.

This two-step approach (rather than simply constructing a Young tower for the open system directly)
allows us to remove many of the technical assumptions needed in
previous works for interval maps with holes, as described above.  Indeed,
we establish the standard suite of strong hyperbolic properties for the open system assuming only that the
hole is a finite union of small intervals (Theorem~\ref{thm:accim}),
entirely eliminating the need for 
previous assumptions on its placement 
or on the orbits of its boundary points.  We also prove the scaling limits
for the escape rate as the 
hole shrinks to a point under much weaker assumptions than used previously 
(Theorems~\ref{thm:zerohole_Holder} and \ref{thm:zerohole_geom}).  
In addition, we greatly broaden the class of potentials
we are able to treat in this setting:  we treat all H\"older continuous potentials, as well as 
the geometric potentials $\phi = -t\log|Df|$ for an interval of $t$ containing $[0,1)$;  if the map satisfies
a Collet-Eckmann condition, we treat $t \ge 1$ as well.  This is in 
contrast to \cite{DemTod17} which restricted $t$ to a small interval around 1, and
\cite{PolUrb} which treated only H\"older potentials with bounded variation.

The paper is organised as follows.  In Section~\ref{sec:setup} we define the class of
maps and potentials we shall study, and recall important definitions regarding pressure and open systems.
In Section~\ref{sec:results} we state our main results, and in Section~\ref{sec:construct}
we carry out our main construction of the Hofbauer extensions and associated induced maps, proving 
that they enjoy tail bounds and mixing properties that are uniform in the size of the hole.  
In Section~\ref{sec:gap} we prove the 
key spectral properties for the induced open system, which are then leveraged in 
Section~\ref{sec:Young tower} for Young towers, and in Section~\ref{sec:zerohole} to establish
the small hole asymptotic.


\section{Setup}
\label{sec:setup}

\subsection{Dynamics}

For $I$ denoting the unit interval, let $\F$ denote the class of $C^3$ maps  $f:I\to I$ with 

\begin{itemize}
\item all critical points non-flat: there exists a finite set $\Crit \subset I$ such that  for each $c\in \Crit$ there is a $C^3$ diffeomorphism $\phi$ in a neighbourhood of $c$ with $\phi(c) =0$ such that $f(x) = \pm|\phi(x)|^\order+ f (c)$ for some $\order>1$, the \emph{order} of $c$;

\item negative Schwarzian derivative, i.e., $\frac{D^3f}{Df}-\frac32\left(\frac{D^2f}{Df}\right)^2\le 0$;

\item the \emph{locally eventually onto (leo)}/topologically exact condition: for any open set $U\subset I$ there exists $n\in \N$ such that $f^n(U)=I$, a form of topological transitivity;

\item for each $c$, 
$$|Df^n(f(c))|\to \infty.$$
\end{itemize}

Note that it is possible to weaken the conditions listed here, but this would lead to a significantly 
more complex exposition.

Sometimes we will require a stronger condition:
we say that $f$ satisfies the \emph{Collet-Eckmann} condition if there exist $C, \gamma>0$ such that for each $c\in \Crit$, and all $n\in \N$,
\begin{equation}|Df^n(f(c))|\ge Ce^{\gamma n} \, . \tag{CE}
\end{equation}

\subsection{Potentials, pressure and equilibrium states}

Given $f\in \F$ we let $\M$ denote the set of $f$-invariant probability measures.  Then for a potential $\phi:\to [-\infty, \infty]$, we define the \emph{pressure} by
\begin{equation}
P(\phi) := \sup \left\{ h_\mu(f) + \int \phi \, d\mu: \mu\in\M \text{ and } \mu(-\phi) < \infty\right\}. \label{eq:pressure}
\end{equation}

A measure $\mu\in \M$ is called an \emph{equilibrium state} for $\phi$ if $h_\mu(f) + \int \phi \, d\mu= P(\phi)$.

Given $\phi:I\to [-\infty, \infty]$, we say that a sigma-finite measure $m_\phi$ is \emph{$\phi$-conformal} if whenever $U$ is a Borel set and $f:U\to f(U)$ is a bijection then 
$$m_\phi(f(U))=\int_Ue^{-\phi}~dm_\phi.$$
(For example, Lebesgue measure is $-\log|Df|$-conformal.)
Notice that we can iterate this relation:  if $f^n:U\to f(U)$ is a bijection, then 
\begin{equation}
m_\phi(f^n(U))=\int_Ue^{-S_n\phi}~dm_\phi ,
\label{eq:conf_meas}
\end{equation}
where $S_n\phi = \sum_{i=0}^{n-1} \phi \circ f^i$. 
We will also be interested in functions $\psi:I\to [-\infty, \infty]$ \emph{cohomologous} to $\phi$; namely, there exists a function $h$ such that $\phi=\psi+h-h\circ f$.  These functions share equilibrium states, though they may produce different, but equivalent, conformal measures. 

We will consider equilibrium states for two types of potentials:  H\"older continuous
potentials and geometric potentials.

{\em (i) H\"older continuous potentials.}   In \cite{LiRiv14} it was shown that any H\"older potential $\phi$ is cohomologous to a H\"older potential $\tilde \phi$ with $\tilde \phi<P(\tilde\phi)$ on $I$ (note that there can be many such potentials).  
It is therefore no loss of generality to assume, as we will throughout, that for our H\"older potentials, $\phi<P(\phi)$.

{\em (ii) Geometric potentials.}  We set $\phi=-\log|Df|$ and consider the family 
$\{t\phi\}_{t\in \R}$.   We let $p_t:=P(t\phi)$ and denote $m_t=m_{t\phi-p_t}$ if this measure exists.
For a $p$-periodic point $x$, define its Lyapunov exponent by  $\lambda(x):=\frac1p\log|Df^p(x)|$.  
As in \cite[Appendix A]{PR}, for $f\in \F$ and $x\in I$, it is always the case that $\lambda(x)>0$. 
Then define
$$\lmin=\inf\{\lambda(x): x \text{ is periodic}\} \text{ and } \lmax:=\sup\{\lambda(x): x \text{ is periodic}\}$$
For $\mu\in\M$, let its Lyapunov exponent be defined by $\lambda(\mu):=\int \log|Df|~d\mu$.  
 By  \cite[Proposition 4.7]{PR}, if $f\in \F$ then 
 $$\inf\{\lambda(\mu):\mu\in \M\}=\lmin \text{ and } \sup\{\lambda(\mu):\mu\in \M\}=\lmax.$$
Noting from the definition of pressure that $p_t\ge -t\lmin$, we define
$$t^+:=\sup\{t\in \R: p_t>-t\lmin\} \text{ and } t^-:=\inf\{t\in \R: p_t>-t\lmax\}.$$
These are referred to as the \emph{freezing point} and the \emph{condensation point} of $f$, respectively.  
It is immediate that $t^- < 0$.
For $f \in \F$, there is always an absolutely continuous invariant probability measure, which implies that $p_1=0$ and $t^+\ge 1$.  As in \cite{PR}, (CE) implies $t^+>1$.

\begin{definition}
\label{def:ad}
We shall call a potential $\phi$ {\em admissible} if either: (a) $\phi$ is H\"older continuous and
$\phi < P(\phi)$ on $I$; or (b) $\phi = - t \log |Df|$ with $t \in (t^-, t^+)$.
\end{definition}

For each admissible $\phi$, 
\begin{equation}
P(\phi) = \sup \left\{ h_\mu(f) + \int \phi \, d\mu: \mu\in\M, \  \mu(-\phi) < \infty \text{ and } h_\mu(f)>0\right\}, \label{eq:pressureLE}
\end{equation}
and there is a unique equilibrium state which is exponentially mixing: for geometric potentials with $t \in (t^-, t^+)$, this follows for example by \cite[Theorem A]{IomTod10}; in the H\"older case this follows from \cite[Theorem A]{LiRiv14}.  Moreover, each equilibrium state is absolutely continuous with respect to a unique conformal measure, which is shown to exist in, for example, \cite[Appendix B]{IomTod13}.
Throughout, we will denote the normalised potential by $\vf = \phi - P(\phi)$, and say that $\vf$ is admissible whenever $\phi$ is.  Moreover, we let $m_\vf$ and $\mu_\vf$ denote the $\vf$-conformal measure and the 
 equilibrium state, respectively.  We may drop the $\vf$ when the potential is clear.

\subsection{Puncturing the system}
\label{sec:puncture}

Choose $z \in I$, and let $H_\ve = (z-\ve, z+\ve) \subset I$ be an interval.  
Denote by $\hI = I \setminus H_\ve$, and in general by $\hI^n = \cap_{i=0}^n f^{-i} \hI$, the set of points
that do not enter $H_\ve$ in the first $n$ iterates.  The sequence of maps
$\hf^n := f^n|_{\hI^n}$ defines the corresponding open system.

We define the upper and lower escape rates through $H_\ve$ by
\[
\log \overline\lambda_\eps:=\limsup_{n\to \infty}\frac1n\log \mu_\vf(\hI^n )
\quad
\mbox{and}
\quad
\log \underline\lambda_\eps:=\liminf_{n\to \infty}\frac1n\log \mu_\vf(\hI^n )
\]
When the two quantities coincide, we denote them by $\log \lambda_\ve$, and call $- \log \lambda_\ve$ the {\em escape rate}
with respect to $\mu_\vf$.

Given a potential $\phi$, once a hole $H_\ve$ is introduced, the \emph{punctured potential} is defined by $\phi^{H_\ve} = \phi$ on $\hI$ and 
$\phi^{H_\ve} = - \infty$ on $H_\ve$.  $P(\phi^{H_\ve})$ denotes the pressure of the punctured potential, and it follows from
the requirement $\mu(-\phi^{H_\ve}) < \infty$ that the supremum for this pressure is restricted to $f$-invariant measures
that are supported on the {\em survivor set} $\hI^\infty := \cap_{n=0}^\infty \hI^n$.

We will be interested in establishing convergence for limits of the form,
$\hf^n_*\mu/\hf^n_*\mu(I)$ for measures $\mu$ which are absolutely continuous with respect
to the conformal measure $m_\vf$.  To this end, define the transfer operator corresponding to the
potential $\vf$ by,
\[
\Lp_\vf \psi(x) = \sum_{y \in f^{-1}x} \psi(y) e^{\vf(y)}, \quad \mbox{for $\psi \in L^1(m_\vf)$.}
\]
Similarly, the punctured transfer operator for the open system
is defined by
\[ 
\hLp_{\vf^{H_\ve}} \psi(x) = \Lp_\vf(1_{\hI^1} \psi)(x)
= \sum_{y \in \hf^{-1}x} \psi(y) e^{\vf(y)} \, .
\]
Due to the conformality of $m_\vf$, we have
\[
\int_I \hLp_{\vf^{H_\ve}}^n \psi \, dm_\vf = \int_{\hI^n} \psi \, dm_\vf, \quad \mbox{for all $n \geq 1$},
\]
which relates the escape rate with respect to the measure $\psi \, dm_\vf$ to the spectral radius
of $\hLp_{\vf^{H_\ve}}$.


\section{Results}
\label{sec:results}

\subsection{Small hole, general placement}

Theorem~\ref{thm:accim} proves the standard suite of strong hyperbolic properties for the open system. 
As noted in the introduction, it is a significant improvement over \cite{BDM}, \cite{DemTod17} 
and \cite{PolUrb} which had similar results under much more
restrictive assumptions on the map, the potential and the hole.

\begin{theorem}
\label{thm:accim}
 Let $f\in \F$ and $\phi$ be an admissible potential, with normalised version $\vf = \phi - P(\phi)$.
Let $z \in I$, and for $\ve>0$, set $H_\ve(z) = (z-\ve, z+\ve)$. 
Suppose that $\ve^*>0$ is sufficiently 
small so that 
 $-\log\overline\lambda_{\ve^*}< \alpha$, where $\alpha>0$ is the tail decay rate
 from Theorem~\ref{thm:tails}.
 Then the following hold for all $0 < \ve \le \min \{ \ve_1^*, \ve^* \}$, where
 $\ve_1^*$ is from Lemma~\ref{lem:mixing D}.
\begin{enumerate}
\item[(a)] The escape rate $- \log \lambda_\ve$ exists, and $\lambda_\ve<1$ is the spectral radius of the punctured transfer operator 
on the associated Young tower.
The associated eigenvector projects to a nonnegative function $\hg_\ve$, which is bounded
away from zero
on $I \setminus H_\ve$ and satisfies $\hLp_{\vf^{H_\ve}} \hg_\ve = \lambda_\ve \hg_\ve$.
\item[(b)] There is a unique $(\phi^{H_\ve}-P(\phi^{H_\ve}))$-conformal measure $m_{H_\ve}$. This is singular with respect to
$m_\vf$ and supported on $\hI^\infty$.
\item[(c)] The measure $\nu_{H_\ve} :=\hg_\ve m_{H_\ve}$ is the unique
equilibrium state for $\phi^{H_\ve} - P(\phi^{H_\ve})$; in particular,
\[
\log \lambda_\ve = P(\phi^{H_\ve}) - P(\phi) = P(\vf^{H_\ve}) 
= h_{\nu_{H_\ve}}(f) + \int \vf^{H_\ve} \, d\nu_{H_\ve}.
\]
Moreover, $\nu_{H_\ve}$ is supported on $\hI^\infty$ and can be realised as the limit,
 \[
 \nu_{H_\ve}(\psi) = \lim_{n\to \infty} \lambda_\ve^{-n} \int_{\hI^n} \psi \hg_\ve \, dm_\vf, \qquad
 \mbox{for all $\psi \in C^0(I)$.}
 \]
 \item[(d)] The measure $\mu_\vf^{H_\ve} := \hg_\ve m_\vf$ is a conditionally invariant measure supported on 
 $I \setminus H_\ve$ with eigenvalue 
$\lambda_\ve$
and is a limiting distribution in the following sense.  Fix $\varsigma >0$ and
let $\psi \in C^\varsigma(I)$ satisfy $\psi \ge 0$, with $\nu_{H_\ve}(\psi) >0$.  
Then
 \begin{equation}
 \label{eq:conv to accim}
\left| \frac{\hLp^n_{\vf^{H_\ve}} \psi}{|\hLp^n_{\vf^{H_\ve}} \psi|_{L^1(m_\vf)}} - \hg_\ve \right|_{L^1(m_\vf)} \le C \vartheta^n |\psi|_{C^\varsigma(I)} 
\end{equation}
for some $C>0$ independent of $\psi$, and $\vartheta<1$ depending only on $\varsigma$. 
\end{enumerate}
\end{theorem}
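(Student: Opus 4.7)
The strategy is to transport the analysis to the punctured Young tower constructed in Sections~\ref{sec:construct}--\ref{sec:Young tower}, extract spectral information for the tower transfer operator $\hLp_\Delta$ there, and project back to $I$. The hypothesis $-\log \overline\lambda_{\ve^*} < \alpha$ is precisely what is needed so that $\hLp_\Delta$ has spectral radius strictly larger than its essential spectral radius, the latter governed by the tail rate $\alpha$ from Theorem~\ref{thm:tails}; this forces quasi-compactness and a spectral gap, while the further bound $\ve \le \ve_1^*$ from Lemma~\ref{lem:mixing D} supplies the aperiodicity needed to rule out peripheral eigenvalues. With a simple leading eigenvalue $\lambda_\ve$ and a strictly positive leading eigenfunction $\tilde g_\ve$ on the surviving part of the tower in hand, parts~(a)--(d) can be read off by descending to $I$.

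For part~(a), the plan is to project $\tilde g_\ve$ down to $I$ by summing over tower levels against the appropriate Jacobian of the semi-conjugacy, obtaining $\hg_\ve$. The commutation of $\hLp_\Delta$ with this projection gives $\hLp_{\vf^{H_\ve}} \hg_\ve = \lambda_\ve \hg_\ve$, and positivity of $\tilde g_\ve$ combined with the \emph{leo} property of $f$ forces $\hg_\ve$ to be bounded below on $I \setminus H_\ve$. The identity $\int_I \hLp^n_{\vf^{H_\ve}} 1 \, dm_\vf = m_\vf(\hI^n)$ combined with the tower spectral decomposition forces $\overline\lambda_\ve = \underline\lambda_\ve = \lambda_\ve$, so the escape rate exists and equals $-\log \lambda_\ve$. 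For part~(d), lift $\psi \in C^\varsigma(I)$ to a function $\tilde \psi$ of controlled norm on the tower (H\"older regularity transfers to Lipschitz regularity on each tower level), decompose $\tilde \psi$ into its leading eigencomponent plus a remainder in the complementary spectral subspace, and iterate: the remainder term, rescaled by $\lambda_\ve^{-n}$, decays geometrically at a rate $\vartheta < 1$ coming from the spectral gap. Projecting down and renormalising in $L^1(m_\vf)$ then yields~\eqref{eq:conv to accim}. The conditionally invariant property of $\mu_\vf^{H_\ve} = \hg_\ve m_\vf$ with eigenvalue $\lambda_\ve$ follows directly from the eigenfunction equation for $\hg_\ve$.

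For parts~(b) and~(c), obtain the conformal measure $m_{H_\ve}$ as the eigenmeasure dual to $\hg_\ve$, realised as the weak-$*$ limit $\lim_{n\to\infty} \lambda_\ve^{-n} (\hf^n)^* m_\vf$, or equivalently as the projection of the unique left eigenvector of $\hLp_\Delta$. Since $m_\vf(\hI^n)$ decays like $\lambda_\ve^n$, the mass concentrates on $\hI^\infty$, yielding both singularity with respect to $m_\vf$ and the required support property; the conformality identity for $m_{H_\ve}$ is then obtained by passing to the limit in the conformality identity for $m_\vf$ restricted to $\hI^n$. The measure $\nu_{H_\ve} = \hg_\ve m_{H_\ve}$ is $f$-invariant, supported on $\hI^\infty$, and satisfies the pressure identity via a Rokhlin-type entropy formula on the inducing scheme, using that any $f$-invariant measure on $\hI^\infty$ lifts canonically to the tower; uniqueness of the equilibrium state and the limit formula in~(c) both come from simplicity of the leading eigenvalue of $\hLp_\Delta$, since a competing equilibrium state would produce a second leading eigenvector. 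The main obstacle, and the reason previous results imposed strong placement conditions, is that the hole is allowed to meet the postcritical set and to have boundary points with arbitrary orbit behaviour; this is exactly what the hole-dependent cuts added to the Hofbauer partition in Section~\ref{sec:construct}, together with the complexity bounds from \cite{DobTod15}, are designed to control uniformly in the placement of $H_\ve$.
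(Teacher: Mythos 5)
Your overall route is the same as the paper's: build the punctured Hofbauer extension and induced map, pass to the Young tower, use the tail bound from Theorem~\ref{thm:tails} together with the slow-escape hypothesis $-\log\overline\lambda_{\ve^*}<\alpha$ and the mixing from Lemma~\ref{lem:mixing D} to get a spectral gap for $\hLp_\Delta$ (the paper packages this as \cite[Theorem~4.12]{DemTodMP}), and then project the eigendata back to $I$ via the level-sum against the Jacobian of the semiconjugacy, which is exactly the operator $\P_\Delta$ and the lifting Lemmas~\ref{lem:smooth lift} and \ref{lem:good lift}. Parts (a), (b), (d) of your sketch track the paper's argument.

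There is, however, a genuine gap in your part (c). You assert that ``any $f$-invariant measure on $\hI^\infty$ lifts canonically to the tower,'' and you derive uniqueness of the equilibrium state from simplicity of the leading eigenvalue, arguing that a competing equilibrium state ``would produce a second leading eigenvector.'' Neither claim holds as stated: measures of small (in particular zero) entropy supported on the survivor set need not lift to the inducing scheme at all, and a non-liftable invariant measure produces no eigenvector of $\hLp_\Delta$ whatsoever, so simplicity of $\lambda_\ve$ says nothing about it. The simplicity argument only yields that $\nu_{H_\ve}$ is the unique equilibrium state \emph{among measures that lift}. To close the gap one needs the paper's two extra ingredients: (i) the uniform lower bound $P(\phi)-\int\phi\,d\nu\ge\bar\alpha\ge\alpha$ for every invariant $\nu$ with $\nu(-\phi^{H_\ve})<\infty$ (inequality \eqref{eq:lower pressure}, coming from the admissibility of $\phi$ via the proof of Theorem~\ref{thm:tails}), and (ii) the choice of $L$ in Section~\ref{ssec:uniform} guaranteeing, by \cite[Lemma~8.2]{DobTod15}, that every ergodic invariant measure with entropy exceeding $(\log\overline\lambda_{\ve^*}+\alpha)/2$ lifts to the inducing scheme. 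Then any competitor $\nu$ with $P_\nu(\phi^{H_\ve})\ge P_{\nu_{H_\ve}}(\phi^{H_\ve})$ is forced to have $h_\nu(f)\ge\log\lambda_\ve+\alpha$, hence lifts, hence coincides with $\nu_{H_\ve}$; without this entropy-threshold liftability step the variational identity $\log\lambda_\ve=P(\vf^{H_\ve})$ over \emph{all} measures on $\hI^\infty$, and the uniqueness claim, are not established.
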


The techniques also imply that $\nu_{H_\ve}, \mu_\vf^{H_\ve}  \to \mu_\phi$ as $\ve\to 0$.
Note that the techniques of the proof also extend to holes comprised of finitely many intervals
as the only condition required
on the hole in \cite{DemTodMP} is $-\log \overline\lambda_\ve < \alpha$.  We prove this theorem in Section~\ref{accim pf}. 

\begin{remark}
\label{rmk:wider conv}
In fact, we prove convergence to the conditionally invariant measure $\mu_\vf^{H_\ve}$ for a larger class of initial densities than $C^\varsigma(I)$.  It only matters that
$\psi$ satisfies $\nu_{H_\ve}(\psi)>0$ and that it can be realised as the projection of an element in a certain function space on the
related Young tower.  So for example, any function of the form $\psi = \tilde \psi g_\vf$ also satisfies \eqref{eq:conv to accim}, where
$\tilde \psi \in C^\varsigma(I)$ and $g_\vf = \frac{d\mu_\vf}{dm_\vf}$.
\end{remark}

The following lemma shows that we can always choose $\ve^*>0$ small enough so that $-\log\overline\lambda_{\ve^*}< \alpha$, and hence the theorem applies to all small holes.

\begin{lemma}
\label{lem:lambda cont}
Suppose $\phi$ is an admissible potential and $f \in \mathcal{F}$.  For any $z \in I$, and
hole $H_\ve(z) = (z-\ve, z+\ve)$, it holds that $\lim_{\ve \to 0} \overline\lambda_\ve = 1$.
\end{lemma}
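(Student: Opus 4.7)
The plan is to reduce the statement to the two ingredients that admissible equilibrium states always provide: $\mu_\vf$ has no atoms, and $\mu_\vf$ mixes exponentially for regular observables.

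First I would observe that $p_\ve := \mu_\vf(H_\ve) \to 0$ as $\ve \to 0$. Indeed, for admissible $\phi$ the equilibrium state $\mu_\vf$ is absolutely continuous with respect to the $\vf$-conformal measure $m_\vf$, whose density $g_\vf$ is bounded above on $I$; since $m_\vf$ assigns zero mass to single points (a standard fact for conformal measures of admissible potentials, as used in \cite{IomTod10,LiRiv14}), $\mu_\vf(\{z\})=0$ and hence $p_\ve\to0$ by downward continuity. Next, using $f$-invariance of $\mu_\vf$ I would record the elementary bound
\[
\mu_\vf(\hI^n) \;=\; \mu_\vf\!\left(I\setminus \bigcup_{i=0}^{n} f^{-i}H_\ve\right)\;\ge\; 1-(n+1)p_\ve.
\]
Choosing $K=K(\ve):=\lfloor p_\ve^{-1/2}\rfloor$ gives $K(\ve)\to\infty$ together with $\mu_\vf(\hI^{K-1}) \ge 1-\sqrt{p_\ve} \to 1$.

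The core of the proof is then to promote this short-time information to control on the limsup. Writing $\hI^{NK+K-1}=\bigcap_{j=0}^{N} f^{-jK}(\hI^{K-1})$, I would apply the exponential decay of correlations of $\mu_\vf$ (available for both families of admissible potentials) inductively in $N$ to obtain approximate submultiplicativity of the form
\[
\mu_\vf(\hI^{NK+K-1}) \;\ge\; \mu_\vf(\hI^{K-1})^{\,N+1}\bigl(1-\mathrm{err}(N,K)\bigr),
\]
with $\mathrm{err}(N,K)\to 0$ as $N\to\infty$ for each fixed $K$. Taking logarithms, dividing by $NK+K-1$, and sending $N\to\infty$ then yields
\[
\log\overline\lambda_\ve \;\ge\; \tfrac{1}{K}\log \mu_\vf(\hI^{K-1})\;\ge\;\tfrac{1}{K(\ve)}\log\bigl(1-\sqrt{p_\ve}\bigr)\;\xrightarrow[\ve\to 0]{}\;0,
\]
as desired; the upper bound $\overline\lambda_\ve\le 1$ is automatic since $\mu_\vf(\hI^n)\le 1$.

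The main obstacle is controlling the error term $\mathrm{err}(N,K)$, because the indicator $\mathbf 1_{\hI^{K-1}}$ is the characteristic function of a union of possibly many intervals, and its BV norm grows with $K$. The cleanest way to handle this is to pass to the Hofbauer extension built in Section~\ref{sec:construct}, in which $\hI^{K-1}$ is a finite union of $1$-cylinders with distortion controlled by the tail bound of Theorem~\ref{thm:tails}; iterated mixing on the tower then gives the required estimate with $K(\ve)$ and $N$ chosen in the right order. A technically different route, which I would mention as backup, proves the one-sided variational inequality $\log\overline\lambda_\ve \ge h_\mu(f)+\int\vf\,d\mu$ for any $f$-invariant $\mu$ supported on $\hI^\infty$ via a Bowen-ball/conformal-measure argument, and then produces, for each $\delta>0$, a compact invariant set $\Lambda_\delta$ disjoint from $z$ with $P(\vf|_{\Lambda_\delta})>-\delta$ (using that $\Lambda_\delta$ can be taken as the projection of a finite subgraph of the Hofbauer extension that avoids the vertex containing $z$); this $\Lambda_\delta$ lies in $\hI^\infty$ for small $\ve$, again giving $\log\overline\lambda_\ve\to 0$.
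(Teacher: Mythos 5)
Your proposal has a genuine gap at its core step. The claimed ``approximate submultiplicativity''
\[
\mu_\vf(\hI^{NK+K-1}) \;\ge\; \mu_\vf(\hI^{K-1})^{\,N+1}\bigl(1-\mathrm{err}(N,K)\bigr),
\qquad \mathrm{err}(N,K)\xrightarrow[N\to\infty]{}0 \ \text{for fixed } K,
\]
does not follow from exponential mixing, for two reasons. First, the observables you would feed into the correlation bound are the indicators $1_{\hI^{K-1}}$ and $1_{\hI^{NK-1}}$, which are not H\"older and whose BV-type norms grow (typically exponentially) with the number of branches, i.e.\ with $K$ and $N$; exponential decay of correlations for $\mu_\vf$ is only available for observables with controlled regularity norms entering multiplicatively, so the error term is not even bounded, let alone small. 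Second, and more structurally, mixing only gives an \emph{additive} error: $\mu_\vf(A\cap f^{-K}B)\ge \mu_\vf(A)\mu_\vf(B)-E$ with $E$ controlled (at best) by a fixed rate, while the main term $\mu_\vf(\hI^{K-1})^{N}$ decays exponentially in $N$ for each fixed $\ve$. Hence the relative error accumulates and blows up as $N\to\infty$ for fixed $K$ --- exactly the opposite of what you assert --- so the passage from $\frac1K\log\mu_\vf(\hI^{K-1})$ to a bound on $\log\overline\lambda_\ve=\limsup_n\frac1n\log\mu_\vf(\hI^n)$ is not justified. (There is also no time separation to exploit: $1_{\hI^{K-1}}$ depends on the orbit up to time $K-1$ and the second factor starts at time $K$, so the effective gap in the correlation estimate is essentially zero.) Your suggestion to ``pass to the Hofbauer extension and use iterated mixing on the tower'' gestures at the right machinery but leaves precisely this estimate unproved, and your backup route (a one-sided variational inequality for the open system plus invariant sets of nearly full pressure avoiding $H_\ve$) is plausible but again only sketched; both would require substantial work that the proposal does not supply.

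For comparison, the paper's proof is a two-line consequence of machinery already in place: by Corollary~\ref{cor:spectral decomp} (a Keller--Liverani perturbation argument for the punctured transfer operator of the \emph{induced} first-return map on $Y$), the leading eigenvalue satisfies $\Lambda_\ve\to1$ as $\ve\to0$; and since escape from the induced system is at least as fast as escape from the original system, $\overline\lambda_\ve\ge\Lambda_\ve$, whence $\overline\lambda_\ve\to1$. Your preliminary observations ($\mu_\vf(H_\ve)\to0$ and $\mu_\vf(\hI^{K-1})\ge 1-(K)\mu_\vf(H_\ve)$) are fine, but they only control finitely many iterates and cannot by themselves reach the $\limsup$ defining $\overline\lambda_\ve$; the missing ingredient is exactly the uniform spectral control that the paper obtains through the induced map.
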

\begin{proof}
This is a simple consequence of Corollary~\ref{cor:spectral decomp},  since the escape
rate for the related induced system, $-\log \Lambda_\ve$, is continuous in $\ve$, and by
monotonicity, $\overline\lambda_\ve \ge \Lambda_\ve$.  For details, see the verification of
property (P2) in Section~\ref{tower transfer}.
\end{proof}

\begin{remark}
\label{rmk:hausdorff}
(Bowen formula for Hausdorff dimension of $\hI^\infty$.)  If we take $\phi = - \log |Df|$, then under the assumptions of Theorem~\ref{thm:accim}
and for $\ve$ sufficiently small,
$\mbox{\textnormal{Hdim}}(\hI^\infty) = t^*$,
where $t^*$ is the unique value of $t$ such that $P(t\phi^{H_\ve}) = 0$.  This follows as in
\cite[Theorem~8.1]{DemTod17}, using the uniform bounds for $t$ close to 1 on the tail of the return time function from Theorem~\ref{thm:tails} to show that
any set of Hausdorff dimension greater than some constant $D<1$ lifts to our inducing scheme.  Then Theorem~\ref{thm:accim}(c)
implies that the dimension of the equilibrium measure $\nu^1_{H_\ve}$, corresponding to $t=1$, equals $1 + \log \lambda_\ve$,
and so  is greater than $D$ for $\ve$ small.   Thus the Hausdorff dimension of
$\hI^\infty$ equals that of the survivor set in our inducing scheme.
\end{remark}


\subsection{Zero-hole limits}

Here we consider the asymptotic scaling limit for the escape rate,
$\frac{-\log\lambda_\eps}{\mu_\vf(H_\eps)}$ as $\ve\to 0$.  
This limit was first computed in the context of escape rates for full shifts
in \cite{BuniYur}, then extended to (piecewise) uniformly expanding systems in \cite{KelLiv09} and to more
general potentials in the symbolic setting in \cite{FerPol} (see also \cite{AfBu, BahVai13, FreFreTod15}).
Its extension to unimodal and multimodal maps followed with added assumptions on the centre of the hole $z$,
 either assuming that the post-critical orbits
approach $z$ slowly \cite{BDM, DemTod17}, or are bounded away from $z$ \cite{PolUrb}.

By contrast, for H\"older continuous potentials, we prove our results for all nonperiodic $z \in I$,
with an additional assumption required only if $z$ is periodic and lies in the post critical orbit.  
For geometric potentials, we require a (generic) 
slow approach condition to $z$, and present an example (Section~\ref{ex}) to show that the scaling limit
can fail for geometric potentials if no condition on $z$ is imposed.
The proofs of Theorems~\ref{thm:zerohole_Holder} and \ref{thm:zerohole_geom} are in Section~\ref{sec:zerohole}.

\subsubsection{H\"older potentials}

The asymptotic escape rate depends on whether the chosen centre $z$ is periodic 
or not.

\begin{theorem}
\label{thm:zerohole_Holder}
Let $f \in \mathcal{F}$, $\phi$ be H\"older continuous and $z \in I$.  

\begin{itemize}
  \item[a)]  If $z$ is not periodic, then $\displaystyle \lim_{\ve \to 0} \frac{-\log \lambda_\ve}{\mu_\vf(H_\ve)} = 1$.
  \item[b)]  If $z$ is periodic with prime period $p$ and $\{ f^n(c) : c \in \Crit, n \ge 1 \} \cap \{ z \} = \emptyset$, then  
    $\displaystyle \lim_{\ve \to 0} \frac{-\log \lambda_\ve}{\mu_\vf(H_\ve)} = 1 - e^{S_p\vf(z)}$.
  \item[c)]  Suppose $z$ is periodic with prime period $p$ and $\{ f^n(c) : c \in \Crit, n \ge 1 \} \cap \{ z \} \neq \emptyset$.   If in addition,
  either $f^p$ is orientation preserving in a neighbourhood of $z$, or $\lim_{\ve\to 0} \frac{m_\vf(z+\ve, z)}{m_\vf(z, z-\ve)} = 1$, then
  $\displaystyle \lim_{\ve \to 0} \frac{-\log \lambda_\ve}{\mu_\vf(H_\ve)} = 1 - e^{S_p\vf(z)}$.  
  \end{itemize}
\end{theorem}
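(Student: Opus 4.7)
The plan is to reduce the computation of $\lim_{\ve\to 0}(-\log\lambda_\ve)/\mu_\vf(H_\ve)$ to a spectral perturbation analysis on the Young tower built in Section~\ref{sec:Young tower}, and then to identify the resulting extremal index case by case. Since $\lambda_\ve\to 1$ by Lemma~\ref{lem:lambda cont}, $-\log\lambda_\ve = (1-\lambda_\ve)(1+o(1))$, and the problem reduces to computing $\lim_{\ve\to 0}(1-\lambda_\ve)/\mu_\vf(H_\ve)$. The unperturbed transfer operator on the tower has a simple leading eigenvalue $1$ with a spectral gap (Section~\ref{sec:gap}), and the perturbed operator $\hLp_{\vf^{H_\ve}}$ is a small-norm perturbation as $\ve\to 0$. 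Keller--Liverani perturbation theory in its rare-event form then yields
\begin{equation}\label{eq:klform}
\lim_{\ve\to 0}\frac{1-\lambda_\ve}{\mu_\vf(H_\ve)} = \theta(z) := \lim_{K\to\infty}\lim_{\ve\to 0}\frac{\mu_\vf\bigl(\{x\in H_\ve : f^j(x)\notin H_\ve,\, 1\le j\le K\}\bigr)}{\mu_\vf(H_\ve)}.
\end{equation}

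For part (a), $z$ is non-periodic. Since $\mu_\vf$ is non-atomic and $f^k(z)\neq z$ for every $k\ge 1$, each intersection $H_\ve\cap f^{-k}H_\ve$ has $\mu_\vf$-measure $o(\mu_\vf(H_\ve))$ for fixed $k$; the tail over $k\ge K$ is absorbed by the exponential decay of correlations for $(f,\mu_\vf)$ established in Section~\ref{sec:setup}. Hence $\theta(z)=1$. For part (b), the prime-period hypothesis eliminates returns before time $p$, so it suffices to compute the limit of $\mu_\vf(H_\ve\cap f^{-p}H_\ve)/\mu_\vf(H_\ve)$. Because $z$ is disjoint from the post-critical orbit, $f^p$ is a $C^3$ local diffeomorphism at $z$ and the density $g_\vf:=d\mu_\vf/dm_\vf$ is continuous and positive there. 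Combining this with the conformality identity $m_\vf(f^p(U)) = \int_U e^{-S_p\vf}\,dm_\vf$ and bounded distortion of $f^p$ on $H_\ve$ gives $\mu_\vf(H_\ve\cap f^{-p}H_\ve)/\mu_\vf(H_\ve)\to e^{S_p\vf(z)}$, so $\theta(z)=1-e^{S_p\vf(z)}$.

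The main difficulty is part (c): $z$ lies in the post-critical orbit, so the Hofbauer extension has cuts accumulating at $z$ and $g_\vf$ may possess distinct one-sided limits $g_\vf(z^\pm)$. Splitting $H_\ve = H_\ve^-\cup H_\ve^+$ into the two half-intervals and writing $A_{\sigma\tau}:=H_\ve^\sigma\cap f^{-p}H_\ve^\tau$ for $\sigma,\tau\in\{-,+\}$, the conformality computation gives $\mu_\vf(A_{\sigma\tau})\sim g_\vf(z^\sigma)\,e^{S_p\vf(z)}\,m_\vf(H_\ve^\tau)$ whenever $A_{\sigma\tau}$ is non-negligible. If $f^p$ preserves orientation near $z$, only $A_{--}$ and $A_{++}$ contribute, and the ratio $[\mu_\vf(A_{++})+\mu_\vf(A_{--})]/\mu_\vf(H_\ve)$ collapses to $e^{S_p\vf(z)}$ after cancelling the common factors $g_\vf(z^\pm)\,m_\vf(H_\ve^\pm)$ in numerator and denominator. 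If $f^p$ reverses orientation, only $A_{-+}$ and $A_{+-}$ survive, and a short algebraic check shows the ratio tends to $e^{S_p\vf(z)}$ if and only if $[g_\vf(z^+)-g_\vf(z^-)]\cdot[m_\vf(H_\ve^-)-m_\vf(H_\ve^+)]\to 0$, which is precisely the hypothesis $m_\vf(z+\ve,z)/m_\vf(z,z-\ve)\to 1$.

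The principal technical obstacle is justifying \eqref{eq:klform} on the Young tower when the boundary of $H_\ve$ and, in case (c), the full forward orbit of $z$, introduce additional cuts into the Hofbauer partition. This is exactly what the construction in Section~\ref{sec:construct} is designed to accommodate: the partition is refined so that the lift of $H_\ve$ becomes a finite union of $1$-cylinders, the spectral framework of Section~\ref{sec:gap} applies with constants uniform in $\ve$, and the Keller--Liverani perturbation argument then runs as in the uniformly hyperbolic setting.
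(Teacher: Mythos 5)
There is a genuine gap, and it sits exactly at the step you flag as ``the principal technical obstacle'' and then wave through: the rare-event Keller--Liverani formula expressing $\lim_{\ve\to 0}(1-\lambda_\ve)/\mu_\vf(H_\ve)$ as an extremal index cannot simply be ``run as in the uniformly hyperbolic setting'' here. For the un-induced map $f$ there is no Banach space on which $\Lp_\vf$ has a spectral gap (that is the whole reason for the Hofbauer/inducing construction), and on the tower the situation is worse than you acknowledge: the tower $\Delta$ and the induced map $F_{z,\ve_0,\ve}$ are themselves built with cuts at $z\pm\ve$, so the ``unperturbed'' and ``punctured'' operators do not act on one fixed space with an $\ve$-independent reference operator, and the operator-norm size of the perturbation is \emph{not} proportional to $\mu_\vf(H_\ve)$ --- it depends on how the orbit of $z$ recurs to $H_\ve$, which is precisely why the paper states explicitly (Section~\ref{sec:zerohole}) that the results of \cite{KelLiv09} do not apply in this setting. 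The paper's actual route avoids this: using Kac's Lemma and the Abramov-type relation \eqref{eq:abramov} it factors $\log\lambda_\ve/\hat\mu_\ve(\hat H_\ve)$ into three ratios as in \eqref{eq:string}, proves the induced-eigenvalue asymptotic $(1-\Lambda_\ve)/\hat\mu_{Y,\ve}(\hat H_\ve')\to 1$ by a direct two-operator comparison of $\Lp_\ve$ with $\hLp_\ve$ (Theorem~\ref{thm:induced limit}), handles the Abramov factor in Proposition~\ref{prop:ratio} (a step absent from your outline and not subsumed by your formula unless that formula is first justified), and obtains the factor $1-e^{S_p\vf(z)}$ from the ratio $\hat\mu_\ve(\hat H_\ve')/\hat\mu_\ve(\hat H_\ve)$ via a combinatorial count of multiple entries to the lifted hole before return to the base, using the complexity bounds of \cite{DobTod15} (Lemmas~\ref{lem:holder aperiodic} and \ref{lem:holder per}). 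Your proposal contains no substitute for this quantitative control of short returns and multiplicities; in case (a) ``absorbed by exponential decay of correlations'' does not work either, since indicators of intervals of size $\ve$ are not uniformly controlled in the relevant norms as $\ve\to 0$.

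A second, independent problem is your case (c). You assume the density $g_\vf=d\mu_\vf/dm_\vf$ has finite, positive one-sided limits $g_\vf(z^\pm)$ at $z$. When $z$ lies in the post-critical orbit this is unjustified: the density may be unbounded or oscillate at such points, and the paper deliberately avoids any regularity or upper bound on $g_\vf$ at $z$ (only the lower bound of Lemma~\ref{lem: low dens Hold} is used). In the paper the hypotheses of part (c) (orientation preservation of $f^p$ at $z$, or $m_\vf(z,z+\ve)/m_\vf(z-\ve,z)\to 1$) enter through the matching of \emph{lifted half-holes} under $\hat f^p$ in the Hofbauer extension, where conformality of $\hat m$ and the log-H\"older regularity of the lifted densities (Lemma~\ref{lem:reg}) give uniform control --- not through one-sided limits of the density downstairs. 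So even granting your extremal-index formula, your derivation of the limit in case (c) rests on an unproved regularity assumption.
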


\begin{remark}
\label{rmk:sub}
Even when both conditions fail in part (c) of Theorem~\ref{thm:zerohole_Holder} fail, we can still find a subsequence of $\ve \to 0$
so that the scaling limit converges to $1 - e^{S_p\vf(z)}$.  Thus we expect that the scaling limit holds for all periodic points in
the case of H\"older continuous potentials.
\end{remark}


\subsubsection{Geometric potentials}
\label{sssec:geo}

For the remainder of this section we let $\phi=-\log|Df|$. 
The geometric case requires a condition on slow approach to the critical set as well as a polynomial rate of growth of the derivative
along the post-critical orbit.  For simplicity, for a given $\order> 1$ we will consider the set $\F_\order\subset \F$ with the defining property that for each $f\in \F_\order$ all critical points have order $\order$.

For $t \in (t^-, t^+)$, let $s_t := t + \frac{p_t}{\lambda(\mu_t)} \in (0,1]$ denote the local scaling exponent
for $m_{t\phi - p_t}$, see \cite[Lemma~9.5]{DemTod17}.
Define
\[
D_n(c) = |Df^n(f(c))| \quad \mbox{for each } c \in \Crit \, .
\]
We assume that for each $c \in \Crit$, 
\begin{equation}
\label{eq:Dn}
D_n(c) \ge \mbox{const.} n^q \quad \mbox{for some $q > \order + \frac{\order - 1}{s_t}$ and all $n \ge 1$.}
\end{equation}

With $q$ given as above, we choose $\theta \in (0,1)$ and $r \in \left(\frac{1}{(1-\theta)s_t}, \frac{q-\order}{\order -1} \right)$, 
and define a sequence $\gamma_n = n^{-r}$, $n \ge 1$.
We make the following assumption on the 
centre of the hole, $z$, in terms of this sequence:
\begin{equation}
\label{eq:slow approach}
\exists \delta_z > 0 \mbox{ s.t. } \min_{c \in \Crit} d(f^n(c), z) \ge \delta_z \gamma_n^{1-\theta}, \quad \mbox{for all $n \in \mathbb{N}$.}
\end{equation}
In particular, we have 
$\sum_n \gamma_n^{(1-\theta)(s_t - \epsilon)} < \infty$ for some $\epsilon>0$, 
so that condition \eqref{eq:slow approach} is generic with respect to the measures $m_\vf$,
$\vf = t\phi - p_t$, as proved in  \cite[Lemma~9.3]{DemTod17}. 

The value of $s_t$ varies continuously with $t$, and is $>0$ for each $t\in (t^-, t^+)$, with $s_1=1$, but may tend to zero as $t$ tends to the boundary of $(t^-, t^+)$.  This means that in particular when the map $f$ satisfies the (CE) condition, 
we will restrict to a subinterval $(t^-, t_1)$ where $t_1 \in (1, t^+]$ is determined by \eqref{eq:t1};
if $f$ does not satisfy (CE), we let $t_1=1$.

\begin{theorem}
For $\order > 1$, let $f\in \F_\order$ and $t \in (t^-, t_1)$.  Suppose
\eqref{eq:Dn} is satisfied and  $z\in I$ satisfies
\eqref{eq:slow approach}.  
Then for  $\vf=-t\log|Df|-p_t$,
\begin{enumerate}
\item[(a)] if $z$ is not periodic then 
$\displaystyle \lim_{\eps\to 0} \frac{-\log\lambda_\eps}{\mu_\vf(H_\eps)} =1$;
\item[(b)] if $z$ is periodic with (prime) period $p$, then  
$\displaystyle \lim_{\eps\to 0} \frac{-\log \lambda_\eps}{\mu_\vf(H_\eps)} =1-e^{S_p\vf(z)}$. 
\end{enumerate}
\label{thm:zerohole_geom}
\end{theorem}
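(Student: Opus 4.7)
The plan is to adapt the Keller--Liverani perturbation framework used in the proof of Theorem~\ref{thm:zerohole_Holder} to the geometric potential $\vf = -t\log|Df| - p_t$, passing through the induced Young tower constructed in Sections~\ref{sec:construct}--\ref{sec:Young tower}. The main input is Theorem~\ref{thm:accim}(a): for sufficiently small $\ve$, the leading eigenvalue $\lambda_\ve$ of $\hLp_{\vf^{H_\ve}}$ is simple and isolated, so spectral perturbation theory yields an expansion of the form
\[
1 - \lambda_\ve = \mu_\vf(H_\ve) \Bigl( 1 - \sum_{n \ge 1} R_n(z, \ve) \Bigr) + o(\mu_\vf(H_\ve)),
\]
where $R_n(z, \ve)$ encodes the conditional first-return contribution to $H_\ve$ at time $n$ under the equilibrium dynamics. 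The task reduces to computing $\lim_{\ve \to 0} R_n(z, \ve)$ and to justifying the exchange of the limit and the sum.

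The first step is to pass to the induced system: the hole $H_\ve$ lifts to a union of first-level cylinders in the Hofbauer extension, and the uniform tail bounds of Theorem~\ref{thm:tails} (uniform for $t \in (t^-, t_1)$), together with the pressure relation between $\lambda_\ve$ and the induced escape rate, provide the functional-analytic setup. The hypotheses \eqref{eq:Dn} and \eqref{eq:slow approach}, via the choice of the exponents $r$ and $q$, are used precisely to ensure that this lifting and induction remain uniform in $\ve$ even when $z$ lies close to the post-critical orbit, where the geometric potential blows up; without these, cylinders of arbitrarily large depth could be required to cover $H_\ve$.

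Once the tower setup is in place, for non-periodic $z$ (part (a)) the separation of $z$ from its own forward orbit yields $R_n(z, \ve) \to 0$ for each fixed $n$, since $f^n(z) \ne z$ forces $f^{-n}(H_\ve) \cap H_\ve = \emptyset$ once $\ve$ is small enough depending on $n$. Combined with uniform summability of the tail, dominated convergence gives the limit $1$. For periodic $z$ of prime period $p$ (part (b)), only $R_p(z, \ve)$ survives in the limit: a Koebe-type distortion argument on the local diffeomorphic branch of $f^p$ at $z$, using the $C^3$ regularity and negative Schwarzian of $f$, evaluates the limit as $e^{S_p \vf(z)}$, giving the asserted $1 - e^{S_p \vf(z)}$.

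The main obstacle will be bounding the tail of the series $\sum_n R_n(z, \ve)$ uniformly in $\ve$, particularly when the orbits contributing to the sum pass near the post-critical set, where $\vf$ is unbounded below. This is the essential use of the slow-approach condition \eqref{eq:slow approach}: the exponent $r$ is chosen so that the sequence $\gamma_n = n^{-r}$ controls $m_\vf$-measure through the local scaling exponent $s_t$, producing summability uniformly in $\ve$ via the estimate $\sum_n \gamma_n^{(1-\theta)(s_t - \epsilon)} < \infty$ already appearing in Section~\ref{sssec:geo}. The example in Section~\ref{ex} shows that such a restriction on $z$ cannot be dropped for geometric potentials, in contrast to the H\"older setting of Theorem~\ref{thm:zerohole_Holder}.
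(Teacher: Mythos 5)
Your proposal rests on a first-order Keller--Liverani eigenvalue expansion of the form $1-\lambda_\ve = \mu_\vf(H_\ve)\bigl(1-\sum_n R_n(z,\ve)\bigr)+o(\mu_\vf(H_\ve))$, but such an expansion is not available here, and the paper explicitly points this out at the start of Section~\ref{sec:zerohole}: the framework of \cite{KelLiv09} requires a \emph{fixed} unperturbed operator with a spectral gap whose distance to the punctured operators is comparable to the measure of the hole. The original map has no spectral gap on any natural space, and the operators that do have one live on the Hofbauer extension/Young tower, whose construction (cuts at $z\pm\ve$) itself depends on $\ve$; the only $\ve$-independent candidate $\Lp_0=\Lp_{z,\ve_0}$ differs from $\hLp_\ve$ by an amount (the set $B_\ve$ of Lemma~\ref{lem:close}) that is \emph{not} proportional to $\mu_\vf(H_\ve)$ and is not explicitly quantified in $\ve$. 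This is why the paper instead factors the ratio as in \eqref{eq:string}, using Kac's Lemma and the Abramov-type relation \eqref{eq:abramov}, and proves the three limits \eqref{eq:three amigos} separately: the induced asymptotic $\lim_{\ve\to0}(1-\Lambda_\ve)/\hat\mu_{Y,\ve}(\hat H'_\ve)=1$ (Theorem~\ref{thm:induced limit}, obtained by comparing the two $\ve$-dependent families $\Lp_\ve$ and $\hLp_\ve$ rather than perturbing a fixed operator), the return-time ratio (Proposition~\ref{prop:ratio}), and the ratio $\hat\mu_\ve(\hat H'_\ve)/\hat\mu_\ve(\hat H_\ve)$, which is where the factor $1-e^{S_p\vf(z)}$ actually arises. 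Your $R_n$-expansion is asserted, not derived, and deriving a substitute for it is precisely the content you would need to supply.

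The second gap is that the hard geometric estimate is gestured at but not carried out. The conditions \eqref{eq:Dn} and \eqref{eq:slow approach} are not needed to make the inducing scheme uniform in $\ve$ (Theorem~\ref{thm:tails} and the spectral results of Sections~\ref{sec:construct}--\ref{sec:gap} hold for any $z$ and all admissible potentials); they are needed to control the cylinders that re-enter $H_\ve$ while in a \emph{binding period} near the post-critical orbit, where the derivative estimates degrade. The paper handles this in Lemmas~\ref{lem:t aperiodic} and \ref{lem:t per} by splitting into free and bound pieces, invoking the binding-period machinery of \cite{Henk} (Lemmas~\ref{lem:Henk} and \ref{lem:bound growth}), the complexity bound \eqref{eq:card} from \cite{DobTod15}, the density lower bound on the hole (Lemma~\ref{lem: low dens geom}), and, for $t\ge1$ under (CE), the definition of $t_1$ in \eqref{eq:t1} to keep the effective contraction exponent positive. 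Your appeal to the summability $\sum_n\gamma_n^{(1-\theta)(s_t-\epsilon)}<\infty$ does not do this work: that estimate is only the genericity statement for condition \eqref{eq:slow approach} with respect to $m_\vf$, not a bound on the tail of the multiple-entry contributions, and nothing in your outline explains why $t<t_1$ is required or how the unboundedness of $\vf$ along bound orbit segments is absorbed. The periodic-case factor does ultimately come from conformality and distortion of $f^p$ near $z$ (as you suggest), but in the paper this computation lives inside the estimate of $\hat\mu_\ve(\hat H'_\ve)/\hat\mu_\ve(\hat H_\ve)$ via the half-hole/whole-hole structure of the lifted hole in the Hofbauer extension, not inside an eigenvalue expansion.
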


\begin{remark}
\cite[Section 6]{FreFreTod13} shows that there are examples of maps $f\in \F_\order$ and periodic points $z$ satisfying \eqref{eq:slow approach}.
\end{remark}

\begin{remark}
It is not clear what the optimal condition on $z$ is so that the scaling limits of 
Theorem~\ref{thm:zerohole_geom} hold, but it is clear that the limits can fail without some
assumption on $z$ in the case of geometric potentials.  To illustrate this point, we present an
example in Section~\ref{ex} using the map $f(x) = 4x(1-x)$ for which \eqref{eq:slow approach}
does not hold, and the relevant scaling limit fails. 
\end{remark}


\subsection{Escape rate function}

The asymptotics in the previous subsection can be seen as a type of derivative of the escape rate
at $\ve = 0$.  Our next result addresses the regularity of the escape rate $- \log \lambda_\ve$
from Theorem~\ref{thm:accim} for $\ve > 0$.

\begin{theorem}
\label{thm:devil}
Let $f \in \F$ and $\phi$ be an admissible potential.  Suppose $z \in I$ and 
let $\ve^* > 0$ be from Theorem~\ref{thm:accim}.
Then $\ve \mapsto - \log \lambda_\ve$ is continuous on $[0, \ve^*]$ and
forms a devil's staircase:  i.e., $\frac{d\log\lambda_\ve}{d\ve}$ exists and equals 0 on an 
open and full measure subset of $[0, \ve^*]$.
\end{theorem}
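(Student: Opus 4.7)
The plan is to combine three ingredients: monotonicity of $\ve \mapsto -\log\lambda_\ve$, its continuity, and its local constancy on an open subset $\mathcal{A} \subseteq (0, \ve^*)$ of full Lebesgue measure.  Together these give the devil's staircase: on $\mathcal{A}$ the derivative exists and equals zero, so $\mathcal{A}$ is the required open full-measure subset of $[0, \ve^*]$.

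\emph{Monotonicity and continuity.}  Monotonicity follows from $H_{\ve_1} \subseteq H_{\ve_2}$ for $\ve_1 \le \ve_2$, which gives $\hI^n(\ve_1) \supseteq \hI^n(\ve_2)$, and hence $\lambda_{\ve_1} \ge \lambda_{\ve_2}$ via the conformality identity $\int \hLp_{\vf^{H_\ve}}^n \psi\, dm_\vf = \int_{\hI^n(\ve)} \psi\, dm_\vf$.  For continuity of $\ve \mapsto \lambda_\ve$ at each $\ve_0 \in (0, \ve^*]$, I would extend the argument of Lemma~\ref{lem:lambda cont}: Theorem~\ref{thm:tails} supplies tail bounds uniform in a neighborhood of $\ve_0$, and Corollary~\ref{cor:spectral decomp} a uniform spectral gap; a Keller--Liverani perturbation argument on the Young tower then gives continuous dependence of the induced leading eigenvalue $\Lambda_\ve$, which transfers to $\lambda_\ve$ by the uniformly controlled relation between the two.

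\emph{An open set of local constancy with full measure.}  Define
\[
\mathcal{A} = \bigl\{ \ve \in (0, \ve^*) : \exists\, n_+, n_- \ge 1 \text{ with } f^{n_\pm}(z \pm \ve) \in \mathrm{int}\, H_\ve \bigr\}.
\]
Openness is an elementary continuity argument: if $|f^{n_\pm}(z \pm \ve_0) - z| < \ve_0$, both quantities perturb continuously as $\ve$ moves.  For full measure, parametrise via $y = z + \ve$; the complement of the right-endpoint condition consists of $y \in (z, z+\ve^*)$ whose orbit avoids $(2z - y, y)$ for every $n \ge 1$.  Since $f$ is leo and admits an acip equivalent to Lebesgue (the equilibrium state at $t = 1$), Lebesgue-a.e.\ orbit is dense and visits $(z-\delta, z+\delta)$ for every $\delta > 0$; taking $\delta = y - z$ shows this complement has Lebesgue measure zero.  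The symmetric argument for the left endpoint yields $\Leb((0, \ve^*) \setminus \mathcal{A}) = 0$.

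\emph{Local constancy on $\mathcal{A}$.}  Fix $\ve_0 \in \mathcal{A}$ with witnesses $n_\pm$ and set $N = \max(n_+, n_-)$.  By continuity of $f^{n_\pm}$, I choose neighborhoods $U^\pm$ of $z \pm \ve_0$ and $V$ of $\ve_0$ small enough that $f^{n_\pm}(U^\pm) \subset H_\ve$ for all $\ve \in V$ and $H_\ve \triangle H_{\ve_0} \subset U^+ \cup U^-$.  Every point of $U^+ \cup U^-$ then enters $H_\ve$ within $N$ iterates for each $\ve \in V$.  I claim $\hI^\infty(\ve) = \hI^\infty(\ve_0)$ on $V$: if $x \in \hI^\infty(\ve)$ had some iterate $f^k(x) \in H_{\ve_0}$, then either $f^k(x) \in H_\ve$, a contradiction, or $f^k(x) \in H_{\ve_0} \setminus H_\ve \subset U^+ \cup U^-$, forcing $f^{k+N}(x) \in H_\ve$, again a contradiction; the reverse inclusion is symmetric.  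Since the normalised potential $\vf$ is independent of $\ve$, the variational formula $\log \lambda_\ve = P(\vf^{H_\ve})$ of Theorem~\ref{thm:accim}(c) computes the same supremum over $f$-invariant measures supported on the common survivor set $\hI^\infty(\ve_0)$ for every $\ve \in V$, so $\lambda_\ve \equiv \lambda_{\ve_0}$ on $V$.

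The main obstacle is the continuity step.  The perturbation $\hLp_{\vf^{H_\ve}} - \hLp_{\vf^{H_{\ve_0}}}$ is concentrated near $\partial H_{\ve_0}$, where the conformal density of $m_\vf$ may be large (near post-critical accumulations), so the perturbation is not small in $L^\infty(I)$ and a direct perturbation argument on $I$ fails; the detour through the Hofbauer extension and Young tower built in Sections~\ref{sec:construct}--\ref{sec:Young tower} is essential, providing the uniform Lasota--Yorke inequalities and tail bounds that feed into a Keller--Liverani argument.
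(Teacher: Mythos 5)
Your proposal runs on the same engine as the paper's proof: the set of $\ve$ for which both endpoints $z\pm\ve$ eventually enter the open hole is open, on it the survivor set $\hI^\infty_\ve$ is locally constant in $\ve$, hence so is the punctured pressure, and Theorem~\ref{thm:accim}(c) converts this into local constancy of $\log\lambda_\ve$; continuity is likewise obtained from the induced spectral machinery (Corollary~\ref{cor:spectral decomp}) together with the Abramov-type relation \eqref{eq:abramov}, and your Keller--Liverani sketch for interior $\ve_0$ is at the same level of detail as the paper's one-line citation. The one substantive divergence is the genericity step. The paper gets it for free from ergodicity of $\mu_\vf$, which has full support, so $\mu_\vf$-a.e.\ point enters every open interval and the locally-constant condition holds for a.e.\ parameter \emph{with respect to $\mu_\vf$}; no extra input is needed. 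You instead work with Lebesgue measure on the parameter interval and lean on the assertion that $f$ has an acip equivalent to Lebesgue, so that Lebesgue-a.e.\ orbit is dense. That assertion is believable for this class but is proved neither in the paper nor by you: equivalence requires the invariant density to be positive Lebesgue-a.e.\ (e.g.\ via a leo lower-bound argument in the spirit of Lemma~\ref{lem: low dens Hold} applied to the acip density, using an inducing scheme), and note that ``the equilibrium state at $t=1$'' falls inside the paper's admissible class only when $t^+>1$, i.e.\ under (CE). Either supply such a justification, or replace Lebesgue by $\mu_\vf$ throughout the genericity argument, as the paper does, at the price that ``full measure'' is then read with respect to $\mu_\vf$ rather than Lebesgue; your Lebesgue version, once justified, is in fact the slightly stronger reading of the statement. (A cosmetic point: in the local-constancy step the contradicting iterate should be $f^{k+n_\pm}(x)$ rather than $f^{k+N}(x)$.)
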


That the escape rate function forms a devil's staircase has been shown in 
uniformly hyperbolic settings, namely for expanding systems in \cite{KelLiv09}, and for
Anosov diffeomorphisms in \cite{DemWri}.  The present result is the first in the setting
of nonuniformly hyperbolic maps.  
It stands in contrast to Theorems~\ref{thm:zerohole_Holder}
and \ref{thm:zerohole_geom}, which prove that $\left. \frac{d\log\lambda_\ve}{d\ve} \right|_{\ve = 0}$ exists
and is nonzero.
Once Theorem~\ref{thm:accim} is established,
it is a direct consequence of the continuity of $f$ and the ergodicity of the 
measure $\mu_\vf$, so we give this short proof immediately.

\begin{proof}[Proof of Theorem~\ref{thm:devil}]
The continuity of $\ve\mapsto - \log \lambda_\ve$ follows from Corollary~\ref{cor:spectral decomp}
and \eqref{eq:abramov}.  We proceed to prove the statement about the derivative of this map.
Denote the survivor set by $\hI^\infty_\ve = \cap_{n=0}^\infty f^{-n}(I \setminus H_\ve(z))$.

If $\hI^\infty_\ve \cap \partial H_\ve = \emptyset$, then dist$(\hI^\infty_\ve, \partial H_\ve) > 0$.
This follows from the continuity of $f$ and the fact that $H_\ve(z)$ is open:  If
$\hI^\infty_\ve \cap \partial H_\ve = \emptyset$ then there exists $n> 0$ such that
$f^n(z+\ve) \in H_\ve(z)$; by the continuity of $f$, there exists a neighbourhood of $z + \ve$,
$N_\delta(z+\ve)$,
such that $f^n(N_\delta(z+\ve)) \subset H_\ve(z)$.  A similar argument holds for $z-\ve$.

Thus if $\hI^\infty_\ve \cap \partial H_\ve = \emptyset$, then 
$\hI^\infty_{\ve'} \cap \partial H_{\ve'} = \emptyset$ for all $\ve' \in (\ve-\delta', \ve+\delta')$ for
some $\delta' >0$, i.e.
the fact that the boundary of the hole falls into the hole is an open condition.
It follows from this that $\hI^\infty_\ve = \hI^\infty_{\ve'}$ for all $\ve' \in (\ve-\delta', \ve+\delta')$, and
thus that $P(\vf^{H_\ve}) = P(\vf^{H_{\ve'}})$ and by Theorem~\ref{thm:accim}(c),
$\lambda_\ve = \lambda_{\ve'}$ for all $\ve' \in (\ve-\delta', \ve+ \delta')$.

Thus $\log\lambda_\ve$ is locally constant whenever $\hI^\infty_\ve \cap \partial H_\ve = \emptyset$.

Finally, since $\mu_\vf(H_\ve)>0$, ergodicity implies that generic $z \pm \ve$ fall in the hole, so the condition $\hI^\infty_\ve \cap \partial H_\ve = \emptyset$
is generic.  Therefore, 
$$\mu_\vf\left\{x=z+\ve\in I: \ve\in (0, \eps^*) \text{ and } \frac{d\log\lambda_\ve}{d\ve} \neq 0\right\}= 0,$$
as required. 
\end{proof}


\subsection{An example of scaling limit failure}
\label{ex}

In this section we present an example of a map in our class $\mathcal{F}$
and choice of $z$ such that condition \eqref{eq:slow approach} is violated and 
the conclusion of Theorem~\ref{thm:zerohole_geom} fails.

Let $f: I \circlearrowleft$ be defined by $f(x) = 4x(1-x)$.  Let $X$ also denote the unit interval,
and $T:X \circlearrowleft$ be the tent map $T(x) = 2x$, $x \in [0,1/2]$, and $T(x) = 2(1-x)$,
$x \in [1/2,1]$.

The well-known conjugacy between $f$ and $T$ is $g:X \to I$, $g(x) = \sin^2(\frac{\pi x}{2})$, so that
$f \circ g(x) = g \circ T(x)$ for all $x \in X$.

Let $m$ denote Lebesgue measure on $X$, which is $T$-invariant and the equilibrium
state for the potential $- \log |DT|$.  The absolutely
continuous invariant probability measure for $f$ can then be written as $\mu = g_*m$,
which is the equilibrium state for the potential $- \log |Df|$. 

We choose $z = 0$, a fixed point for $f$, and
define $H_\ve = [0, \ve)$.  
It is clear that \eqref{eq:slow approach} fails, since $\Crit = \{ \frac 12 \}$ and $f^2(\frac 12) = 0$.

Now $g^{-1}(H_\ve) = [0, \ve')$, where $\ve' = \frac{2}{\pi} \sin^{-1}(\sqrt{\ve})$.
Note that since $X = g^{-1}(I)$, we have
\[
\begin{split}
m(\hX^n) & := m(\cap_{i=0}^n T^{-i}(X \setminus g^{-1}(H_\ve)) 
= m(\cap_{i=0}^n T^{-i}(g^{-1}(I \setminus H_\ve))) \\
& = m(\cap_{i=0}^n g^{-1} \circ f^{-i}(I \setminus H_\ve))
= m(g^{-1}(\cap_{i=0}^n f^{-i}(I \setminus H_\ve))) = \mu(\hI^n) \, , 
\end{split}
\]
where $\hX^n$ and $\hI^n$ denote the $n$-step survivor sets for $T$ and $f$, respectively.

Thus the escape rate $-\log \lambda_\ve$ for $(f, \mu, H_\ve)$ is the same as the escape rate for $(T, m, g^{-1}(H_\ve))$.

Now applying \cite[Theorem~4.6.1 and Section~5]{BuniYur} 
(see also \cite[Theorem~2.1 and Section~3.1]{KelLiv09}) to $T$,
we compute the scaling limit,
\[
\lim_{\ve \to 0} \frac{- \log \lambda_\ve}{\mu(H_\ve)} = 
\lim_{\ve \to 0} \frac{- \log \lambda_\ve}{m(g^{-1}(H_\ve))}
= 1 - \frac{1}{DT(0)} = \frac 12 \, .
\]
Yet $Df(0) = 4$, so that the expected scaling limit for $f$
would be $1 - \frac{1}{Df(0)} = \frac 34 \neq \frac 12$.

\begin{remark}
Although the scaling limit of Theorem~\ref{thm:zerohole_geom} fails in this case, we note
that an alternate formulation is possible.  Indeed, the invariant density for $f$
with respect to Lebesgue measure has a spike of order $x^{-1/2}$ at $z=0$.  So the limit of
$\frac 1 2$ that we compute is compatible with the formula,
\[
\lim_{\ve \to 0} \frac{- \log \lambda_\ve}{\mu(H_\ve)} = 1 - \left( \frac{1}{Df(0)} \right)^{1/2}
= 1 - \left( \frac{1}{4} \right)^{1/2} = 1 - \frac 1 2 = \frac 1 2,
\]
where the scaling exponent of $1/2$ matches the exponent in the spike of the invariant density.
Such relations follow from O'Brien's formula for the extremal index (see \cite[(2.6)]{FreFreTod15} for a dynamical setting of this), and given the connection
between extremal indices and scaling limits for escape rates established in 
\cite{BDT}, we conjecture that it holds in greater generality for scaling limits.
\end{remark}


\section{Construction of extensions and preliminary results}
\label{sec:construct}

\subsection{Distortion and contraction}

As is standard in this field we wish to recover some uniform expansion and uniform distortion from a system which is non-uniformly hyperbolic.  
We will use versions of the Koebe Lemma often, so state it here (see \cite[Theorem IV.1.2]{MelStr93}) recalling that elements of $\F$ have negative Schwarzian derivative.

\begin{lemma}[Koebe Lemma]
For any $\epsilon>0$, there exists $K(\epsilon)\ge 1$ such that the following hold.  If $f\in \F$ and $U\Subset U'$ is such that $U'\sm U$ consists of two intervals length $\ge \epsilon|U|$ and $f^n: U'\to f^n(U')$ is a diffeomorphism then,
\begin{enumerate}
\item[(a)]  for $x, y\in U$,
$$\frac{Df^n(x)}{Df^n(y)}\le K(\epsilon);$$
\item[(b)]   for $x, y\in U$,
$$\left|\frac{Df^n(x)}{Df^n(y)}-1\right|\le K(\epsilon)\frac{|x-y|}{|U|}.$$
\end{enumerate}
\label{lem:Koebe}
\end{lemma}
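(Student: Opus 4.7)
The plan is to deduce the Koebe distortion estimates from the cross-ratio inequality that holds for maps with negative Schwarzian derivative. Since the class $\F$ is closed under composition with respect to the negative Schwarzian property, $f^n$ itself has negative Schwarzian derivative on $U'$, and it suffices to prove the statement for a single diffeomorphism $g$ with negative Schwarzian on $U'$. The central tool will be the following: for any triple $L, J, R$ of adjacent intervals with union $T = L \cup J \cup R$ and $J$ in the middle, the cross-ratio
\[
B(T, J) \;=\; \frac{|T|\,|J|}{|L \cup J|\,|J \cup R|}
\]
is non-decreased under $g$, i.e.\ $B(g(T), g(J)) \ge B(T, J)$. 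This is the standard consequence of negative Schwarzian derivative (via the monotonicity of Minkowski-type cross-ratios), and I would take it as the starting point.

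For part (a), given $x, y \in U$, I would apply the cross-ratio inequality to $T = U'$, $J = [x,y]$, and $L, R$ being the two components of $U' \setminus J$. The assumption that $U' \setminus U$ has two components of length $\ge \ve |U|$ gives $|L|, |R| \ge \ve |U|$, so $B(T,J)$ is bounded below by a quantity depending only on $\ve$ and $|J|/|U|$. Passing to the limit as $J$ shrinks around $x$ (and running the same argument around $y$), the cross-ratio inequality translates into a bound of the form $\log \bigl( Df^n(x)/Df^n(y) \bigr) \le K(\ve)$, with $K(\ve)$ coming from solving an elementary algebraic inequality that measures how much the Koebe space of ratio $\ve$ allows cross-ratios to contract.

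For part (b), the same cross-ratio inequality has to be applied more quantitatively. Writing the log-derivative ratio as a telescoping sum of logs of consecutive cross-ratios over a partition of $U'$ into pieces separating $x$ from $y$, the gain is that when $|x-y|/|U|$ is small, $B(U', [x,y])$ is close to $1$; hence its image under $g$ is also close to $1$, and the derivative ratio deviates from $1$ by an amount proportional to $|x-y|/|U|$. The main obstacle in this plan is the bookkeeping for the quantitative linearization in (b): one must carefully extract the Lipschitz dependence from the cross-ratio estimate, rather than merely its boundedness. Since this entire machinery is worked out in full detail in \cite[Theorem IV.1.2]{MelStr93}, an alternative (and much shorter) route is simply to invoke that theorem; my proposal above is how one would reproduce it from first principles using only negative Schwarzian derivative and Koebe space.
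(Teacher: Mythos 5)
The paper never proves this lemma: it is stated as a quotation of the Koebe Principle, \cite[Theorem IV.1.2]{MelStr93}, the only added remark being that maps in $\F$ (hence their iterates) have negative Schwarzian derivative. So your ``much shorter route'' is literally the paper's proof, and your observation that negative Schwarzian is preserved under composition is the one ingredient the paper supplies.

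Your from-first-principles sketch, however, has a genuine gap: it uses the Koebe space on the wrong side. The cross-ratio expansion $B(g(T),g(J))\ge B(T,J)$ yields bounded distortion when the space hypothesis is imposed on the \emph{image}; the hypothesis of \cite[Theorem IV.1.2]{MelStr93} is that $f^n(U')$ contains a $\tau$-scaled neighbourhood of $f^n(U)$, i.e.\ both components of $f^n(U')\setminus f^n(U)$ have length $\ge\tau|f^n(U)|$. Your step for (a) --- bounding $B(U',[x,y])$ below via $|L|,|R|\ge\epsilon|U|$ and shrinking $[x,y]$ --- cannot produce a bound on $Df^n(x)/Df^n(y)$: as $y\to x$ one has $B(U',[x,y])\to 0$ no matter how much domain-side space there is, and the limiting inequality only gives the one-sided bound $|Dg(x)|\ge \frac{|g(L')||g(R')|}{|g(U')|}\cdot\frac{|U'|}{|L'||R'|}$ (with $L',R'$ the components of $U'\setminus\{x\}$), which controls nothing unless the image collars $g(L),g(R)$ are long relative to $|g(U)|$. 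Indeed domain-side space alone is insufficient for a negative-Schwarzian diffeomorphism: $g=\arctan$ with $U=[0,N]$, $U'=[-\epsilon N,(1+\epsilon)N]$ satisfies the stated hypotheses yet has distortion $1+N^2$ on $U$. So the version you can actually prove by the cross-ratio method --- and the version the paper really invokes and uses later (bounded distortion of induced branches comes from each branch extending so that its image covers a whole domain $D$ strictly containing the trimmed image $D(L)$, i.e.\ image-side space; similarly in Lemma~\ref{lem:distort}) --- is the image-side statement; the domain-side phrasing in the lemma should be read as shorthand for that. Part (b) in the form $|x-y|/|U|$ then follows from the image-side Lipschitz estimate of \cite[Theorem IV.1.2]{MelStr93} combined with (a), which makes $|f^n(x)-f^n(y)|/|f^n(U)|$ comparable to $|x-y|/|U|$.
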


For expansion/backward contraction we use `polynomial shrinking'.  That is, 
   for $\beta>0$,
   
\begin{itemize}
\item (PolShr)$_\beta$: there are constants $\delta, C>0$  such that for each $x\in I$ and every integer $n \ge 1$, any  connected component $W$ of $f^{-n}(B_\delta(x))$ has $|W| \le Cn^{-\beta}$.
\end{itemize}

 Combining \cite[Theorem A]{RivShe14} and \cite[Theorem 1]{BruRivSheStr08},  for each $f\in \F$ this holds for any $\beta>0$.\footnote{In fact, these results imply that to obtain (PolShr)$_\beta$ for a particular $\beta$,
one does not need $|Df^n(f(c))|\to \infty$ for all $c\in \Crit$, but rather a specific lower bound for $|Df^n(f(c))|$ depending on $\beta$ suffices.}
Notice that for intervals of size larger than $\delta$ in our setting, we can simply chop these up into smaller intervals at the cost of adding a multiplicative constant.


\subsection{Hofbauer extensions}

Hofbauer extensions are Markov extensions of $f : I \circlearrowleft$ usually defined by introducing
cuts at (images of) critical points, but in fact we can cut at arbitrary points: in Section~\ref{ssec:puncture} we will give a definition of our `extended critical set'.  So we let $\crit\subset I$ be a finite set of points with $\Crit\subset\crit$.  Set $\P_0:=I$, let $\P_1$ be the partition defined by $\crit$, and define $n$-cylinders by
$$\P_n:=\bigvee_{i=0}^{n-1}f^{-i}\P_1.$$
We will denote the $n$-cylinder which $x\in I$ lies in by $Z_n[x]$ (note that if there are two, then we can make an arbitrary choice).
Now define $\D:=\{f^k(Z):Z\in \P_k, k \ge 0\}$.  
As $\D$ is a set, each element $D\in \D$ appears once (i.e., if $f^k(Z)=f^j(Z')$ then these elements are naturally identified as the same set).  The \emph{Hofbauer extension} is defined as the disjoint union 
$$\hat I=\hat I(\crit):=\sqcup_{D\in \D}D.$$  
We call each $D$ a \emph{domain} of $\hat I$.  There is a natural projection map $\pi:\hat I \to I$, so each point $\hat x\in \hat I$ can be represented as $(x, D)$ where $x=\pi \hat x$.   The map $\hat f:\hat I \circlearrowleft$ is defined by $\hat f(\hat x)=\hat f(x, D)=(f(x), D')$ if there are cylinder sets $Z'\subset Z$ with $Z'\in \P_{k+1}$ and $Z\in \P_k$ such that 
\begin{equation}
x\in f^k(Z')\subset f^k(Z)=D \text{ and } D'=f^{k+1}(Z').\label{eq:Hof_cyl}
\end{equation}
In this case we write $D\to D'$, so $(\D, \to)$ has the structure of a directed graph.  With this setup, $\pi$ acts as a semiconjugacy between $\hat f$ and $f$:
$$\pi\circ \hat f=f\circ \pi.$$

We can think of points in $\crit$ as `cut points' since if an open interval $\hat A=(A, D)\subset \hat I$ and $\#\{A\cap \crit\}=k \ge 1$, then $\hat A$ gets cut at each element of $\crit$ (strictly speaking, of $\pi^{-1}(\crit)$) so that $\hat f(\hat A)$ lies in $k+1$ different elements of $\D$.

Let $D_0$ be the \emph{base}  of $\hat I$, that is the copy of $I$ in the extension.   Define $\iota$ to  be the natural inclusion map sending $I$ to $D_0$.
For $D\in \D$ we let $\lev(D)$ be the length of the shortest path $D_0\to \cdots \to D$ in $(\D, \to)$.  Then for $L\in \N$, the truncated extension at level $L$ is  
$$\hat I(L):=\sqcup\{D\in \D:\lev(D)\le L\}.$$

The following lemma and proof are well-known in the area, but we include them for illustrative purposes and for use later.

\begin{lemma}
Suppose that $\hat x, \hat y\in \hat I\sm\bd\hat I$ have $\pi \hat x=\pi\hat y$.  Then there exists $n\in \N$ such that $\hat f^n(\hat  x) =\hat f^n(\hat y)$.
\label{lem:fibres meet}
\end{lemma}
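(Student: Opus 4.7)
The plan is to unfold the definition of $\hat f$ and exploit the fact that the cylinders $Z_n[x] \in \P_n$ shrink to the single point $x$; once $Z_n[x]$ lies inside the intersection of the initial domains, the identification rule in $\D$ forces the two orbits to land in the same element of $\D$.

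Write $\hat x = (x, D_1)$ and $\hat y = (x, D_2)$ with $D_i = f^{k_i}(W_i)$, $W_i \in \P_{k_i}$, and note that each $D_i$ is an open interval (the image of an open cylinder under a monotone branch of $f^{k_i}$). Since $\hat x, \hat y \notin \partial \hat I$, the point $x$ sits in the interior of both $D_i$, hence in the interior of the open interval $D_1 \cap D_2$. The non-boundary hypothesis also lets us assume the forward orbit $\{f^i(x)\}_{i \ge 0}$ avoids $\crit$, so that $Z_n[x] \in \P_n$ is an unambiguous open neighbourhood of $x$ for every $n$. The key claim is that $|Z_n[x]| \to 0$: otherwise the nested intersection $J := \bigcap_n Z_n[x]$ is an interval of positive length whose iterates $f^i(J)$ all sit in a single element of $\P_1$, contradicting the leo hypothesis. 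Hence there is $n_0$ with $Z_n[x] \subset D_1 \cap D_2$ for every $n \ge n_0$.

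A short induction on the rule \eqref{eq:Hof_cyl}, using that $f^{k_1}|_{W_1}$ is a monotone diffeomorphism onto $D_1$, shows that the second coordinate of $\hat f^n(\hat x)$ equals $f^n(D_1 \cap Z_n[x])$ as a subset of $I$, and analogously for $\hat y$ with $D_2$. For $n \ge n_0$ we have $D_1 \cap Z_n[x] = D_2 \cap Z_n[x] = Z_n[x]$, so both second coordinates coincide with $f^n(Z_n[x])$ as subsets of $I$; the identification rule in the definition of $\D$ then declares them the same element of $\D$. Hence $\hat f^n(\hat x) = (f^n(x), f^n(Z_n[x])) = \hat f^n(\hat y)$, as required. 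The main subtlety lies in this last step: naively the level of $\hat f^n(\hat x)$ could grow to $k_1 + n$, but the identification of equal-as-sets elements of $\D$ collapses the level to at most $n$ exactly when $Z_n[x] \subset D_1$, which is what produces the merge.
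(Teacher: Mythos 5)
Your argument is correct and is essentially the paper's own proof: cylinders shrink (you justify this via the leo property, which the paper leaves implicit), so for large $n$ the cylinder $Z_n[x]$ lies inside both domains, and the rule \eqref{eq:Hof_cyl} then sends both lifts to the same point $(f^n(x), f^n(Z_n[x]))$, exactly as in the paper's argument with $f^n(Z_n[w])$. The one quibble is your claim that $\hat x,\hat y\in \hat I\sm\bd\hat I$ lets you assume the forward orbit of $x$ avoids $\crit$ --- that hypothesis only controls the position of $x$ inside its two domains, not its orbit --- but this is harmless, since the paper's convention of a fixed (arbitrary) choice of $Z_n[x]$, and the corresponding resolution in the definition of $\hat f$, is applied identically to both lifts, so the merging step goes through unchanged.
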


\begin{proof}
Let $w=\pi \hat x$.  Observe that since $\pi$ is a semiconjugacy, $\hat f^k(\hat x), \hat f^k(\hat y)\in \pi^{-1}(f^k(w))$ for all $k\ge 0$.  Let $D_{\hat x}$ and $D_{\hat y}$ denote the 
domains of $\hat I$ which contain $\hat x$ and $\hat y$ respectively.  Then choose $n$ so large 
that $(\pi|_{D_{\hat x}})^{-1}(Z_n[w])$ and  $(\pi|_{D_{\hat y}})^{-1}(Z_n[w])$ are both compactly contained inside  $D_{\hat x}$ and $D_{\hat y}$ respectively, where
$Z_n[w]$ denotes the element of $\mathcal{P}_n$ containing $w$.  Now notice that $f^n(Z_n[w])$ is a domain of the Hofbauer extension, and indeed it follows from the construction in \eqref{eq:Hof_cyl} that $\hat f^n(\hat x)$ and $\hat f^n(\hat y)$ must lie in $f^n(Z_n[w])$.  Since these iterates must also both lie on the fibre $\pi^{-1}(f^n(w))$ by the conjugacy property, the points must coincide, as required.
\end{proof}

In general, Hofbauer extensions split into a collection of transitive components and a non-transitive set, see \cite{HofRai89}, but the above lemma and the leo property imply that there is a unique transitive component.  Since any points outside this must map into it and stay there forever, we will adopt the convention that $\hat I(L)$ is always restricted to the transitive component.

 Given a set $A\subset I$, the set $\hat A= \pi^{-1}(A)$ is called the \emph{lift} of $A$.
We now consider how to lift measures to $\hat I$.  Suppose that $\mu$ is an ergodic $f$-invariant probability measure.  Set 
$\hat\mu^{(0)}:=\mu\circ \iota^{-1}$, and for $n\in \N$,
$$\hat\mu^{(n)}:=\frac1n\sum_{k=0}^{n-1}\hat\mu^{(0)}\circ \hat f^{-k}.$$
As in \cite{Kel89}, if  $h_\mu(f)>0$, then $\hat\mu^{(n)}$ converges in the vague topology\footnote{Recall that $\hat\mu^{(n)}$ converges to $\hat\mu$ vaguely if $\hat\mu^{(n)}(\psi)$ converges to $\hat\mu(\psi)$ for all continuous $\psi$ with compact support in $\hat I$.} to $\hat\mu$, which is an $\hat f$-invariant ergodic measure with $$\hat\mu\circ\pi^{-1}=\mu.$$
Also, \cite{Kel89} shows that $h_{\hat\mu}(\hat f)= h_\mu(f)$.

We will also be interested in lifting conformal measures.
Given a conformal measure $m_\phi$ on $I$, define $\hat m_\phi:=m_\phi\circ\pi^{-1}$.  
Clearly $\hat m_\phi$ is $\hat\phi$-conformal for $\hat\phi:= \phi\circ\pi$ on $\hat I$.  
Note that in general it could be the case that $\hat m_\phi(\hat I)=\infty$.

\begin{remark}
We can define pressure $P(\hat\phi)$ analogously to \eqref{eq:pressure}.  As in \eqref{eq:pressureLE}, for admissible potentials we need only consider measures with positive entropy, so we deduce that $P(\hat \phi)=P(\phi)$.
This implies that when we lift the normalised potential, $\hat \vf := \vf \circ \pi$, then
the relation $\hat \vf = \hat \phi - P(\hat \phi)$ continues to hold.
\label{rmk:lifted pressure}
\end{remark}


\subsection{Inducing schemes}

We wish to define inducing schemes via first return maps to truncated domains in the Hofbauer extension, whose partition we will refine further below: it will also be useful to set this up for our punctured systems, though there will be a small difference in the structure there.  To this end, let $\hat\P_n$ be the set of intervals $\{(\pi|_D)^{-1}(Z):D\in \D, \ Z\in \P_n\}$.  For a domain $D\in \D$, let $D_\ell^L$ be the left-most interval of $\hat\P_L$ in $D$ and $D_r^L$ be the right-most,
 \begin{equation}
\label{eq:trim}
\hat I'(L):=\hat I(L)\cap \left[\sqcup_{D\in \D}\left(D\sm (D_\ell^L \cup D_r^L)\right)\right].
\end{equation}
It follows, for example from \cite[Lemma 8.2]{DobTod15} that, so long as $\hat I$ has more than one domain, then for all $\epsilon>0$ there exists $L\in \N$ such that if $h_\mu(f)>\epsilon$ then 
$\hat\mu(\hatI'(L))>0$.

We further partition $\hat I'(L)$ into the elements of $\hat\P_L$ intersecting it and denote this collection by $\Q$, (i.e., $\Q=\{Q\in \hat\P_L: Q\subset \hat I'(L)\}$), see Figure~\ref{fig:Hofbasic}.  Letting $R$ be the first return time {to $Y := \hat I'(L)$, 
the map $F=\hat f^R$ is the first return map.  
We denote the domains of $F$ by $\{ Y_i \}_i$.
These are the maximal sets $U$ such that $U\subset Q$ 
and $F(U) \subset Q'$ for some $Q, Q' \in \Q$, so that $F$ is monotonic and $R$ is constant on $U$.
We set $R_i=R|_{Y_i}$.  The cylinder structure of $\Q$ ensures that the $\{Y_i\}_i$ are disjoint 
and the Markov structure ensures that the image of such a domain is an interval $Q$ 
of $\Q$, see \cite[Lemma 4.9]{DobTod15}.  We give a short proof of this fact to explain how the changes we make later will not affect this structure.

\begin{figure}[h]
\begin{tikzpicture}[thick, scale=0.5]


\draw[gray, very thin] (0.6, -1.8) -- (0.6, 10);
\draw (0.6, -2,0.5pt) node[below] {$c_{-3}^1$};

\draw[gray, very thin] (2.9, -1.8) -- (2.9, 10);
\draw (2.9, -2,0.5pt) node[below] {$c_{-4}^1$};

\draw[gray, very thin] (5.1, -1.8) -- (5.1, 10);
\draw (5.0, -2,0.5pt) node[below] {$c_{-2}^1$};

\draw[gray, very thin] (6.4, -1.8) -- (6.4, 10);
\draw (6.4, -2,0.5pt) node[below] {$c_{-4}^2$};

\draw[gray, very thin] (13.1, -1.8) -- (13.1, 10);
\draw (13.1, -2,0.5pt) node[below] {$c_{-4}^3$};

\draw[gray, very thin] (14.5, -1.8) -- (14.5, 10);
\draw (14.5, -2,0.5pt) node[below] {$c_{-2}^2$};

\draw[gray, very thin] (16.9, -1.8) -- (16.9, 10);
\draw (16.9, -2,0.5pt) node[below] {$c_{-4}^4$};

\draw[gray, very thin] (21.6, -1.8) -- (21.6, 10);
\draw (21.6, -2,0.5pt) node[below] {$c_{-3}^2$};

\draw[gray, very thin] (25.5, -1.8) -- (25.5, 10);
\draw (25.5, -2,0.5pt) node[below] {$c_{-1}$};

\draw[gray, very thin] (26.6, -1.8) -- (26.6, 10);
\draw (26.7, -2,0.5pt) node[below] {$c_{-3}^3$};

\draw[gray, very thin] (29.7, -1.8) -- (29.7, 10);
\draw (29.7, -2,0.5pt) node[below] {$c_{-4}^5$};

\draw[thin, |-| ] (0,0) -- (30,0);
\draw (0,0.5pt) node[below] {$c_2$};
\draw (30,0.5pt) node[below] {$c_1$};
\draw[gray, dashed] (9, -0.3) -- (9, 10);
\draw (9,0.5pt) node[below] {$c_0$};
\draw[black, very thick, |-|]  (0.6,0) -- (2.9,0);
\draw[black, very thick, |-|]  (2.9,0)--(5.1, 0);
\draw[black, very thick, |-|]  (5.1,0) -- (6.4,0);
\draw[black, very thick, |-|]  (6.4,0) -- (9,0);
\draw[black, very thick, |-|]  (9,0) -- (13.1,0);
\draw[black, very thick, |-|]  (13.1,0) -- (14.5,0);
\draw[black, very thick, |-|]  (14.5,0) -- (16.9,0);
\draw[black, very thick, |-|]  (16.9,0) -- (21.6,0);
\draw[black, very thick, |-|] (21.6,0) -- (25.5,0);
\draw[black, very thick, |-|] (25.5,0) -- (26.6,0);
\draw[black, very thick, |-|] (26.6,0) -- (29.7,0);

\draw[thin, |-| ] (14,2) -- (30,2);
\draw (14, 2,0.5pt) node[below] {$c_3$};
\draw (30, 2, 0.5pt) node[below] {$c_1$};
\draw[ -> , blue, very thin] (5, 0.5) -- (21, 1.5);
\draw[ ->, red, very thin] (16, -0.5)--(16, -0.9)--(8, -0.9)--(8, -0.5);

\draw[black, very thick, |-|]  (14.5,2) -- (16.9,2);
\draw[black, very thick, |-|]  (16.9,2) -- (21.6,2);
\draw[black, very thick, |-|] (21.6,2) -- (25.5,2);
\draw[black, very thick, |-|] (25.5,2) -- (26.6,2);
\draw[black, very thick, |-|] (26.6,2) -- (29.7,2);

\draw[thin, |-| ] (0,4) -- (26.5,4);
\draw (0, 4,0.5pt) node[below] {$c_2$};
\draw (26.5, 4,0.5pt) node[below] {$c_4$};
\draw[ ->, blue, very thin ] (23, 2.5) -- (11, 3.5);
\draw[ ->, blue, very thin ] (7, 3.5) -- (13, 2.5);
\draw[black, very thick, |-|]  (0.6,4) -- (2.9,4);
\draw[black, very thick, |-|]  (2.9,4)--(5.1, 4);
\draw[black, very thick, |-|]  (5.1,4) -- (6.4,4);
\draw[black, very thick, |-|]  (6.4,4) -- (9,4);
\draw[black, very thick, |-|]  (9,4) -- (13.1,4);
\draw[black, very thick, |-|]  (13.1,4) -- (14.5,4);
\draw[black, very thick, |-|]  (14.5,4) -- (16.9,4);
\draw[black, very thick, |-|]  (16.9,4) -- (21.6,4);
\draw[black, very thick, |-|] (21.6,4) -- (25.5,4);

\draw[thin, |-| ] (6.5,6) -- (30,6);
\draw (6.5, 6,0.5pt) node[below] {$c_5$};
\draw (30, 6,0.5pt) node[below] {$c_1$};
\draw[ -> , blue, very thin] (14, 4.5) -- (12, 5.5);
\draw[ -> , blue, very thin] (18, 5.5) -- (14.5, 0.5);
\draw[black, very thick, |-|]  (9,6) -- (13.1,6);
\draw[black, very thick, |-|]  (13.1,6) -- (14.5,6);
\draw[black, very thick, |-|]  (14.5,6) -- (16.9,6);
\draw[black, very thick, |-|]  (16.9,6) -- (21.6,6);
\draw[black, very thick, |-|] (21.6,6) -- (25.5,6);
\draw[black, very thick, |-|] (25.5,6) -- (26.6,6);
\draw[black, very thick, |-|] (26.6,6) -- (29.7,6);

\draw[thin, |-| ] (27.5,8) -- (30,8);
\draw (27.5, 8,0.5pt) node[below] {$c_6$};
\draw (30, 8,0.5pt) node[below] {$c_1$};
\draw[ ->, blue, very thin ] (8, 6.5) -- (26, 7.5);

\end{tikzpicture}
\caption{A sketch of the first few levels of a Hofbauer extension for a unimodal map.  The 
dashed line shows where we cut at the critical point $c_0$, a blue arrow shows movement between domains in different levels and a red arrow shows movement between domains in the same level (the colouring will be most useful when we have extra cuts, as we will later).  
We denote by $c_n = f^n(c_0)$.  We also indicate the boundaries of the cylinder sets, denoted $c^i_{-j}$, in $\P_4$, and draw vertical lines to indicate how this lifts to $\hat\P_4$.  
Thick vertical lines imply these points are doubled.
These endpoints are then used to determine $\hat I'(4)$, 
as well as the domains $Q$ of $Y$, which are drawn with thick black lines.
}
\label{fig:Hofbasic}
\end{figure}

\begin{lemma}[Markov property of $F$]
\label{lem:Markov}
If $Y_i$ is a domain of 
$F$ with $F(Y_i) \subset Q\in \Q$ then $F(Y_i) = Q$.
\end{lemma}

\begin{proof}
Let $D\in \D$ denote the domain in which $Y_i$ lies and suppose $R_i=n$.  By the Markov structure of the Hofbauer extension there must exist $Y_i'\subset D$ such that $\hat f^n(Y_i')=Q$.  If $Y_i\neq Y_i'$ then  the only constraint that $Y_i$ must satisfy which $Y_i'$ does not need to is that $Y_i$ must be contained in some $Q'\in \hat\P_L$.  This means that  $Y_i$ must have an element of 
$\bd\hat\P_L$ as a boundary point: indeed it must be adjacent to some $D_\ell^L$ or $D_r^L$.  Denote such a point by $a_{-j}$ where $\pi(a_{-j})\in f^{-j}\crit$.  In particular $j\le L$.   So if $n>j$ then in fact $\hat f^n(a_{-j})$ must be a boundary point of some $D\in \D$, which is a contradiction.  On the other hand if $n\le j$ then $\hat f^n(a_{-j})$ is a boundary point of an element of $\hat\P_L$ so in fact $\hat f^n(Y_i)=Q$ and $Y_i= Y_i'$.
\end{proof}

\begin{remark}
In the construction above, we used $\hat\P_L=\hat\P_L(\crit)$ to firstly arrange for $\hat I(L)$ to be trimmed to $\hat  I'(L)$ and then secondly to partition the domains of $\hat  I'(L)$ into $\Q$.  We observe here that if a subset $\crit'\subset \crit$ is instead used to produce $\hat\P_L(\crit')$ and this set used in place of  $\hat\P_L(\crit)$, the setup above, and in particular the conclusion of Lemma~\ref{lem:Markov}, still holds.
We will employ such a construction in Section~\ref{ssec:puncture}.
\label{rmk:still Markov}
\end{remark}

Note that the set of domains generate a cylinder structure for $F$, which we will denote by
$\{ Y^{(n)}_i \}_i$ for the collection of $n$-cylinders.
The Markov structure of the Hofbauer extension implies for that each domain of $F$, if it maps onto $D(L)\in \hat I'(L)$, where $D(L)\subset D\in \D$,  then there is  an extension so that $F$ extends to a map onto $D$.
As in Lemma~\ref{lem:Koebe}, this extension property gives us bounded distortion for $F$: there exists $K\ge 1$ such that for $Y_i$ a domain of $F$, if $x, y\in Y_i$ then
$$\frac{|DF(x)|}{|DF(y)|}\le K$$
(we improve on this estimate in Lemma~\ref{lem:distort}). Note that $K$ depends
on $L$ since $L$ determines the constant $\epsilon$ in Lemma~\ref{lem:Koebe}.

We also note that by \cite[Lemma 10.7]{DobTod15}, $F$ is uniformly hyperbolic, i.e., there exist $C_F>0$ and $\sigma_F>1$ such that for $x\in Y$ and any $n\ge 1$,
\begin{equation}
|DF^n|\ge C_F\sigma_F^n.
\label{eq:unif_hyp}
\end{equation}

Given a potential $\phi:I\to [-\infty, \infty]$, and its normalised lift $\hat \vf = \vf \circ \pi$ as in 
Remark~\ref{rmk:lifted pressure}, we define the \emph{induced potential} 
$$\Phi(x)=\hat\vf(x)+\hat\vf(\hat f(x))+\cdots+ \hat \vf(\hat f^{R(x)}(x)), \quad x \in \hat I'(L).$$  
As in \eqref{eq:conf_meas}, if $\hat m_\vf$ is $\hat \vf$-conformal for $\hat f$, then it is 
also $\Phi$-conformal for $F$.

By Kac's Lemma, since $F$ is a first return map to $Y$, if $\hat \mu$ is a $\hat f$-invariant probability measure then 
\begin{equation} \hat\mu_{Y}=\frac{\hat\mu|_{Y}}{\hat\mu(Y)} \text{ is an $F$-invariant probability measure and }
\hat\mu(Y)=\frac1{\int R~d\hat\mu_Y}.
\label{eq:Kac}
\end{equation}
  We also note that 
  \begin{equation}
\label{eq:Kac2}
\hat\mu(A) = \hat\mu(Y) \sum_i \sum_{j=0}^{R_i-1} \hat\mu_{Y}(Y_i \cap \hatf^{-j}A)
= \sum_i \sum_{j=0}^{R_i-1} \hat\mu (Y_i \cap \hatf^{-j}A), \mbox{ for any Borel $A \subset \hatI$},
\end{equation}
where the sum over $i$ is taken over all 1-cylinders $Y_i$ for $F$, and $R_i = R|_{Y_i}$.  

We close this subsection with the following distortion result, which is primarily due to Lemma~\ref{lem:Koebe}.

\begin{lemma}
\begin{enumerate}
\item[(a)] Suppose that $\phi:I\to \R$ is H\"older continuous with H\"older exponent $\eta \le 1$.  
Then there exists $K_{F, \phi}>0$ such that for any $n$-cylinder $Y^{(n)}_i$, 
 and all $x, y\in Y^{(n)}_i$,
$$|S_n\Phi(x)-S_n\Phi(y)|\le K_{F, \phi}|F^n(x)-F^n(y)|^\eta.$$
\item[(b)] There exists $K_F>0$ such that for any $n$-cylinder of the scheme $Y^{(n)}_i$, and all $x, y\in Y^{(n)}_i$,
 $$\left|\frac{DF^n(x)}{DF^n(y)}-1\right|\le K_F|F^n(x)-F^n(y)|.$$
\end{enumerate}
\label{lem:distort}
\end{lemma}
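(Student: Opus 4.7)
The plan is to deduce both distortion estimates from the Koebe Lemma (Lemma~\ref{lem:Koebe}), by first establishing that every $n$-cylinder of $F$ sits inside an open interval with uniform Koebe space. Concretely, by iterating the Markov property (Lemma~\ref{lem:Markov}), each $n$-cylinder $Y^{(n)}_i$ admits a strict extension $\tilde Y^{(n)}_i \supset Y^{(n)}_i$ such that $F^n \colon \tilde Y^{(n)}_i \to D$ is a diffeomorphism onto some $D \in \D$ of level $\le L$, while $F^n(Y^{(n)}_i)$ is contained in $D \setminus (D_\ell^L \cup D_r^L)$. Since $L$ is fixed and $\hatI(L)$ contains only finitely many domains, the ratios $|D_\ell^L|/|D|$ and $|D_r^L|/|D|$ are bounded below by some $\epsilon_L > 0$ uniformly in $n$ and $i$, so Koebe applies with a single constant.

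For part (b), I would apply Koebe~\ref{lem:Koebe}(b) to $F^n \colon \tilde Y^{(n)}_i \to D$ to obtain $|DF^n(x)/DF^n(y) - 1| \le K(\epsilon_L) |x-y|/|\tilde Y^{(n)}_i|$. Then Koebe~\ref{lem:Koebe}(a), applied to the inverse branch $(F^n)^{-1}\colon D \to \tilde Y^{(n)}_i$, converts $|x-y|/|\tilde Y^{(n)}_i|$ to a constant multiple of $|F^n(x)-F^n(y)|/|D|$. Using the uniform lower bound $\min\{|D| : D \in \D,\ \lev(D) \le L\} > 0$, both factors combine into a single constant $K_F$, giving (b).

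For part (a), I would decompose $S_n\Phi(x) - S_n\Phi(y) = \sum_{k=0}^{n-1}[\Phi(F^k(x)) - \Phi(F^k(y))]$ (using that $R^n(x) = R^n(y)$ since $x,y$ lie in a common $n$-cylinder). For each $k$, Hölder continuity of $\hatf \vf = \vf \circ \pi$ together with Koebe distortion on $\hatf^{s_k} \colon \tilde Y_{i_k} \to D'$ yields a single-excursion bound $|\Phi(F^k(x)) - \Phi(F^k(y))| \le C\, |F^{k+1}(x) - F^{k+1}(y)|^\eta$. The hyperbolicity \eqref{eq:unif_hyp} of $F$, combined with the Koebe space around the remaining $(n-k-1)$ iterates, then gives $|F^{k+1}(x) - F^{k+1}(y)| \le C\sigma_F^{-(n-k-1)}|F^n(x) - F^n(y)|$. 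Summing the resulting geometric series in $k$ produces $|S_n\Phi(x) - S_n\Phi(y)| \le K_{F,\phi}|F^n(x) - F^n(y)|^\eta$.

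The hard step will be the per-excursion bound. Because $s_k$ can be arbitrarily large and $\hatf$ need not be uniformly expanding along an excursion (its derivative may be tiny near a critical point), one must carefully exploit the Koebe space around the entire orbit segment. Specifically, for intermediate times $j \in [0,s_k)$, bounded distortion of $\hatf^{s_k-j}$ on $\hatf^j(\tilde Y_{i_k})$ gives $|\hatf^j F^k(x) - \hatf^j F^k(y)| \le C\, |\hatf^j(\tilde Y_{i_k})|\cdot |F^{k+1}(x) - F^{k+1}(y)|/|D'|$; the sum $\sum_j |\hatf^j(\tilde Y_{i_k})|^\eta$ is then controlled via the polynomial shrinking property (PolShr)$_\beta$ for $\beta$ large relative to $\eta$, so that Hölder continuity of $\hatf \vf$ closes the estimate.
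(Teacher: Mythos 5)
Your proposal is correct and follows essentially the same route as the paper: part (b) is Koebe applied to $F^n$ with the uniform Koebe space coming from the Markov extension onto full domains and $|F^n(Y_i^{(n)})|\asymp 1$, and part (a) is the single-excursion estimate via H\"older continuity of $\vf$, Koebe distortion along the excursion, and (PolShr)$_\beta$ with $\beta\eta>1$, then passed to $n$-cylinders by the uniform hyperbolicity \eqref{eq:unif_hyp}. Your write-up merely makes explicit (the extension of inverse branches to full domains, the geometric series over excursions) what the paper leaves implicit.
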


\begin{proof}
We prove (a) first.  We begin by taking a 1-cylinder $Y_i$ and $x, y \in Y_i$.  Then

\begin{align*}
|\Phi(x)-\Phi(y)| &\le \sum_{k=0}^{R_i-1}|\phi\circ f^{k}(x)-\phi\circ f^{k}(y)|\le C\sum_{k=0}^{R_i-1}|f^{k}(x)-f^{k}(y)|^\eta \\
&= C\sum_{k=0}^{R_i-1}\left(\frac{|f^{k}(x)-f^{k}(y)|}{|F(x)-F(y)|}\right)^\eta |F(x) -F(y)|^\eta 
\le KC |F(x) - F(y)|^\eta \sum_{k=0}^{R_i-1} \left(\frac{|f^k(Y_i)|}{|f^{R_i}(Y_i)|}\right)^\eta, 
\end{align*}
where $K$ is a distortion constant coming from Lemma~\ref{lem:Koebe}.
So for a H\"older condition on the induced potential it suffices to have a bound on 
$ \sum_{k=0}^{R_i-1} \left(\frac{|f^{k}(Y_i)|}{|f^R(Y_i)|}\right)^\eta$, which follows from (PolShr)$_\beta$ for $\beta>1/\eta$.

Note that since $F$ is uniformly hyperbolic as in \eqref{eq:unif_hyp}, this result passes to $n$-cylinders, proving (a). 

Part (b) is an immediate consequence of Lemma~\ref{lem:Koebe}(b).  Note that when considering a cylinder $Y^{(n)}_i$, the switch from $\frac{|x-y|}{|Y^{(n)}_i|}$ to  $|F^n(x)-F^n(y)|$ follows by Lemma~\ref{lem:Koebe}(a) and that $|F^n(Y^{(n)}_i)|\asymp 1$.
\end{proof}

\begin{remark}
\label{rmk:ind press}
The above lemma,  Remark~\ref{rmk:lifted pressure} and the proof of \cite[Propostion 1.6]{DemTodMP} imply that for admissible normalised potentials $\vf$, the induced potential $\Phi$ has $P(\Phi)=0$ where pressure for the induced system is defined analogously to \eqref{eq:pressure}. 
\end{remark}


\subsection{Punctured extensions with uniform images and uniform tails}
\label{ssec:puncture}

In order to study open systems via the Hofbauer extension, once we fix a point $z \in I$ to be the centre
of our hole, we will introduce extra cuts during the construction of the extension.
Indeed, in order to compare Hofbauer extensions with different sets of cuts in a neighbourhood
of $z$, we will construct extensions with uniform images for the induced maps that are
independent of these
extra cuts.  

Our notation is as follows.  For $\ve_0 > 0$ to be chosen below and $0 < \ve < \ve_0$,
we will construct two related Hofbauer extensions:  $\hatI_{z, \ve_0}$ introducing cuts at $z$ and $z \pm \ve_0$;
and $\hatI_{z, \ve_0, \ve}$  introducing cuts at $z, z \pm \ve_0$, and $z \pm \ve$.
In particular, this means that we will add  $f^{-1}(z)$, $f^{-1}(z \pm \ve_0)$ and $f^{-1}(z \pm \ve)$ 
to our critical set. 
The corresponding dynamics are denoted by 
$\hatf_{z, \ve_0}$ and $\hatf_{z, \ve_0, \ve}$, respectively.  A simplified diagram is presented in Figure~\ref{fig:Hofextracuts}.

\begin{figure}[h]
\begin{tikzpicture}[thick, scale=0.5]


\draw[gray, very thin] (0.6, -1.8) -- (0.6, 20);
\draw (0.6, -2,0.5pt) node[below] {$c_{-3}^1$};

\draw[gray, very thin] (2.9, -1.8) -- (2.9, 20);
\draw (2.9, -2,0.5pt) node[below] {$c_{-4}^1$};

\draw[gray, very thin] (5.1, -1.8) -- (5.1, 20);
\draw (5.0, -2,0.5pt) node[below] {$c_{-2}^1$};

\draw[gray, very thin] (6.4, -1.8) -- (6.4, 20);
\draw (6.4, -2,0.5pt) node[below] {$c_{-4}^2$};

\draw[gray, very thin] (13.1, -1.8) -- (13.1, 20);
\draw (13.1, -2,0.5pt) node[below] {$c_{-4}^3$};

\draw[gray, very thin] (14.5, -1.8) -- (14.5, 20);
\draw (14.5, -2,0.5pt) node[below] {$c_{-2}^2$};

\draw[gray, very thin] (16.9, -1.8) -- (16.9, 20);
\draw (16.9, -2,0.5pt) node[below] {$c_{-4}^4$};

\draw[gray, very thin] (21.6, -1.8) -- (21.6, 20);
\draw (21.6, -2,0.5pt) node[below] {$c_{-3}^2$};

\draw[gray, very thin] (25.5, -1.8) -- (25.5, 20);
\draw (25.5, -2,0.5pt) node[below] {$c_{-1}$};

\draw[gray, very thin] (26.6, -1.8) -- (26.6, 20);
\draw (26.7, -2,0.5pt) node[below] {$c_{-3}^3$};

\draw[gray, very thin] (29.7, -1.8) -- (29.7, 20);
\draw (29.7, -2,0.5pt) node[below] {$c_{-4}^5$};

\draw[ |-| ] (0,0) -- (30,0);
\draw (0,0.5pt) node[below] {$c_2$};
\draw (30,0.5pt) node[below] {$c_1$};
\draw[gray, dashed] (9, -0.3) -- (9, 20);
\draw (9,0.5pt) node[below] {$c_0$};
\draw[gray, dashed] (21, -0.3) -- (21, 20);
\draw (21,0.5pt) node[below] {$y^{-\eps}_0$};
\draw[gray, dashed] (22.5, -0.3) -- (22.5, 20);
\draw (22.5,-0.1, 0.5pt) node[below] {$y_0$};
\draw[gray, dashed] (24, -0.3) -- (24, 20);
\draw (24,0.5pt) node[below] {$y^{+\eps}_0$};
\draw [decorate,decoration={brace,amplitude=5pt},rotate=0] (-0.5,-0.7) -- (-0.5,0.7);
\draw (-1, 0) node[left] {$0$};


\draw[blue, very thin, ->] (5.5, 0.5)--(13, 3.2);

\draw[blue, very thin, ->] (15.5, 0.5)--(18, 1.5);

\draw[ |-| ] (14,4) -- (30,4);
\draw (14, 4,0.5pt) node[below] {$c_3$};
\draw (30, 4, 0.5pt) node[below] {$c_1$};
\draw[ |-| ] (0,2) -- (10.7,2);
\draw (0.1, 2,0.5pt) node[below] {$c_2$};
\draw (30, 4, 0.5pt) node[below] {$c_1$};
\draw[ |-| ] (10.7,2) -- (12.8,2);
\draw (10.7, 2, 0.5pt) node[below] {$y_1^{+\eps}$};
\draw[ |-| ] (12.8,2) -- (14.9,2);
\draw (12.8, 2, 0.5pt) node[below] {$y_1$};
\draw[ |-| ] (14.9,2) -- (30,2);
\draw (14.9, 2, 0.5pt) node[below] {$y_1^{-\eps}$};
\draw (30, 2, 0.5pt) node[below] {$c_1$};
\draw [decorate,decoration={brace,amplitude=5pt},rotate=0] (-0.5,1) -- (-0.5,4.5);
\draw (-1, 3) node[left] {$1$};


\draw[red, very thin, ->] (5.5, 2.7)--(13, 3.7);

\draw[red, very thin,->] (23.2, 3.7)--(11.7, 2.4);

\draw[blue, very thin,->] (23.2, 0.3)--(11.7, 1.7);

\draw[blue,very thin, ->] (17.5, 4.5)--(18.5, 7.5);

\draw[blue, very thin,->] (17.8, 2.3)--(19.5, 5.5);


\draw[blue, very thin,->] (12.2, 2.3)--(27.5, 5.5);


\draw[red, very thin,->] (23.2, 1.8)--(23.2, 1.2)--(12.2, 1.2)--(12.2, 1.8);

\draw[ |-| ] (14.9,6) -- (24.4,6);
\draw (14.9, 6, 0.5pt) node[below] {$y_1^{-\eps}$};
\draw (24.4, 6, 0.5pt) node[below] {$y_2^{-\eps}$};
\draw[ |-| ] (24.4,6)--(26.8, 6);
\draw (26.8, 6, 0.5pt) node[below] {$y_2$};
\draw[ |-| ] (26.8,6)--(28.6, 6);
\draw (28.6, 6, 0.5pt) node[below] {$y_2^{+\eps}$};
\draw[ |-| ] (28.6, 6) -- (30, 6);
\draw (30, 6, 0.5pt) node[below] {$c_1$};
\draw[ |-| ] (14.9,8) -- (26.5,8);
\draw (14.9, 8, 0.5pt) node[below] {$y_1^{-\eps}$};
\draw (26.5, 8,0.5pt) node[below] {$c_4$};

\draw [decorate,decoration={brace,amplitude=5pt},rotate=0] (-0.5,5) -- (-0.5,8.5);
\draw (-1, 7) node[left] {$2$};


\draw[blue, very thin,->] (21.5, 5.7)--(13.8, 2.4);
\draw[blue, very thin,->] (23.2, 7.7)--(11.7, 2.4);
\draw[blue, very thin,->] (17.2, 7.7)--(17.7, 6.4);

\draw[blue,very thin, ->] (25.5, 8.2)--(11.85, 11.6);


\draw[blue, very thin,->] (27.5, 6.3) -- (4, 9.6);


\draw[blue, very thin,->] (24.1, 6.2)-- (10, 9.5);

\draw[ |-| ] (0,10) -- (2.3,10);
\draw (0.1, 10, 0.5pt) node[below] {$c_2$};
\draw (2.3, 10, 0.5pt) node[below] {$y_3^{+\eps}$};
\draw[ |-| ] (2.3,10) -- (4.8,10);
\draw (4.8, 10, 0.5pt) node[below] {$y_3$};
\draw[ |-| ] (4.8,10) -- (9.4, 10);
\draw (9.4, 10, 0.5pt) node[below] {$y_3^{-\eps}$};
\draw[ |-| ] (9.4,10) -- (10.7, 10);
\draw (10.7, 10, 0.5pt) node[below] {$y_1^{+\eps}$};
\draw[ |-| ] (6.7,12) -- (10.7,12);
\draw (6.7, 12, 0.5pt) node[below] {$c_5$};
\draw (10.7, 12, 0.5pt) node[below] {$y_1^{+\eps}$};

\draw [decorate,decoration={brace,amplitude=5pt},rotate=0] (-0.5,9) -- (-0.5,12.5);
\draw (-1, 11) node[left] {$3$};


\draw[blue, very thin,->] (1.3, 10.5)-- (15, 15.5);
\draw[blue,very thin, ->] (3.9, 10.5)-- (22, 15.5);

\draw[blue, very thin,  ->] (7.7, 10.3)-- (27.0, 15.5);


\draw[blue, very thin,->] (10.2, 10.2)-- (28.5, 13.7);

\draw[blue, very thin,->] (7.9, 12.2)-- (28.5, 17.7);

\draw[blue, very thin,->] (10.9, 11.6)-- (29.2, 6.5);


\draw[ |-| ] (28.6, 14)--(29.2, 14);
\draw (28.6, 14, 0.5pt) node[below] {$y_2^{+\eps}$};
\draw (29.7, 14, 0.5pt) node[below] {$y_4^{-\eps}$};
\draw[ |-| ] (29.2, 14)--(30,14);
\draw (30.7, 14, 0.5pt) node[below] {$c_1$};

\draw[ |-| ] (14,16) -- (19,16);
\draw (14, 16,0.5pt) node[below] {$c_3$};
\draw (19, 16,0.5pt) node[below] {$y_4^{+\eps}$};
\draw[ |-| ] (19,16)--(25, 16);
\draw (25, 16,0.5pt) node[below] {$y_4$};
\draw[ |-| ] (25, 16)--(30,16);
\draw (30, 16,0.5pt) node[below] {$c_1$};
\draw[ |-| ] (27.5, 18)--(30,18);
\draw (27.5, 18,0.5pt) node[below] {$c_6$};
\draw (30, 18,0.5pt) node[below] {$c_1$};

\draw [decorate,decoration={brace,amplitude=5pt},rotate=0] (-0.5,13) -- (-0.5,18.5);
\draw (-1, 16) node[left] {$4$};


\draw[blue, very thin, ->] (23.2, 15.7)--(11.7, 2.8);
\draw[blue, very thin, ->] (21.7, 15.7)--(13.8, 2.8);

\end{tikzpicture}

\caption{The Hofbauer extension based on the same unimodal map as in Figure~\ref{fig:Hofbasic}, but with new cuts at points $y_0^{+\eps}, \ y_0$ and $y_0^{-\eps}$.   These represent one set of preimages of $f^{-1}(z-\eps_0)$, $f^{-1}(z)$ and $f^{-1}(z+\eps_0)$: adding in all preimages adds to the complexity of the diagram significantly. 
  Similarly for simplicity we include only the preimages of $c_0$ at the bottom of the diagram, 
 and omit the preimages of $y_0^{+\eps}, \ y_0$ and $y_0^{-\eps}$ (therefore unlike in Figure~\ref{fig:Hofbasic} we do not mark out the domains $Q$ of $Y$ here).  In levels above 0 any marked point is a boundary point of $\D$: thicker markers imply that these points are doubled.  Since, in contrast to Figure~\ref{fig:Hofbasic}, the number of domains of a given level can be greater than one, we add in the numbers on the left to clarify the level of each domain.  Note that with $\eps_0$ fixed less than $\eps_0^*$, 
additional cuts can be introduced at $z \pm \eps$ for $\eps <\eps_0$ which do not affect the
structure of the cylinders outside the intervals $(y_n, y_n^{+\eps})$ and $(y_n^{-\eps}, y_n)$. }
\label{fig:Hofextracuts}
\end{figure}

We fix $z$ and at the beginning of Section~\ref{ssec:uniform}, we will choose 
the relevant quantities in the following order.  First, we will choose $L$ according to 
Theorem~\ref{thm:tails}, which will provide uniform control on the complexity of the
tail of the Hofbauer extension and will depend only on the cardinality of the critical set
plus $5 \deg(f)$.  Next, we will choose $\ve_0^*$
according to \eqref{eq:eps_0}, then finally we choose $\ve_0 \le \ve_0^*$, which will fix 
the return domain $Y$, and work with
$0 < \ve < \ve_0$ as the variable size of the hole.

\underline{$\hat I_z=\hat I(\Crit_{z})$}. Let $\Crit_{z}$ denote the expanded critical set, i.e., $\Crit\cup\{f^{-1}z\}$.
Next, consider the partition $\P_L=\P_L(\Crit_z)$ of $I$ into $L$-cylinders with endpoints at
$\{ f^{-j}(y): y \in \Crit_z, 0 \le j \le L \}$.  We choose
\begin{equation}
\label{eq:eps_0}
\ve^*_0 < \frac{1}{|Df^L|_\infty} \min \{ |x - y| : x \neq y , x \in \partial \P_L, y \in 
f^j(\Crit_z), 0 \le j \le L \} .
\end{equation}

\underline{$\hat I_{z, \ve_0}=\hat I(\Crit_{z, \ve_0})$}.  For $0 < \ve_0 \le \ve_0^*$,
we define $\hatI_{z, \ve_0}=\hat I(\Crit_{z, \ve_0})$ as above where $\Crit_{z, \ve_0}$ has $f^{-1}(z\pm \ve_0)$ added to $\Crit_z$.  Let $\hatI_{z, \ve_0}(L)$ denote the first $L$
levels of $\hatI_{z, \ve_0}$, and 
let $\hatI'_{z, \ve_0}(L)$ denote $\hatI_{z, \ve_0}(L)$
minus the elements of $\hat{\mathcal{P}}_L(\Crit_{z, \ve_0})$  adjacent to each boundary point in $\hatI_{z, \ve_0}(L)$, as in \eqref{eq:trim}, so that the new boundary points are of the form $f^{-j}(y)$ for some $y \in \Crit_{z, \ve_0}$ and $0 \le j \le L$.  
Note that by choice of $\eps^*_0$, we completely remove elements of the form $[f^k(z), f^k(z+\eps_0)]$ for $0 \le  k\le L$, and analogues, in going from $\hatI_{z, \ve_0}(L)$ to $\hatI_{z, \ve_0}'(L)$.

\underline{$\hat I_{z, \ve_0, \ve}=\hat I(\Crit_{z, \ve_0, \ve})$}.
For any $\ve \in (0, \ve_0)$, we define $\Crit_{z, \ve_0, \ve}$ to be $\Crit_{z, \ve_0}$ with $f^{-1}(z\pm\eps)$ added.  Let $\hat I_{z, \ve_0, \ve}=\hat I(\Crit_{z, \ve_0, \ve})$ and 
define $\hatI'_{z, \ve_0, \ve}(L)$ to be the first $L$ levels, 
$\hatI_{z, \ve_0, \ve}(L)$,
minus the elements of $\hat{\mathcal{P}}_L(\Crit_{z, \ve_0, \ve})$ adjacent to each boundary point in $\hatI_{z, \ve_0, \ve}(L)$ so that the new boundary points are of the form $f^{-j}(y)$ for some $y \in \Crit_{z, \ve_0, \ve}$ and $0 \le j \le L$.  As above, we completely remove elements of the form $[f^k(z), f^k(z+\eps_0)]$ for $0 \le k\le L$, and analogues, in going from $\hatI_{z, \ve_0, \ve}(L)$ to $\hatI_{z, \ve_0, \ve}'(L)$.

As can be seen from this construction, the domains of  $\hatI'_{z, \ve_0}(L)$ and  $\hatI'_{z, \ve_0, \ve}(L)$ are the same.  We choose $\Q$ to be the domains of  $\hatI'_{z, \ve_0, \ve}(L)$ further partitioned by $\hat{\mathcal{P}}_L(\Crit_{z, \ve_0})$.  We choose this partition rather than  $\hat{\mathcal{P}}_L(\Crit_{z, \ve_0, \ve})$ to ensure our $F$-images have size independent of $\ve$ and because, as in Remark~\ref{rmk:still Markov}, this does not affect the Markov structure for $F$ since
the extra cuts due to $\ve$ fall within intervals of the form $[f^k(z), f^k(z+\eps_0)]$ for $0 \le k\le L$,
which have already been removed from $\hatI'_{z, \ve_0, \ve}(L)$.

\begin{remark}
Here we explain how cutting at $f^{-1}(z)$ and our choice of $\ve_0^*$ ensures that the representatives of the holes in the Hofbauer extension are disjoint from our inducing domains.
\begin{enumerate}
\item[(a)] If $\hat f:U\to D$, $D \in \mathcal{D}$, is a homeomorphism, then since we cut at  $f^{-1}(z)$, the interior of $U$ cannot intersect $\pi^{-1}(f^{-1}(z))$, which also implies that the interior of $D$ cannot intersect $\pi^{-1}(z)$.  Therefore, this fact must be true for any $D$ in the transitive part of $\hat I_z$.  So we conclude that $\pi^{-1}(z)\cap \hat I'_z(L) =\es$ due to trimming of $L$-cylinders.

\item[(b)]
Suppose that $\hat J_{\ve_0}(z)\subset D\in \D$ where $\lev(D)=k\in \{0, \ldots, L\}$ and $\pi(\hat J_{\ve_0}(z)) \subset (z - \ve_0, z+\ve_0)$.  By \eqref{eq:eps_0},  
$\left(\hatf_{z, \ve_0}^j(\hat J_{\ve_0}(z))\right)  \cap \hatI_{z, \ve_0}'(L)= \es$,
for all $j = 0, \ldots, L-k$.   As a consequence $\pi^{-1}( (z - \ve_0, z+\ve_0)) \cap\hatI'_{z, \ve_0}(L)=\es$ and there is a one-to-one correspondence between elements of
$\hatI'_{z, \ve_0}(L)$ and $\hatI'_{z, \ve_0, \ve}(L)$; indeed, precisely the same domains appear
on each level.  Abusing notation slightly, we write $\hatI'_{z, \ve_0}(L) = \hatI'_{z, \ve_0, \ve}(L)$,
and once $L$ is fixed, simply refer to the common set of domains as $$Y = \sqcup_{Q\in \hat{\mathcal{P}}_L(\Crit_{z, \ve_0})} Q.$$
As a result of this construction, $Y \cap \hat H_\ve = \emptyset$ for all $\ve < \ve_0$, where $\hat H_\ve = \pi^{-1}(H_\ve)$.


\end{enumerate}
\label{rmk:def of Y}
\end{remark}

\begin{remark}
(Role of $\ve$ and $\ve_0$.)  The cuts at $z \pm \ve$ form the boundary of the hole
$H_\ve$, and defining $\hat I_{z, \ve_0, \ve}$ with respect to these cuts guarantees that
the Markov structure will respect the hole.  The extra cuts at $z \pm \ve_0$ are used to guarantee
uniform images and tails for returns to $Y$ as $\ve \to 0$.  Without loss of generality on $H_\ve$,
we may always choose $\ve_0$ to satisfy,
\begin{equation}
\label{eq:eps0 choice}
\{ f^\ell(z \pm \ve_0) \}_{\ell \ge 0} \cap \Crit_z = \emptyset \mbox{ and }
\{ f^\ell(z) \}_{\ell \ge 0} \cap \{ z \pm \ve_0 \} = \emptyset . 
\end{equation}

In fact, we will only need to invoke
\eqref{eq:eps0 choice} in Section~\ref{sec:zerohole} to prove convergence to the asymptotic escape rate in the case that $z$ is periodic (see Lemmas~\ref{lem:holder per} and \ref{lem:t per} ).  All results
in Sections~\ref{ssec:uniform}, \ref{sec:gap} and \ref{sec:Young tower} hold for all $\ve_0 \le \ve_0^*$.

The size of $\ve >0$ will be further reduced in Corollaries~\ref{cor:large connection}
and \ref{cor:spectral gap}, and Lemma~\ref{lem:mixing D} to satisfy $\ve < \ve_1^*$, where $\ve_1^* < \ve_0$ guarantees that the corresponding
induced maps are uniformly mixing and the associated transfer operators have a uniform spectral gap.
\end{remark}

As defined above, $\Q$ denotes the finite partition of 
$Y$ into its domains.
Define the induced maps
$F_{z, \ve_0} = \hatf_{z, \ve_0}^{R_{z, \ve_0}}$ and 
$F_{z, \ve_0, \ve} = \hatf_{z, \ve_0, \ve}^{R_{z, \ve_0, \ve}}$ acting on the domain
$Y$, where $R_\varkappa$ denotes the first return time
to $Y$ in the extension $(\hatf_\varkappa, \hatI_\varkappa)$, and $\varkappa$
stands for either of the indices $(z, \ve_0)$ or $(z, \ve_0, \ve)$.  By
construction, all images of elements of $\Q$ under $F_{\varkappa}$ are unions of elements of
$\Q$.  Thus $F_{\varkappa}$ has the finite images property.

We have a natural projection $\pi_\varkappa : \hatI_\varkappa \to I$ which commutes with the
dynamics, $\pi_\varkappa \circ \hatf_\varkappa = f \circ \pi_\varkappa$.  Note that from here on we will fix $\hat m=\hat m_\vf$ for the relevant $\vf = \phi - P(\phi)$.  As in Remark~\ref{rmk:def of Y}, $\hat m$ is the same for all $Y=\hatI_{\varkappa}'(L)$, and moreover $\hat m$ is always conformal for $\Phi_\varkappa$ under $F_\varkappa$ and we obtain $F_\varkappa$-invariant measures as in \eqref{eq:Kac} and \eqref{eq:Kac2}.

Define $\mathfrak{d} = \# \crit + 5 \deg(f) \ge \# \Crit_{z, \ve_0, \ve}$, and note that
by definition, $\mathfrak{d}$ does not depend on $\ve$ and $\ve_0$, just on the fact that we have
introduced extra cuts at the preimages of the 5 points, $z$, $z \pm \ve_0$ and $z \pm \ve$. 
Our first result provides uniform bounds on the tail of the return time functions $R_{z, \ve_0}$
and $R_{z, \ve_0, \ve}$.

\begin{theorem}
Suppose that either:
\begin{enumerate}
\item[(a)]  $\phi=-t\log|Df|$ for $t \in (t^-, t^+)$; or
\item[(b)] 
$\phi:I\to \R$ is H\"older continuous.
\end{enumerate}
 Then there exist $L\in \N$, $C>0$ and $\alpha>0$ such that 
 for all $0 < \ve < \ve_0 \le \ve_0^*$, $F_{\varkappa}$, 
 the first return map to $\hat I'_\varkappa(L)$, has tails 
 $\hat m_{\phi-P(\phi)}(R_\varkappa>n)\le Ce^{-\alpha n}$ where 
 $\varkappa = \{ z, \ve_0 \}$ or $\{z, \eps_0, \eps\}$. Here $L, C, \alpha$ depend only on $(f, \phi, \mathfrak{d})$.
\label{thm:tails}
\end{theorem}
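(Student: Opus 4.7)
The plan is to reduce the estimate to the tail-decay machinery for first-return maps to truncated Hofbauer extensions developed in \cite{DobTod15}, and then to argue that the extra cuts introduced by $z$, $z\pm\ve_0$ and $z\pm\ve$ neither enlarge the effective critical set unboundedly nor, in the case of $z\pm\ve$, alter the return dynamics on $Y$. By construction $\#\Crit_{z,\ve_0,\ve}\le\#\Crit+5\deg(f)$, so $\mathfrak{d}$ is bounded independently of $\ve$. For any fixed cut set of cardinality at most $\mathfrak{d}$, the Koebe Lemma~\ref{lem:Koebe}, polynomial shrinking (PolShr)$_\beta$, and the complexity bound of \cite{DobTod15} furnish a level $L$ and constants $C,\alpha>0$ such that the first return map to the trimmed extension \eqref{eq:trim} has $\hat m$-exponential tails. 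Applied to $\Crit_{z,\ve_0}$ this yields the statement for $\varkappa=\{z,\ve_0\}$, with constants depending only on $(f,\phi,\mathfrak{d})$, hence only on $(f,\phi,d)$.

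The technical heart is upgrading this to uniformity in $\ve$ for $\varkappa=\{z,\ve_0,\ve\}$. Here the crucial input is Remark~\ref{rmk:def of Y}(b) together with the choice of $\ve_0^*$ in \eqref{eq:eps_0}: all images up to level $L$ of the neighbourhoods of $z$ and $z\pm\ve_0$ are trimmed from $\hatI_{z,\ve_0}(L)$, so $\pi^{-1}((z-\ve_0,z+\ve_0))\cap Y=\emptyset$ and $\hatI'_{z,\ve_0}(L)=\hatI'_{z,\ve_0,\ve}(L)=Y$ as subsets with the same domain decomposition $\mathcal Q$. Any orbit segment that witnesses the extra cuts at $z\pm\ve$ must first traverse $\pi^{-1}((z-\ve_0,z+\ve_0))$, which lies entirely outside $Y$; hence such an excursion does not register as a return to $Y$ until the orbit has left the $\ve_0$-neighbourhood of $z$. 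Consequently the set $\{R_{z,\ve_0,\ve}>n\}$ differs from $\{R_{z,\ve_0}>n\}$ only by a refinement into sub-cylinders with the same union, so the combinatorial complexity of $F_{z,\ve_0,\ve}$ at level $n$ is controlled by that of $F_{z,\ve_0}$ up to a factor depending only on $\deg(f)$, and in particular is independent of $\ve$.

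To convert the combinatorial estimate to the measure estimate, one uses that $\hat m=\hat m_{z,\ve_0}=\hat m_{z,\ve_0,\ve}$ (the lift is defined by $m_\vf\circ\pi^{-1}$ and does not see the extra cuts) is conformal for the induced potential $\Phi_\varkappa$. Bounded distortion from Lemma~\ref{lem:distort} then gives
\[
\hat m(\{R_\varkappa>n\})=\sum_{R_\varkappa(Y_i)>n}\hat m(Y_i)\le C\sum_{R_\varkappa(Y_i)>n}|F_\varkappa(Y_i)|\,e^{\inf_{Y_i}S_{R_\varkappa}\hat\vf},
\]
and the sum on the right is bounded exactly as in \cite{DobTod15} by $Ce^{-\alpha n}$, using admissibility of $\phi$ to control the Birkhoff sums (in case (a) via the spectral properties of $s_t$ and in case (b) directly from the Gibbs property). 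The main obstacle is precisely the uniformity paragraph above: one must verify that, despite the fact that $F_{z,\ve_0,\ve}$ has strictly more inducing domains than $F_{z,\ve_0}$ as $\ve$ shrinks, the new domains are confined to the $\ve_0$-neighbourhood of $z$ and so contribute nothing new to the tail at level $n$ beyond what the $\ve_0$-scheme already does. This confinement is what the special choice \eqref{eq:eps_0} of $\ve_0^*$ buys us, and it is the only place where decoupling of $\ve$ from the uniform constants $L,C,\alpha$ is needed.
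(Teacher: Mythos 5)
There is a genuine gap at the heart of your argument. The theorem's content is a competition between two quantities: the combinatorial growth of the number of $1$-cylinders with $R_\varkappa = n$ (which \cite[Lemma~4.15]{DobTod15} bounds by $e^{\xi n}$ with constants depending only on $\mathfrak{d}$ — this part you use correctly), and a per-cylinder measure decay $\hat m(Y_i)\le e^{-\bar\alpha R_i}$ with a \emph{definite} rate $\bar\alpha>\xi$. You outsource this second estimate to \cite{DobTod15} (``the sum on the right is bounded exactly as in \cite{DobTod15}\dots in case (a) via the spectral properties of $s_t$ and in case (b) directly from the Gibbs property''), but it is not there: \cite{DobTod15} does not bound tails with respect to the conformal measures $\hat m_{\phi-P(\phi)}$ of general admissible potentials, $s_t$ plays no role in this estimate (it only enters the zero-hole asymptotics later), and $\hat m$ is merely conformal — no Gibbs property is available a priori. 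The actual mechanism is: prove that $F_\varkappa$ is transitive on the finite partition $\Q$ (via the leo property and Lemma~\ref{lem:fibres meet}), use finiteness of $\Q$ to get connecting branches of uniformly bounded length $N$ and derivative, so that each $1$-cylinder $Y_i$ carries a periodic point $y_i$ of period in $[R_i, R_i+N]$; then conformality plus Koebe/Lemma~\ref{lem:distort} distortion give $\hat m(Y_i)\lesssim |DF(y_i)|^{-t}e^{-R_i p_t}\lesssim e^{-R_i(t\lmin+p_t)}$ in the geometric case, where $\bar\alpha=p_t+t\lmin>0$ precisely because $t<t^+$, and in the H\"older case $\bar\alpha\ge\inf(P(\phi)-\phi)>0$ from the normalisation $\phi<P(\phi)$. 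Only after identifying $\bar\alpha$ can one choose $\xi<\bar\alpha$ (which fixes $L$) and sum $e^{\xi m}e^{-\bar\alpha m}$ to get $\alpha=\bar\alpha-\xi$. Without naming $\bar\alpha$ and explaining why it beats the complexity exponent, the proof of the theorem is missing.

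Your route to uniformity in $\ve$ also rests on a false claim: that $\{R_{z,\ve_0,\ve}>n\}$ differs from $\{R_{z,\ve_0}>n\}$ ``only by a refinement into sub-cylinders with the same union.'' The extra cuts at $f^{-1}(z\pm\ve)$ send orbits into strictly smaller domains of the finer extension beyond level $L$, which can \emph{delay} returns to $Y$; the two return-time functions genuinely differ on a nonempty set (this is exactly what Corollary~\ref{cor:large connection} quantifies, and that corollary is proved \emph{after}, and using, the tail bound). Fortunately no comparison between the two systems is needed: the complexity bound depends only on $\mathfrak{d}\le\#\Crit+5\deg(f)$, and the periodic-point/per-cylinder estimate applies verbatim to $F_{z,\ve_0}$ and to $F_{z,\ve_0,\ve}$ for every $\ve$, so the constants $L, C, \alpha$ are automatically uniform. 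Your observation about the role of \eqref{eq:eps_0} (keeping $Y$ and $\Q$ independent of $\ve$) is correct and relevant, but it enters as the reason the same argument runs for both indices, not as the pivot of a transfer argument from one return map to the other.
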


\begin{proof}[Proof of Theorem~\ref{thm:tails}]
For ease of notation, we will drop the subscript $\varkappa$ in the proof, 
but all statements apply
equally well to $F_{z, \ve_0}$ and $F_{z, \ve_0, \ve}$.

As shown in \cite[Lemma 4.15]{DobTod15}, for each $\xi>0$ there exist $L=L(\xi)$  and $n_0=n_0(L)$ such that $\#\{i: R(Y_i) = n \}\le e^{\xi n}$ for all $n \ge n_0$.  Crucially these numbers only depend on $\mathfrak{d}$, so are independent of the actual values of 
$\eps$ and $\eps_0$.   Thus to prove the theorem, it suffices to show that there exists some 
$\bar\alpha>\xi$ such that for any 1-cylinder $Y_i$ of $F$, $\hat m(Y_i) \le e^{-\bar\alpha R_i}$ where $R_i:=R|_{Y_i}$.

In the geometric case i.e., case (a), we will set 
$$\bar\alpha=p_t+t\lmin.$$ 
The fact that $\bar\alpha>0$ follows immediately from our having set $t<t^+$.
In the H\"older case we obtain an analogous $\bar\alpha>0$ using the 
assumed pressure gap, $\phi < P(\phi)$.  In both cases we now can select $\xi<\bar\alpha$, which then fixes $L(\xi)$ and $\Q$.  We will see below that our estimates on the measures of the domains $Y_i$ yield $\alpha=\bar\alpha-\xi$. 

We will use the expansion on periodic orbits to estimate the measure of the domains $Y_i$.  The proof of this theorem would be simpler if we had $Y_i\subset F(Y_i)$ for all $i$, since then each $Y_i$ would contain a point of period $R_i$, which will allow us to 
connect $\bar\alpha$ and the measure of $Y_i$.  
To overcome this issue, we will first prove that $F$ is transitive on elements
of $\mathcal{Q}$.   Recall that by Lemma~\ref{lem:fibres meet}, 
if $O_1, O_2 \subset \hatI_{z, \ve_0}'(L)$
are two open sets such that $\pi(O_1) \cap \pi(O_2) \neq \emptyset$, then there exists
$n \in \mathbb{N}$ such that $F^n(O_1) \cap F^n(O_2) \neq \emptyset$. \\
\\
Now let $Q_1, Q_2 \in \Q$.  Since
$f$ is leo, there exists $n_1 \in \mathbb{N}$ such that $\pi(\hatf^{n_1}Q_1) \supset I \supset \pi(Q_2)$.
By Lemma~\ref{lem:fibres meet}, there exists $n_2 \in \mathbb{N}$ such that 
$\hat f^{n_1 + n_2}(Q_1) \cap \hat f^{n_2}(Q_2) \neq \emptyset$.
Since $Q_2$ is a recurrent element of $\Q$, 
there exists $n \in \mathbb{N}$ such that $F^n(Q_1) \cap Q_2 \neq \emptyset$.
Then the Markov property of $F$ implies that $F^n(Q_1) \supset Q_2$, and the
claimed transitivity follows.

Since $\Q$ is finite,
there exist $N\ge1$ and $C>0$ such that for each pair $Q_1, Q_2\in \Q$, there are $J \subset Q_1$ 
and  $n \le N$ such that $\hat f^n: J \to Q_2$ is a diffeomorphism with $\left|D\hat f^n|_J\right|\ge C$ .  Therefore, each domain $Y_i$ of the inducing scheme contains a periodic point $y_i$ 
with period $R_i\le p \le R_i+N$ for $\hat f$.  Then $|DF(y_i)|\ge C^{-1} |D\hat f^p(y_i)|\gtrsim e^{\lmin R_i}$.  
Throughout we will treat $\hat f^{R_i}(Y_i)$ as having uniform size, i.e., independent of $i$.

In case (a), Lemma~\ref{lem:Koebe} implies 
\begin{align}
 \hat m(\hat f^{R_i}(Y_i)) & =\int_{Y_i}e^{-S_{R_i}(t\hat \phi-p_t)}d\hat m=\int_{Y_i}|DF|^te^{R_ip_t}d\hat m 
 \asymp |DF(y_i)|^t\hat m(Y_i)e^{R_ip_t} .
 \label{eq:confcyl_new}
\end{align}
Therefore, $\hat m(Y_i)\lesssim e^{-R_i(t\lmin+p_t)}= e^{-\bar\alpha R_i}$.  

For the H\"older case, recall that we have assumed that $\phi<P(\phi)$, 
and thus by Remark~\ref{rmk:lifted pressure},  $\hat \phi < P(\hat \phi)$ on $\hat I$. 
 Our value of $\bar\alpha$ here is $\inf\left\{P(\phi)-\frac{S_p\phi(x)}p: f^p(x)=x\right\}\ge \inf\{P(\phi)-\phi(x):x\in I\}>0$. So again, using a slightly more elementary version of the estimate in \eqref{eq:confcyl_new}  in conjunction with Lemma~\ref{lem:distort},  to give us our requisite distortion property, the result follows.
\end{proof}


\subsection{Uniform mixing for $F_{z, \ve_0, \ve}$}
\label{ssec:uniform}

Now we choose $L$ large enough so that the conclusion of Theorem~\ref{thm:tails} is satisfied.  Furthermore, we enlarge
$L$ if necessary so that 
\begin{enumerate}
  \item[a)] $\displaystyle \kappa = \max\{ \hat m(\hatI_{z, \ve_0, \ve} \setminus \hatI'_{z,\ve_0}(L)) , 
\hat m(\hatI_{z, \ve_0} \setminus \hatI'_{z,\ve_0}(L)) \} < 1/3$; and
  \item[b)]  any ergodic invariant measure $\nu$ with entropy $h_\nu(f) > (\log \overline\lambda_{\ve^*} + \alpha)/2$ lifts to our inducing
  scheme on $\hat I_{z, \ve_0}'(L)$,  where $\ve^*$ is from Theorem~\ref{thm:accim}.
   \end{enumerate}
Item (b) is possible due to \cite[Lemma~8.2]{DobTod15}, and the fact that $\alpha$ does not decrease as $L$ increases.
   
With $L$ fixed, we define $\ve_0^*$ as in \eqref{eq:eps_0}, and for $\ve_0 \le \ve_0^*$, we let $Y = \hatI'_{z,\ve_0}(L)$ as in Remark~\ref{rmk:def of Y}.

Our next result proves a necessary mixing property for our return maps.

\begin{lemma}
\label{lem:mixing}
For all $\ve_0 \le \ve_0^*$ and $\ve < \ve_0$, the induced maps
$F_{z, \ve_0}$ and $F_{z, \ve_0, \ve}$ are
topologically mixing on $Y$.
\end{lemma}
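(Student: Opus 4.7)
The plan is to reduce topological mixing of $F = F_{z,\ve_0,\ve}$ (the argument for $F_{z,\ve_0}$ is identical) to a combinatorial property of a finite directed graph $\mathcal{G}$. By Lemma~\ref{lem:Markov}, each $1$-cylinder $Y_i$ of $F$ maps onto a unique element $Q' \in \mathcal{Q}$ minus its two trimmed $L$-cylinders, so we define $\mathcal{G}$ with vertex set $\mathcal{Q}$ and put an edge $Q \to Q'$ whenever some $1$-cylinder in $Q$ returns to $Q'$. The Markov property together with uniform hyperbolicity \eqref{eq:unif_hyp} imply that the $1$-cylinders form a topological basis of $Y$, so topological mixing of $F$ on $Y$ is equivalent to strong connectivity plus aperiodicity of $\mathcal{G}$. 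Strong connectivity is already contained in the proof of Theorem~\ref{thm:tails}: for any $Q_1, Q_2 \in \mathcal{Q}$, the leo property of $f$ arranges $\pi(\hatf^{n_1} Q_1) \supset \pi(Q_2)$, Lemma~\ref{lem:fibres meet} merges the appropriate fibres by a further iteration, and the recurrence of $\mathcal{Q}$-elements then promotes this to some $n$ with $F^n(Q_1) \supset Q_2$.

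For aperiodicity I would argue by contradiction: assume $\mathcal{G}$ has period $p \ge 2$, with class decomposition $\mathcal{Q} = C_0 \sqcup \cdots \sqcup C_{p-1}$ such that every edge runs $C_i \to C_{(i+1) \bmod p}$. Fix $Q \in C_0$. My approach is to produce two $F$-cycles at $Q$ of coprime length. To do so I first locate periodic points of $f$ inside $\pi(Q)$ of suitably chosen prime periods: Sharkovsky's theorem, applied to the topologically exact map $f$, produces periodic orbits of every sufficiently large prime period, and the leo property combined with Brouwer's fixed point theorem applied to $f^q : \pi(Q) \to I \supset \pi(Q)$ guarantees that such orbits can be found meeting $\pi(Q)$. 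Lifting to the canonical $\hatf$-recurrent lift $\hat x$ of such a periodic point and merging fibres via Lemma~\ref{lem:fibres meet} places $\hat x$ in $Q$ after finitely many iterations, producing an $\hatf$-periodic orbit in $Q$ of period $q$. This orbit descends to an $F$-cycle at $Q$ of some length $m \ge 1$ with $m \mid q$; comparing the $F$-lengths arising from two $f$-periodic orbits of suitably distinct prime periods produces $F$-cycle lengths at $Q$ with gcd $1$, contradicting the assumed period $p \ge 2$.

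With aperiodicity in hand, strong connectivity plus the Markov property yield $F^n(Q_1) \supset Q_2$ for every pair $Q_1, Q_2 \in \mathcal{Q}$ and all sufficiently large $n$, which is topological mixing on the basis of $1$-cylinders, hence on $Y$. The main obstacle is the aperiodicity step: translating the leo property of $f$ — which naturally governs $\hatf$-time — into arithmetic control of the discrete count of $F$-returns. Handling this via the standard dictionary between $f$-periodic orbits and their canonical $\hatf$-lifts, together with Sharkovsky's theorem, is where the technical care is needed.
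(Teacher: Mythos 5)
Your reduction to strong connectivity plus aperiodicity of the finite transition graph on $\Q$, and your transitivity argument (leo property, Lemma~\ref{lem:fibres meet}, Markov property of Lemma~\ref{lem:Markov}), do match the paper. The gap is in the aperiodicity step, and it sits exactly at the point you flag as the main obstacle: converting $f$- (or $\hatf$-) time into the count of $F$-returns. Three concrete problems. First, an $f$-periodic point $x \in \pi(Q)$ of period $q$ does not give an $\hatf$-periodic orbit through $Q$: the point of $Q$ in the fibre over $x$ is in general only \emph{eventually} periodic, and Lemma~\ref{lem:fibres meet} merges it \emph{forward} onto the canonical periodic lift, whose orbit may lie entirely in domains of level greater than $L$, hence outside $Y$ altogether; nothing places a cycle through $Q$. (Moreover, the fixed point of $f^q$ in $\overline{\pi(Q)}$ coming from $f^q(\pi(Q)) \supset \pi(Q)$ need not have prime period $q$, and the prime-period-$q$ orbits supplied by Sharkovsky need not meet $\pi(Q)$, so even the first step is not secured.) Second, even granting an $\hatf$-periodic point $\hat x \in Q$ of $\hatf$-period $q$, the resulting $F$-cycle length is the number of visits of the orbit to $Y$ within one period; this number does \emph{not} divide $q$ (a period-$7$ orbit visiting $Y$ at times $0,2,5$ gives an $F$-cycle of length $3$), so the claim $m \mid q$ fails, and with it the coprimality you hoped to extract from two distinct large primes: the associated $F$-cycle lengths are simply uncontrolled, and the contradiction with a graph period $p \ge 2$ is never reached.

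The paper obtains aperiodicity without any periodic orbits, and this is the missing idea. By the choice of $L$ one has $\kappa = \hat m(\hatI_{z,\ve_0,\ve} \setminus Y) < 1/3$, and leo gives $f^n(\pi(Q)) \supset I$ for all $n \ge n_Q$. Applying this at the two \emph{consecutive} times $n_Q$ and $n_Q+1$, the sets $\pi\left(Y \cap \hatf^{n_Q}(Q)\right)$ and $\pi\left(Y \cap \hatf^{n_Q+1}(Q)\right)$ each have $m_\vf$-measure at least $1-\kappa$, hence overlap; Lemma~\ref{lem:fibres meet} then merges the two images, and together with the Markov property and transitivity this yields $F^{k}(Q) \cap F^{k+1}(Q) \neq \emptyset$ for some $k$, i.e.\ cycles at $Q$ of two consecutive lengths, so the graph period is $1$. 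In other words, the paper controls the \emph{difference} of the return counts along two routes out of $Q$ (forcing it to be one) rather than trying to prescribe the counts themselves through periods of $f$; any repair of your scheme would need a device of this kind.
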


\begin{proof}
We write our arguments for $F_{z, \ve_0, \ve}$, but the same proof holds for $F_{z, \ve_0}$. 

By the proof of Theorem~\ref{thm:tails},
$F_{z, \ve_0, \ve}$ is transitive on the finitely many elements of $\Q$.
The only way it can
fail to be mixing is if the images decompose into a periodic cycle.  Let $Q \in \Q$.  Since $f$ is leo, 
there exists $n_Q$ such that $f^n(\pi(Q)) \supset I$
for all $n \ge n_Q$.  By our choice of $L$, 
$\kappa = \hat m(\hatI_{z, \ve_0, \ve} \setminus Y) < 1/3$.  
Then since $f^n \circ \pi = \pi \circ \hatf^n_{z, \ve_0, \ve}$, we have
$m(\pi(\hatf^n_{z, \ve_0, \ve}(Q) \cap Y)) \ge 1 - \kappa$, for $n \ge n_Q$.

Applying this to $n = n_Q$ and $n = n_Q+1$, we conclude
\[
m\left(\pi\left(Y \cap \hatf_{z, \ve_0, \ve}^{n_Q}(Q)\right) \cap \pi\left( Y \cap \hatf_{z, \ve_0, \ve}^{n_Q+1}(Q)\right)\right) \ge 1 - 2\kappa >0  .
\]
Thus there must exist intervals $O_1 \subset Y \cap \hatf_{z, \ve_0, \ve}^{n_Q}(Q)$ and $O_2 \subset Y \cap \hatf_{z, \ve_0, \ve}^{n_Q+1}(Q)$ such that $\pi(O_1) \cap \pi(O_2) \neq \emptyset$.  By Lemma~\ref{lem:fibres meet}, there exists
$n_1 \in \mathbb{N}$ such that $F_{z, \ve_0, \ve}^{n_1}(O_1) \cap F_{z, \ve_0, \ve}^{n_1}(O_2) \neq \emptyset$.   
Since $Q, O_1, O_2 \subset Y$, there exists $k_Q \in \mathbb{N}$ such that 
$F_{z, \ve_0, \ve}^{k_Q}(Q) \cap F_{z, \ve_0, \ve}^{k_Q+1}(Q) \neq \emptyset$, 
so the period of $Q$ under $F_{z, \ve_0, \ve}$ is 1.  Thus $F_{z, \ve_0, \ve}$ is aperiodic and therefore mixing.
\end{proof}

Our next two lemmas show that the mixing established in Lemma~\ref{lem:mixing} is in fact
uniform in $\ve$.

\begin{lemma}
\label{lem:uniform mix}
Fix $\ve_0 \le \ve_0^*$ and suppose there exist $Q_1, Q_2 \in \Q$ and an interval $J \subset Q_1$
such that $F_{z, \ve_0}^{n_J}(J) = Q_2$ for some $n_J \in \mathbb{N}$.
Then there exists $\ve_1 < \ve_0$ such that
for all $\ve \in (0, \ve_1)$, $F_{z, \ve_0, \ve}^{n_J}(J) = Q_2$.
\end{lemma}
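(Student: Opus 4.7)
The plan is to show that for $\ve$ sufficiently small, the dynamics of $\hatf_{z,\ve_0,\ve}$ on $J$ is indistinguishable from that of $\hatf_{z,\ve_0}$ for the first $N$ iterations, where $N := \sum_{i=0}^{n_J-1} R_{z,\ve_0}\circ F_{z,\ve_0}^i$ is the total number of $\hatf_{z,\ve_0}$-iterations realising $F_{z,\ve_0}^{n_J}$ on $J$. The key point is that the added cut points $f^{-1}(z\pm\ve)$ in $\Crit_{z,\ve_0,\ve}$ do not split the cylinder in which $\pi(J)$ sits at any level up to $N$.

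First, I would analyse the finite orbit of intervals $\{f^k(\pi(J))\}_{k=0}^{N}$. Since $\pi(J)$ lies in a single cylinder $Z \in \mathcal{P}_N(\Crit_{z,\ve_0})$, each $f^k|_{\pi(J)}$ is monotone and $f^k(\pi(J))$ is an interval. The inclusion $f^{-1}(z) \subset \Crit_z \subset \Crit_{z,\ve_0}$ forces $f^{-k}(z)$, $k=1,\ldots,N$, to be boundary points of $\mathcal{P}_N(\Crit_{z,\ve_0})$-cylinders, so no domain of $\hat I_{z,\ve_0}$ contains $z$ in its interior, and $z$ can appear only on the boundary of $f^k(\pi(J))$. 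Moreover, $\partial\pi(J)$ consists of preimages of $\partial\pi(Q_2)$ under the appropriate branch of $f^N$, and $\pi(Q_2)$ lies at distance at least $\ve_0$ from $z$ by Remark~\ref{rmk:def of Y}(b); combined with the genericity condition \eqref{eq:eps0 choice} on $\ve_0$, this allows one to verify that $z \notin \overline{f^k(\pi(J))}$ for $k=1,\ldots,N$, possibly after shrinking $J$ to an open sub-interval with the same image $Q_2$. Consequently,
\[
  \delta \;:=\; \min_{1 \le k \le N}\dist\bigl(z,\overline{f^k(\pi(J))}\bigr) \;>\; 0,
\]
and we set $\ve_1 := \min(\delta,\ve_0)$.

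For $\ve \in (0,\ve_1)$, any new cut point $f^{-k}(z\pm\ve)$, $k \le N$, introduced by the refinement $\Crit_{z,\ve_0}\to\Crit_{z,\ve_0,\ve}$ would lie in $\pi(J)$ only if $z\pm\ve \in f^k(\pi(J))$, which is excluded by $\ve<\delta$. Hence $\pi(J)$ remains inside a single cylinder of $\mathcal{P}_N(\Crit_{z,\ve_0,\ve})$, and, using the identification $\hat I'_{z,\ve_0}(L) = \hat I'_{z,\ve_0,\ve}(L)$ from Remark~\ref{rmk:def of Y}, the chain of Hofbauer domains traversed by $J$ agrees in the two extensions. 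Since $Y$ is the same set and the trajectory enters it at the same iteration steps, the first-return structure is preserved, giving $F_{z,\ve_0,\ve}^{n_J}(J) = \hatf_{z,\ve_0,\ve}^{N}(J) = \hatf_{z,\ve_0}^{N}(J) = Q_2$. The main obstacle is securing the strict separation $\delta > 0$: one has to rule out $z$ lying in the forward $f$-orbit of $\partial\pi(Q_2)$, and it is here that the trimming in Remark~\ref{rmk:def of Y} and the choice of $\ve_0$ from \eqref{eq:eps0 choice} do the real work.
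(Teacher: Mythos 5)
Your overall strategy is the same as the paper's: follow the finite orbit segment realising $F_{z,\ve_0}^{n_J}$ on $J$, show it stays away from $z$, pick $\ve_1$ below that distance so the new cuts at $f^{-1}(z\pm\ve)$ never split the orbit, and conclude that the return structure to the common set $Y$ is unchanged. The gap is precisely at the step you yourself flag as ``the main obstacle'': the claim that $z \notin \overline{f^k(\pi(J))}$ for $1\le k\le N$ is not established by the facts you cite. The distance of $\pi(Q_2)$ from $z$ together with \eqref{eq:eps0 choice} only tells you that if $f^j(\pi(\partial J))=z$ for some intermediate $j$, then an endpoint of $\pi(Q_2)$ is a forward image of $z$; since endpoints of $Q_2$ are preimages of points of $\Crit_{z,\ve_0}$, this scenario is \emph{not} excluded when the forward orbit of $z$ meets $\Crit$ or when $z$ is periodic (only the option $f^\ell(z)=z\pm\ve_0$ is ruled out by \eqref{eq:eps0 choice}). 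If some image $f^j(\pi(J))$ does have $z$ as an endpoint, then $\delta=0$ and, worse, for \emph{every} $\ve>0$ the new cut value $z+\ve$ (or $z-\ve$) lies in the interior of that image, so the orbit of $J$ is genuinely split in $\hat I_{z,\ve_0,\ve}$ and the conclusion can fail. Your fallback of ``shrinking $J$ to an open sub-interval with the same image $Q_2$'' does not repair this: passing to the interior of $J$ leaves the closures $\overline{f^k(\pi(J))}$ unchanged, so it cannot create a positive $\delta$, and in any case the lemma must be proved for the given $J$.

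The paper closes exactly this gap using the trimming together with the Markov property (Lemma~\ref{lem:Markov}): since $Q_2$ is a trimmed domain, it is compactly contained in a full domain $Q_2'\in\D$, and pulling back along the branch gives $J'\supsetneq J$ with $\hatf_{z,\ve_0}^{n_1}(J')=Q_2'$, so $\partial J$ lies in the interior of $J'$ and hence $\hatf_{z,\ve_0}^{j}(\partial J)$ lies in the interior of a domain for every $j\le n_1$. Because $f^{-1}(z)$ and $f^{-1}(z\pm\ve_0)$ are cut points, interiors of domains are disjoint from the fibres over $z$ and $z\pm\ve_0$ (as in Remark~\ref{rmk:def of Y}(a)), which yields $\partial(\hatf_{z,\ve_0}^j(J))\cap\{\widehat z,\widehat{z\pm\ve_0}\}=\emptyset$ and hence the strict separation you need; one then takes $\ve_1$ smaller than $\min_{j\le n_1} d\big(\partial(\hatf_{z,\ve_0}^j(J)),\widehat z\big)$ (or $\ve_1=\ve_0$ when the orbit avoids the fibre over $(z-\ve_0,z+\ve_0)$ altogether). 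If you insert this extension argument in place of your unverified separation claim, the rest of your proof goes through essentially as in the paper.
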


\begin{proof}
Fix $\ve_0$.
Suppose there exist $Q_1, Q_2 \in \Q$ and an interval $J \subset Q_1$ and $n_J \in \N$
such that $F^{n_J}_{z, \ve_0}(J) = Q_2$ as in the statement of the lemma.  Let $n_1 \in \N$ be such that
$\hatf_{z, \ve_0}^{n_1}(J) = Q_2$.  

A key property of our construction of $Y = \hatI'_{z, \ve_0}(L)$ is that we have `trimmed' the edges
of the domains at returns:  i.e., the endpoints of $Q_1$ and $Q_2$ are elements of  $\bd\P_L= \bd\P_L(\Crit_{z, \ve_0})$ and 
the Markov property of $F_{z, \ve_0}$, Lemma~\ref{lem:Markov}, implies that
there exist domains $Q_1' \supsetneq Q_1$ and $Q_2' \supsetneq Q_2$ in the extension $\hatI_{z, \ve_0}$ (note that $Q_2'$ is an element of $\D$) 
and an interval $J'$ with $J \subsetneq J' \subset Q_1'$ such that $\hatf_{z, \ve_0}^{n_1}(J') = Q_2'$.

Let $\widehat{z}$ and $\widehat{z \pm \ve_0}$ denote the fibres above $z$ and $z \pm \ve_0$, respectively.
Due to the Markov property and because we have treated $f^{-1}(z)$ and $f^{-1}(z \pm \ve_0)$ as 
cut points during our construction of $\hatI_{z, \ve_0}$ and $\hatI_{z, \ve_0, \ve}$, it follows that
$\partial(\hatf_{z, \ve_0}^j(J)) \cap \{ \widehat{z} , \widehat{z \pm \ve_0} \} 
= \emptyset$, for all $0 \le j \le n_1$.

Case 1:  $\bigcup_{j =0}^{n_1} \hatf_{z, \ve_0}^j(J) \cap \left(\widehat{z-\ve_0}, \widehat{z+\ve_0}\right) = \emptyset$.
Then introducing new cuts at $f^{-1}(z \pm \ve)$ in the construction of $\hatI_{z, \ve_0, \ve}$
does not affect the endpoints of either $J'$ or $Q_2'$, and the lemma holds with $\ve_1 = \ve_0$.

Case 2:  $\bigcup_{j =0}^{n_1} \hatf_{z, \ve_0}^j(J) \cap \left(\widehat{z-\ve_0}, \widehat{z+\ve_0}\right) \neq \emptyset$.
Choose
\[
\ve_1 < \min \big\{ d\big( \partial(\hatf_{z, \ve_0}^j(J)), \widehat{z} \big) : 0 \le j \le n_1 \big\}.
\]
It follows that for all $\ve < \ve_1$, $\hatf_{z, \ve_0, \ve}^{n_1}(J) = Q_2$.  Moreover, there exists
an interval $J'_\ve \supsetneq J$ and a domain $Q_{2, \ve}' \supsetneq Q_2$ in $\hatI_{z, \ve_0, \ve}$
such that $\hatf_{z, \ve_0, \ve}^{n_1}(J'_\ve) = Q_{2, \ve}'$.   Then, since $F_{z, \ve_0, \ve}$ is the first return
map to $Y$, and $Y$ is independent of $\ve$, it follows that $F_{z, \ve_0, \ve}^{n_J}(J) = Q_2$.
\end{proof}

\begin{corollary}
\label{cor:large connection}
For all $\delta > 0$ there exists $\ve_1 \in (0, \ve_0)$ such that for all $\ve \in (0, \ve_1)$,
\[
\hat m ( \hat x \in Y : F_{z, \ve_0}(\hat x) \neq F_{z, \ve_0, \ve}(\hat x) ) \le \delta .
\] 
\end{corollary}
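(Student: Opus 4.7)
The plan is to truncate the return time at some large $N$ using Theorem~\ref{thm:tails}, thereby reducing to only finitely many $1$-cylinders of $F_{z,\ve_0}$, and then to apply Lemma~\ref{lem:uniform mix} to each of them, taking the minimum of the resulting thresholds.

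First I would use the uniform exponential tail estimate from Theorem~\ref{thm:tails}: given $\delta > 0$, choose $N$ such that $\hat m\{ \hat x \in Y : R_{z, \ve_0}(\hat x) > N\} \le C e^{-\alpha N} < \delta$. Then it suffices to show that, for $\ve$ small enough, $F_{z,\ve_0}$ and $F_{z,\ve_0,\ve}$ agree pointwise on $\{ R_{z,\ve_0} \le N\}$. Next, the combinatorial bound $\#\{i : R_i = n\} \le e^{\xi n}$ (from the proof of Theorem~\ref{thm:tails}) implies that the set of $1$-cylinders $Y_i$ of $F_{z,\ve_0}$ with $R_{z,\ve_0}|_{Y_i} \le N$ is finite. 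For each such $Y_i$, the Markov property (Lemma~\ref{lem:Markov}) gives $F_{z,\ve_0}(Y_i) = Q_{i,2}$ for some $Q_{i,2} \in \Q$, and so Lemma~\ref{lem:uniform mix}, applied with $J = Y_i$, $n_J = 1$, yields some $\ve_1(i) > 0$ such that $F_{z,\ve_0,\ve}^{\,1}(Y_i) = Q_{i,2}$ for all $\ve < \ve_1(i)$. Setting $\ve_1 = \min_{i} \ve_1(i)$, which is positive because the minimum is over a finite set, should do the job.

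The main obstacle is upgrading the set-theoretic image equality provided by Lemma~\ref{lem:uniform mix} to the pointwise identity $F_{z,\ve_0,\ve}(\hat x) = F_{z,\ve_0}(\hat x)$ on each $Y_i$ with $R_i \le N$. Unpacking the proof of Lemma~\ref{lem:uniform mix} (Case~2 in particular), the chosen $\ve_1(i)$ is strictly less than the distance from the fibre $\widehat z$ to the boundaries of the iterates $\hatf_{z,\ve_0}^j(Y_i)$ for $0 \le j \le R_i$. This means that the additional cuts at $\widehat{z \pm \ve}$ fall outside the interiors of these orbit intervals, so the trajectories under $\hatf_{z,\ve_0}$ and $\hatf_{z,\ve_0,\ve}$ coincide on $Y_i$ for $0 \le j \le R_i$. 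Since the set $Y = \hatI'_{z,\ve_0}(L) = \hatI'_{z,\ve_0,\ve}(L)$ is common to both extensions (Remark~\ref{rmk:def of Y}(b)) and $R_i$ is the first return time, no earlier return can occur in the perturbed extension, so $R_{z,\ve_0,\ve}|_{Y_i} = R_{z,\ve_0}|_{Y_i}$ and the two induced maps agree pointwise on $Y_i$.

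Putting these ingredients together, for $\ve \in (0, \ve_1)$,
\[
\{ \hat x \in Y : F_{z, \ve_0}(\hat x) \neq F_{z, \ve_0, \ve}(\hat x) \} \subset \{ \hat x \in Y : R_{z, \ve_0}(\hat x) > N \},
\]
and the right-hand side has $\hat m$-measure at most $\delta$, which gives the stated bound.
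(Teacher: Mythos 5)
Your proposal is correct and follows essentially the same route as the paper: truncate the return time via Theorem~\ref{thm:tails}, note there are only finitely many $1$-cylinders with $R_{z,\ve_0}\le N$, apply Lemma~\ref{lem:uniform mix} to each, and take the minimum of the thresholds. Your additional unpacking of Case~2 of Lemma~\ref{lem:uniform mix} to upgrade the image equality to pointwise agreement (and equality of return times) is exactly the point the paper's proof uses implicitly, so no gap remains.
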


\begin{proof}
Fix $\delta > 0$.
By Theorem~\ref{thm:tails}, we may choose $N$ such that 
$\hat m(R_{z, \ve_0} > N) \le C e^{-\alpha N} \le \delta$.
Considering the 1-cylinders for $F_{z, \ve_0}$, there are only finitely many with $R \le N$.

For each 1-cylinder $Y_i$, Lemma~\ref{lem:uniform mix} yields an $\ve_1(i) > 0$ 
such that for all $\ve \in (0, \ve_1(i))$, $Y_i$ is also a 1-cylinder for $F_{z, \ve_0, \ve}$; moreover,
$F_{z, \ve_0}(Y_i) = F_{z, \ve_0, \ve}(Y_i)$ and $R_{z, \ve_0}(Y_i) = R_{z, \ve_0, \ve}(Y_i)$.

Taking $\ve_1 = \min \{ \ve_1(i) : R_{z, \ve_0}(Y_i) \le N \} > 0$ completes the proof of the corollary.
\end{proof}


\section{A spectral gap for the induced punctured transfer operators}
\label{sec:gap}

In this section, we work with the induced maps $F_{z, \ve_0}$ and $F_{z, \ve_0, \ve}$ defined on the
common domain $Y = \hat{I}'_{z, \ve_0}(L)$.  Since $z$ and $\ve_0 \le \ve_0^*$ are fixed throughout
this section, for brevity, we will denote these
maps simply by $F_\ve := F_{z, \ve_0, \ve}$ and $F_0 := F_{z, \ve_0}$.  Related objects will also
be denoted by the subscript $\ve$ or 0.  One of the main points of this section
is to show that certain key properties are uniform for $\ve \in [0, \ve_0)$, where $\ve=0$ is understood
to correspond to the map $F_{z, \ve_0}$ whose Hofbauer extension is defined by introducing cuts
only at $z$ and $z \pm \ve_0$.

For $\ve \in [0, \ve_0)$, let 
$\mathcal{Y}_\ve = \{ Y_i \}_i$ denote the set of 1-cylinders for $F_\ve$ on which $R_\ve = R_{z, \ve_0, \ve}$
is constant.
As before, denote by $\Q$ the finite partition of $Y$ into intervals which comprise the finite images of
$\mathcal{Y}_\ve$ under $F_\ve$.  It is important that $Y$ and $\Q$ are independent of $\ve$.
Indeed, the uniformity of $\Q$ and $L$ allows us to take the constants in 
\eqref{eq:unif_hyp} and Lemma~\ref{lem:distort} uniformly in $\eps$.  This is formalised in properties 
\ref{GM2} and \ref{GM3} below.

Let $\Phi_\ve = S_{R_\ve}\vf$ be the induced version of $\vf$ on $Y$. Note that as in, for example \cite[Lemma 14.9]{DobTod15}, the fact that $\hat\mu_\eps(\hat I'(L))>0$ guarantees 
that $P(\Phi_\ve) =0$.
Also, the conformal measure $m_{\vf}$ lifted to $\hat I_{z, \ve_0, \ve}$, and denoted $\hat m_{\vf, \ve}$, 
depends on both $\ve$ and $\vf$.  However, $\hat m_{\vf, \ve}$ restricted to $Y$ is independent of $\ve$ since $Y$ is independent
of $\ve$.  Since we will work exclusively in $Y$ in this section, we suppress the dependence on $\vf$ and refer to this measure on 
$Y$ as simply $\hat m$.  For each $\ve \in [0, \ve_0)$, it is
a conformal measure for $F_\ve$ with respect to the potential $\Phi_\ve$.

The key properties of the Gibbs-Markov maps $F_\ve$, $\ve \in [0, \ve_0)$, are as follows: 
\begin{itemize}
  \item[\textbf{(GM1)}\labeltext{{(GM1)}}{GM1}]  $F_\ve(Y_i) \in \Q$ for each $Y_i \in \mathcal{Y}_\ve$; 
    \item[\textbf{(GM2)}\labeltext{{(GM2)}}{GM2}]  There exist $\sigma > 1$ and $C_e \in (0,1]$ (an expansion constant) such that for all $n \in \mathbb{N}$, 
  if $Y_i^{(n)}$ is an $n$-cylinder for $F_\ve$ and $x, y \in Y_i^{(n)}$, then
  $d(F_\ve^nx, F_\ve^ny) \ge C_e \sigma^n d(x,y)$, where $d(\cdot, \cdot)$ is the distance on each interval in
  $\hat{I}$ induced by the Euclidean metric on $I$.
  \item[\textbf{(GM3)}\labeltext{{(GM3)}}{GM3}] There exists $C_d > 0$ (a distortion constant) such that for all $n \in \mathbb{N}$, 
  if $Y_i^{(n)}$ is an $n$-cylinder for $F_\ve$ and $x, y \in Y_i^{(n)}$, then
  \[
 \left|e^{S_n \Phi_\ve(x) - S_n \Phi_\ve(y)} - 1\right| \le C_d d(F_\ve^nx, F_\ve^ny)^\eta, 
  \]
  for some $\eta > 0$. 
  \end{itemize}
 Note that \ref{GM3} follows from Lemma~\ref{lem:distort}, and that the constants in \ref{GM2} and \ref{GM3} are 
independent of $\ve$ by construction of $Y$.  Due to \ref{GM3}, conformality and large images,
\begin{equation}
\label{eq:distortion}
e^{S_n\Phi_\ve(x)} \le (1+C_d) \frac{\hm(Y_i^{(n)})}{\hm(F_\ve^n(Y_i^{(n)}))}
\le \frac{1+C_d}{q} \hm(Y_i^{(n)}), \quad \mbox{for all $x \in Y_i^{(n)}$,}
\end{equation}
where $q := \min_{Q \in \Q} \hm(Q) > 0$. 

Let $\cont^\eta(\Q)$ denote the set of H\"older continuous functions on elements of $\Q$, equipped with
the norm,
\[
\| \psi \|_{\cont^\eta} = \sup_{Q \in \Q} \left( |\psi|_{\cont^0(Q)} + \sup_{x,y \in Q} |\psi(x) - \psi(y)| d(x,y)^{-\eta} \right) =: \sup_{Q \in \Q} \big( |\psi|_{\cont^0(Q)} + H^\eta_Q(\psi) \big).
\]

We define the transfer operator $\Lp_\ve = \Lp_{\Phi_\ve}$ acting on $L^1(\hat{m}_\ve)$ by
\[
\Lp_\ve^n \psi(x) = \sum_{y \in F_\ve^{-n}(x)}  \psi(y) e^{S_n\Phi_\ve(y)} , \quad \mbox{for each $n \ge 1$.}
\]
Analogously, define $\Lp_0$ to be the transfer operator corresponding
to the map $F_0 = F_{z, \ve_0}$.

Given a hole $H_\ve = (z-\ve, z+\ve)$, $\ve \in (0, \ve_0)$, as in Remark~\ref{rmk:def of Y}, 
its lift $\hat H_\ve$ 
is disjoint from $Y$ due to our choice of $\ve_0^*$.  We denote by $\hat H_\ve' \subset Y$ 
the \emph{pre-hole}, the set of points in $Y$ which
do not return to $Y$ before entering $\hat H_\ve$.  
Due to our construction, $\hat H_\ve'$ is a (countable) union of 1-cylinders for $F_\ve$,
\[
\hat H_\ve' = \{ Y_i \in \mathcal{Y}_\ve : \hat f_\ve^n(Y_i) \subset \hat H_\ve \mbox{ for some } n < R_\ve(Y_i) \} \, .
\]
We will treat $\hat H_\ve'$ as our effective hole for $F_\ve$.  Let $\hY_\ve = Y \setminus \hat H_\ve'$,
and for $n \ge 0$, define
$$\hY^n_\ve = \cap_{i=0}^n F_\ve^{-i}(\hY_\ve)$$ 
to be the set of points which do not enter $\hat H_\ve'$
in the first $n$ iterates of $F_\ve$.  
The dynamics of the induced open system are defined by
$\hF_\ve^n = F_\ve^n|_{\hY^{n-1}_\ve}$. 
 Since $\hat H_\ve'$
is a union of 1-cylinders for $F_\ve$, the punctured map $\hF_\ve$
has the same finite image property:  $\hF_\ve(Y_i) \in \Q$ for each
$Y_i \subset \hY_\ve$.  The punctured transfer operator for the open system is defined for $n \ge 1$ by
\begin{equation}
\label{eq:hLp}
\hLp_\ve^n \psi(x) = \Lp_\ve^n (\psi 1_{\hY_\ve^{n-1}})(x) = \sum_{y \in \hF_\ve^{-n}(x)} \psi(y) e^{S_n\Phi_\ve(y)} .
\end{equation}
The punctured transfer operator is defined only for $\ve > 0$.  There is no analogous
object for $\Lp_0$.


\subsection{Spectral properties of $\Lp_\ve$}

In this subsection we prove that for sufficiently small $\ve$, all the operators $\Lp_\ve$ have a uniform
spectral gap.

\begin{proposition}
\label{prop:ly}
There exists $C>0$ such that for all $n \ge 0$,
\begin{eqnarray}
\| \hLp_\ve^n \psi \|_{\cont^\eta} & \le & C \sigma^{-\eta n} \| \psi \|_{\cont^\eta} \hm(\hY_\ve^{n-1}) 
+ C \int_{\hY_\ve^{n-1}} |\psi| \, d\hm \qquad \forall \psi \in \cont^\eta(\Q) ,
\label{eq:strong ly} \\
| \hLp_\ve^n \psi |_{L^1(\hm)} & \le & \int_{\hY_\ve^{n-1}} |\psi| \, d\hm 
\qquad \forall \psi \in L^1(\hm) .
\label{eq:weak ly}
\end{eqnarray}
The analogous inequalities hold for $\Lp_\ve^n \psi$ and $\Lp_0^n \psi$ with $\hY_\ve^{n-1}$ replaced by $Y$.
\end{proposition}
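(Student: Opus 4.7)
The plan is to establish the two inequalities by the standard Gibbs--Markov machinery, taking care that all constants are uniform in $\ve$ because our construction of $Y$, $\Q$ and $L$ is the same for every $\ve \in [0, \ve_0)$ (Remark~\ref{rmk:def of Y}), and the uniformity in \ref{GM2}, \ref{GM3} follows from Lemma~\ref{lem:distort} and \eqref{eq:unif_hyp}.

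The weak bound \eqref{eq:weak ly} is immediate: writing $\hLp_\ve^n\psi = \Lp_\ve^n(\psi \mathbf{1}_{\hY_\ve^{n-1}})$ and using $\hm$-conformality of $F_\ve$ with respect to $\Phi_\ve$,
\[
\int |\hLp_\ve^n \psi|\, d\hm \le \int \Lp_\ve^n(|\psi| \mathbf{1}_{\hY_\ve^{n-1}})\, d\hm = \int_{\hY_\ve^{n-1}} |\psi|\, d\hm.
\]
For $\Lp_\ve^n$ and $\Lp_0^n$, the indicator is just $\mathbf{1}_Y$ and the same argument applies.

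For the $\cont^0$ part of the strong bound, I fix $x \in Q \in \Q$ and use the finite-image property \ref{GM1} to write
\[
\hLp_\ve^n \psi(x) = \sum_i \psi(x_i) e^{S_n\Phi_\ve(x_i)},
\]
the sum ranging over $n$-cylinders $Y_i^{(n)} \subset \hY_\ve^{n-1}$ whose $F_\ve^n$-image contains $Q$, and $x_i$ the branch preimage of $x$. For each $i$, \ref{GM2} yields $\diam(Y_i^{(n)}) \le C_e^{-1}\sigma^{-n}\diam(Y)$, and Hölder continuity of $\psi$ on $Y_i^{(n)}$ gives
\[
|\psi(x_i)| \le \frac{1}{\hm(Y_i^{(n)})}\int_{Y_i^{(n)}}|\psi|\, d\hm + C\,\|\psi\|_{\cont^\eta}\sigma^{-\eta n}.
\]
Combining with the distortion bound \eqref{eq:distortion} to replace $e^{S_n\Phi_\ve(x_i)}$ by $\hm(Y_i^{(n)})$ up to a constant, and summing over $i$, yields
\[
|\hLp_\ve^n\psi(x)| \le C\left(\int_{\hY_\ve^{n-1}}|\psi|\, d\hm + \sigma^{-\eta n}\|\psi\|_{\cont^\eta}\hm(\hY_\ve^{n-1})\right).
\]

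For the $\eta$-Hölder semi-norm, I take $x, y \in Q$, pair their branch preimages $x_i, y_i \in Y_i^{(n)}$, and split
\[
\hLp_\ve^n\psi(x) - \hLp_\ve^n\psi(y) = \sum_i e^{S_n\Phi_\ve(x_i)}\bigl(\psi(x_i)-\psi(y_i)\bigr) + \sum_i \psi(y_i)\bigl(e^{S_n\Phi_\ve(x_i)}-e^{S_n\Phi_\ve(y_i)}\bigr).
\]
For the first sum, \ref{GM2} gives $|\psi(x_i)-\psi(y_i)| \le \|\psi\|_{\cont^\eta}(C_e^{-1}\sigma^{-n})^\eta d(x,y)^\eta$, and \eqref{eq:distortion} converts $e^{S_n\Phi_\ve(x_i)}$ to $\hm(Y_i^{(n)})$, producing $C\sigma^{-\eta n}\|\psi\|_{\cont^\eta}\hm(\hY_\ve^{n-1})d(x,y)^\eta$. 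For the second sum, \ref{GM3} gives $|e^{S_n\Phi_\ve(x_i)}-e^{S_n\Phi_\ve(y_i)}| \le C_d\,d(x,y)^\eta e^{S_n\Phi_\ve(y_i)}$, so the second sum is bounded by $C_d\, d(x,y)^\eta \hLp_\ve^n|\psi|(y)$, which we in turn bound by the $\cont^0$ estimate above. Adding the two contributions yields \eqref{eq:strong ly}. The analogous inequality for $\Lp_\ve^n$ and $\Lp_0^n$ is identical with $\hY_\ve^{n-1}$ replaced by $Y$.

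The argument is essentially bookkeeping; the only point requiring care is to check that the comparison of $\sup_{Y_i^{(n)}}|\psi|$ to the average of $|\psi|$ is legitimate, which requires each $n$-cylinder to lie in a single element of $\Q$ so that $\psi$ is genuinely Hölder on it; this is ensured by \ref{GM1} since every $n$-cylinder is contained in a 1-cylinder, which in turn sits inside a unique $Q \in \Q$. All appearances of $C_e$, $C_d$, $\sigma$ and $q$ come from data of $Y$ and $\Q$ alone and are therefore independent of $\ve$.
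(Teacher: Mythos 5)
Your proof is correct and follows essentially the same route as the paper: the weak bound via conformality, and the strong bound by pairing branch preimages, using \ref{GM2}, \ref{GM3} and \eqref{eq:distortion} to compare values of $\psi$ with cylinder averages, with the Markov structure of the pre-hole ensuring that the surviving $n$-cylinders fill out $\hY_\ve^{n-1}$. The only (cosmetic) difference is that you bound the second sum in the H\"older estimate by $C_d\,d(x,y)^\eta\,\hLp_\ve^n|\psi|(y)$ and then invoke the $\cont^0$ bound, whereas the paper averages $|\psi(v)|$ over cylinders directly; the constants and conclusions are the same.
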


\begin{proof}
Due to the definition \eqref{eq:hLp}, $\int_Y \hLp_\ve^n \psi \, d\hm = \int_{\hY^{n-1}_\ve} \psi \, d\hm$,
so that \eqref{eq:weak ly} is immediate.  We focus on verifying \eqref{eq:strong ly} for $\psi \in \cont^\eta(\Q)$.

First, we estimate the H\"older constant of $\hLp_\ve^n \psi$.
Let $Q \in \Q$ and $x, y \in Q$.  For $n \ge 0$, notice that each $u \in \hF_\ve^{-n}(x)$ has a
(unique) corresponding $v \in \hF_\ve^{-n}(y)$ lying in the same $n$-cylinder $Y_i^{(n)}(u)$ as $u$.  Thus,
\[
\begin{split}
 \hLp_\ve^n \psi(x) & - \hLp_\ve^n \psi(y)   = \sum_{u \in \hF_\ve^{-n}(x)} (\psi(u) - \psi(v)) e^{S_n\Phi_\ve(u)}
			\; + \sum_{v \in \hF_\ve^{-n}(y)} \psi(v) \big(e^{S_n\Phi_\ve(u)} - e^{S_n\Phi_\ve(v)}\big) \\
			& \le \sum_{u \in \hF_\ve^{-n}(x)} H^\eta(\psi) d(u,v)^\eta \tfrac{1+C_d}{q} \hm\left(Y_i^{(n)}(u)\right) 
			 \; + \sum_{v \in \hF_\ve^{-n}(y)} |\psi(v)| \, \hm\!\left(Y_i^{(n)}(v)\right) \tfrac{1 + C_d}{q} 
			C_d d(x,y)^\eta,
\end{split}
\]
where we have used the bounded distortion property \ref{GM3} as well as \eqref{eq:distortion}.  Now using
the regularity of $\psi$ as well as the expanding property \ref{GM2}, for any $v \in \hF_\ve^{-n}(y)$,
\begin{equation}
\label{eq:average}
\left| |\psi(v)| - \frac{1}{\hm\left(Y_i^{(n)}(v)\right) } \int_{Y_i^{(n)}(v)} |\psi| \, d\hm \right|
\le H^\eta(\psi) \ \diam\left(Y_i^{(n)}(v)\right)^\eta \le H^\eta(\psi) C_e^{-\eta} \sigma^{-\eta n}  \, .
\end{equation}
Putting these estimates together, we obtain
\[
\begin{split}
\left|  \hLp_\ve^n \psi(x)  - \hLp_\ve^n \psi(y) \right|
			& \le C_e^{-\eta } \sigma^{-\eta n} H^\eta(\psi) d(x,y)^\eta \sum_{u \in \hF_\ve^{-n}(x)}  \tfrac{1+C_d}{q} (1+C_d) 
			\hm \!\left(Y_i^{(n)}(u)\right) \\
			& \quad + \sum_{v \in \hF_\ve^{-n}(y)} \int_{Y_i^{(n)}(v)} |\psi| \, d\hm \tfrac{1 + C_d}{q} 
			C_d d(x,y)^\eta.
\end{split}
\]
Due to the fact that the hole respects the Markov structure of our inducing scheme, it follows that
$\cup_{u \in \hF_\ve^{-n}(x)} Y_i^{(n)}(u) \subset \hY_\ve^{n-1}$ allowing us to evaluate both sums.
Now dividing through by $d(x,y)^\eta$ and taking the appropriate suprema yields 
the required inequality in \eqref{eq:strong ly} for the H\"older constant of
$\hLp_\ve^n \psi$ with $C = C_e^{-\eta } \frac{(1+C_d)^2}{q}$. 

Next, we estimate $|\hLp_\ve^n \psi|_\infty$.  Let $Q \in \Q$ and $x  \in Q$.   Now,
\begin{equation}
\label{eq:C0 bound}
|\hLp_\ve^n \psi(x)|  \le \sum_{u \in \hF_\ve^{-n}(x)} |\psi(u)| e^{S_n\Phi_\ve(u)}
\le \sum_{u \in \hF_\ve^{-n}(x)} \tfrac{1+C_d}{q} |\psi(u)|\,  \hat m \!\left( Y_i^{(n)}(u) \right)
\end{equation}
where we have used \eqref{eq:distortion} for the second inequality.  Using
\eqref{eq:average}, we estimate,
\[
\begin{split}
|\hLp_\ve^n \psi(x)| & \le \tfrac{1+C_d}{q} \sum_{u \in \hF_\ve^{-n}(x)} H^\eta(\psi) C_e^{-\eta} \sigma^{-\eta n} \hat m \!\left( Y_i^{(n)}(u) \right) + \int_{Y_i^{(n)}(u)} |\psi| \, d\hat m \\
& \le C' \sigma^{-\eta n}  H^\eta(\psi) \hat m(\hY_\ve^{n-1}) +  \int_{\hY_\ve^{n-1}} |\psi| \, d\hat m \, ,
\end{split}
\]
 so \eqref{eq:strong ly} holds 
with $C = 2 C_e^{-\eta } \frac{(1+C_d)^2}{q}$,  completing the proof of the proposition.
\end{proof}

Define the norm for $\Lp_\ve : \cont^\eta(\Q) \to L^1(\hm)$ by 
\[
\vertiii{ \Lp_\ve} = \sup \left\{ |\Lp_\ve \psi |_{L^1(\hm)} :  \| \psi \|_{\cont^\eta} \le 1 \right\} \, .
\]

\begin{lemma}
\label{lem:close}
For any $\delta > 0$, there exists $\ve_\delta > 0$ such that for all $\ve \in (0, \ve_\delta)$,
$\vertiii{\Lp_0 - \Lp_\ve} \le \delta$.
\end{lemma}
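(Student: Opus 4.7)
The plan is to exploit the fact established in Corollary~\ref{cor:large connection} that $F_0$ and $F_\ve$ agree off a set of small $\hat m$-measure, and transfer this closeness to the transfer operators via the standard duality with a bounded test function. Specifically, set $B_\ve := \{\hat x \in Y : F_0(\hat x) \neq F_\ve(\hat x)\}$.

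First, I observe that on $Y \setminus B_\ve$ not only do the maps agree, but the induced potentials do as well. Indeed, if $\hat x \notin B_\ve$, then the return time and the entire orbit segment in the Hofbauer extension realising the return are identical for $\hatf_{z,\ve_0}$ and $\hatf_{z,\ve_0,\ve}$ (since the extra cuts at $f^{-1}(z \pm \ve)$ are not encountered by the orbit segment), so $R_0(\hat x) = R_\ve(\hat x)$ and $\Phi_0(\hat x) = \Phi_\ve(\hat x) = S_{R_0(\hat x)} \hat{\vf}(\hat x)$.

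Next, I use the standard duality with the conformal measure $\hat m$: for each $\psi \in \cont^\eta(\Q)$ and each $\varphi \in L^\infty(\hat m)$,
\[
\int_Y (\Lp_0 \psi - \Lp_\ve \psi) \, \varphi \, d\hat m
= \int_Y \psi \, \big( \varphi \circ F_0 - \varphi \circ F_\ve \big) \, d\hat m .
\]
Since the integrand on the right vanishes on $Y \setminus B_\ve$, we have
\[
\left| \int_Y (\Lp_0 - \Lp_\ve) \psi \cdot \varphi \, d\hat m \right|
\le \int_{B_\ve} |\psi| \, |\varphi \circ F_0 - \varphi \circ F_\ve| \, d\hat m
\le 2 |\varphi|_\infty \, |\psi|_\infty \, \hat m(B_\ve) .
\]
Taking the supremum over $\varphi$ with $|\varphi|_\infty \le 1$ and using $|\psi|_\infty \le \|\psi\|_{\cont^\eta}$ yields
\[
|(\Lp_0 - \Lp_\ve) \psi|_{L^1(\hat m)} \le 2 \|\psi\|_{\cont^\eta} \, \hat m(B_\ve),
\]
so that $\vertiii{\Lp_0 - \Lp_\ve} \le 2 \hat m(B_\ve)$.

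Finally, given $\delta > 0$, apply Corollary~\ref{cor:large connection} with tolerance $\delta/2$ to obtain $\ve_\delta > 0$ such that $\hat m(B_\ve) \le \delta/2$ for every $\ve \in (0, \ve_\delta)$, which completes the proof. The only real content is the observation that the induced potentials agree off $B_\ve$ (so that no extra error term appears from $e^{\Phi_0} - e^{\Phi_\ve}$); once that is noted, the duality reduces the estimate to a pure measure-theoretic bound on $\hat m(B_\ve)$, which is precisely what Corollary~\ref{cor:large connection} supplies. No additional difficulty is expected.
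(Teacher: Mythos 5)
Your proof is correct, but it follows a genuinely different route from the paper. The paper argues pointwise: it writes $(\Lp_0-\Lp_\ve)\psi(x)$ as a difference of sums over preimages, cancels the branches on which the two induced maps (and hence the induced potentials) coincide, and bounds the surviving terms using the distortion estimate \eqref{eq:distortion}, $e^{\Phi_\ve(y)}\le \tfrac{1+C_d}{q}\hm(Y_i(y))$, together with the fact (from the proof of Corollary~\ref{cor:large connection}) that the disagreement set is a union of $1$-cylinders of small total mass; integrating the resulting $L^\infty$ bound \eqref{eq:infinity close} gives the lemma. You instead use the duality $\int (\Lp_\varkappa\psi)\,\varphi\,d\hm=\int \psi\,(\varphi\circ F_\varkappa)\,d\hm$, valid for $\varkappa=0,\ve$ because $\hm$ is conformal for both induced maps, and observe that the difference of the right-hand sides is supported on the pointwise disagreement set $B_\ve=\{F_0\neq F_\ve\}$; taking the supremum over $|\varphi|_\infty\le 1$ yields $\vertiii{\Lp_0-\Lp_\ve}\le 2\,\hm(B_\ve)$, and Corollary~\ref{cor:large connection} is quoted exactly as stated. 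Your route is cleaner for this lemma: it needs no distortion constants and no cylinder structure of the bad set, only its measure. Note, incidentally, that your opening observation about agreement of the return times and potentials off $B_\ve$ is never used in the duality computation (the potentials enter only through the conformality of $\hm$ for each map separately), which is just as well, since deducing $R_0(\hat x)=R_\ve(\hat x)$ from the single equality $F_0(\hat x)=F_\ve(\hat x)$ is not immediate; the cylinderwise agreement of full orbit segments is what the proof of Corollary~\ref{cor:large connection} actually provides. What the paper's pointwise formulation buys is the sup-norm estimate \eqref{eq:infinity close} (and its iterate version), which is reused later in the proof of Theorem~\ref{thm:induced limit}; your $L^1$ duality bound suffices for Lemma~\ref{lem:close} itself but does not supply that $L^\infty$ control.
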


\begin{proof}
Fix $\delta > 0$.
Define $\G_\ve = \{ Y_i \in \mathcal{Y}_\ve : \hat f^k_{z, \ve_0}(Y_i) = \hat f^k_{z, \ve_0, \ve}(Y_i), \forall k = 1, \ldots, R_\ve(Y_i) \}$.  Note that if $Y_i \in \G_\ve$, then $\Phi_0 = \Phi_\ve$ on $Y_i$.

Next
define $B_\ve = \{ y \in Y : Y_{i, \ve}(y) \notin \G_\ve \}$,
where $Y_{i, \ve}(y)$ is the 1-cylinder with respect to $F_\ve$ containing $y$.
For $\psi \in \cont^\eta(\Q)$ and $x \in Y$,
\begin{equation}
\label{eq:error sum}
\begin{split}
|(\Lp_0  - \Lp_\ve) \psi(x)| & = \left| \sum_{y \in F_0^{-1}x} \psi(y)e^{\Phi_0(y)} - \sum_{y \in F_\ve^{-1}x} \psi(y) e^{\Phi_\ve(y)} \right| \\
& \le |\psi|_{\infty} \sum_{\substack{ y \in F_0^{-1}x \\ y \in B_\ve}} \tfrac{1+C_d}{q} \hm(Y_i(y))
+ |\psi|_{\infty} \sum_{\substack{ y \in F_\ve^{-1}x \\ y \in B_\ve}} \tfrac{1+C_d}{q} \hm(Y_i(y)).
\end{split}
\end{equation}
By the proof of Corollary~\ref{cor:large connection}, the total mass of 1-cylinders where
$F_0$ and $F_\ve$ do not agree can be made arbitrarily small.

Let $\delta' = \delta \frac{q}{2(1+C_d) \hm(Y)}$.  Choose $\ve_\delta > 0$ such that 
$\hm(B_{\ve_\delta}) \le \delta'$ by Corollary~\ref{cor:large connection}.  Then
\begin{equation}
\label{eq:infinity close}
|(\Lp_0  - \Lp_\ve) \psi(x)| \le |\psi|_\infty 2 \tfrac{1+C_d}{q} \hm(B_\ve) \le |\psi |_\infty \delta / \hm(Y).
\end{equation}
Integrating over $x \in Y$ proves the lemma:
$\displaystyle \int | (\Lp_0 - \Lp_\ve) \psi | \, d\hm \le | \psi |_\infty \delta$.
\end{proof}

\begin{corollary}
\label{cor:spectral gap}
There exists $\ve_1 \in (0, \ve_0]$ such that the family of operators $\Lp_\ve$, $\ve \in [0, \ve_1)$, acting on $\cont^\eta(\Q)$ have a uniform
spectral gap.  There exists $\beta >0$ such that $\Lp_\ve$ admits the following spectral decomposition
for all $\ve \in [0, \ve_1)$: 
There exist $G_\ve \in \cont^\eta(\Q)$,
a linear functional $e_\ve : \cont^\eta(\Q) \to \R$ and an operator $\cR_\ve : \cont^\eta(\Q) \circlearrowleft$
such that
\[
\Lp_\ve = G_\ve \otimes e_\ve + \cR_\ve, \quad \mbox{and } \cR_\ve G_\ve = 0 .
\]
The spectral radius of $\cR_\ve$ is at most $e^{-\beta}$ and
$e_\ve(\psi) = \int_Y \psi \, d\hm$ for all $\psi \in \cont^\eta(\Q)$.

Moreover, $G_\ve \to G_0$ in $L^1(\hm)$ and  $\vertiii{\cR_\ve - \cR_0} \to 0$ as $\ve \to 0$.
\end{corollary}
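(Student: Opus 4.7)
The plan is to obtain Corollary~\ref{cor:spectral gap} in two stages: first establish the spectral decomposition for the unperturbed operator $\Lp_0$, and then deduce the uniform gap and convergence for $\Lp_\ve$ via a Keller--Liverani style perturbation argument whose hypotheses are already in hand.

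First I would analyse $\Lp_0$ on $\cont^\eta(\Q)$. The Lasota--Yorke inequality \eqref{eq:strong ly}--\eqref{eq:weak ly} applied with $\hY_\ve^{n-1}$ replaced by $Y$, combined with the fact that the unit ball of $\cont^\eta(\Q)$ embeds compactly into $L^1(\hm)$ on each $Q \in \Q$ (Arzel\`a--Ascoli, plus the finite cardinality of $\Q$), yields via Hennion's theorem that $\Lp_0$ is quasi-compact with essential spectral radius at most $\sigma^{-\eta}$. Since $\hm$ is $\Phi_0$-conformal, $\int_Y \Lp_0\psi\,d\hm = \int_Y \psi\,d\hm$ for all $\psi\in L^1(\hm)$, so $1$ is in the spectrum with $e_0(\psi) := \int_Y \psi \, d\hm$ the corresponding left eigenfunctional. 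Topological mixing of $F_0$ on $Y$ (Lemma~\ref{lem:mixing}), together with the Markov and large-image properties \ref{GM1}--\ref{GM3} and Remark~\ref{rmk:ind press} ($P(\Phi_0)=0$), imply that $1$ is a simple eigenvalue and the only spectral value on the unit circle; this is the standard argument via the Perron--Frobenius--Ionescu--Tulcea--Marinescu construction for Gibbs--Markov maps with finite images. I would therefore write $\Lp_0 = g_0 \otimes e_0 + \cR_0$ with spectral radius of $\cR_0$ bounded by some $e^{-\beta_0} < 1$, $\cR_0 g_0 = 0$, and $g_0 \in \cont^\eta(\Q)$ strictly positive (bounded away from $0$ on each $Q$) by the mixing Perron--Frobenius argument.

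Next I would promote this to a uniform statement for $\ve \in [0,\ve_1)$ by quoting Keller--Liverani perturbation theory. The required hypotheses are precisely:
\begin{enumerate}
\item[(i)] a uniform Lasota--Yorke inequality of the form $\|\Lp_\ve^n\psi\|_{\cont^\eta} \le A\theta^n \|\psi\|_{\cont^\eta} + B|\psi|_{L^1(\hm)}$ with constants independent of $\ve$, which is exactly \eqref{eq:strong ly}--\eqref{eq:weak ly} with $\hY_\ve^{n-1} \subset Y$ and $\theta = \sigma^{-\eta}$;
\item[(ii)] convergence of $\Lp_\ve$ to $\Lp_0$ in the weak operator norm $\vertiii{\cdot}$, which is Lemma~\ref{lem:close}.
\end{enumerate}
Keller--Liverani then yields an $\ve_1 > 0$ and $\beta \in (0,\beta_0]$ such that, for all $\ve \in [0,\ve_1)$, the operator $\Lp_\ve$ admits a spectral decomposition $\Lp_\ve = g_\ve \otimes e_\ve + \cR_\ve$ with $\cR_\ve g_\ve = 0$, spectral radius of $\cR_\ve$ at most $e^{-\beta}$, and the one-dimensional spectral projections $g_\ve \otimes e_\ve$ converging to $g_0 \otimes e_0$ in the operator norm $\vertiii{\cdot}$, together with $\vertiii{\cR_\ve - \cR_0} \to 0$. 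To identify $e_\ve$ with integration against $\hm$, I would again invoke conformality: $\int \Lp_\ve \psi\,d\hm = \int\psi\,d\hm$ on $Y$ (as in the proof of Proposition~\ref{prop:ly}), so $\hm$ is a left eigenvector of $\Lp_\ve$ at eigenvalue $1$; by simplicity this forces $e_\ve = \int_Y \cdot\,d\hm$. Normalising $g_\ve$ by $\int g_\ve\,d\hm = 1$ then gives $g_\ve \to g_0$ in $L^1(\hm)$ (indeed in $\cont^\eta$), since the projection $\psi \mapsto e_\ve(\psi) g_\ve$ converges to $\psi \mapsto e_0(\psi) g_0$ in $\vertiii{\cdot}$.

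The main obstacle is not any single estimate but the careful verification that the Keller--Liverani framework applies to our pair of norms $(\|\cdot\|_{\cont^\eta}, \vertiii{\cdot})$ in the presence of the hole: one must check that the unperturbed leading eigenvalue is simple and isolated, which rests on topological mixing of $F_0$ (Lemma~\ref{lem:mixing}) rather than of $\hF_\ve$, so that all statements at $\ve = 0$ genuinely correspond to the closed system. Once this is in place the remaining statements (positivity and H\"older regularity of $g_\ve$, bound $e^{-\beta}$ on the spectral radius of $\cR_\ve$, norm convergence of $\cR_\ve$) are automatic consequences of the abstract perturbation theorem applied to the uniform Lasota--Yorke bound from Proposition~\ref{prop:ly} and the closeness estimate from Lemma~\ref{lem:close}.
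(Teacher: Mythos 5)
Your proposal is correct and follows essentially the same route as the paper: quasi-compactness of each $\Lp_\ve$ from the uniform Lasota--Yorke bounds of Proposition~\ref{prop:ly} plus the compact embedding of the unit ball of $\cont^\eta(\Q)$ in $L^1(\hm)$, a simple isolated eigenvalue at $1$ for $\Lp_0$ from the mixing of $F_0$ (Lemma~\ref{lem:mixing}), the Keller--Liverani perturbation result applied with the closeness estimate of Lemma~\ref{lem:close}, and identification of $e_\ve$ with $\hm$ via conformality. The only cosmetic difference is that you name Hennion's theorem and spell out why the perturbed leading eigenvalue must equal $1$, details the paper leaves implicit.
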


We may normalise the above so that $\hm(G_\ve)=1$, so $\hat\mu_{Y, \ve} = G_\ve \hm$ is the corresponding invariant probability measure for $F_\ve$.

\begin{proof}
The fact that all the operators $\Lp_\ve$, $\ve \in [0, \ve_0)$, are quasi-compact on $\cont^\eta(\Q)$
with essential spectral
radius bounded by $\sigma^{-1}$ follows from Proposition~\ref{prop:ly} and the fact that
the unit ball of $\cont^\eta(\Q)$ is compactly embedded in $L^1(\hm)$.  Moreover, the spectrum of
$\Lp_\ve$ on the unit circle is finite dimensional and forms a cyclic group.

Since $F_0$ is mixing by Lemma~\ref{lem:mixing}, $\Lp_0$ has a single simple eigenvalue at 1 and
the rest of the spectrum of $\Lp_0$ is contained in a disk of radius $e^{-2\beta} \ge \sigma^{-1}$ 
for some $\beta>0$.
Next, by Lemma~\ref{lem:close} and \cite[Corollary 1]{KelLiv99}, the spectrum of
$\Lp_\ve$ outside the disk of radius $\sigma^{-1}$ can be made arbitrarily close to that of $\Lp_0$
by choosing $\ve$ sufficiently small.  Thus we may choose $\ve_1 \in (0, \ve_0]$ such that
the spectrum of $\Lp_\ve$ outside the disk of radius $e^{-\beta}$ consists only of a simple
eigenvalue at 1, for all $\ve \in (0, \ve_1)$.  The closeness of $G_\ve$ and $\cR_\ve$ to
$G_0$ and $\cR_0$ follow similarly from \cite[Corollary 1]{KelLiv99}.
Finally, the fact that $e_\ve(\psi) = \hm(\psi)$ for all $\psi \in \cont^\eta(\Q)$ follows from the conformality of $\hm$.
\end{proof}


\subsection{Spectral properties of the punctured operators $\hLp_\ve$}

Due to the uniform Lasota-Yorke inequalities provided by Proposition~\ref{prop:ly}, it only remains
to show that $\Lp_\ve$ and $\hLp_\ve$ are close in the $\invertiii{ \cdot }$-norm.

\begin{lemma}
\label{lem:good bound}
For any $\ve \in (0, \ve_0)$, $\invertiii{ \Lp_\ve - \hLp_\ve} \le \hm(\hat H_\ve')$.
\end{lemma}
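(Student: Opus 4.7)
The plan is to observe that the difference $\Lp_\ve - \hLp_\ve$ is itself a restricted transfer operator, namely the one that only collects preimages lying in the pre-hole $\hat H_\ve'$, and then to exploit the $\hm$-conformality of $F_\ve$ to integrate this difference directly.

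More concretely, first I would write, for any $\psi \in \cont^\eta(\Q)$,
\[
(\Lp_\ve - \hLp_\ve)\psi(x) \;=\; \Lp_\ve\!\left(\psi\,\mathbf{1}_{\hat H_\ve'}\right)(x)
\;=\; \sum_{\substack{y \in F_\ve^{-1}(x) \\ y \in \hat H_\ve'}} \psi(y)\, e^{\Phi_\ve(y)},
\]
which follows directly from the definitions of $\Lp_\ve$ and $\hLp_\ve$ in \eqref{eq:hLp} and the fact that $\hat H_\ve'$ is a union of $1$-cylinders for $F_\ve$ (so the splitting into ``returns through the hole'' vs.\ ``returns through $\hY_\ve$'' is clean at the level of preimages).

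Next I would take the $L^1(\hm)$-norm, bound pointwise $|\Lp_\ve(\psi\mathbf{1}_{\hat H_\ve'})| \le \Lp_\ve(|\psi|\mathbf{1}_{\hat H_\ve'})$, and invoke the conformality of $\hm$ under $F_\ve$ for the potential $\Phi_\ve$ (which gives the duality $\int \Lp_\ve \varphi \, d\hm = \int \varphi \, d\hm$ for $\varphi \in L^1(\hm)$):
\[
\bigl| (\Lp_\ve - \hLp_\ve)\psi \bigr|_{L^1(\hm)}
\;\le\; \int_Y \Lp_\ve\!\left(|\psi|\,\mathbf{1}_{\hat H_\ve'}\right) d\hm
\;=\; \int_{\hat H_\ve'} |\psi|\, d\hm
\;\le\; |\psi|_\infty \, \hm(\hat H_\ve').
\]
Since $\|\psi\|_{\cont^\eta} \ge |\psi|_\infty$, dividing through and taking the supremum over the unit ball of $\cont^\eta(\Q)$ yields $\invertiii{\Lp_\ve - \hLp_\ve} \le \hm(\hat H_\ve')$.

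There is no real obstacle here; the entire argument rests on two elementary facts that are already in hand, namely that $\hat H_\ve'$ is saturated by $1$-cylinders (so the transfer operator decomposes cleanly along the hole) and that $\hm$ is conformal for $F_\ve$ (so it is in particular an eigenmeasure of the dual of $\Lp_\ve$ with eigenvalue $1$). The only minor point worth making explicit is the first of these, which is why the estimate comes out sharp (with constant $1$) rather than incurring a distortion factor.
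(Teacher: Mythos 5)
Your proof is correct and follows essentially the same route as the paper: identify $\Lp_\ve - \hLp_\ve$ with $\psi \mapsto \Lp_\ve(\psi\,1_{\hat H_\ve'})$ and use the conformality of $\hm$ to reduce the $L^1$ bound to $\int_{\hat H_\ve'}|\psi|\,d\hm \le |\psi|_\infty\,\hm(\hat H_\ve')$. Your explicit handling of the absolute value via positivity of the operator is a small refinement of the paper's one-line computation, but the argument is the same.
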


\begin{proof}
The proof is immediate using the definition of $\hLp_\ve$ and the conformality of $\hm$,
\begin{equation}
\label{eq:good bound}
\int (\Lp_\ve - \hLp_\ve) \psi \, d\hm = \int \Lp_\ve (1_{Y \setminus \hY^1_\ve} \psi) \, d\hm
= \int_{\hat H_\ve'} \psi \, d\hm \le | \psi |_\infty \hm(\hat H_\ve') ,
\end{equation}
since $\hat H_\ve' = Y \setminus \hY_\ve$.
\end{proof}

\begin{corollary}
\label{cor:spectral decomp}
There exists $\ve_2 \le \ve_1$ such that for all $\ve \in (0, \ve_2)$, the operators
$\hLp_\ve$ have a uniform spectral gap:  There exist $\Lambda_\ve \in (e^{-\beta/3}, 1)$, 
$\bhg_\ve \in \cont^\eta(\Q)$,
a functional $\he_\ve : \cont^\eta(\Q) \to \mathbb{R}$, and an operator 
$\hcR_\ve : \cont^\eta(\Q) \circlearrowleft$ such that
\begin{equation}
\label{eq:decomp}
\hLp_\ve = \Lambda_\ve \bhg_\ve \otimes \he_\ve + \hcR_\ve, \quad \mbox{and } \hcR_\ve \bhg_\ve = 0.
\end{equation}
The spectral radius of $\hcR_\ve$ is at most $e^{-2\beta/3} < \Lambda_\ve$.

Moreover, $\Lambda_\ve \to 1$, $\bhg_\ve \to G_0$ in $L^1(\hm)$ and $\invertiii{\hcR_\ve - \cR_0} \to 0$ as $\ve \to 0$.
\end{corollary}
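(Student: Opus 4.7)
The plan is to view $\hLp_\ve$ as a small perturbation of $\Lp_0$ in the $\invertiii{\,\cdot\,}$-norm on $\cont^\eta(\Q) \to L^1(\hm)$, and then to invoke the Keller--Liverani stability theorem \cite[Corollary~1]{KelLiv99} to transfer the spectral gap. All the preparatory ingredients are in place: Proposition~\ref{prop:ly} supplies a Lasota--Yorke inequality for $\hLp_\ve$ whose constants are independent of $\ve$, so the family $\{\hLp_\ve\}_{\ve \in (0, \ve_0)}$ is uniformly quasi-compact on $\cont^\eta(\Q)$ with essential spectral radius at most $\sigma^{-\eta}$.

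The first step is to show $\invertiii{\hLp_\ve - \Lp_0} \to 0$ as $\ve \to 0$. By the triangle inequality combined with Lemma~\ref{lem:close} and Lemma~\ref{lem:good bound},
\[
\invertiii{\hLp_\ve - \Lp_0} \le \invertiii{\hLp_\ve - \Lp_\ve} + \invertiii{\Lp_\ve - \Lp_0} \le \hm(\hat H_\ve') + \vertiii{\Lp_\ve - \Lp_0},
\]
so it remains to verify $\hm(\hat H_\ve') \to 0$. To do this I would decompose $\hat H_\ve'$ according to the entry time $n < R_\ve$ into $\hat H_\ve$ before the first return to $Y$, and bound the measure of each level $n$ piece by $e^{-\alpha n}$-type factors coming from Theorem~\ref{thm:tails} times a piece whose $\hm$-measure is controlled by $\hm(\hat H_\ve)$ via conformality. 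Since $\hat H_\ve$ shrinks to a finite union of fibres as $\ve \to 0$, $\hm(\hat H_\ve) \to 0$, and summing in $n$ (using the exponential tails) yields $\hm(\hat H_\ve') \to 0$. This is the step I expect to require the most care.

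Once the perturbative bound is established, \cite[Corollary~1]{KelLiv99} applied to $\{\hLp_\ve\}$ with limit $\Lp_0$ delivers the spectral decomposition \eqref{eq:decomp}: the simple isolated eigenvalue $1$ of $\Lp_0$ persists as a simple isolated eigenvalue $\Lambda_\ve$ of $\hLp_\ve$ with $\Lambda_\ve \to 1$, the rest of the spectrum of $\hLp_\ve$ lies in the disk of radius $e^{-2\beta/3}$ (any radius strictly between $\sigma^{-\eta}$ and $e^{-\beta}$ works for $\ve$ small), and the associated eigenprojector converges in the operator norm $\invertiii{\,\cdot\,}$. This yields directly both $\hg_\ve \to g_0$ in $L^1(\hm)$ and $\invertiii{\hcR_\ve - \cR_0} \to 0$. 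Defining $\he_\ve$ as the dual component of the leading spectral projector completes \eqref{eq:decomp}, and positivity/simplicity of $\hg_\ve$ follows from its closeness to the strictly positive $g_0$.

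Finally, to see that $\Lambda_\ve \in (e^{-\beta/3}, 1)$ I would use the integrated eigenvalue equation. Conformality of $\hm$ under $F_\ve$ gives $\int_Y \hLp_\ve \hg_\ve \, d\hm = \int_{\hY_\ve} \hg_\ve \, d\hm$, so that
\[
\Lambda_\ve = 1 - \frac{\int_{\hat H_\ve'} \hg_\ve \, d\hm}{\int_Y \hg_\ve \, d\hm}.
\]
Since $\hg_\ve$ is strictly positive (as it is close to $g_0 > 0$) and $\hm(\hat H_\ve') > 0$ for every $\ve \in (0, \ve_0)$, this forces $\Lambda_\ve < 1$, while $\Lambda_\ve > e^{-\beta/3}$ for sufficiently small $\ve$ follows from $\Lambda_\ve \to 1$. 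The resulting $\ve_2 \le \ve_1$ is the threshold below which the above holds.
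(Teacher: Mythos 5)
Your proposal takes essentially the same route as the paper's proof: uniform Lasota--Yorke bounds from Proposition~\ref{prop:ly}, closeness of $\hLp_\ve$ to $\Lp_0$ in the $\invertiii{\cdot}$-norm via Lemmas~\ref{lem:close} and \ref{lem:good bound} and the triangle inequality, and then \cite[Corollary 1]{KelLiv99} to transfer the spectral gap and obtain convergence of the eigendata. The two points you flag as needing care are indeed the implicit ones: $\hm(\hat H_\ve')\to 0$ can be handled exactly as you sketch (entry-time decomposition, conformality, exponential tails), and the identity $1-\Lambda_\ve=\int_{\hat H_\ve'}\hg_\ve\,d\hm$ is precisely what the paper uses later in \eqref{eq:three}, though for strict positivity of $\hg_\ve$ the paper relies on the log-H\"older lower bound of Lemma~\ref{lem:reg} rather than on $L^1$-closeness to $g_0$, which by itself only controls $\hg_\ve$ off a small-measure set.
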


\begin{proof}
Lemmas~\ref{lem:close} and \ref{lem:good bound} together with the triangle inequality show that
$\hLp_\ve$ and $\Lp_0$ are close in the $\invertiii{\cdot }$-norm.
The uniform Lasota-Yorke inequalities given by Proposition~\ref{prop:ly} together with
\cite[Corollary 1]{KelLiv99} imply that the spectrum (and corresponding spectral projectors)
of $\hLp_\ve$ outside the disk of radius $e^{-\beta}$ are close to those of $\Lp_0$.  
Without requiring a rate of approach, we may choose $\ve_2 > 0$ with the stated properties.
\end{proof}

We may normalise $\bhg_\ve$ and $\he_\ve$ so that $\hm(\bhg_\ve) = 1$ and $\he_\ve(\bhg_\ve)=1$,
so that $\hLp_\ve \bhg_\ve = \Lambda_\ve \bhg_\ve$.


\section{Young towers and proof of Theorem~\ref{thm:accim}}
\label{sec:Young tower}

The Markov structure of the return map $F_\ve = F_{z, \ve_0, \ve}$ to $Y$ immediately implies the existence of another,
related extension, called a Young tower.  These have been well-studied in the context of open systems,
so we will recall their structure in order to apply some results in our setting.

As in Section~\ref{sec:gap}, let $R_\ve = R_{z, \ve_0, \ve}$.  Define the Young tower over $Y$ with return time $R_\ve$ by,
\[
\Delta := \{ (y, \ell) \in Y \times \mathbb{N} : \ell < R_\ve(y) \} .
\]
We view $\Delta$ as a tower with $\Delta_\ell = \{ (y, n) \in Y \times \mathbb{N} : n = \ell \}$ as the $\ell$th level. 
The dynamics on the tower is defined by $f_\Delta(y, \ell) = (y, \ell+1)$ when $\ell + 1 < R_\ve(y)$, and
$f_\Delta(y, \ell) = (F_\ve(y), 0)$ otherwise.  Thus $\Delta_0$ corresponds to $Y$ and $F_\ve = f_\Delta^{R_\ve}$ can be viewed
as the first return map to $\Delta_0$.  With this definition, there is a natural projection 
$\hatpi_\Delta : \Delta \to \hatI$ satisfying $\hatpi_\Delta \circ f_\Delta = \hatf_\ve \circ \hatpi_\Delta$.
Then also defining $\pi_\Delta = \pi \circ \hatpi_\Delta : \Delta \to I$, we have
$\pi_\Delta \circ f_\Delta = f \circ \pi_\Delta$.

Clearly, $\Delta = \Delta(z, \ve_0, \ve)$ depends on $z$, $\ve_0$ and $\ve$ through the construction of 
$\hatI_{z, \ve_0, \ve}$, $Y = \hat I'_{z, \ve_0, \ve}(L)$ and $F_\ve$.  However, since we fix these three parameters in this section, we 
will drop explicit mention of this dependence in the notation we use for objects associated with $\Delta$.

The map $f_\Delta$ inherits a Markov structure as follows.  On $\Delta_0$, we use the elements of 
the finite partition $\mathcal{Q}$ as our partition elements, 
labelling them by $\Delta_{0,i}$.  On $\Delta_\ell$, $\ell \ge 1$, we define
$\Delta_{\ell, i} = f_\Delta^\ell(Y_i)$, $Y_i \in \mathcal{Y}_\ve$.  The collection $\{ \Delta_{\ell, i} \}_{\ell, i \ge 0}$ forms a 
countable Markov partition for $f_\Delta$.  Since at return times to $\Delta_0$, 
$f_\Delta$ maps the image of each 1-cylinder $Y_i$ to an element of the finite partition $\Q$ of
$Y = \Delta_0$, we will view $(f_\Delta, \Delta)$ as a Young tower with finitely many bases.  The partition 
$\{ \Delta_{\ell,i} \}$ is generating since $\{ Y_i \}_i$ is a generating partition for $F_\ve$.  
Moreover, the first return time $R_\ve$ to $\Delta_0$ under $f_\Delta$ is the same
as the first return to $Y$ under $\hat f_\ve$.

We make $\Delta$ into a metric space by defining a symbolic metric based on the Markov partition.
Let $R^n_\ve(x)$ denote the $n$th return time of $x$ to $\Delta_0$.  Define the {\em separation time}
on $\Delta_0$ by,
\[
s(x,y) = \min \left\{ n \ge 0 : f_\Delta^{R_\ve^n}(x) \text{ and } f_\Delta^{R_\ve^n}(y)
\mbox{ lie in different elements of $\mathcal{Y}_\ve$} \right\} \, .
\]
We extend the separation time to all of $\Delta$ by setting $s(x,y) = s(f_\Delta^{-\ell} x, f_\Delta^{-\ell} y)$ for $x, y \in \Delta_\ell$.
It follows that  
$s(x,y)$ is finite almost everywhere since $\{ \Delta_{i,j} \}$ is a generating partition.
For $\theta>0$, define a metric on $\Delta$ by
$d_\theta(x,y) = e^{-\theta s(x,y)}$.  We will choose $\theta$ according to property (P3) in Section~\ref{tower transfer}.
 
Given our (normalised) potential $\hat \vf$ on $\hatI$, and $\hat \vf$-conformal measure 
$\hat m = \hat m_\vf$, 
we define a reference measure 
$m_\Delta$ on 
$\Delta$ by setting $m_\Delta = \hat m$ on $\Delta_0$, and 
$m_\Delta|_{\Delta_\ell} := (f_\Delta)_*(m_\Delta|_{\Delta_{\ell-1} \cap f_\Delta^{-1}\Delta_\ell})$.

Similarly, we lift the potential $\hat \vf$ to a potential $\vf_\Delta$ on $\Delta$ as follows.
For $x \in \Delta_\ell$, let $x^- = f_\Delta^{-\ell}(x)$ denote the pullback of $x$ to $\Delta_0$.
Then,
\[
\vf_\Delta(x) := S_{R_\ve}\hat \vf(x^-) \mbox{ for } x \in f_{\Delta}^{-1}(\Delta_0) \mbox{ and }
\vf_\Delta = 0 \mbox{ on } \Delta \setminus f_\Delta^{-1}(\Delta_0) .
\]
With this definition, $m_\Delta$ is a $\vf_\Delta$-conformal measure.

We may also define a related invariant measure on $\Delta$.  Let $G_\ve \in \cont^\eta(\Q)$ be the invariant
density from Corollary~\ref{cor:spectral gap}.  Define 
\begin{equation}
\label{eq:gD def}
g_\Delta = G_\ve \mbox{ on } \Delta_0, \mbox{ and } g_\Delta(x) = G_\ve(x^-) \mbox{ for } x \in \Delta_\ell, \ell \ge 1,
\end{equation}
where $x^-$ is defined as above.

It follows that the measure $d\mu_\Delta = g_\Delta dm_\Delta/ \int_\Delta g_\Delta dm_\Delta$ is 
an invariant probability measure for $f_\Delta$.
Moreover, we have $(\hatpi_\Delta)_* \mu_\Delta = \hat \mu_\ve$. 
And since $\pi_* \hat \mu_\ve = \mu_\vf$, we have also that $(\pi_\Delta)_* \mu_\Delta = \mu_\vf$.
Note that here $\hat\mu_\ve$ is defined on $\hat I_{z, \ve_0, \ve}$ and depends on
$\ve$, while $\mu_\vf$ does not.

We lift the hole $H = H(z,\ve)$ to $\Delta$ by settting $H_\Delta := \pi_\Delta^{-1}H = \hatpi_\Delta^{-1}\hat H$.  Due to the construction
of $\hat I_{z, \ve_0, \ve}$, $H_\Delta$ comprises a countable collection of elements of the Markov
partition $\Delta_{\ell,j}$, which we shall denote by $H_{\ell,j}$.  Set $\hDelta = \Delta \setminus H_\Delta$,
and define the open system $\hf_\Delta = f_\Delta|_{\hDelta}$.

\begin{lemma}
\label{lem:same escape}
Define $\hDelta^{(n)} = \cap_{i=0}^{n} f_\Delta^{-1}(\hDelta)$.  Then,
\[
\log \overline\lambda_\ve := \limsup_{n \to \infty} \frac 1n \log \mu_\vf(\hI^n)
= \limsup_{n \to \infty} \frac 1n \log \mu_\Delta(\hDelta^{(n)}) 
 = \limsup_{n \to \infty} \frac 1n \log m_\Delta(\hDelta^{(n)}).
\]
\end{lemma}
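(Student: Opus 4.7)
The plan is to establish the two equalities separately: (i) the projection $\pi_\Delta$ gives $\mu_\vf(\hI^n) = \mu_\Delta(\hDelta^{(n)})$ exactly for every $n$, and (ii) $\mu_\Delta$ and $m_\Delta$ differ on $\Delta$ only by a Radon--Nikodym derivative bounded above and below by positive constants, so their $\log$-measures of $\hDelta^{(n)}$ differ by a bounded additive term that washes out in $\limsup \frac{1}{n}\log$.

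For (i), the key observation is that the construction of the hole on the tower, $H_\Delta = \pi_\Delta^{-1}(H)$, together with the semi-conjugacy $\pi_\Delta \circ f_\Delta = f \circ \pi_\Delta$, yields $\hDelta = \pi_\Delta^{-1}(\hI)$ and iteratively
\[
\hDelta^{(n)} \;=\; \bigcap_{i=0}^{n} f_\Delta^{-i}\bigl(\pi_\Delta^{-1}(\hI)\bigr) \;=\; \pi_\Delta^{-1}\!\left(\bigcap_{i=0}^{n} f^{-i}(\hI)\right) \;=\; \pi_\Delta^{-1}(\hI^n).
\]
Combined with the already established identity $(\pi_\Delta)_*\mu_\Delta = \mu_\vf$, this gives $\mu_\Delta(\hDelta^{(n)}) = \mu_\vf(\hI^n)$ for every $n$, so the first equality holds exactly, not just asymptotically.

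For (ii), by \eqref{eq:gD def} the density $g_\Delta$ of $\mu_\Delta$ with respect to $m_\Delta$ is obtained from $g_\ve$ on $\Delta_0 = Y$ by constant propagation up the tower, so it suffices to show $g_\ve$ is bounded above and below on $Y$ by positive constants, with the normalising constant $Z = \int_\Delta g_\Delta\, dm_\Delta$ also finite and positive. The upper bound for $g_\ve$ is immediate from Corollary~\ref{cor:spectral gap} since $g_\ve \in \cont^\eta(\Q) \subset L^\infty$. For the lower bound I would argue by contradiction using the eigenrelation $\Lp_\ve g_\ve = g_\ve$ and the positivity of the summands in $\Lp_\ve$: if $g_\ve(x_0) = 0$ for some $x_0 \in Y$, all preimages of $x_0$ under $F_\ve$ would carry zero density; iterating and using the topological mixing of $F_\ve$ (Lemma~\ref{lem:mixing}) together with the finite-images property \ref{GM1}, these preimages become dense in $Y$, whence $g_\ve \equiv 0$ by continuity within each of the finitely many $Q \in \Q$, contradicting $\hm(g_\ve) = 1$. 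For $Z$, a Kac-type computation using the layered definition of $m_\Delta$ yields $Z = \int_Y R_\ve\, g_\ve\, d\hm$, which is finite by the exponential tail bound of Theorem~\ref{thm:tails}.

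The main obstacle is the lower bound for $g_\ve$; once $0 < c \le g_\Delta \le C$ holds uniformly on $\Delta$, the inequality $\tfrac{c}{Z}\, m_\Delta(\hDelta^{(n)}) \le \mu_\Delta(\hDelta^{(n)}) \le \tfrac{C}{Z}\, m_\Delta(\hDelta^{(n)})$ immediately yields the second equality of limsups, completing the proof.
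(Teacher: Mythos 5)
Your proposal is correct, and its overall structure is the same as the paper's: the first equality is obtained exactly as in the paper, via $\hDelta^{(n)} = \pi_\Delta^{-1}(\hI^n)$ and $(\pi_\Delta)_*\mu_\Delta = \mu_\vf$, and the second by comparing $\mu_\Delta$ and $m_\Delta$ through the density $g_\Delta$, which is constant along fibres of the tower. The one place you diverge is the justification that $g_\ve$ is bounded away from zero on $Y$: the paper simply invokes Lemma~\ref{lem:reg}, whose proof runs through the invariance of a log-H\"older cone under $\Lp_\ve$ together with the uniform covering time from Lemma~\ref{lem:uniform mix}, and therefore yields a lower bound $c_0$ that is uniform in $\ve$. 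Your argument (positivity of the summands in the eigenrelation $\Lp_\ve g_\ve = g_\ve$, density of the preimages of a putative zero of $g_\ve$ via expansion \ref{GM2}, the Markov property and transitivity on $\Q$, and continuity of $g_\ve \in \cont^\eta(\Q)$ on each $Q$) is a softer, self-contained route; it produces a lower bound that may a priori depend on $\ve$, which is perfectly adequate here since the limsup is taken at fixed $\ve$, but it would not by itself deliver the uniformity in $\ve$ that the paper's phrasing records and that Lemma~\ref{lem:reg} supplies for later use in the zero-hole asymptotics. You also make explicit the finiteness and positivity of the normalising constant $\int_\Delta g_\Delta \, dm_\Delta = \int_Y R_\ve \, g_\ve \, d\hm$ via the exponential tail bound of Theorem~\ref{thm:tails}; the paper leaves this implicit in the definition of $\mu_\Delta$, so this is a harmless (and slightly more careful) addition.
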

\begin{proof}
The first equality follows immediately from the fact that $(\pi_\Delta)_*\mu_\Delta = \mu_\vf$ and
$\pi_\Delta \circ f_\Delta = f \circ \pi_\Delta$, so that $\mu_\Delta(\hDelta^{(n)}) = \mu_\vf(\hI^n)$
for each $n$.
The second equality follows from the fact that $\mu_\Delta = g_\Delta m_\Delta$,
and $g_\Delta$ is bounded (uniformly in $\ve$) 
away from 0 and $\infty$ on $\Delta$ by \eqref{eq:gD def} and
Lemma~\ref{lem:reg} below.
\end{proof}

Our final lemma of this subsection says that the open system $\hf_\Delta$ is mixing\footnote{Mixing for an open system is 
not generally defined, and topologically transitivity does not hold  
unless we restrict to the survivor set $\hDelta^{(\infty)} = \cap_{n=0}^\infty \hDelta^{(n)}$.
In the open systems context, a mixing property can be formulated
in terms of transitions between elements
of the Markov partition $\{ \Delta_{\ell,j} \}$, after removing those elements which lie above components of $H_\Delta$ in $\Delta$.} on partition elements under our assumptions on 
$f$ and our construction of $\hatf_{z, \ve_0, \ve}$.

Let $\ve_1^* = \min \{ \ve_1(Q_1, Q_2) : Q_1, Q_2 \in \Q \} > 0$, where $\ve_1(Q_1, Q_2)$ is from Lemma~\ref{lem:uniform mix}.

\begin{lemma}
\label{lem:mixing D}
For all $\ve \in (0, \ve_1^*)$, the open system $(\hf_\Delta, \hDelta)$ is transitive and 
aperiodic
on elements of $\{ \Delta_{\ell,j} \}$ that do not lie above
a component of $H_\Delta$.
\end{lemma}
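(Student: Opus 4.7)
The plan is to bootstrap topological mixing of $(\hf_\Delta, \hDelta)$ from topological mixing of the open induced map $\hF_\ve$ on the finite image partition $\Q$ of the base $\Delta_0$, then extend along the tower's column structure.

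First I would establish topological mixing of $(\hF_\ve, \hY_\ve)$ on $\Q$. By Lemma~\ref{lem:mixing}, $F_0 = F_{z, \ve_0}$ is topologically mixing on $\Q$, so for each pair $Q_1, Q_2 \in \Q$ there exists $n_0 = n_0(Q_1, Q_2)$ such that for both $n = n_0$ and $n = n_0 + 1$, the Markov property of $F_0$ supplies an $n$-cylinder $J_n \subset Q_1$ with $F_0^n(J_n) = Q_2$. The threshold $\ve_1^*$ is chosen as the minimum, over the finitely many such triples, of the bounds provided by Lemma~\ref{lem:uniform mix}, ensuring that for $\ve \in (0, \ve_1^*)$ each chosen $J_n$ satisfies $F_\ve^n(J_n) = Q_2$. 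The crucial supplementary observation, visible in Cases~1 and~2 of the proof of Lemma~\ref{lem:uniform mix}, is that the $\hat f_{z, \ve_0, \ve}$-orbit of each $J_n$ is bounded away from the fibre $\widehat z$ by more than $\ve$ and therefore avoids $\hat H_\ve$; hence $J_n \subset \hY_\ve^{n-1}$ and $\hF_\ve^n(J_n) = Q_2$ holds as an identity for the open return map.

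Second, I would transfer mixing to the tower via the column structure. A partition element $\Delta_{\ell, i}$ that does not lie above a component of $H_\Delta$ has its entire column disjoint from $H_\Delta$, so open iteration carries $\Delta_{\ell, i}$ up to $\Delta_{0, q}$ (where $Q_q = F_\ve(Y_i)$) in exactly $R_\ve(Y_i) - \ell$ steps, and conversely $\Delta_{\ell, i}$ is reached from $\Delta_{0, q'}$ (any $q'$ with $Y_i \subset Q_{q'}$) in exactly $\ell$ steps. A path between two non-hole partition elements thus decomposes into fixed column pieces at each end plus an $\hF_\ve$-path on $\Q$ whose tower length is a sum of non-hole return times; transitivity of $\hf_\Delta$ on non-hole partition elements follows immediately from transitivity of $\hF_\ve$ on $\Q$.

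The main obstacle is aperiodicity of the resulting tower Markov chain. Since Step~1 provides $\hF_\ve$-paths of consecutive lengths $n_0$ and $n_0 + 1$ between any pair of base elements, the corresponding tower loops through any base vertex have lengths differing by $R_\ve(Y_i)$ for the additional non-hole 1-cylinder $Y_i$ inserted; composing and varying the cylinders used in distinct mixing paths, the gcd of attained tower loop lengths divides every such $R_\ve(Y_i)$. This gcd is equal to $1$ because, for our choice of $L$, the set of return times realised on non-hole 1-cylinders in the transitive component of the Hofbauer extension contains coprime integers (in fact, consecutive values) as a consequence of the transitivity and combinatorial richness of the Hofbauer construction recorded in the proof of Theorem~\ref{thm:tails}. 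Combined with the transitivity established above, this yields the required aperiodicity of $\hf_\Delta$ on non-hole partition elements.
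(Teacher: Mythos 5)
Your transitivity argument is sound and is essentially the paper's: the connections supplied by Lemma~\ref{lem:uniform mix} persist for the punctured system because (in either case of that proof) the orbit of the connecting interval stays away from $\hat H_\ve$, and the column structure of the tower then reduces transitivity of $\hf_\Delta$ on non-hole partition elements to transitivity of $\hF_\ve$ on $\Q$.

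The aperiodicity step, however, has a genuine gap. The period of the tower chain is the gcd of loop lengths counted in $f_\Delta$-steps, i.e.\ of sums of return times along $\hF_\ve$-loops, so mixing of $\hF_\ve$ on $\Q$ in \emph{induced} time does not transfer. Your bridge relies on two unjustified claims. First, you assert that the loop of induced length $n_0+1$ is obtained from the loop of induced length $n_0$ by inserting a single extra step through some 1-cylinder $Y_i$, so that the two tower loop lengths differ by $R_\ve(Y_i)$; topological mixing of $F_0$ (Lemma~\ref{lem:mixing}) gives the existence of paths of both induced lengths but no such insertion structure, and without it the difference of the two tower lengths is an uncontrolled difference of sums of return times, so the conclusion that the gcd divides ``every such $R_\ve(Y_i)$'' does not follow. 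Second, even granting that, you invoke the existence of non-hole 1-cylinders with coprime (indeed consecutive) return times ``recorded in the proof of Theorem~\ref{thm:tails}''; that proof contains only an upper complexity bound $\#\{i: R(Y_i)=n\}\le e^{\xi n}$ and the existence of periodic points in each $Y_i$, and says nothing about which return times are realised, let alone that consecutive values occur on cylinders avoiding $\hat H_\ve$ uniformly in $\ve$. The paper avoids this entirely by producing, for each $Q\in\Q$, two loops of \emph{consecutive actual} lengths: by leo, $f^{\bar n}(\pi(Q))\supset I$ and $f^{\bar n+1}(\pi(Q))\supset I$, so one finds $O_1\subset \Delta_0\cap f_\Delta^{\bar n}(Q)$ and $O_2\subset \Delta_0\cap f_\Delta^{\bar n+1}(Q)$ with overlapping projections, merges them after $n_1$ further iterates using Lemma~\ref{lem:fibres meet}, and then covers $Q$ after a common number $R_k$ of iterates via the transitivity and Markov property of $F_\ve$, yielding $f_\Delta^{R_k+n_1+\bar n}(Q)\supset Q$ and $f_\Delta^{R_k+n_1+\bar n+1}(Q)\supset Q$; Lemma~\ref{lem:uniform mix} then ensures these two connections survive puncturing for $\ve<\ve_1(Q,Q)$. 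To repair your proof you would need either this construction or an independent argument producing tower loops of coprime actual lengths in the open system.
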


\begin{proof}
Transitivity of $f_\Delta$ on elements of the Markov partition
is guaranteed by the transitivity of $F_{z, \ve_0, \ve}$, proved in Lemma~\ref{lem:mixing}.
That this property carries over to the open map $\hf_\Delta$ follows from Lemma~\ref{lem:uniform mix}.  Considering
Case 2 in the proof of that lemma, we see that for $\ve \in (0, \ve_1)$, the orbit of the desired interval $J$
connecting $Q_1$ to $Q_2$ is disjoint from $H_\ve$.  Thus the connection holds for the open system $(\hf_\Delta, \hDelta)$.

Next we show that $\hf_\Delta$ is aperiodic.
Due to the structure of the tower map, it suffices to show that there exists $n_0 \in \mathbb{N}$ such that for all 
$n \ge n_0$, $\hf_\Delta^n(\Delta_0) \supset \Delta_0$.  Since returns to $\Delta_0$ must be to one of the finitely
many elements of the partition $\Q$, this property is in turn implied by the following claim:
For all $Q \in \Q$, there exists $n_Q \in \mathbb{N}$ such that $\hf_\Delta^{n_Q}(Q) \supset Q$ and 
$\hf_\Delta^{n_Q+1}(Q) \supset Q$.
We proceed to prove the claim, which is a refinement of the proof of Lemma~\ref{lem:mixing}.

Let $Q \in \Q$.  Since $f$ is leo, there exists $\bar n \in \mathbb{N}$ such that $f^n(\pi(Q)) \supset I$ for all $n \ge \bar n$.
Thus as in the proof of Lemma~\ref{lem:mixing}, $m_\vf(\pi_\Delta(f_\Delta^n(Q) \cap \Delta_0)) \ge 1- \kappa$, by choice
of $L$.  Applying this to $n = \bar n$ and $n = \bar n + 1$, and recalling that we identify $Y = \hatI_{z, \ve_0}'$ with $\Delta_0$,
we obtain, 
\[
m_\vf \left(\pi_\Delta(\Delta_0 \cap f_\Delta^{\bar n}(Q)\right) \cap \pi_\Delta\left(\Delta_0 \cap f_\Delta^{\bar n +1}(Q))\right) \ge 1 - 2\kappa >0 .
\]
Thus there must exist intervals $O_1 \subset \Delta_0 \cap f_\Delta^{\bar n}(Q)$ and 
$O_2 \subset \Delta_0 \cap f_\Delta^{\bar n +1}(Q)$ such that $\pi(O_1) \cap \pi(O_2) \neq \emptyset$.  By 
Lemma~\ref{lem:fibres meet}, there exists $n_1 \in \mathbb{N}$ such that 
$\hatf_\ve^{n_1}(O_1) \cap \hatf_\ve^{n_1}(O_2) \neq \emptyset$, and we can choose this time $n_1$ so that this intersection
occurs in $Y=\Delta_0$.  This implies that also $f_\Delta^{n_1}(O_1) \cap f_\Delta^{n_1}(O_2) \neq \emptyset$.

Now using the transitivity of $F_\ve$, as well as its Markov property, there exists $k \in \mathbb{N}$ such that 
$F_\ve^k(f_\Delta^{n_1}(O_1) \cap f_\Delta^{n_1}(O_2)) \supset Q$.  Let $r_k$ denote the number of iterates of $f_\Delta$
contained in $F_\ve^k$ on this set.
This implies that both $f_\Delta^{r_k + n_1 + \bar n}(Q) \supset Q$, and 
$f_\Delta^{r_k + n_1 + \bar n +1}(Q) \supset Q$.

As a final step, we invoke Lemma~\ref{lem:uniform mix} as earlier.  We have constructed two times $k_1$ and $k_2$ for which
$F_\ve^{k_j}(Q) \supset Q$, $j=1,2$.  By case 2 of the proof of Lemma~\ref{lem:uniform mix}, for $\ve < \ve_1(Q,Q)$,
these connections still occur in the open system.  Thus we conclude
that both $\hf_\Delta^{r_k + n_1 + \bar n}(Q) \supset Q$, and 
$\hf_\Delta^{r_k + n_1 + \bar n +1}(Q) \supset Q$, as required.
\end{proof}


\subsection{Transfer Operator on $\Delta$ and a Spectral Gap}
\label{tower transfer}

In order to study the dynamics on the open tower, we define the transfer operator associated with the
potential $\vf_\Delta$,
\[
\Lp_\Delta \psi = \sum_{y \in f_\Delta^{-1}x} \psi(y) e^{\vf_\Delta(y)},
\]
and its usual punctured counterpart for the open system, $\hLp_\Delta \psi = \Lp_\Delta (\psi \cdot 1_{\hDelta^{(1)}})$.  We also define the corresponding punctured potential on the
tower by $\vf_\Delta^{H_\ve} = \vf_\Delta$ on $\hDelta$ and $\vf_\Delta^{H_\ve} = -\infty$
on $H_\Delta$. 

We will prove that for sufficiently small holes $H_\ve$, the transfer operator $\hLp_\Delta$ has a spectral gap on a certain Banach space $\B$, using the abstract result \cite[Theorem~4.12]{DemTodMP}.  Note that this result is not perturbative, but
rather relies on checking four explicit conditions (P1)-(P4) from \cite[Section~4.2]{DemTodMP}.
They are as follows.

\medskip
\noindent
(P1) {\em Exponential Tails. }  This follows from Theorem~\ref{thm:tails}, since by definition of $m_\Delta$,
\[
m_\Delta(\Delta_n) = m_\Delta(\Delta_0 \cap \{ R_\ve > n \}) 
= \hat m_\vf(Y \cap \{ R_\ve > n \})  \le C e^{-\alpha n} \, ,
\]
where $C$ and $\alpha$ are uniform for $\ve < \ve_0$.

\medskip
\noindent
(P2) {\em Slow Escape. } $ - \log \overline\lambda_\ve < \alpha$.  This can be guaranteed by noting that
$\overline\lambda_\ve \ge \Lambda_\ve$, where $\Lambda_\ve<1$ is from 
Corollary~\ref{cor:spectral decomp}.   This inequality is due to the fact that the escape from the
induced system cannot be slower than the escape from the uninduced system.
The requirement on the upper escape rate in \cite{DemTodMP}  is defined in terms of $m_\Delta$,
which in our case is equal to $\log \overline\lambda_\ve$ by Lemma~\ref{lem:same escape}.
Again using Corollary~\ref{cor:spectral decomp},  there exists $\ve^*>0$ such that 
$\Lambda_\ve > e^{-\alpha}$ for all $\ve \in (0, \ve^*)$.  This guarantees (P2).

\medskip
\noindent
(P3) {\em Bounded Distortion and Lipschitz Property for $e^{\vf_\Delta}$.}  
The potential $\vf_\Delta = 0$ on $\Delta \setminus f_\Delta^{-1}(\Delta_0)$ so we need only
to verify this property at return times.  This follows from Lemma~\ref{lem:distort} and the
following estimate linking the Euclidean metric on $I$ with the separation time metric
on $\Delta$.  If $s(x,y) = n$, then $F_\ve^i(\hat \pi_\Delta(x))$ and $F_\ve^i(\hat \pi_\Delta(y))$ lie in the same element 
of $\mathcal{Y}_\ve$
for each $i < n$, and $F_\ve^n(\hat \pi_\Delta(x))$ and $F_\ve^n(\hat \pi_\Delta(y))$ lie in the same element of $\Q$.  Then since $DF_\ve^n \ge C_e\sigma^n > 1$, 
\begin{equation}
\label{eq:metric}
\frac{|\hat\pi_\Delta(x) - \hat\pi_\Delta(y)|^\eta}{d_\theta(x,y)} = \frac{|\hat\pi_\Delta(x) - \hat\pi_\Delta(y)|^\eta}{e^{-\theta s(x,y)}} 
\le \frac{C_e^\eta\sigma^{-\eta n}}{e^{-\theta n}} \, .
\end{equation}
Choosing $\theta < \eta \log \sigma$
guarantees that a $\eta$-H\"older continuous function on $I$ (and $\hat I_{z, \ve_0, \ve}$) 
lifts to a Lipschitz function on 
$\Delta$.  Then Lemma~\ref{lem:distort}(a) implies the required bounded distortion 
for $\vf_\Delta$. 

\medskip
\noindent
(P4) {\em Subexponential Growth of Potential:}  For each $\delta>0$, there exists $C>0$
such that
\[
|S_{R_\ve} \vf_\Delta(x) | \le C e^{\delta R_\ve(x)} \quad
\mbox{for all $x \in \Delta_0$.} 
\]
This is immediate for H\"older continuous potentials since $\vf$ is bounded so 

\[
|S_{R_\ve} \vf_\Delta(x)| \le R_\ve(x) |\vf|_\infty \, \quad \mbox{for all $x \in \Delta_0$}.
\]

For geometric potentials, $\vf = -t\log |Df| - p_t$, (P4) is
guaranteed by the uniform expansion of $F_\ve$ at return times, noting that
\[
S_{R_\ve}\vf_\Delta(x) = S_{R_\ve} \vf(\hat \pi_\Delta(x)) 
= - t \log |DF_\ve(\hat \pi_\Delta(x))| - R_\ve(x) p_t \, .
\]
By \ref{GM2}, $C_e\sigma \le |DF_\ve| \le (\sup |Df|)^{R_\ve}$, and
since $\sup |Df|<\infty$,  we have
\[
|S_{R_\ve}\vf_\Delta(x)| \le R_\ve(x) |p_t| + t \max \{ |\log (C_e \sigma)|, R_\ve(x) \log |Df|_\infty \}
\quad \mbox{for all $x \in \Delta_0$.} 
\]


With (P1)-(P4) verified, we are prepared to study the action of $\hLp_\Delta$ on an
appropriate function space.  Using (P2), choose $\beta$ such that
$- \log \overline\lambda_\ve < \beta < \alpha$.  Define a weighted $L^\infty$ norm on $\Delta$ by,
\[
\| \psi \|_\infty = \sup_\ell e^{-\beta \ell}  \sup \{ |\psi(x)| : x \in \Delta_\ell \} \, ,
\] 
as well as the weighted Lipschitz norm,
\[
|\psi|_{Lip} = \sup_\ell e^{-\beta \ell} \sup \left\{ e^{-\theta s(x,y)}|\psi(x)-\psi(y)| : x,y \in \Delta_\ell \right\} \, .
\]
Then define $\B = \{ \psi \in L^1(m_\Delta) : \| \psi \|_\B < \infty \}$, where
$\| \psi \|_\B = \| \psi \|_\infty + |\psi|_{Lip}$. 
We define $\B_0 \subset \B$ to be the set of bounded functions on $\Delta$ whose
Lipschitz constant is also bounded, i.e., $\B_0$ uses the same definition as $\B$, but with $\beta = 0$.
Recall $\ve_1^*>0$ from Lemma~\ref{lem:mixing D} and $\ve^*>0$ from the verification of (P2).

\begin{theorem}{\em (\cite[Theorem~4.12]{DemTodMP})}
\label{thm:tower gap}
Since the open system $(f_\Delta, \Delta; H_\Delta)$ is mixing on partition elements and satisfies properties (P1)-(P4),
we conclude that $\hLp_\Delta$ has a spectral gap on $\B$ for all $\ve < \min \{ \ve^*, \ve_1^*\}$.  
Let $\lambda_\ve$ denote the largest eigenvalue of
$\hLp_\Delta$ and let $\hg_\Delta$ denote the corresponding normalised eigenfunction.
\begin{enumerate}
  \item[(a)]  The escape rate with respect to $m_\Delta$ exists and equals  
  $- \log \lambda_\ve$. 
\item[(b)] $ \displaystyle
\log \lambda_\ve = \sup \left\{ h_{\vartheta}(f_\Delta) + \int_{\hDelta} \vf_\Delta^{H_\ve} d\vartheta 
: \vartheta \in \mathcal{M}_{f_\Delta} , \vartheta(-\vf_\Delta^H) < \infty \right\}$, where $\mathcal{M}_{f_\Delta}$ 
is the set of $f_\Delta$-invariant probability measures on $\Delta$.
\item[(c)]   The following limit defines a probability measure $\nu_\Delta$, 
supported on $\cap_{n = 0}^\infty \hDelta^{(n)}$,
\[
\nu_\Delta(\varphi) = \lim_{n\to \infty} \lambda_\ve^{-n} \int_{\hDelta^{(n)}} \psi \, \hg_\Delta \, dm_\Delta
\qquad \mbox{for all $\psi \in \B_0$. }   
\]
Moreover, the measure 
$\nu_\Delta$ is the unique measure in $\mathcal{M}_{f_\Delta}$ that
attains the supremum in (b), i.e., it is the unique equilibrium state for
$\vf_\Delta^{H_\ve}$.

  \item[(d)] There exist constants
$D>0$ and $\sigma_0 <1$ such that for all $\psi \in \B$,
\[
\| \lambda_\ve^{-n}\hLp^n_\Delta \psi - d(\psi)\hg_\Delta \|_\B \leq D\|\psi \|_\B \sigma_0^n, \; \; \;
\mbox{where }
d(\psi) = \lim_{n\to \infty} \lambda_\ve^{-n} \int_{\hDelta^{(n)}} \psi \, dm_\Delta < \infty .
\]
Also, for any $\psi \in \B$ with $d(\psi) > 0$,
\[
\left| \frac{\hLp^n_\Delta \psi}{|\hLp^n_\Delta \psi|_{L^1(m_\Delta)}} - \hg_\Delta \right|_{L^1(m_\Delta)} 
\le D \| \psi \|_\B \sigma_0^n.
\]
\end{enumerate}
\end{theorem}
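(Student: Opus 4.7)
The plan is to obtain Theorem~\ref{thm:tower gap} as a direct application of \cite[Theorem~4.12]{DemTodMP}; the bulk of the work lies in verifying that our construction meets its hypotheses uniformly in $\ve$, and this verification has essentially been assembled in the preceding paragraphs. So the ``proof'' is really a consolidation and citation.

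First I would collect the four structural conditions. Condition (P1) is immediate from the uniform exponential tail bound in Theorem~\ref{thm:tails} together with the identification $m_\Delta(\Delta_n) = \hat m_\vf(Y \cap \{R_\ve > n\})$ coming from the definition of $m_\Delta$. For (P2), I would note that escape from the induced system cannot be slower than escape from the full system, so $\overline\lambda_\ve \ge \Lambda_\ve$; Corollary~\ref{cor:spectral decomp} then gives $\Lambda_\ve > e^{-\alpha}$ for all sufficiently small $\ve$, and Lemma~\ref{lem:same escape} lets me read $\overline\lambda_\ve$ either on $I$ or on $\Delta$ as convenient. Condition (P3) is Lemma~\ref{lem:distort}(a), transported from $\eta$-H\"older regularity on $\hatI_{z,\ve_0,\ve}$ to Lipschitz regularity in the separation-time metric on $\Delta$ via the comparison \eqref{eq:metric}, provided $\theta < \eta \log \sigma$. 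Condition (P4) is trivial for H\"older $\vf$ since $\vf$ is bounded, and for $\vf = -t\log|Df|-p_t$ it follows from the uniform expansion of $F_\ve$ at return times and $\sup|Df| < \infty$, giving $|S_{R_\ve}\vf_\Delta|$ growing at most linearly in $R_\ve$. The remaining hypothesis --- mixing on partition elements of the open system --- is supplied by Lemma~\ref{lem:mixing D} for $\ve \in (0, \ve_1^*)$.

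Having verified these, for $\ve < \min\{\ve^*, \ve_1^*\}$ I would invoke \cite[Theorem~4.12]{DemTodMP} to extract the spectral decomposition of $\hLp_\Delta$ on $\B$, giving the eigenvalue $\lambda_\ve$ and eigenfunction $\hg_\Delta$. Conclusions (a)--(d) then match the outputs of the abstract theorem directly: (a) follows from the spectral radius formula together with $\int_\Delta \hLp_\Delta^n 1 \, dm_\Delta = m_\Delta(\hDelta^{(n)})$; (b) is the tower variational principle; (c) is the construction of the unique conditionally invariant equilibrium measure $\nu_\Delta$ via pushforward limits, uniqueness coming from the spectral gap; and (d) is the standard quantitative convergence implied by the gap on $\B$.

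The main obstacle is not this final citation but the uniformity required to make (P2) and mixing hold on a whole range of $\ve$ accumulating at $0$ with $\alpha$ fixed --- which is why the tail rate in Theorem~\ref{thm:tails}, the induced spectral gap of Corollary~\ref{cor:spectral decomp}, and the connection times of Lemma~\ref{lem:uniform mix} all had to be proved uniformly in $\ve$. Once these uniform estimates are in place, the proof of Theorem~\ref{thm:tower gap} reduces to the bookkeeping above.
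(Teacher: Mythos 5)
Your proposal matches the paper's treatment: Theorem~\ref{thm:tower gap} is not proved from scratch but is invoked as \cite[Theorem~4.12]{DemTodMP} after verifying (P1)--(P4) and the mixing of the open tower on partition elements, and your verification of each hypothesis (tails via Theorem~\ref{thm:tails}, slow escape via $\overline\lambda_\ve \ge \Lambda_\ve$ and Corollary~\ref{cor:spectral decomp} with Lemma~\ref{lem:same escape}, distortion via Lemma~\ref{lem:distort} and \eqref{eq:metric} with $\theta < \eta\log\sigma$, subexponential growth via \ref{GM2}, mixing via Lemma~\ref{lem:mixing D}) is exactly the argument given in Section~\ref{tower transfer}. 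No gaps; this is essentially the same proof.
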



\subsection{Proof of Theorem~\ref{thm:accim}}
\label{accim pf}

In this section, we will prove the items of Theorem~\ref{thm:accim} using Theorem~\ref{thm:tower gap}.
The following lemma will allow us to lift H\"older continuous functions on $I$ to Lipschitz functions on $\Delta$.

\begin{lemma}
\label{lem:smooth lift}
Suppose $\theta/ \log \sigma \le \varsigma \le 1$, where $\sigma > 1$ is from \ref{GM2}.
Let $\psi \in C^\varsigma(I)$ and define $\tpsi$ on $\Delta$ by $\tpsi = \psi \circ \pi_\Delta$.
Then $| \tpsi |_\infty \le |\psi|_\infty$ and Lip$(\tpsi) \le C|\psi|_{C^\varsigma(I)}$ for some constant
$C$ depending on the minimum length of elements of $\Q$.
\end{lemma}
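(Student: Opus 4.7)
The $L^\infty$ bound is immediate: since $\tpsi = \psi \circ \pi_\Delta$, one has $|\tpsi|_\infty \le |\psi|_\infty \le |\psi|_{C^\varsigma(I)}$. For the Lipschitz estimate on $(\Delta, d_\theta)$, I would fix $x, y$ in a common level $\Delta_\ell$ and set $n = s(x,y)$. When $n = 0$ we have $d_\theta(x,y) = 1$, and $|\tpsi(x) - \tpsi(y)| \le 2|\psi|_\infty$ immediately suffices. For $n \ge 1$, both points must lie in a common Markov element $\Delta_{\ell,j}$, so the pullbacks $x^- := f_\Delta^{-\ell}x$ and $y^- := f_\Delta^{-\ell}y$ sit in a common $n$-cylinder $Y_j^{(n)}$ for $F_\ve$. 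Property~\ref{GM2} applied to $F_\ve^n$, combined with the fact that $F_\ve^n(x^-), F_\ve^n(y^-)$ lie in a single element of $\mathcal{Q}\subset I$ of diameter at most one, yields $|x^- - y^-| \le C_e^{-1}\sigma^{-n}$.

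The substantive step is to transfer this bound to $|\pi_\Delta(x) - \pi_\Delta(y)|$ uniformly in $\ell$. By the semiconjugacy $\pi \circ \hatf_{z,\ve_0,\ve} = f \circ \pi$, we have $\pi_\Delta(x) = f^\ell(\pi(x^-))$ (and similarly for $y$); since $x^-, y^-$ lie in a single domain of $\hatI_{z,\ve_0,\ve}$, distances between their projections coincide with distances between their $\hatf_{z,\ve_0,\ve}^\ell$-images. Koebe-type bounded distortion for $\hatf_{z,\ve_0,\ve}^\ell$ on $Y_j^{(n)}$, obtained by applying Koebe to the Markov extension of $F_\ve = \hatf_{z,\ve_0,\ve}^{R_\ve(Y_j)}$ onto a fixed element of $\mathcal{Q}$, then yields
\[
|\pi_\Delta(x) - \pi_\Delta(y)| \le K \cdot \frac{|Y_j^{(n)}|}{|Y_j|} \cdot \bigl|\hatf_{z,\ve_0,\ve}^\ell(Y_j)\bigr|,
\]
where the distortion constant $K$ depends on the Koebe space, which in turn is determined by $\min_{Q \in \mathcal{Q}} |Q|$. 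Combining $|\hatf_{z,\ve_0,\ve}^\ell(Y_j)| \le 1$ with property~\ref{GM2} applied to $F_\ve^{n-1}$ on $F_\ve(Y_j^{(n)})$ (which gives $|Y_j^{(n)}|/|Y_j| \lesssim \sigma^{-(n-1)}$) produces $|\pi_\Delta(x) - \pi_\Delta(y)| \le C\sigma^{-n}$, with $C$ depending only on $C_e, \sigma$, and the minimum diameter of $\mathcal{Q}$.

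The Lipschitz bound then follows from the H\"older regularity of $\psi$ and the hypothesis $\varsigma \log \sigma \ge \theta$:
\[
\frac{|\tpsi(x) - \tpsi(y)|}{d_\theta(x,y)} \le |\psi|_{C^\varsigma(I)} C^\varsigma \sigma^{-\varsigma n} e^{\theta n} = |\psi|_{C^\varsigma(I)} C^\varsigma e^{(\theta - \varsigma \log \sigma)n} \le C^\varsigma |\psi|_{C^\varsigma(I)}.
\]
The main obstacle will be securing bounded distortion for the intermediate iterates $\hatf_{z,\ve_0,\ve}^\ell$ with $0 \le \ell < R_\ve(Y_j)$, since Lemma~\ref{lem:distort} is stated only at return times for $F_\ve$. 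This must be extracted from the Markov structure of the extension---specifically, that $F_\ve$ extends diffeomorphically from a neighbourhood of $Y_j^{(n)}$ onto a full element of $\mathcal{Q}$---via Koebe's lemma, and this is precisely where the dependence of $C$ on $\min_{Q \in \mathcal{Q}}|Q|$ enters.
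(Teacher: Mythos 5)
Your proposal is correct in substance and runs on the same circle of ideas as the paper's proof: H\"older regularity of $\psi$ on $I$, the uniform expansion \ref{GM2} of $F_\ve$ on cylinders of the base, Koebe distortion extracted from the Markov structure of the Hofbauer extension, and the hypothesis $\theta \le \varsigma\log\sigma$ to absorb $e^{\theta s(x,y)}$. The difference is the direction of comparison. The paper factors $|\tpsi(x)-\tpsi(y)|/d_\theta(x,y)$ into three ratios by comparing $|\pi_\Delta(x)-\pi_\Delta(y)|$ with the distance of the images at the \emph{next return} to $\Delta_0$; the only distortion it then needs is for the branch $\hatf_{z,\ve_0,\ve}^{R_\ve-\ell}$ that completes the return, which by Lemma~\ref{lem:Markov} extends to a diffeomorphism onto a full domain $D\supset Q=F_\ve(Y_j)$, so Lemma~\ref{lem:Koebe} applies in one shot (intermediate image sizes being controlled trivially or via (PolShr)$_\beta$), after which \eqref{eq:metric} handles the distance at the return. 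You instead pull back to $\Delta_0$ and argue through cylinder ratios, which forces you to control the distortion of the \emph{initial} segment $\hatf^{\ell}$ on $Y_j$, $0\le\ell<R_\ve(Y_j)$ --- exactly the step you flag and leave open. That step is true and closes by the standard two-sided Koebe argument: from $D\hatf^{R_\ve}(w)=D\hatf^{R_\ve-\ell}(\hatf^{\ell}w)\,D\hatf^{\ell}(w)$ one gets
\[
\frac{D\hatf^{\ell}(x^-)}{D\hatf^{\ell}(y^-)}
=\frac{D\hatf^{R_\ve}(x^-)}{D\hatf^{R_\ve}(y^-)}\cdot
\frac{D\hatf^{R_\ve-\ell}(\hatf^{\ell}y^-)}{D\hatf^{R_\ve-\ell}(\hatf^{\ell}x^-)},
\]
and both factors are bounded by Lemma~\ref{lem:Koebe}, since both $\hatf^{R_\ve}|_{Y_j}$ and $\hatf^{R_\ve-\ell}|_{\hatf^{\ell}(Y_j)}$ extend onto the full domain $D$; the Koebe space, hence the constant, is governed by the trimming and so by $\min_{Q\in\Q}|Q|$, as you anticipated. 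Two bookkeeping remarks: your bound $|Y_j^{(n)}|/|Y_j|\lesssim\sigma^{-(n-1)}$ needs, besides \ref{GM2} applied to $F_\ve^{n-1}$ on $F_\ve(Y_j^{(n)})$, the distortion of $F_\ve|_{Y_j}$ (Lemma~\ref{lem:distort}(b)) together with the lower bound $|F_\ve(Y_j)|\ge\min_{Q\in\Q}|Q|$; and once you have distortion of the completing branch $\hatf^{R_\ve-\ell}$ you could bound $|\pi_\Delta(x)-\pi_\Delta(y)|$ directly against the return-time distance as the paper does, rendering the pullback detour (and the second Koebe application) unnecessary. What your route buys is an estimate phrased purely in terms of cylinder geometry at the base; what the paper's buys is a single application of Koebe with no intermediate-iterate distortion.
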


\begin{proof}
The bound $| \tpsi |_\infty \le |\psi|_\infty$ is immediate.  To prove the bound on the Lipschitz
constant of $\tpsi$, suppose $x, y \in \dlj$ and estimate,
\[
\frac{|\tpsi(x) - \tpsi(y)|}{d_\theta(x,y)}
= \frac{|\psi(\pi_\Delta(x))- \psi(\pi_\Delta(y))|}{|\pi_\Delta(x) - \pi_\Delta(y)|^\varsigma} 
\cdot \frac{|\pi_\Delta(x) - \pi_\Delta(y)|^\varsigma}{|\pi_\Delta(f_\Delta^{R_\ve}x) - \pi_\Delta(f_\Delta^{R_\ve}y)|^\varsigma}
\cdot \frac{|\pi_\Delta(f_\Delta^{R_\ve}x) - \pi_\Delta(f_\Delta^{R_\ve}y)|^\varsigma}{e^{-\theta s(x,y)}}.
\]
The first ratio above is bounded by $|\psi|_{C^\varsigma(I)}$. 
The second ratio is bounded due to bounded distortion and
the backward contraction condition (PolShr)$_\beta$
at return times to $Y$.
 For the third ratio, we use \eqref{eq:metric}, recalling that the
separation time only counts returns to $\Delta_0$, and that 
$\theta \le \varsigma \log \sigma$.
\end{proof}

In order to project densities from $\Delta$ to $I$, for $\psi \in L^1(m_\Delta)$, and $x \in I$, define
\begin{equation}
\label{eq:PD def}
\P_\Delta \psi(x) = \sum_{y \in \pi_\Delta^{-1}x} \frac{\psi(y)}{J\pi_\Delta(y)} ,
\end{equation}
where $J\pi_\Delta$ is the Jacobian of $\pi_\Delta$ with respect to the measures $m_\vf$ and $m_\Delta$.
Note that for $y \in \Delta_\ell$, with $y = f^\ell_\Delta(z)$ for $z \in \Delta_0$, the conformality of $m_\vf$ implies,
\begin{equation}
\label{eq:lip J}
\frac{1}{J\pi_\Delta(y)} = \frac{dm_\Delta(y)}{dm_\vf(\pi_\Delta y)} = \frac{dm_\Delta(y)}{dm_\vf(\pi_\Delta(f^\ell_\Delta z))}
= \frac{dm_\vf(\pi_\Delta z)}{dm_\vf(f^\ell(\pi_\Delta z))} = e^{S_\ell \vf(\pi_\Delta z)} \, .
\end{equation}
Then the proof of Lemma~\ref{lem:smooth lift} implies that $1/J\pi_\Delta$ is Lipschitz continuous on each $\Delta_{\ell,j}$
with Lipschitz constant depending only on the level $\ell$. 

It follows from the definition of $m_\Delta$ that $\P_\Delta \psi \in L^1(m_\vf)$, and 
$\int_I \P_\Delta \psi \, dm_\vf = \int_\Delta \psi \, dm_\Delta$.  Moreover, 
\begin{equation}
\label{eq:commute}
\hLp_{\vf^{H_\ve}}^n (\P_\Delta \psi) = \P_\Delta( \hLp_\Delta^n \psi) \, , \mbox{ for each $n \in \mathbb{N}$.}
\end{equation}

The final step in translating Theorem~\ref{thm:tower gap} to Theorem~\ref{thm:accim} is the following.

\begin{lemma}
\label{lem:good lift}
$C^\varsigma(\hI) \subset \P_\Delta \B_0$ for all $\varsigma \ge \theta/\log \sigma$.
\end{lemma}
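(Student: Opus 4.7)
The plan is to exhibit an explicit lift of $\psi \in C^\varsigma(\hI)$ to a function $\tilde\psi \in \B_0$ satisfying $\P_\Delta \tilde\psi = \psi$, by combining the invariant density on the tower with the equilibrium density on $I$. Concretely I would take as candidate
\[
\tilde\psi(y) := \frac{\psi(\pi_\Delta(y))}{g_\vf(\pi_\Delta(y))}\, g_\Delta(y), \qquad y \in \Delta,
\]
where $g_\Delta$ is the invariant density from \eqref{eq:gD def} and $g_\vf = d\mu_\vf/dm_\vf$. The guiding observation is that the pushforward identity $(\pi_\Delta)_\ast(g_\Delta\, m_\Delta) = \mu_\vf$ translates into the pointwise fibrewise identity $\P_\Delta g_\Delta = g_\vf$ (valid at $m_\vf$-a.e.\ $x$).

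With this in hand, checking $\P_\Delta \tilde\psi = \psi$ is immediate: since $\psi(x)/g_\vf(x)$ does not depend on the preimage $y \in \pi_\Delta^{-1}(x)$, it factors out of the defining sum, and one is left with $\psi(x) g_\vf(x)^{-1} \P_\Delta g_\Delta(x) = \psi(x)$. Thus the identity reduces to the pushforward statement quoted above.

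Next I would verify that $\tilde\psi \in \B_0$, i.e.\ that $\tilde\psi$ is uniformly bounded and uniformly Lipschitz on $\Delta$ in the $d_\theta$-metric (with weight $\beta = 0$). For the factor $g_\Delta$, Corollary~\ref{cor:spectral gap} gives $g_\ve \in C^\eta(\Q)$; extending by $g_\Delta(y) = g_\ve(y^-)$ through \eqref{eq:gD def} and using that $s(y_1, y_2) = s(y_1^-, y_2^-)$ for $y_1, y_2$ in the same level, one sees that $g_\Delta$ is bounded and uniformly Lipschitz across all levels of $\Delta$. For the factor $(\psi / g_\vf)\circ \pi_\Delta$, I would apply Lemma~\ref{lem:smooth lift} to the function $\psi/g_\vf \in C^\varsigma(I)$: the hypothesis $\varsigma \ge \theta/\log\sigma$ is precisely the regularity threshold identified there, and the conclusion is that the lift is Lipschitz in $d_\theta$. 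Since $\B_0$ is a Banach algebra under the pointwise product of bounded Lipschitz functions, the product $\tilde\psi$ lies in $\B_0$, completing the proof modulo the input on $g_\vf$.

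The main obstacle is establishing that $\psi/g_\vf$ itself lies in $C^\varsigma$, which boils down to $g_\vf$ being H\"older continuous and bounded away from zero on $\hI$. In the H\"older case this is standard from the spectral construction of \cite{LiRiv14}. In the geometric case $\vf = -t\log|Df| - p_t$ with $t \in (t^-, t^+)$, the density $g_\vf$ may develop algebraic spikes along the forward critical orbit, so the argument must use the conformal-measure construction of \cite{IomTod10, IomTod13} together with a Koebe-type estimate to control $g_\vf$ away from $\Crit$. Because the trimming in the definition of $Y$ and the polynomial shrinking property give uniform expansion at returns, these spikes do not interfere with the H\"older bound on $\psi/g_\vf$ restricted to $\hI$, which is all that is needed.
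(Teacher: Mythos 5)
Your reduction is attractive, and the pieces of it that mirror the paper are fine: the identity $\P_\Delta g_\Delta = g_\vf$ (up to the normalising constant $\int_\Delta g_\Delta\, dm_\Delta$, which you drop but which is harmless) is exactly what the paper invokes to justify Remark~\ref{rmk:wider conv}, and the product of a uniformly bounded, uniformly Lipschitz lift with $g_\Delta$ does land in $\B_0$, since $g_\Delta$ is bounded above and below with uniform Lipschitz constant across levels by Lemma~\ref{lem:reg} and \eqref{eq:gD def}. The genuine gap is the step you defer to the last paragraph: your construction needs a pointwise-defined version of $g_\vf=\frac{d\mu_\vf}{dm_\vf}$ that is bounded above, bounded away from zero, and $\varsigma$-H\"older on $\hI$, so that $\psi/g_\vf \in C^\varsigma$ and Lemma~\ref{lem:smooth lift} applies. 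This is not established anywhere in the paper, and for geometric potentials it is essentially false as stated: $g_\vf$ is unbounded, with algebraic spikes along the \emph{forward critical orbit} $\{f^n(c): c\in\Crit,\ n\ge 1\}$ (the paper's own example in Section~\ref{ex} exhibits the $x^{-1/2}$ spike at $z=0=f^2(1/2)$), and $\hI=I\setminus H_\ve$ does not avoid this set -- the hole is placed at $z$, not along the postcritical orbit, which for a leo map is typically infinite and may be dense. Your proposed remedies do not engage with this: Koebe estimates ``away from $\Crit$'' control the wrong set (the singularities sit at critical \emph{values} and their images, not at $\Crit$), and the trimming of $Y$ and (PolShr)$_\beta$ are statements about the induced map on the Hofbauer extension, not about the regularity of the projected density $g_\vf=\P_\Delta g_\Delta$ on $I$, which is an infinite sum over tower levels whose global H\"older regularity is precisely what is unknown. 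Even in the H\"older-potential case, citing \cite{LiRiv14} for H\"older continuity of $g_\vf$ on all of $I$ is an unproved appeal; the regularity available in this setting lives on cylinders/tower levels, not globally.

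For comparison, the paper's proof avoids any regularity statement about $g_\vf$ altogether: using the leo property it selects a \emph{finite} collection $\mathcal{K}$ of partition elements $\Delta_{\ell,i}$ with $\ell\le N$ whose projections cover $I$ with only finitely many overlaps, sets $\tilde\psi=0$ elsewhere, and on $\mathcal{K}$ defines $\tilde\psi=(\psi\circ\pi_\Delta)\,J\pi_\Delta$ (with a $C^\varsigma$ partition of unity to handle the finitely many overlaps). Since $1/J\pi_\Delta=e^{S_\ell\vf}$ is Lipschitz on each $\Delta_{\ell,i}$ with constant depending only on $\ell\le N$, membership in $\B_0$ is immediate, and $\P_\Delta\tilde\psi=\psi$ by construction. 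If you want to rescue your approach, you would have to prove the missing regularity of (a version of) $g_\vf$ on $\hI$ uniformly in $\ve$, which is a substantially harder statement than the lemma itself.
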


\begin{proof}
Let $Q\in \Q$ and note that by the leo property there exists $N\in \N$ such that $f^N(\pi(Q))=I$.  
This implies that $\pi(\hat I_{z,\ve_0, \ve}(N)) = I$, where $\hat I_{z,\ve_0,\ve}(N)$
denotes the first $N$ levels of $\hat I_{z, \ve_0, \ve}$ as in Section~\ref{ssec:puncture}.
This in turn implies that $\pi_\Delta(\cup_{\ell \le N} \Delta_\ell) = I$ (mod 0 with respect to $m_\vf$).  

Next, we select a collection $\mathcal{K}$ of $\Delta_{\ell,i}$, $\ell \le N$, such that
$\pi_\Delta(\cup_{\Delta_{\ell,i} \in \mathcal{K}} \Delta_{\ell,i}) = I$ and
$\pi_\Delta(\Delta_{\ell,i}) \cap \pi_\Delta(\Delta_{\ell',j}) = \emptyset$ except for at most finitely many pairs
$\Delta_{\ell,i}, \Delta_{\ell',j} \in \mathcal{K}$.  Such a collection exists since $f^N$ has at most finitely many intervals
of monotonicity, so that when the images of two branches overlap, we  may eliminate all the $\Delta_{\ell,i}$ in one branch
from our set $\mathcal{K}$.  The only time when we may be forced to retain two overlapping $\Delta_{\ell,i}$ occurs
at the end of one of the branches of monotonicity.  In this way, we are guaranteed the existence of a set $\mathcal{K}$
with the property that only finitely many elements have projections that overlap. 

With the set $\mathcal{K}$ established, the rest of the proof follows along the lines of \cite[Proposition~4.2]{BDM}.
Essentially, it amounts to inverting the projection operator $\mathcal{P}_{\Delta}$ defined in \eqref{eq:PD def}.

Let $\psi \in C^\varsigma(I)$ be given.  
Define $\tilde \psi \equiv 0$ on $\Delta \setminus \cup_{\Delta_{\ell,i} \in \mathcal{K}} \Delta_{\ell,i}$. 
Next, if $\Delta_{\ell,i} \in \mathcal{K}$ and $\pi_\Delta(\Delta_{\ell,i})$ does not overlap
the projection of
any other $\Delta_{\ell',j} \in \mathcal{K}$, then for $x \in \Delta_{\ell,i}$, we may define
$\tilde\psi(x) = \psi(\pi_\Delta x) J\pi_\Delta(x)$.  It follows that $\P_\Delta \tilde\psi(x) = \psi(\pi x)$ for $x \in \Delta_{\ell,i}$, and
by \eqref{eq:lip J} and Lemma~\ref{lem:smooth lift}, $\tilde \psi$ is Lipschitz with norm depending on the level $\ell$.

Finally, for elements of $\mathcal{K}$ whose projections overlap, we proceed as follows.  Suppose 
$\pi_\Delta(\Delta_{\ell,i}) \cap \pi_\Delta(\Delta_{\ell',j}) \neq \emptyset$.  Let $A = \pi_\Delta(\Delta_{\ell,i}) \cup \pi_\Delta(\Delta_{\ell',j})$
and choose a partition of unity $\{ \rho_1, \rho_2 \}$ for the interval $A$ such that $\rho_1, \rho_2 \in C^\varsigma(A)$,
and $\rho_1 =1$ on $\pi_\Delta(\Delta_{\ell,i}) \setminus \pi_\Delta(\Delta_{\ell',j})$, while $\rho_2 = 1$ on
$\pi_\Delta(\Delta_{\ell',j}) \setminus \pi_\Delta(\Delta_{\ell,i})$.

Define $\tilde \psi$ for $x \in \Delta_{\ell,i}$ by
\[
\tilde \psi(x) = \psi(\pi_\Delta x) J\pi_\Delta(x) \rho_1(\pi x) \, ,
\]
and similarly define $\tilde \psi$ on $\Delta_{\ell',j}$ using $\rho_2$.  It is clear that $\P_\Delta \tilde\psi(y) = \psi(y)$
for $y \in A$.  This construction using partitions of unity $\rho_i$ can be modified to account for finitely may overlaps in
$\pi(\Delta_{\ell,i})$, $\Delta_{\ell,i} \in \mathcal{K}$, while keeping a uniform bound on the $C^\varsigma$-norm of $\rho_i$.

In this way, we define $\tilde \psi$ on $\Delta_{\ell,i}$ for all $\Delta_{\ell,i} \in \mathcal{K}$.  Since
$\pi_\Delta(\cup_{\Delta_{\ell,i} \in \mathcal{K}} \Delta_{\ell,i}) = I$, we have $\P_\Delta \tilde \psi = \psi$ (mod 0).
And since $\mathcal{K}$ contains only elements on level at most $N$, by \eqref{eq:lip J} and Lemma~\ref{lem:smooth lift}, 
$\tilde \psi \in \B_0$.
\end{proof}

We proceed to prove the items of Theorem~\ref{thm:accim}.

Recall that $\eta \in (0,1]$ is the relevant H\"older exponent for $\vf$.  For geometric potentials, we take
$\eta = 1$ due to Lemma~\ref{lem:distort}(b).  Fix $\varsigma \in (0, \eta]$.  Then we may choose
$\theta \le \varsigma \log \sigma$, so that Lemma~\ref{lem:good lift} holds.  Then also $\theta \le \eta \log \sigma$ as required by (P3).  Choosing  $\beta$ such that $- \log \overline\lambda_\ve < \beta < \alpha$ then fixes the appropriate
Banach space $\B$ for Theorem~\ref{thm:tower gap}.  In what follows, we assume 
$\ve < \min\{ \ve^* , \ve_1^* \}$.

\medskip
\noindent
(a) The existence of the escape rate $-\log \lambda_\ve$ follows from Theorem~\ref{thm:tower gap}(a) and 
Lemma~\ref{lem:same escape}.  Define 
\[
\hg_\ve = \P_\Delta \hg_\Delta \, .
\]
By \eqref{eq:commute}, one has $\hg_\ve \in L^1(m_\vf)$ and for each $n$,
\[
\hLp_{\vf^{H_\ve}}^n \hg_\ve = \P_\Delta( \hLp_\Delta^n \hg_\Delta)
= \P_\Delta (\lambda_\ve^n \hg_\Delta) = \lambda_\ve^n \hg_\ve \, ,
\]
so that $\hg_\ve dm_\vf$ defines a conditionally invariant probability measure on $I$ with eigenvalue $\lambda_\ve$.

\medskip
\noindent
(b) We define the required conformal measure $m_{H_\ve}$, using the by-now standard procedure,
\begin{equation}
\label{eq:conformal}
m_{H_\ve}(\psi)  := \lim_{n \to \infty} \lambda_\ve^{-n} \int_{\hI^n} \psi \, dm_\vf, \quad \mbox{for } \psi \in C^\varsigma(I).
\end{equation}
Using Lemma~\ref{lem:good lift}, we find $\tpsi \in \B_0$ such that $\P_\Delta \tpsi = \psi$.  Then by \eqref{eq:commute},
\[
\int_{\hI^n} \psi \, dm_\vf = \int_I \hLp_{\vf^{H_\ve}}^n \psi \, dm_\vf = \int_\Delta \hLp_\Delta^n \tpsi \, dm_\Delta
= \int_{\hDelta^{(n)}} \tpsi \, dm_\Delta \, ,
\]
so that the limit in \eqref{eq:conformal} exists by Theorem~\ref{thm:tower gap}(d), using the spectral gap enjoyed by
$\hLp_\Delta$.  Indeed, $d(\tpsi) = m_{H_\ve}(\psi)$.  The fact that $m_{H_\ve}$ defined in this way is $\vf$-conformal
follows from the same calculation as in the proof of \cite[Theorem~1.7]{DemTodMP}.  The fact that $m_{H_\ve}$
is supported on $\hI^\infty$ follows from its definition in \eqref{eq:conformal}.

\medskip
\noindent
(c) Defining $\nu_{H_\ve} := \hg_\ve m_{H_\ve}$, we see that
\begin{equation}
\label{eq:nu limit}
\nu_{H_\ve}(\psi) = \lim_{n \to \infty} \lambda_\ve^{-n} \int_{\hI^n} \hg_\ve \psi \, dm_\vf, \quad \mbox{for } \psi \in C^\varsigma(I),
\end{equation}
since $\P_\Delta (\hg_\Delta \tpsi) = \hg_\ve \psi$, and $\hg_\Delta \tpsi \in \B$ 
by Lemma~\ref{lem:good lift}. 
This extends to $\psi \in C^0(I)$ by approximation:  for each $\epsilon > 0$, 
we may choose $\psi_\delta \in C^\varsigma(I)$ such that $|\psi - \psi_\delta|_{C^0(I)} \le \epsilon$
and $|\psi_\delta|_{C^\varsigma(I)} \le \delta^{-\varsigma}$.  (This can be accomplished, for example,
through convolution of $\psi$ with a $C^\infty$ mollifier.)  
Then $\nu_{H_\ve}(\psi_\delta - \psi_{\delta'}) \le 2\epsilon$ for each $\delta' < \delta$, so that
$( \nu_{H_\ve}(\psi_\delta) )_{\delta > 0}$ forms a Cauchy family as $\delta \to 0$.  
Moreover, 
\[
\lim_{n \to \infty} \lambda_\ve^{-n} \int_{\hI^n} \hg_\ve \psi \, dm_\vf
= \lim_{n \to \infty} \lambda_\ve^{-n} \int_{\hI^n} \hg_\ve (\psi - \psi_\delta) \, dm_\vf
+ \nu_{H_\ve}(\psi_\delta)
= \nu_{H_\ve}(\psi_\delta) + \mathcal{O}(\epsilon),
\]
since $\lambda_\ve^{-n} \int_{\hI^n} \hg_\ve \, dm_\vf = 1$ for each $n \in \mathbb{N}$.  Since
$\epsilon>0$ was arbitrary, $\nu_{H_\ve}(\psi)$ exists and is given by the limit in \eqref{eq:nu limit}.

Next, again using the commutivity given by \eqref{eq:commute}, we see that
$\nu_{H_\ve} = (\pi_\Delta)_* \nu_\Delta$, where $\nu_\Delta$ is from Theorem~\ref{thm:tower gap}(c).
It follows that
\begin{equation}
\label{eq:var}
\log \lambda_\ve = h_{\nu_\Delta}(f_\Delta) + \int \vf_\Delta \, d\nu_\Delta = h_{\nu_{H_\ve}}(f) + \int \vf \, d\nu_{H_\ve},
\end{equation}
since $\pi_\Delta: \Delta \to I$ is at most countable-to-one,
so that $\nu_{H_\ve}$ achieves the supremum in the variational principle among all invariant probability measures on $\hI^\infty$
that lift to an invariant probability measure on $\Delta$, and $\nu_{H_\ve}$
is unique in this class.

In order to conclude that in fact $\nu_{H_\ve}$ achieves the supremum over all invariant probability
measures $\nu$ with $\nu(-\phi^{H_\ve}) < \infty$, i.e., that are supported on $\hI^\infty$, we note the following inequality,   
 taking our notation from Theorem~\ref{thm:tails},
\begin{equation}
\label{eq:lower pressure}
P(\phi) - \int \phi \, d\nu = \int (P(\phi) - \phi) \, d\nu \ge \bar\alpha =\alpha+\xi   \ge  \alpha,
\end{equation}
for any such measure $\nu$, 
which follows from the proof of Theorem~\ref{thm:tails} for all classes of our admissible
potentials.  Note also that $\int \phi^{H_\ve} \, d\nu = \int \phi \, d\nu$ whenever
$\nu(-\phi^{H_\ve}) < \infty$.

By choice of $L$ in Section~\ref{ssec:uniform}, any ergodic invariant measure $\nu$ with entropy 
$h_\nu(f) >  (\log \lambda_{\ve^*} + \alpha)/2$
lifts to our inducing scheme.
For an $f$-invariant measure $\nu$  with $\nu(-\phi^{H_\ve}) < \infty$,
define the pressure of $\nu$ to be $P_\nu(\phi^{H_\ve}) = h_\nu(f) + \int \phi \, d\nu$.  Now if $P_\nu(\phi^{H_\ve}) \ge P_{\nu_{H_\ve}}(\phi^{H_\ve})$,
then,
\[
h_\nu(f) + \int \phi \, d\nu - P(\phi) \ge h_{\nu_{H_\ve}}(f) + \int \phi \, d\nu_{H_\ve}  - P(\phi) = \log \lambda_\ve ,
\]
by \eqref{eq:var} so that $h_\nu(f) \ge \log \lambda_\ve + \alpha \ge \log \lambda_{\ve^*} + \alpha$, using \eqref{eq:lower pressure}, and so 
$\nu$ lifts
to our inducing scheme by our choice of $L$.  Thus 
$P_\nu(\phi^{H_\ve}) \le P_{\nu_{H_\ve}}(\phi^{H_\ve})$, and $\nu_{H_\ve}$ achieves the supremum
among all invariant measures $\nu$ satisfying $\nu(-\phi^{H_\ve}) < \infty$ 
(so in fact $\nu=\nu_{H_\ve}$).
Thus, $\nu_{H_\ve}$ is the unique equilibrium state for $\phi^{H_\ve}$,
proving item (c) of the theorem.

\medskip
\noindent
(d) The characterisation of the limit proving item (d) now follows from Theorem~\ref{thm:tower gap}(d), again using
Lemma~\ref{lem:good lift} to lift any $\psi \in C^\varsigma(\hI)$ to a function $\tpsi \in B_0$, and then evolving that function according
to \eqref{eq:commute}.  The convergence extends to any $\psi \in C^\varsigma(I)$ since in one iterate,
$\hLp_{\vf^{H_\ve}} \psi$ is supported on $\hI$ so the values of $\psi$ on $H_\ve = I \setminus \hI$ are irrelevant to the
value of the limit. 

To justify Remark~\ref{rmk:wider conv}, note that the convergence in (d) holds for any 
$\psi \in \mathcal{P}_\Delta\B_0$ with $\nu_{H_\ve}(\psi) > 0$ due to \eqref{eq:commute}.  In particular, since the invariant density
$g_\vf = \frac{d\mu_\vf}{dm_\vf}$ satisfies $g_\vf = \mathcal{P}_\Delta g_\Delta$ for some $g_\Delta \in \B_0$,
for any $\psi \in C^\varsigma(I)$, we may define $\tilde\psi = \psi \circ \pi_\Delta$, and then conclude
that $\tilde\psi g_\Delta \in \B_0$ by  Lemma~\ref{lem:smooth lift}.  Thus $\psi g_\vf \in \mathcal{P}_\Delta\B_0$, and so
$\hLp^n_{\vf^{H_\ve}}(\psi g_\vf)/|\hLp^n_{\vf^{H_\ve}}(\psi g_\vf)|_{L^1(m_\vf)}$ converges to $\hg_\ve$ as $n \to \infty$.


\section{Zero-hole limit}
\label{sec:zerohole}

In this section, we will focus on the limit $\lim_{\eps\to 0}-\frac{\log\lambda_\eps}{\mu_\vf(H_\eps)}$, the content of Theorems~\ref{thm:zerohole_Holder} and \ref{thm:zerohole_geom}.  
We assume throughout that $\ve \in (0, \ve_2)$, so that the conclusions of
Corollary~\ref{cor:spectral decomp} hold.  Indeed, we will use the spectral gap for $\hLp_\ve$
to construct a canonical invariant measure $\hat\nu_\ve$ for $\hF_\ve$, supported on the survivor set,
$\hY_\ve^\infty = \cap_{n=0}^\infty \hF_\ve^{-n}(Y)$.

For $\psi \in \cont^\eta(\Q)$, define
\begin{equation}
\label{eq:nu def}
\hat\nu_\ve(\psi) := \lim_{n \to \infty} \Lambda_\ve^{-n} \int_{\hY_\ve^{n-1}} \psi \, \bhg_\ve \, d\hat m \, .
\end{equation}
The limit exists since
\[
\Lambda_\ve^{-n} \int_{\hY_\ve^{n-1}} \psi \,  \bhg_\ve \, d\hat m
= \int_Y \Lambda_\ve^{-n} \hLp_\ve^n(\psi \,  \bhg_\ve) \, d\hat m
\; \xrightarrow[n \to \infty]{} \; \he_\ve(\psi \, \bhg_\ve) ,
\]
where $\he_\ve$ is from Corollary~\ref{cor:spectral decomp}.
Since $|\hat \nu_\ve(\psi)| \le \hat \nu_\ve(1) |\psi|_\infty$, $\hat\nu_\ve$ extends to a bounded linear functional
on $\cont^0(\Q)$, i.e., $\hat\nu_\ve$ is a Borel measure.  Moreover, $\hat\nu_\ve(1)=1$, 
so $\hat\nu_\ve$
is a probability measure, clearly supported on $\hY_\ve^\infty$.  

Let $\hPhi_\ve$ denote the punctured version of the induced potential $\Phi_\ve$, i.e., $\hPhi_\ve = \Phi_\ve$ on $Y \setminus \hat H_\ve'$, and $\hPhi_\ve = - \infty$ on $\hat H_\ve'$.  Recall $P(\Phi_\ve)=0$ by Remark~\ref{rmk:ind press}. According to \cite[Section~6.4.1]{DemTodMP}, $\hat\nu_\ve$ is 
an equilibrium state for the potential $\hPhi_\ve - \log \Lambda_\ve$; on the other hand, by \cite[Lemma~5.3]{BDM},
$\hat\nu_\ve$ is a Gibbs measure  
for the potential $\hPhi_\ve - R_\ve \log \lambda_\ve$, with pressure
$P_{\hat\nu_\ve}(\hPhi_\ve - R_\ve \log \Lambda_\ve) = 0$. We conclude,
\begin{equation}
\label{eq:abramov}
\log\Lambda_\ve= \left(\int R_\eps~d\hat\nu_\eps\right)\log\lambda_\ve.
\end{equation}

Recalling $\hat\mu_{Y, \ve} = G_\ve \hm$ is the invariant probability measure for $F_{z, \ve_0, \ve}$, 
supported on  $Y$, Kac's Lemma in \eqref{eq:Kac}  implies 
$\hat\mu_\ve(\hat H_\ve') = \frac{\hat\mu_{Y, \ve}(\hat H_\ve') }{\int R_\ve~d\hat\mu_{Y, \ve}}$.  So putting these together yields

\begin{equation}
\label{eq:string}
\frac{\log\lambda_\eps}{\hat\mu_\ve(\hat H_\eps)} = \frac{\log\lambda_\eps}{\hat\mu_\eps(\hat H_\eps')}\cdot\frac{\hat\mu_\eps(\hat H_\eps')}{\hat\mu_\ve(\hat H_\eps)} =\frac{\log\Lambda_\eps}{\hat\mu_{Y, \ve}(\hat H_\eps')}\cdot
 \frac{\int R_\eps~d\hat{\mu}_{Y, \ve}}{\int R_\eps~d\hat\nu_\eps} \cdot 
 \frac{\hat\mu_\ve(\hat H_\eps')}{\hat\mu_\ve(\hat H_\eps)} \, .
\end{equation}

Therefore to prove Theorems~\ref{thm:zerohole_Holder} and \ref{thm:zerohole_geom} we must show that as $\ve \to 0$,

\begin{equation}
\label{eq:three amigos}
-\frac{\log\Lambda_\eps}{\hat\mu_{Y, \ve}(\hat H_\eps')} \to 1, 
\quad \frac{\int R_\eps~d\hat\mu_{Y, \ve}}{\int R_\eps~d\hat \nu_\eps}  \to 1, 
\quad \mbox{and}
\quad \frac{\hat\mu_\ve(\hat H_\eps')}{\hat\mu_\ve(\hat H_\eps)} \to 1-e^{-S_p\vf(z)}
\end{equation}
(we take $e^{-S_p\vf(z)}=0$ when $z$ is aperiodic).
These are Theorem~\ref{thm:induced limit},  Proposition~\ref{prop:ratio} and then Lemmas~\ref{lem:holder aperiodic} and \ref{lem:holder per} in the H\"older case and Lemmas~\ref{lem:t aperiodic} and Lemma~\ref{lem:t per} in the geometric case.


\subsection{An asymptotic for $\Lambda_\ve$}

In this subsection, we 
obtain a precise
asymptotic for $\Lambda_\ve$ in terms of the quantity $\hat\mu_{Y,\ve}(\hat H_\ve')$, 
proving the first limit in \eqref{eq:three amigos}.

We remark that we are not able to apply the results of \cite{KelLiv09} in our setting since it does not fit into 
the assumptions of that paper.  In \cite{KelLiv09}, it is assumed that there is a sequence of operators $P_\ve$, 
with a decomposition
similar to that given by Corollary~\ref{cor:spectral decomp} and having largest eigenvalue $\rho_\ve$.  
These operators approach a fixed
operator $P_0$ with eigenvalue 1 and the derivative of $\log \rho_\ve$ is expressed in terms of the
size of the perturbation $P_0 - P_\ve$.

In our setting, the only candidate for $P_0$ is our transfer operator $\Lp_0 = \Lp_{z, \ve_0}$, the transfer operator
corresponding to $F_{z, \ve_0}$,
which does not depend on $\ve$.  However, the relation between $\delta$ and $\ve$ given
by Lemma~\ref{lem:close} is not explicit, so that a good asymptotic expression for $\Lambda_\ve$
is not available starting from $\Lp_0$ (indeed, the relation between $\ve$ and $\delta$ depends in part
on the rate of approach of the orbit of $z$ to itself, which is not guaranteed to be proportional to
the measure of $\hat H_\ve'$).  Instead, as suggested by Lemma~\ref{lem:good bound}, the difference
between $\Lp_\ve$ and $\hLp_\ve$ has the correct order for the asymptotic we want.  In order
to exploit this, we consider then two sequences of operators, $(\Lp_\ve)_{\ve > 0}$ and
$(\hLp_\ve)_{\ve >0}$, and use their uniform spectral properties to prove the required asymptotic for
the maximal eigenvalues $\Lambda_\ve$ of the latter sequence in terms of the maximal 
eigenfunctions of the former sequence.

We begin by establishing the following improved regularity for the functions $G_\ve$ and $\bhg_\ve$. 
\begin{lemma}
\label{lem:reg}
For all $\ve \in (0, \ve_2)$, where $\ve_2>0$ is from Corollary~\ref{cor:spectral decomp},
\begin{equation}
\label{eq:log}
H^\eta(\log G_\ve) \le C_d, \quad \mbox{and} \quad H^\eta(\log  \bhg_\ve ) \le C_d .
\end{equation}
As a consequence, there exists $c_0 > 0$ such that for all $\ve \in (0,\ve_2)$,
\begin{equation}
\label{eq:bounds}
c_0 \le \inf_Y \bhg_\ve \le \| \bhg_\ve \|_{\cont^\eta} \le c_0^{-1} ,
\end{equation}
and similar bounds hold for $G_\ve$.
\end{lemma}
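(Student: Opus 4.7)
The plan is to derive \eqref{eq:log} from the distortion estimate \ref{GM3} by applying the spectral convergence of Corollaries~\ref{cor:spectral gap} and \ref{cor:spectral decomp} to iterates of $\Lp_\ve$ and $\hLp_\ve$ acting on the constant function, and then to obtain \eqref{eq:bounds} by combining \eqref{eq:log} with the normalisation of the densities and their $L^1$-stability as $\ve \to 0$.

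First I would establish, for every $n \ge 0$ and every pair $x, x' \in Q$, $Q \in \Q$, the ratio bounds
\[
\left| \tfrac{\Lp_\ve^n 1(x)}{\Lp_\ve^n 1(x')} - 1 \right| \le C_d\, d(x,x')^\eta, \qquad \left| \tfrac{\hLp_\ve^n 1(x)}{\hLp_\ve^n 1(x')} - 1 \right| \le C_d\, d(x,x')^\eta.
\]
Each preimage $y$ of $x$ under $F_\ve^n$ (resp.\ $\hF_\ve^n$) has a unique partner $y'$ for $x'$ lying in the same $n$-cylinder $Y_i^{(n)}$, and because $\hat H_\ve'$ is a union of $1$-cylinders, survival of an $n$-cylinder is determined by the forward orbit of the cylinder itself, so the pairing is respected in the punctured setting. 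Applying \ref{GM3} to $y, y'$ gives $|e^{S_n\Phi_\ve(y)} - e^{S_n\Phi_\ve(y')}| \le C_d\, d(x,x')^\eta\, e^{S_n\Phi_\ve(y')}$; summing over (surviving) preimages and using the triangle inequality yields the ratio bounds. By Corollaries~\ref{cor:spectral gap} and \ref{cor:spectral decomp}, $\hm(Y)^{-1}\Lp_\ve^n 1 \to g_\ve$ and $\he_\ve(1)^{-1}\Lambda_\ve^{-n}\hLp_\ve^n 1 \to \hg_\ve$ in $\cont^\eta(\Q)$ as $n \to \infty$; a standard Perron--Frobenius argument based on the simplicity of the leading eigenvalues together with the mixing properties from Lemmas~\ref{lem:mixing} and \ref{lem:mixing D} (which also apply at the level of $F_\ve$ on $Y$) gives $g_\ve, \hg_\ve > 0$ on $Y$ and $\he_\ve(1) > 0$. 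Passing to the limit in $|\Lp_\ve^n 1(x) - \Lp_\ve^n 1(x')| \le C_d\, d(x,x')^\eta\, \Lp_\ve^n 1(x')$ and in its punctured analogue (swapping $x$ and $x'$ so that the larger of the two densities is placed at the denominator) and applying $\log(1+u) \le u$ for $u \ge 0$ yields \eqref{eq:log}.

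For \eqref{eq:bounds}, the upper bounds on $\|\hg_\ve\|_\infty$ and $\|g_\ve\|_\infty$ follow from \eqref{eq:log} together with the normalisations $\hm(\hg_\ve) = \hm(g_\ve) = 1$: on each $Q \in \Q$, \eqref{eq:log} gives $\sup_Q \hg_\ve \le \inf_Q \hg_\ve \exp(C_d(\diam Q)^\eta)$, while averaging against $\hm$ forces $\inf_Q \hg_\ve \le 1/\hm(Q)$, so $\sup_Q \hg_\ve$ is uniformly bounded; uniformity over $\ve$ follows from the finiteness and $\ve$-independence of $\Q$. The Hölder seminorm bound then follows from $H^\eta_Q(\hg_\ve) \le C_d \sup_Q \hg_\ve$, and similarly for $g_\ve$. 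The hard part will be the uniform lower bound, since the normalisation only forces $\hg_\ve$ to be bounded below on some element $Q^*(\ve) \in \Q$ that may depend on $\ve$. For this I would invoke the $L^1$ convergence $\hg_\ve, g_\ve \to g_0$ from Corollaries~\ref{cor:spectral gap} and \ref{cor:spectral decomp}; combined with the uniform $\cont^\eta$-bounds just established, Arzelà--Ascoli on each $Q \in \Q$ upgrades this convergence to $\cont^0(\Q)$. Since $F_0$ is topologically mixing on $\Q$ by Lemma~\ref{lem:mixing}, Perron--Frobenius ensures that $g_0 \in \cont^\eta(\Q)$ is strictly positive on $Y$; because $\Q$ is finite, $\inf_Y g_0 =: 2c_0 > 0$. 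Shrinking $\ve_2$ if necessary so that $\max\bigl(\sup_Y |\hg_\ve - g_0|, \sup_Y |g_\ve - g_0|\bigr) \le c_0$ for all $\ve \in (0, \ve_2)$ then yields the uniform lower bound, completing the proof of \eqref{eq:bounds}.
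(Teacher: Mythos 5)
Your proposal is correct, but it takes a genuinely different route from the paper in both halves. For \eqref{eq:log}, the paper derives a contraction inequality for the log-H\"older seminorm, $H^\eta(\log \hLp_\ve^n\psi) \le K C_e^{-\eta}\sigma^{-n\eta}H^\eta(\log\psi)+C_d$, and applies it to the eigenfunction itself, letting $n\to\infty$; you instead apply \ref{GM3} to the iterates $\Lp_\ve^n 1$ and $\hLp_\ve^n 1$ (the pairing of preimages is legitimate precisely because $\hat H_\ve'$ is a union of $1$-cylinders, as you note) and pass to the limit via the spectral decompositions of Corollaries~\ref{cor:spectral gap} and \ref{cor:spectral decomp} --- the same distortion mechanism in different packaging, yielding the same constant $C_d$; your loose appeal to Perron--Frobenius for $\he_\ve(1)>0$ is easily made precise, since $\he_\ve(\psi)=\lim_n\Lambda_\ve^{-n}\int_{\hY_\ve^{n-1}}\psi\,d\hm$ is a positive functional with $\he_\ve(\hg_\ve)=1$. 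The real divergence is the lower bound in \eqref{eq:bounds}: the paper argues non-perturbatively, finding $Q_0\in\Q$ on which $\hg_\ve\ge e^{-C_d}$ by \eqref{eq:log} and the normalisation, and then spreading this positivity over all of $Y$ in a number of steps $n_0$ independent of $\ve$ (Lemma~\ref{lem:uniform mix}), with the weight $e^{S_{n_0}\vf}$ bounded below because the relevant orbit avoids a neighbourhood of $\Crit$; this gives an explicit $c_0$ valid on the whole stated range $(0,\ve_2)$. Your route is perturbative: strict positivity of the single density $g_0$ (which still needs essentially the same covering/Perron--Frobenius argument, only for the unpunctured mixing map $F_0$, where it is standard), upgrading the $L^1$ convergence $\hg_\ve, g_\ve\to g_0$ to $\cont^0(\Q)$ via the uniform H\"older bounds and Arzel\`a--Ascoli (here you should either note that $\hm$ has full support in each $Q$ so that subsequential $C^0$ limits are identified with $g_0$, or argue directly that smallness of $\hg_\ve$ at one point of $Q$ forces $\int_Q\hg_\ve\,d\hm$ to be small by \eqref{eq:log}, contradicting $L^1$-closeness), and then choosing $\ve$ small. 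What the paper's approach buys is uniformity on the range $(0,\ve_2)$ exactly as stated, with an explicit $c_0$; your approach is softer but only yields the bounds after possibly shrinking $\ve_2$ and with a non-explicit constant --- harmless for how the lemma is used (in $\ve\to 0$ limits), but strictly a slightly weaker statement than the one claimed.
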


\begin{proof}
Suppose $\psi \in \cont^\eta$ satisfies $H^\eta(\log \psi) \le K$.  Then $\psi(x)/\psi(y) \le e^{Kd(x,y)^\eta}$, for any
$x,y$ belonging to the same element of $\Q$.

We follow the notation in the proof of Proposition~\ref{prop:ly}.  Let $x, y \in Q \in \Q$.  For $n \ge 0$ and
$u \in \hF_\ve^{-n}(x)$, let $Y_i^{(n)}(u)$ denote the $n$-cylinder containing $u$.  For each $u$, there
is a unique $v \in \hF_\ve^{-n}(y) \cap Y_i^{(n)}(u)$.

Using  the log-H\"older regularity of $\psi$ as well as the bounded distortion property \ref{GM3}, we estimate,
\[
\begin{split}
\hLp_\ve^n \psi(x) & = \sum_{u \in \hF_\ve^{-n}(x)} \psi(u) e^{S_n\Phi_\ve(u)}
\le \sum_{u \in \hF_\ve^{-n}(x)} \psi(v) e^{Kd(u,v)^\eta} e^{S_n\Phi_\ve(v)} (1 + C_d d(x,y)^\eta) \\
& \le \hLp_\ve^n \psi(y) e^{KC_e^{-\eta} \sigma^{-n\eta} d(x,y)^\eta} (1+C_d d(x,y)^\eta),
\end{split}
\]
where for the last inequality, we have used property \ref{GM2}.  Now taking logs, and using the inequality
$\log(1+t) \le t$ for all $t \ge 0$, we have
\begin{equation}
\label{eq:log contract}
H^\eta(\log \hLp_\ve^n \psi) \le KC_e^{-\eta} \sigma^{-n \eta} H^\eta(\log \psi) + C_d, \qquad \mbox{for all $n \ge 1$.}
\end{equation}

This implies that for $n$ large enough, $\hLp_\ve^n$ preserves the set of functions
$\{ \psi \in C^\eta(\Q) : H^\eta(\log \psi) \le 1+C_d \}$.  Thus $\bhg_\ve$ must belong to this set.  
Since $\hLp_\ve \bhg_\ve = \Lambda_\ve \bhg_\ve$, substituting
$\bhg_\ve$ into \eqref{eq:log contract} and taking $n \to \infty$ implies that $H^\eta(\log \bhg_\ve ) \le C_d$, proving \eqref{eq:log}.

By a nearly identical argument, \eqref{eq:log contract} applies to $\Lp_\ve$ as well, and so its fixed point
$G_\ve$ satisfies \eqref{eq:log}.

Finally, we show how \eqref{eq:log} implies \eqref{eq:bounds}.  The uniform upper bounds on 
$|\bhg_\ve|_{\cont^\eta}$
and $|G_\ve|_{\cont^\eta}$ follow immediately from Proposition~\ref{prop:ly}; we can set $c_1^{-1} = C$ from that
proposition, so we focus on the lower bounds.

Since $\int \bhg_\ve  \, d\hat m =1$, there exists $Q_0 \in \Q$ such that $\sup_{x \in Q_0} \bhg_\ve(x) \ge 1$.  By \eqref{eq:log},
$\inf_{x \in Q_0} \bhg_\ve(x) \ge e^{-C_d}$.  Now by the mixing property of $F_\ve$ together with 
Lemma~\ref{lem:uniform mix}, there exists $n_0 \in \mathbb{N}$, independent of $\ve \in (0, \ve_1)$, such that
$\hatf^{n_0}_\ve(Q_0) \supset Y$.  Thus for any $y \in Y$, there exists $n(y) \le n_0$ such that
$R^{n(y)}(y) = n_0$.  Then, 
\[
\bhg_\ve(y) = \Lambda_\ve^{-n(y)} \hLp_\ve^{n(y)} \bhg_\ve(y) \ge \Lambda_\ve^{-n(y)} e^{-C_d} \inf_{x\in Q_0 \cap \hatf_\ve^{-n_0}(Y)} e^{S_{n_0}\vf(x)} =: c_2.
\]
Let $c_0:=\min\{c_1, c_2\}$.
Note that by our assumptions on $f$, we have $\inf_{x\in Q_0 \cap \hatf_\ve^{-n_0}(Y)} e^{S_{n_0}\vf(x)}>0$ even when 
$\vf$ is of the form $-t\log|Df|-P(-t\log|Df|)$ because the orbit $x, f(x), \ldots, f^{n_0-1}(x)$ avoids a neighbourhood of $\Crit$ 
for any $x\in Q_0  \cap \hatf_\ve^{-n_0}(Y)$ since $n_0$ is a return time to $Y$ on this set.  
Thus $c_0$ is strictly positive and is also independent of $\ve$
by Lemma~\ref{lem:uniform mix}.  This proves \eqref{eq:bounds} for $\bhg_\ve$ and an identical argument
can be used for $G_\ve$.
\end{proof}

\begin{theorem}
\label{thm:induced limit}
\[
\lim_{\ve \to 0} \frac{1 - \Lambda_\ve}{\hat{\mu}_{Y, \ve}(\hat H'_\ve)} = 1.
\]
\end{theorem}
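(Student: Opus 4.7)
The plan is to first turn the numerator $1 - \Lambda_\ve$ into an integral over the pre-hole, then compare it with the corresponding integral defining $\hat\mu_{Y,\ve}(\hat H_\ve')$, using that the two densities $\hg_\ve$ and $g_\ve$ are close to each other in the sup norm as $\ve\to 0$.

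First, I would use conformality of $\hat m$ together with the eigenvalue equation $\hLp_\ve \hg_\ve = \Lambda_\ve \hg_\ve$. Since $\hat m$ is $\Phi_\ve$-conformal for $F_\ve$, we have $\int_Y \Lp_\ve \psi \, d\hat m = \int_Y \psi \, d\hat m$ for all $\psi \in L^1(\hat m)$, and therefore by the definition $\hLp_\ve \psi = \Lp_\ve(\psi \, 1_{\hY_\ve})$,
\[
\Lambda_\ve = \int_Y \hLp_\ve \hg_\ve \, d\hat m = \int_{\hY_\ve} \hg_\ve \, d\hat m = 1 - \int_{\hat H_\ve'} \hg_\ve \, d\hat m,
\]
where the last equality uses the normalisation $\hat m(\hg_\ve)=1$. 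Thus
\begin{equation*}
1 - \Lambda_\ve = \int_{\hat H_\ve'} \hg_\ve \, d\hat m, \qquad \hat\mu_{Y,\ve}(\hat H_\ve') = \int_{\hat H_\ve'} g_\ve \, d\hat m,
\end{equation*}
and the theorem reduces to showing
\[
\frac{\int_{\hat H_\ve'} \hg_\ve \, d\hat m}{\int_{\hat H_\ve'} g_\ve \, d\hat m} \;\longrightarrow\; 1 \quad \text{as } \ve \to 0.
\]

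Next, I would upgrade the $L^1$-convergence from Corollary~\ref{cor:spectral decomp} (both $g_\ve \to g_0$ and $\hg_\ve \to g_0$ in $L^1(\hat m)$) to uniform convergence on $Y$. By Lemma~\ref{lem:reg}, the families $\{g_\ve\}_{\ve\in[0,\ve_2)}$ and $\{\hg_\ve\}_{\ve\in(0,\ve_2)}$ are uniformly bounded in $\cont^\eta(\Q)$, hence equicontinuous and equibounded on each of the finitely many elements of $\Q$. By Arzel\`a--Ascoli, every subsequence admits a further uniformly convergent subsequence; the $L^1$-limit pins down the limit as $g_0$, so in fact $|g_\ve - g_0|_\infty \to 0$ and $|\hg_\ve - g_0|_\infty \to 0$, and therefore
\[
\Delta_\ve := |\hg_\ve - g_\ve|_\infty \;\longrightarrow\; 0 \quad \text{as } \ve \to 0.
\]

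Finally, I would combine this with the uniform lower bound $g_\ve \ge c_0$ from Lemma~\ref{lem:reg}. Writing
\[
\left| \frac{\int_{\hat H_\ve'} \hg_\ve \, d\hat m}{\int_{\hat H_\ve'} g_\ve \, d\hat m} - 1 \right|
= \frac{\bigl|\int_{\hat H_\ve'} (\hg_\ve - g_\ve) \, d\hat m\bigr|}{\int_{\hat H_\ve'} g_\ve \, d\hat m}
\le \frac{\Delta_\ve \, \hat m(\hat H_\ve')}{c_0 \, \hat m(\hat H_\ve')} = \frac{\Delta_\ve}{c_0},
\]
we see the ratio tends to $1$, provided $\hat m(\hat H_\ve')>0$ (otherwise both numerator and denominator in the theorem are zero and there is nothing to prove). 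The main subtlety is the step of upgrading $L^1$ convergence to uniform convergence, since the pre-hole $\hat H_\ve'$ has $\hat m$-measure tending to zero so bare $L^1$-convergence is not enough; the uniform $\cont^\eta$-bound of Lemma~\ref{lem:reg} is precisely what makes this upgrade work, and is the key technical input.
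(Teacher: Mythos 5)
Your proof is correct, and its middle step is genuinely different from the paper's. Both arguments share the same starting identities $1-\Lambda_\ve = \int_{\hat H'_\ve}\hg_\ve\,d\hm$ and $\hat\mu_{Y,\ve}(\hat H'_\ve) = \int_{\hat H'_\ve} g_\ve\,d\hm$, and both lean on Lemma~\ref{lem:reg}; but from there the paper does not compare $\hg_\ve$ and $g_\ve$ through the common limit $g_0$. Instead it writes $\hg_\ve = \he_\ve(g_\ve)^{-1}\big(\Lambda_\ve^{-n}\hLp_\ve^n g_\ve - \Lambda_\ve^{-n}\hcR_\ve^n g_\ve\big)$ from the spectral decomposition, substitutes this into the integral over $\hat H'_\ve$, and controls three error terms (the spectral remainder of order $e^{-\beta n/3}$, the factor $1-\Lambda_\ve^{-n}$, and the comparison of $\hLp_\ve^n$ with $\Lp_0^n$ on the set $B'_{\ve,n}$ where the maps differ), together with $\he_\ve(g_\ve)\to 1$, via a double limit: first $n\to\infty$, then $\ve\to 0$, using $g_\ve\ge c_0$ to express every error relative to $\int_{\hat H'_\ve} g_\ve\,d\hm$. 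You instead upgrade the $L^1(\hm)$ convergences $g_\ve\to g_0$ and $\hg_\ve\to g_0$ from Corollaries~\ref{cor:spectral gap} and~\ref{cor:spectral decomp} to sup-norm closeness using the uniform $\cont^\eta(\Q)$ bounds and Arzel\`a--Ascoli, after which the ratio bound is immediate. This is shorter, and it makes transparent why the concern raised before the theorem (that $\vertiii{\Lp_0-\hLp_\ve}$ need not be proportional to $\hm(\hat H'_\ve)$) is harmless in your route: you never compare $1-\Lambda_\ve$ with an operator-norm perturbation, and the factor $\hm(\hat H'_\ve)$ cancels in a ratio of integrals over the same set. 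Two small points you should make explicit: identifying the Arzel\`a--Ascoli limit with $g_0$ from $L^1$ information alone requires either that $\hm$ has full support on $Y$ (true here: by the leo property and conformality every open subinterval of $I$, hence of each domain, has positive $m_\vf$-measure), or, more simply, you can work with the $\hm$-essential supremum of $|\hg_\ve-g_\ve|$, which is all your final integral estimate uses; and positivity of the denominator is automatic for $\ve\in(0,\ve_2)$, since $\Lambda_\ve<1$ and $1-\Lambda_\ve\le c_0^{-1}\hm(\hat H'_\ve)$, so there is no degenerate case to set aside.
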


\begin{proof}
We assume $\ve \in (0, \ve_2)$ since we are interested in the limit $\ve \to 0$.
Iterating \eqref{eq:decomp} for $n \ge 1$,
\[
\hLp_\ve^n G_\ve = \Lambda_\ve^n \he_\ve(G_\ve) \, \bhg_\ve +  \hcR_\ve^n G_\ve
 \implies \bhg_\ve = \frac{1}{\he_\ve(G_\ve)} \big( \Lambda_\ve^{-n} \hLp_\ve^n G_\ve - \Lambda_\ve^{-n}
 \hcR_\ve^n G_\ve \big). 
\]

Using this identity and \eqref{eq:good bound}, we estimate 
\begin{equation}
\label{eq:three}
\begin{split}
1 - \Lambda_\ve & =  \int \bhg_\ve \, d\hm - \int \hLp_\ve \bhg_\ve \, d\hm = \int (\Lp_\ve - \hLp_\ve) \bhg_\ve \, d\hm 
=   \int_{\hat H_\ve'} \bhg_\ve \, d\hm  \\
& =  \frac{1}{\he_\ve(G_\ve)} \int_{\hat H_\ve'} \big( \Lambda_\ve^{-n} \hLp_\ve^n G_\ve 
- \Lambda_\ve^{-n} \hcR_\ve^n G_\ve \big) \, d\hm  \\
& = \frac{1}{\he_\ve(G_\ve)} \left( \int_{\hat H_\ve'} G_\ve \, d\hm
- \int_{\hat H_\ve'} (1- \Lambda_\ve^{-n} \hLp_\ve^n) G_\ve \, d\hm
- \int_{\hat H_\ve'} \Lambda_\ve^{-n} \hcR_\ve^n G_\ve \, d\hm \right) . 
\end{split}
\end{equation}

Using
Corollary~\ref{cor:spectral decomp}, 
we estimate the third term on the right side of \eqref{eq:three} by
\[
\| \Lambda_\ve^{-n} \hcR_\ve^n G_\ve \|_{\cont^\eta} \le e^{-2\beta n/3} \Lambda_\ve^{-n} \| G_\ve \|_{\cont^\eta}
\le e^{-\beta n/3} \| G_\ve \|_{\cont^\eta} . 
\]
Due to \eqref{eq:bounds},  $\| G_\ve \|_{\cont^\eta} \le c_0^{-1}$ and $G_\ve \ge c_0$ 
uniformly in $\ve$.
Thus
\begin{equation}
\label{eq:three three}
 \left| \int_{\hat H'_\ve} \Lambda_\ve^{-n} \cR_\ve^n G_\ve \, d\hm \right|
= \int_{\hat H_\ve'} G_\ve \, d\hm \cdot \mathcal{O}(e^{-\beta n/3}) . 
\end{equation}

Next, the second term on the right hand side of \eqref{eq:three} can be rewritten as,
\[
\int_{\hat H_\ve'} (1- \Lambda_\ve^{-n} \hLp_\ve^n) G_\ve \, d\hm
= (1-\Lambda_\ve^{-n}) \int_{\hat H_\ve'} G_\ve \, d\hm + \Lambda_\ve^{-n} \int_{\hat H_\ve'} (\Lp_0^n - \hLp_\ve^n) G_\ve \, d\hm , 
\]
recalling that $\Lp_0$ is the transfer operator corresponding to $F_{z, \ve_0}$ which also has $\hm$
as a conformal measure.
Now  the maps $F_{z, \ve_0}$ and $\hF_{z, \ve_0, \ve}$ differ on the
1-cylinders contained in $B_\ve \cup \hat H_\ve'$, where $B_\ve$ is defined in the proof of 
Lemma~\ref{lem:close}.  Thus $F^n_{z, \ve_0}$ and $\hF^n_{z, \ve_0, \ve}$ differ on the
$n$-cylinders contained in $B'_{\ve, n} := \big( \cup_{i=0}^{n-1} F_{z, \ve_0}^{-i} (B_\ve \cup \hat H_\ve') \big)\bigcup \big( \cup_{i=0}^{n-1} F_{z, \ve_0, \ve}^{-i} (B_\ve \cup \hat H'_\ve) \big)$.  
Now  following \eqref{eq:error sum} and \eqref{eq:infinity close}, we have
\[
| ( \Lp_0^n - \hLp_\ve^n) G_\ve |_\infty \le \frac{1+C_d}{q} 2 \, \hm(B'_{\ve,n})  |G_\ve|_\infty. 
\]

 Then the second term
on the right side of \eqref{eq:three} can be bounded by
\begin{equation}
\label{eq:three two}
\left| \int_{\hat H_\ve'} (1- \Lambda_\ve^{-n} \hLp_\ve^n) G_\ve \, d\hm \right|
= \int_{\hat H_\ve'} G_\ve \, d\hm \cdot \mathcal{O}\left( (1-\Lambda_\ve^{-n}) + \Lambda_\ve^{-n} \hm(B'_{\ve,n}) \right), 
\end{equation}
using  \eqref{eq:bounds} again to estimate,
$\int_{\hat H_\ve'} |G_\ve|_\infty \, d\hm \le c_0^{-2} \int_{\hat H_\ve'} G_\ve \, d\hm$.

Putting \eqref{eq:three three} and \eqref{eq:three two} together with \eqref{eq:three} and dividing through
by $\int_{\hat H'_\ve} G_\ve \, d\hm  = \hat{\mu}_{Y, \ve}(\hat H'_\ve)$ yields,
\[
\frac{1-\Lambda_\ve}{\hat{\mu}_{Y, \ve}(\hat H'_\ve)} =  \frac{1}{\he_\ve(G_\ve)} 
 \left( 1 + \mathcal{O}\left( (1-\Lambda_\ve^{-n}) + \Lambda_\ve^{-n}\hm(B'_{\ve, n}) + e^{-\beta n/3} \right) \right) .
\]
The quantity $\he_\ve(G_\ve)$ can be made arbitrarily close to $e_0(G_\ve)=1$  by 
Corollary~\ref{cor:spectral decomp}.

Now fix $\delta>0$ and first choose $n$ sufficiently large that
$e^{-\beta n/3} < \delta$.  Next choose $\ve$ sufficiently small so that $|\he_\ve(G_\ve) - 1| < \delta$, 
$|1-\Lambda_\ve^{-n}| < \delta$
and $\Lambda_\ve^{-n} \le 2$ by Corollary~\ref{cor:spectral decomp},
and $\hm(B'_{n,\ve}) < \delta$ by Corollary~\ref{cor:large connection}.
Then the error term is
$\mathcal{O}(\delta)$, and since $\delta$ was arbitrary, the theorem follows.
\end{proof}


\subsection{Convergence of the integral of the return time}

In this subsection, we prove the convergence of the second limit in \eqref{eq:three amigos}, regarding the
integral of the return time. As before, 
we assume $\ve \in (0, \ve_2)$, so that the conclusions of
Corollary~\ref{cor:spectral decomp} hold.  

Recall the invariant measure $\hat \nu_\ve$ from \eqref{eq:nu def} supported on $\hY^\infty$, and 
that $\hat\mu_{Y,\ve} = G_\ve  \hat m$ is the invariant measure for $F_\ve$ given by Corollary~\ref{cor:spectral gap}.
The main result of this subsection is the following.

\begin{proposition}
\label{prop:ratio}
Let $R_\ve = R_{z, \ve_0, \ve}$.  Then,
\[
\lim_{\ve \to 0} \frac{\int R_\eps~d\hat\mu_{Y, \ve}}{\int R_\eps~d\hat\nu_\eps}  = 1 \, .
\]
\end{proposition}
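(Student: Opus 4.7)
The plan is to show that both $\int R_\ve\,d\hat\mu_{Y,\ve}$ and $\int R_\ve\,d\hat\nu_\ve$ converge to $\int R_0\,d\hat\mu_{Y,0}$ as $\ve\to 0$; the ratio then tends to $1$. This common limit is finite and positive by Theorem~\ref{thm:tails} and Lemma~\ref{lem:reg}. The argument has two parts: establishing uniform exponential tails for $R_\ve$ under both measures (to permit a truncation), and proving convergence of the truncated integrals.

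For the tail under $\hat\mu_{Y,\ve}$, I would combine Theorem~\ref{thm:tails} with the uniform bound $|g_\ve|_\infty\le c_0^{-1}$ from Lemma~\ref{lem:reg}. For $\hat\nu_\ve$, I would use that it is a Gibbs measure for $\hPhi_\ve-\log\Lambda_\ve$ (as noted after \eqref{eq:nu def}): bounded distortion (Lemma~\ref{lem:distort}), conformality of $\hat m$, and $q=\min_{Q\in\Q}\hat m(Q)>0$ would give $\hat\nu_\ve(Y_i)\le C\Lambda_\ve^{-1}\hat m(Y_i)$ for any $1$-cylinder $Y_i\subset\hY_\ve$. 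Summing over $\{R_\ve=n\}$ with Theorem~\ref{thm:tails}, and using $\Lambda_\ve\to 1$, would then yield $\hat\nu_\ve(\{R_\ve>N\})\le C'e^{-\alpha' N}$ uniformly in $\ve$.

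For the truncated convergence with $N$ fixed, I would let $B_\ve$ be from Corollary~\ref{cor:large connection}; on $Y\setminus B_\ve$, $R_\ve=R_0$ and $\hat m(B_\ve)\to 0$, and the above Gibbs bound also gives $\hat\nu_\ve(B_\ve)\to 0$. For $\hat\mu_{Y,\ve}$, $L^1$-convergence $g_\ve\to g_0$ (Corollary~\ref{cor:spectral gap}) together with the boundedness of $R_0\mathbf{1}_{R_0\le N}$ would give $\int R_\ve\mathbf{1}_{R_\ve\le N}\,d\hat\mu_{Y,\ve}\to\int R_0\mathbf{1}_{R_0\le N}\,d\hat\mu_{Y,0}$. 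For $\hat\nu_\ve$, Theorem~\ref{thm:tails} produces only finitely many $1$-cylinders of $F_0$ with $R_0\le N$, and by Lemma~\ref{lem:uniform mix} each coincides with a $1$-cylinder of $F_\ve$ of the same return time for $\ve$ small; it would then suffice to prove $\hat\nu_\ve(Y_i)\to\hat\mu_{Y,0}(Y_i)$ for each such fixed $Y_i$. I would approximate $\mathbf{1}_{Y_i}$ from above and below by $\psi^\pm\in\cont^\eta(\Q)$ with $\int(\psi^+-\psi^-)\,d\hat m$ small, then combine the spectral identity $\hat\nu_\ve(\psi)=\he_\ve(\psi\hg_\ve)$ for $\psi\in\cont^\eta(\Q)$, the convergence $\he_\ve\to e_0$ on $\cont^\eta(\Q)$ (a consequence of $\invertiii{\hcR_\ve-\cR_0}\to 0$ in Corollary~\ref{cor:spectral decomp}), the Gibbs bound $\hat m_\ve^*\le C\hat m$ on unions of $1$-cylinders (where $\hat m_\ve^*$ is the measure representing $\he_\ve$), and $\hg_\ve\to g_0$ in $L^1(\hat m)$ to deduce $\he_\ve(\psi^\pm\hg_\ve)\to\int\psi^\pm g_0\,d\hat m$; a squeeze then delivers the required convergence.

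Letting $N\to\infty$ and combining the tail estimate with the truncated convergence, both integrals would tend to $\int R_0\,d\hat\mu_{Y,0}$, so the ratio tends to $1$. The hard part is the passage from operator-level convergence $\he_\ve\to e_0$ and $L^1$-convergence of densities to measure convergence of $\hat\nu_\ve$ on individual cylinders: since $\hat\nu_\ve$ is singular with respect to $\hat m$ for $\ve>0$ while the limit $\hat\mu_{Y,0}=g_0\hat m$ is absolutely continuous, $L^1$ methods cannot be applied directly, and the uniform Gibbs bound on $1$-cylinders is the essential bridge.
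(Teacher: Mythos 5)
Your overall scheme is sound and shares its skeleton with the paper's proof: both arguments reduce the statement to showing that $\int R_\ve\,d\hat\mu_{Y,\ve}$ and $\int R_\ve\,d\hat\nu_\ve$ converge to the common value $\int R_0\,d\hat\mu_{Y,0}$, via truncation at a level $N$, uniform-in-$\ve$ exponential tails from Theorem~\ref{thm:tails}, the identification $R_\ve^{(N)}=R_0^{(N)}$ for $\ve\le\ve_N$ (Corollary~\ref{cor:large connection}), and the spectral convergence of Corollary~\ref{cor:spectral decomp}. Where you genuinely diverge is in how the non-H\"older observable is fed into the spectral machinery. The paper never evaluates $\hat\nu_\ve$ on indicators of individual cylinders: it smooths the observable by one application of the punctured transfer operator, noting that $\hLp_\ve(R_\ve\hg_\ve)$ and its truncations lie in $\cont^\eta(\Q)$ with norms uniform in $\ve$, so that existence of $\hat\nu_\ve(R_\ve)$, the uniform integrability $\hat\nu_\ve(1_{R_\ve>N}R_\ve)\le C e^{-\alpha N}$, and convergence of the truncated integrals all follow from the H\"older-level convergence $|\hat\nu_\ve(\psi)-\hat\mu_{Y,\ve}(\psi)|\to 0$ plus operator estimates. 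You instead work directly with the measure $\hat\nu_\ve$ on cylinders through a Gibbs-type domination by $\hm$. Both routes work; the paper's smoothing trick avoids approximation arguments for indicators altogether, while yours makes explicit the measure-theoretic content (domination of the singular measures $\hat\nu_\ve$ by $\hm$ on cylinder sets), which is indeed the essential bridge you identify.

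Two points in your sketch need tightening. First, the bound $\hat\nu_\ve(Y_i)\le C\Lambda_\ve^{-1}\hm(Y_i)$ does not follow from the quoted Gibbs property for $\hPhi_\ve-R_\ve\log\lambda_\ve$, which gives only $\hat\nu_\ve(Y_i)\le C\lambda_\ve^{-R_i}\hm(Y_i)$; that weaker bound still yields the uniform tail, since $-\log\lambda_\ve<\alpha/2$ for $\ve$ small, or you can get your stated bound from \eqref{eq:nu def} by writing $\hat\nu_\ve(1_{Y_i})=\Lambda_\ve^{-1}\he_\ve\bigl(\hLp_\ve(1_{Y_i}\hg_\ve)\bigr)$ and using \eqref{eq:distortion} together with the uniform bound on $\he_\ve$ as a functional on $\cont^\eta(\Q)$. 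Second, and more seriously, in the squeeze step the set $\{\psi^+-\psi^->0\}$ is a neighbourhood of $\partial Y_i$ and is \emph{not} a union of $1$-cylinders: it cuts through the cylinders adjacent to the endpoints, and for such a straddled cylinder $Y_j$ your bound only gives $\hat\nu_\ve(Y_j\cap\{\psi^+-\psi^->0\})\le C\hm(Y_j)$, which need not be small. The repair is the $n$-cylinder version of your bound, $\hat\nu_\ve(A)\le C\Lambda_\ve^{-n}\hm(A)$ for $A$ a union of $n$-cylinders (same proof, applying $\hLp_\ve^n$), combined with \ref{GM2}: $n$-cylinders have diameter at most $C_e^{-1}\sigma^{-n}$, so the transition region is contained in a union of $n$-cylinders lying in a slightly enlarged neighbourhood of $\partial Y_i$, whose $\hm$-measure is small once $n$ is large and the neighbourhood small, uniformly for $\ve$ small (using $\Lambda_\ve^{-n}\le 2$). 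With these repairs your argument goes through and gives the same conclusion as the paper's proof.
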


\begin{proof}
First we show that for $\psi \in \cont^\eta(\Q)$, $|\hat\nu_\ve(\psi) - \hat\mu_{Y, 0}(\psi)| \to 0$ as $\ve \to 0$. 
Let $\hPi_\ve$ be the projector defined by $\bhg_\ve \otimes \he_\ve$,  i.e.
\[
\hPi_\ve(\psi) = \he_\ve(\psi) \, \bhg_\ve , \qquad \mbox{for all $\psi \in \cont^\eta(\Q)$,}
\]
and similarly for $\Pi_0$.  Recall that we have normalised the eigenvectors so that $\hat m(\bhg_\ve ) = \hat m(G_0) =1$.

Notice that since $\Lp_0^* \hat m = \hat m$, $e_0(\psi)$ is simply 
$\hat m(\psi)$.  Thus $\hat\mu_{Y,0}(\psi) = e_0(\psi G_0 )$.  Now,
\[
\begin{split}
|\hat\nu_\ve(\psi) - \hat\mu_{Y,0}(\psi)| &  \le |\he_\ve(\psi \bhg_\ve) - e_0(\psi \bhg_\ve)| + |e_0(\psi \bhg_\ve) - e_0(\psi g_0)|  \\
& \le \left| \int_Y \hPi_\ve(\psi  \bhg_\ve) - \Pi_0(\psi \bhg_\ve) \, d\hat m \right|
+ \left| \int_Y \psi( \bhg_\ve - G_0 ) \, d\hat m \right| \\
& \le \vertiii{ \hPi_\ve - \Pi_0 } \, \| \psi  \bhg_\ve \|_{\cont^\eta(\Q)} + |\psi|_\infty |\bhg_\ve - G_0|_{L^1(\hat m)}, 
\end{split}
\]
and both terms go to zero as $\ve \to 0$ by 
Corollary~\ref{cor:spectral decomp} (which in turn uses \cite{KelLiv99}).

It also follows from Corollary~\ref{cor:spectral gap}, that $\hat\mu_{Y,\ve}(\psi) \to \hat\mu_{Y,0}(\psi)$ as $\ve \to 0$.
Thus by the triangle inequality, $|\hat\nu_\ve(\psi) - \hat\mu_{Y,\ve}(\psi)| \to 0$ as $\ve \to 0$, for all $\psi \in \cont^\eta(\Q)$.

This does not immediately imply the proposition since $R_\ve \notin \cont^\eta(\Q)$.
However, we claim that $\hLp_\ve(R_\ve) \in \cont^\eta(\Q)$.  First, $\hLp_\ve(R_\ve)$ is bounded for all $x \in Y$ by
\begin{equation}
\label{eq:R}
\hLp_\ve R_\ve(x) = \sum_{u \in \hF_\ve^{-1}(x)} R_\ve(u) e^{\Phi_\ve(u)}
\le \frac{1+C_d}{q} \sum_{u \in \hF_\ve^{-1}(x)} R_\ve(u) \hat m(Y_i(u)),
\end{equation}
by \eqref{eq:distortion}, where $Y_i(u)$ is the 1-cylinder containing $u$.  The last sum
is simply bounded by $\hat m (R_\ve) = \hat m_\ve(\hatI_\ve)$, since $F_\ve$ is a first return map to $Y$ in
the Hofbauer extension.  This is uniformly  bounded in $\ve$ by Theorem~\ref{thm:tails}.
Next, since $R_\ve$ is constant on 
1-cylinders, $Y_i \in \mathcal{Y}_\ve$, using \ref{GM3} the H\"older constant of $\hLp_\ve(R_\ve)$ is bounded by,
\[
\hLp_\ve R_\ve(x) - \hLp_\ve R_\ve(y) = \sum_{ u \in \hF_\ve^{-1}(x)} R_\ve(u) (e^{\Phi_\ve(u)} - e^{\Phi_\ve(v)} ) \le C_d d(x,y)^\eta  \sum_{u \in \hF_\ve^{-1}(x)} R_\ve(u) e^{\Phi_\ve(u)},
\]
for all $x , y \in Q \in \mathcal{Q}$, where each $v \in \hF_\ve^{-1}(y)$ is paired with $u \in \hF_\ve^{-1}(x)$ lying in the same
1-cylinder.  The sum is again uniformly bounded in $\ve$ as in \eqref{eq:R}, proving the claim.

It follows that, $\hLp_\ve(R_\ve \bhg_\ve) \in \cont^\eta(\Q)$
and by Lemma~\ref{lem:reg}, also $\frac{\hLp_\ve ( R_\ve \bhg_\ve)}{\bhg_\ve} \in \cont^\eta(\Q)$. 
Now by \eqref{eq:nu def},
\[
\lim_{n \to \infty}  \Lambda_\ve^{-n} \int_{\hY_\ve^n} R_\ve \, \bhg_\ve \, d\hat m
= \lim_{n \to \infty} \Lambda_\ve^{-n} \int_{\hY_\ve^{n-1}} \frac{ \hLp_\ve(R_\ve \bhg_\ve) }{\bhg_\ve} 
\, \bhg_\ve \, d\hat m
\xrightarrow[n \to \infty]{}
\hat\nu_\ve \left( \frac{ \hLp_\ve(R_\ve \bhg_\ve) }{ \Lambda_\ve \bhg_\ve} \right) . 
\]
Thus $\hat \nu_\ve(R_\ve)$ exists and is defined by \eqref{eq:nu def}.

For $N \in \mathbb{N}$, define the truncation $R_\ve^{(N)} = \min \{ R_\ve, N \}$.   For $R_0 = R_{z, \ve_0}$,
define $R_0^{(N)}$ similarly.
By the above arguments, it follows that $\hLp_\ve(R_\ve^{(N)} \bhg_\ve) \in \cont^\eta(\Q)$ and that
$\hat \nu_\ve(R_\ve^{(N)})$ exists and is defined by \eqref{eq:nu def}. Similarly for the complementary function,
$\hLp_\ve(1_{R_\ve > N} \cdot R_\ve) \in \cont^\eta(\Q)$, and
$\hat \nu_\ve(1_{R_\ve > N} \cdot R_\ve)$ exists and is defined by \eqref{eq:nu def}.

Next, we claim that $R_\ve$ is uniformly integrable with respect to $\hat \nu_\ve$; in
particular, $\hat \nu_\ve(1_{R_\ve > N} \cdot R_\ve) \to 0$ as $N \to \infty$ uniformly in $\ve$.  To see this, note that
by \eqref{eq:nu def},
\[
\begin{split}
\hat\nu_\ve (1_{R_\ve > N} \cdot R_\ve) & = \lim_{n \to \infty} \Lambda_\ve^{-n} \int_Y \hLp_\ve^n(1_{R_\ve > N} \cdot R_\ve \, \bhg_\ve)
\, d\hat m
\le \lim_{n \to \infty}  \left|\Lambda_\ve^{-n}  \hLp_\ve^{n-1} (\hLp_\ve(1_{R_\ve > N} \cdot R_\ve \, \bhg_\ve))\right|_\infty  \\
& \le C |\hLp_\ve(1_{R_\ve > N} \cdot R_\ve \, \bhg_\ve)|_\infty \, ,
\end{split}
\]
where we have used \eqref{eq:C0 bound} for the last inequality, together with the fact that $\Lambda_\ve^{-n} \hat m(\hY_\ve^{n-1})$
is bounded uniformly in $\ve$ and $n$ by Corollary~\ref{cor:spectral gap}.  Then estimating as in \eqref{eq:R},
\[
\left|\hLp_\ve(1_{R_\ve > N} \cdot R_\ve \, \bhg_\ve)(x)\right|
\le C \sum_{\substack{u \in \hF_\ve^{-1}(x) \\ R_\ve(u) > N}} R_\ve(u) \hat m(Y_i(u))
\le C \sum_{k > N} k \, \hat m(R_\ve = k) \le C' e^{-\alpha N} \, ,
\]
by Theorem~\ref{thm:tails}, and the claim is proved.

It follows from the proof of Corollary~\ref{cor:large connection}, that for each $N > 0$, there exists
$\ve_N >0$ such that for $\ve \le \ve_N$, all one cylinders $Y_i$ for $F_\ve$ with $R_\ve(Y_i) \le N$, are also
one cylinders for $F_0$ with the same return time.  This implies that $R_\ve^{(N)} = R_0^{(N)}$ for
$\ve \le \ve_N$.

Let $\delta>0$ be arbitrary.  Choose $N$ such that $\hat\nu_\ve(R_\ve > N) < \delta$, 
$\hat\mu_{Y,\ve}(R_\ve > N) < \delta$,
and $\hat\mu_{Y,0}(R_0 > N) < \delta$, for all $\ve < \ve_1$, which is possible by the claim and Theorem~\ref{thm:tails}.
Then for $\ve \le \ve_N$, we have
\[
\hat\nu_\ve(R_\ve) = \hat\nu_\ve\left(R_0^{(N)}\right) + \mathcal{O}(\delta) 
\xrightarrow[\ve \to 0]{} 
\hat\mu_{Y,0}\left(R_0^{(N)}\right) + \mathcal{O}(\delta) = \hat\mu_{Y,0}(R_0) + \mathcal{O}(\delta).
\]
Similarly,
\[
\hat\mu_{Y,\ve}(R_\ve) = \hat\mu_{Y,\ve}\left(R_0^{(N)}\right) + \mathcal{O}(\delta) 
\xrightarrow[\ve \to 0]{} 
\hat\mu_{Y,0}\left(R_0^{(N)}\right) + \mathcal{O}(\delta) = \hat\mu_{Y,0}(R_0) + \mathcal{O}(\delta).
\]
Since $\delta$ was arbitrary, this proves the proposition.
\end{proof}


\subsection{Final step of the proof of Theorem~\ref{thm:zerohole_Holder}: the H\"older continuous case } 
\label{sec:holder}

In the next two sections, we prove the third limit in \eqref{eq:three amigos} in both the periodic and nonperiodic cases.
In the present section we address the case when $\vf$ is H\"older continuous, and in Section~\ref{ssec:geo limit} 
we will address the case
when $\vf$ is a geometric potential.  As a prelimary result, we prove the following lemma.

\begin{lemma}
For $f\in \F$ and a H\"older potential $\vf$, we have $\inf_{x\in I}\frac{d\mu_\vf}{dm_\vf}(x)>0$, where $\mu_\vf$ and $m_\vf$ are 
the relevant invariant and conformal measures.
\label{lem: low dens Hold}
\end{lemma}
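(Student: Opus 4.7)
The strategy is to represent the density $g_\vf := d\mu_\vf/dm_\vf$ through the Young tower of Section~\ref{sec:Young tower} (applied with $\ve = 0$), and then combine three facts: the invariant density $g_0$ of the induced map $F_0 = F_{z,\ve_0}$ is uniformly bounded away from $0$ on $Y = \Delta_0$; the leo condition forces finitely many tower levels to project onto all of $I$; and H\"older potentials on the compact interval $I$ are automatically bounded.

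Starting from $\mu_\vf = (\pi_\Delta)_*\mu_\Delta$ with $\mu_\Delta = Z^{-1} g_\Delta\, dm_\Delta$, where $Z = \int_\Delta g_\Delta\, dm_\Delta$ and $g_\Delta$ is built from $g_0$ via \eqref{eq:gD def}, together with the Jacobian identity \eqref{eq:lip J}, one obtains the pointwise representation
\[
g_\vf(x) \;=\; \frac{1}{Z}\, \P_\Delta g_\Delta(x) \;=\; \frac{1}{Z} \sum_{y \in \pi_\Delta^{-1}(x)} g_\Delta(y)\, e^{S_{\ell(y)}\vf(\pi_\Delta(y^-))},
\]
in which $\ell(y)$ denotes the level of $\Delta$ containing $y$ and $y^- = f_\Delta^{-\ell(y)}y \in \Delta_0$. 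Since every term is nonnegative, it is enough to bound one summand from below uniformly in $x$. Repeating the argument used in the proof of Lemma~\ref{lem:good lift} yields $N \in \N$ with $\pi_\Delta\big(\bigcup_{\ell \le N}\Delta_\ell\big) = I$ (mod zero), so for each $x \in I$ one can select $y_0 \in \pi_\Delta^{-1}(x) \cap \Delta_{\ell_0}$ with $\ell_0 \le N$. Lemma~\ref{lem:reg} gives $g_\Delta(y_0) = g_0(y_0^-) \ge c_0 > 0$, while H\"older continuity of $\vf$ on the compact interval $I$ yields $|S_{\ell_0}\vf| \le N |\vf|_\infty < \infty$. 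Retaining only the $y_0$-term then produces the required uniform lower bound $g_\vf(x) \ge Z^{-1} c_0\, e^{-N|\vf|_\infty}$; and $Z$ is finite because $g_\Delta \le c_0^{-1}$ while $m_\Delta(\Delta_\ell) \le Ce^{-\alpha\ell}$ by Theorem~\ref{thm:tails}.

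The mild technical issue is that Lemma~\ref{lem:reg} is stated for $\ve \in (0,\ve_2)$, whereas I need it at $\ve = 0$; however $F_0$ is itself mixing by Lemma~\ref{lem:mixing}, and both the Lasota--Yorke bounds of Proposition~\ref{prop:ly} and the spectral gap of Corollary~\ref{cor:spectral gap} are stated uniformly for $\ve \in [0,\ve_1)$, so the argument in Lemma~\ref{lem:reg} applies without change to $g_0$. Note that this reasoning fails for the geometric potentials $\vf = -t\log|Df|$, where $|\vf|_\infty = \infty$ near the critical set, which is why the statement is restricted to the H\"older case.
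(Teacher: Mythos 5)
Your proof is correct, but it runs through different machinery than the paper's. The paper's argument stays entirely downstairs: it notes that $g=\frac{d\mu_\vf}{dm_\vf}$ satisfies $\L_\vf g=g$ for the \emph{original} (un-induced) transfer operator, uses Lemma~\ref{lem:reg} together with $\pi_*\hat\mu_\ve=\mu_\vf$ (for any fixed $\ve\in(0,\ve_2)$, so no $\ve=0$ issue arises) to get $\inf_U g>0$ on some open set $U\subset I$, and then applies the leo property once: for $n$ with $f^n(U)=I$, every $x$ has a preimage in $U$, and $g(x)\ge g(y)e^{S_n\vf(y)}\ge (\inf_U g)\,e^{\inf S_n\vf}>0$ since the H\"older potential is bounded. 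You instead go up to the Young tower, write $g_\vf=Z^{-1}\P_\Delta g_\Delta$, and use the covering of $I$ (mod $0$) by the projections of finitely many tower levels from Lemma~\ref{lem:good lift}; the seed bound is the same $c_0$ from Lemma~\ref{lem:reg}, and boundedness of $\vf$ plays the same role. Your route costs a little more: you must justify the tower identities $(\pi_\Delta)_*\mu_\Delta=\mu_\vf$ and Lemma~\ref{lem:reg} at $\ve=0$ (your remark on this is fine -- the paper's proof of Lemma~\ref{lem:reg} explicitly covers $g_\ve$ and the spectral statements are uniform on $[0,\ve_1)$; alternatively you could simply fix some $\ve\in(0,\ve_2)$ as the paper does and avoid the issue), you must check $Z<\infty$ via the exponential tails, and your lower bound holds a priori only $m_\vf$-a.e.\ because the covering by levels $\ell\le N$ is mod $0$ -- harmless here, since the lemma is only used through essential lower bounds on the density, but worth noting against the paper's pointwise statement via $\L_\vf^n g=g$. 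What your version buys is an explicit constant $Z^{-1}c_0e^{-N|\vf|_\infty}$ and a formulation that slots directly into the tower framework of Section~\ref{sec:Young tower}; what the paper's buys is brevity and independence from the tower construction. Both proofs correctly isolate boundedness of the potential as the reason the statement is confined to the H\"older case.
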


\begin{proof}
For simplicity we write $g(x)=\frac{d\mu_\vf}{dm_\vf}(x)$ and note that $m_\vf$ is a $\vf$-conformal measure so $\L_\vf g=g$,
where $\L_\vf$ is the transfer operator associated to $\vf$ and $f$ (not the induced dynamics), 
defined in Section~\ref{sec:puncture}.
Since $\pi_*\hat \mu_\ve = \mu_\vf$ (we take any $\ve \in (0, \ve_2)$), Lemma~\ref{lem:reg} implies that there is an open set $U$ such that 
$\inf_{x\in U}g(x)>0$.  By leo, there is some $n\in \N$ such that $f^n(U)=I$.  Hence for any $x\in I$ we can estimate 
\begin{equation}
\label{eq:lower g}
g(x)=\L_\vf^ng(x)=\sum_{y\in f^{-n}(x)}g(y) e^{S_n\vf(y)} \ge \sum_{y\in \{f^{-n}(x)\} \cap U}g(y) e^{S_n\vf(y)},
\end{equation}
So we conclude by noting that $\inf S_n\vf>-\infty$.
\end{proof}

We first address the case in which $z$ is aperiodic.

\begin{lemma}
\label{lem:holder aperiodic}
Let $z$ be an aperiodic point for $f$ and suppose $\vf$ is H\"older continuous.  
Then,
\[
\lim_{\ve \to 0} \frac{ \hat\mu_\ve(\hat H_\ve')}{\hat\mu_\ve(\hat H_\ve)} = 1.
\]
\end{lemma}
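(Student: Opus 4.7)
I'd apply a Kac--Rokhlin decomposition relative to the first--return map $F_\ve$ to $Y$: setting
$N_\ve(x) := \#\{0 \le j < R_\ve(x) : \hat f_\ve^j(x) \in \hat H_\ve\}$
for $x \in Y$ counts the number of visits of the orbit of $x$ to the lifted hole during a single excursion. The Rokhlin tower structure then yields
\[
\hat\mu_\ve(\hat H_\ve) = \hat\mu_\ve(Y) \int_Y N_\ve \, d\hat\mu_{Y,\ve}, \qquad \hat\mu_\ve(\hat H_\ve') = \hat\mu_\ve(Y) \int_Y \mathbf{1}_{N_\ve \ge 1} \, d\hat\mu_{Y,\ve},
\]
so the claim reduces to $\hat\mu_\ve(\hat H_\ve) - \hat\mu_\ve(\hat H_\ve') = \int_Y (N_\ve - 1)^+\, d\hat\mu_{Y,\ve} = o(\mu_\vf(H_\ve))$.

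Aperiodicity is then used to spread out repeated visits: since $f^k(z) \ne z$ for every $k \ge 1$, continuity of $f^k$ produces, for each $M \in \N$, a threshold $\ve_M > 0$ such that $H_\ve \cap f^{-k}H_\ve = \emptyset$ whenever $1 \le k \le M$ and $\ve \le \ve_M$. With $M(\ve) := \sup\{M : \ve \le \ve_M\}$, which tends to infinity as $\ve \to 0$, we have $\{N_\ve \ge 2\} \subset \{R_\ve > M(\ve)\}$. Decomposing the excess of $N_\ve$ over $\mathbf{1}_{N_\ve \ge 1}$ by the gap $k$ between consecutive hits and using $\pi_*\hat\mu_\ve = \mu_\vf$ together with the tail bound $\hat\mu_\ve(\tau_Y^+ > k) \le C e^{-\alpha k}$ (from Theorem~\ref{thm:tails} via the Kac identity for the return time $R_\ve$),
\[
\hat\mu_\ve(\hat H_\ve) - \hat\mu_\ve(\hat H_\ve') \le \sum_{k > M(\ve)} \min\bigl\{\mu_\vf(H_\ve \cap f^{-k}H_\ve),\, C e^{-\alpha k}\bigr\}.
\]

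For the correlation term I would invoke the spectral gap supplied by Theorem~\ref{thm:tower gap} applied to the closed system. The key point is that the cuts at $z \pm \ve$ from Section~\ref{ssec:puncture} make $\hat H_\ve$ a union of entire Markov cells of the Young tower $\Delta$, so $\mathbf{1}_{H_\ve}$ lifts to a function locally constant on $\{\Delta_{\ell,j}\}$ with $\B_0$--norm uniformly bounded in $\ve$, whereas its $L^1(m_\Delta)$--norm is comparable to $\mu_\vf(H_\ve)$ (via the bounds on $g_\Delta$ from Lemma~\ref{lem:reg}). The spectral decomposition then yields the BV--$L^1$ type estimate
\[
\mu_\vf(H_\ve \cap f^{-k}H_\ve) \le \mu_\vf(H_\ve)^2 + C \mu_\vf(H_\ve)\theta^k, \qquad \theta \in (0,1),
\]
with $\theta$ and $C$ independent of $\ve$. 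Splitting the tail sum at $K_0 := 2 \log(1/\mu_\vf(H_\ve))/\alpha$, the middle band $M(\ve) < k \le K_0$ contributes $O\!\bigl(K_0 \mu_\vf(H_\ve)^2 + \mu_\vf(H_\ve)\theta^{M(\ve)}\bigr)$ and the far tail $k > K_0$ contributes $O(\mu_\vf(H_\ve)^2)$; both are $o(\mu_\vf(H_\ve))$ using only $M(\ve) \to \infty$ and $\mu_\vf(H_\ve)\log(1/\mu_\vf(H_\ve)) \to 0$.

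The main obstacle is obtaining the \emph{multiplicative} $\mu_\vf(H_\ve)$ factor in the correlation estimate, rather than the crude $\ve^{-2\eta}$ factor that a naive H\"older approximation of $\mathbf{1}_{H_\ve}$ would produce. This uniform control is what allows the argument to close for \emph{every} aperiodic $z$ without any quantitative hypothesis on the recurrence rate of the orbit of $z$, and it is precisely the Markov structure of the Hofbauer extension tailored to the hole that delivers it.
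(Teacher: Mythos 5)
Your opening reduction coincides with the paper's first step: the Kac identity \eqref{eq:Kac2} applied to $A=\hat H_\ve$, together with the fact that the lifted hole is a union of Markov cells, gives exactly the decomposition \eqref{eq:unwrap}, and aperiodicity supplies a threshold $M(\ve)\to\infty$ below which repeated hits are impossible. The divergence, and the gap, is in how you control the excess $\int (N_\ve-1)^+$. You replace the paper's direct estimate by the uniform correlation bound $\mu_\vf(H_\ve\cap f^{-k}H_\ve)\le \mu_\vf(H_\ve)^2+C\mu_\vf(H_\ve)\theta^k$, and you correctly flag the multiplicative factor $\mu_\vf(H_\ve)$ as the crux; but the justification offered (that $1_{H_\Delta}$ is locally constant on the cells $\{\dlj\}$ with uniformly bounded $\B_0$-norm and $L^1(m_\Delta)$-norm comparable to $\mu_\vf(H_\ve)$) does not deliver it. The spectral decomposition of the tower operator lives on the weighted space $\B$, whose sup-norm carries the weight $e^{-\beta\ell}$, so the non-dominant part of $\Lp_\Delta^k(1_{H_\Delta}g_\Delta)$ is only bounded by $C\theta^k e^{\beta\ell}$ on $\Delta_\ell$; pairing it against $1_{H_\Delta}$ in $L^1(m_\Delta)$ produces $C\theta^k\sum_\ell e^{\beta\ell}\, m_\Delta(H_\Delta\cap\Delta_\ell)$, not $C\theta^k m_\Delta(H_\Delta)$. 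Since the lifted hole has components on every level and a priori one only knows $m_\Delta(H_\Delta\cap\Delta_\ell)\le\min\{\mathfrak{d}\ell^2\, m_\vf(H_\ve),\, Ce^{-\alpha\ell}\}$, this weighted sum is of order $m_\vf(H_\ve)^{1-\beta/\alpha}$ up to logarithms, a genuine loss. With a bound $\mu_\vf(H_\ve)^{1-\delta}\theta^k$ in place of $\mu_\vf(H_\ve)\theta^k$, your middle band forces $\theta^{M(\ve)}=o(\mu_\vf(H_\ve)^{\delta})$, i.e.\ $M(\ve)\gtrsim \delta\log(1/\mu_\vf(H_\ve))$ — a quantitative non-recurrence hypothesis on $z$ that mere aperiodicity does not provide and that the lemma is meant to avoid.

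To close the correlation route you would need $\sum_\ell e^{\beta\ell} m_\Delta(H_\Delta\cap\Delta_\ell)\le C\, m_\vf(H_\ve)$ uniformly in $\ve$, i.e.\ exponential decay (in $\ell$, at rate beating $\beta$) of the mass of the lifted hole across tower levels, with the multiplicative $m_\vf(H_\ve)$ factor. That statement is precisely the content of the paper's proof: conformality plus the pressure gap $\phi<P(\phi)$ gives contraction $e^{-\bar\alpha T_i}$ along arbitrary orbit segments as in \eqref{eq:entry}, the Hofbauer complexity bound \eqref{eq:card} caps the number of cylinders entering the hole at a given time by $e^{\xi T_i}$ with $\xi<\bar\alpha$, and the count of at most $\mathfrak{d}\ell^2$ hole components per level produces the factor $(t+L)^3 m_\vf(H_\ve)$; with these in hand the excess is summed directly as in \eqref{eq:error bound}, and the detour through decay of correlations becomes unnecessary. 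Note that this per-level estimate is potential-sensitive (it is exactly where H\"older continuity enters; the geometric case requires the bound/free analysis of Lemma~\ref{lem:t aperiodic}), so it cannot be absorbed into a generic remark about the Markov structure. Two smaller points: Theorem~\ref{thm:tower gap} concerns the punctured operator, so the closed-system spectral gap you invoke needs its own justification; and your per-gap tail bound $Ce^{-\alpha k}$ needs a short extra argument, since a single excursion of length $R_\ve$ can contain of order $R_\ve/k$ gaps of length $k$.
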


\begin{proof}
Recall from  \eqref{eq:Kac} and \eqref{eq:Kac2} that $\hat \mu_{Y, \ve}$ and $\hat \mu_\ve$ are related by the following: 
$\hat \mu_{Y, \ve} = \frac{\hat\mu_\ve|_Y}{\hat\mu_\ve(Y)}$, and 
\begin{equation}
\label{eq:meas project}
\hat\mu_\ve(A) 
= \sum_i \sum_{j=0}^{R_i-1} \hat\mu_\ve (Y_i \cap \hatf^{-j}A), \mbox{ for any Borel $A \subset \hatI$}.
\end{equation}

We will apply the above expression to $A = \hat H_\ve$.  Note that due to our construction of
$\hatI_{z, \ve_0, \ve}$, for each $j$ if $\hatf^j(Y_i) \cap \hat H_\ve \neq \emptyset$, then 
$\hatf^j(Y_i) \subset \hat H_\ve$.  Thus each term in the above sum is either $0$ or $\hat\mu_\ve(Y_i)$.
Define for $k \ge 1$,
\begin{equation}
\label{eq:Hk}
\hat H_\ve'(k) = \left\{ Y_i \subset \hat H_\ve' : Y_i \mbox{ enters $\hat H_\ve$ exactly $k$ times before time
$R_i$} \right\} .
\end{equation}
Now using \eqref{eq:meas project} and our observation about $Y_i$,
\begin{equation}
\label{eq:unwrap}
\hat\mu_\ve(\hat H_\ve) = \sum_{k \ge 1} \sum_{Y_i \in \hat H_\ve'(k)} k \hat \mu_\ve(Y_i)
= \hat \mu_\ve(\hat H_\ve') + \sum_{k \ge 2} \sum_{Y_i \in \hat H_\ve'(k)} (k-1) \hat \mu_\ve(Y_i) .
\end{equation}
We proceed to estimate the double sum over $k$ and $Y_i$.

By \eqref{eq:meas project}, since $F_\ve$ is the first return map to $Y$ in $\hatI$, the
invariant density $G_\ve$ from Corollary~\ref{cor:spectral gap} is also the density for $\hat\mu_\ve$
on $Y$, up to a normalising constant.   Applying the uniform bounds on $G_\ve$ from Lemma~\ref{lem:reg}, we
replace
$\hat \mu_\ve(Y_i)$ with $\hat m(Y_i)$ in \eqref{eq:unwrap}, up to a uniform constant.
For $Y_i \subset \hat H_\ve'(k)$, let $T_i$ denote the time of the $k$th entry of $Y_i$ to
$\hat H_\ve$ under iteration of $\hatf$.  By the conformality of $\hat m = \hat m_{\hat \vf}$,
\begin{equation}
\label{eq:entry}
\hat m_\ve(Y_i) \le C e^{S_{T_i} \hat\vf(y_i)} \hat m_\ve(\hatf^{T^i}Y_i) \le C e^{-\bar\alpha T_i} \hat m_\ve(\hatf^{T_i}Y_i),
\end{equation}
for any $y_i \in Y_i$, where $\bar\alpha>0$ is from the proof of Theorem~\ref{thm:tails}.

Fixing $Y_i \in \hat H_\ve'(k)$, we wish to estimate 
$\# \{ Y_j \in \hat H_\ve'(k) : T_j = T_i \mbox{ and } \hatf^{T_i}Y_i \cap \hatf^{T_j}Y_j \neq \emptyset \}$.
Due to our construction of the Hofbauer extension, such a $Y_j$ is contained in a set $Z_j \in \hat{\mathcal P}_{T_j+L}$,
such that $f^{T_j}$ maps $Z_j$ injectively into a connected component of $\hat H_\ve$.  $Z_j$
can be associated with a word of length $T_j +L$, the first symbol of which lies in $Y$, while
the remaining symbols lie in $\hatI_{z, \ve_0, \ve} \setminus Y$.  We divide this word into blocks of length $L$,
and note there are $\lfloor \frac{T_j}{L} \rfloor$ of them.  They are all {\em external blocks}
according to the terminology of \cite{DobTod15}.  According to \cite[Lemma~4.6]{DobTod15}, there are
at most $16 \mathfrak{d}^2 L^3$ external blocks of length $L$.  In addition, since $f^{T_j}Z_j \subset \hat H_\ve$,
we may choose $\ve$ sufficiently small that any remaining symbols between
$\lfloor \frac{T_j}{L} \rfloor L$ and $T_j$ also belong to an external block of length $L$.  Finally, there
are at most $(2\mathfrak{d}L)^2$ choices for the first symbol of $Z_j$ since this is an upper bound on the number
of elements in $\hat I'_{z, \ve_0, \ve}(L)$.  Putting these estimates together, we conclude that
\begin{equation}
\label{eq:card}
\# \{ Y_j \in \hat H_\ve'(k) : T_j = T_i \mbox{ and } \hatf^{T_i}Y_i \cap \hatf^{T_j}Y_j \neq \emptyset \}
\le (2\mathfrak{d}L)^2 (16 \mathfrak{d}^2 L^3)^{\frac{T_j}{L} + 1} \le C e^{\xi T_j},
\end{equation}
where $\xi < \bar\alpha$ (by choice of $L$) is the same as in the proof of Theorem~\ref{thm:tails}.

Next, due to the aperiodicity of $z$ and the continuity of $f$, for each $\ve > 0$
there exists $N = N(\ve) \in \mathbb{N}$, such that $\hatf^j \hat H_\ve \cap \hat H_\ve = \emptyset$ for all $j < N$,
and $N(\ve) \to \infty$ as $\ve \to 0$.  This implies in particular that if $Y_i \in \hat H_\ve'(k)$, then
$T_i \ge (k-1)N$.

We organise our estimate for $Y_i \subset \hat H'_\ve(k)$ by considering 
$\hat H'_\ve(k) = \cup_{t \ge (k-1)N} \{ Y_i \in \hat H'_\ve(k) : T_i = t \}$.  Then using \eqref{eq:entry} and \eqref{eq:card},
\[
\sum_{\substack{ Y_i \in \hat H_\ve'(k) \\ T_i = t }} (k-1) \hat \mu_\ve(Y_i)
\leq \sum_{\substack{ Y_i \in \hat H_\ve'(k) \\ T_i = t }}  C (k-1) e^{-\bar\alpha t} \hat m_\ve (\hat f^t(Y_i))
\leq C (k-1) e^{-(\bar\alpha - \xi) t} (t+L)^3 m_\vf(H_\ve),
\]
where for the last inequality, we have used the fact that $\hat f^t(Y_i)$ lies in a component of $\hat H_\ve$ in level at most
$t + L$ in the Hofbauer extension.  Since there are at most $\mathfrak{d}\ell^2$ connected components on level $\ell$
according to the proof of \cite[Lemma~4.6]{DobTod15}, we obtain that projecting $\hat H_\ve|_{\mbox{\scriptsize level } \ell}$ down to $H_\ve$, we have
$\hat m_\ve(\hat H_\ve|_{\mbox{\scriptsize level } \ell}) \le \mathfrak{d}\ell^2 m_\vf(H_\ve)$ and summing over $\ell \le (t+L)$ yields the required bound. 

Using this estimate in the double sum in \eqref{eq:unwrap}, we obtain,
\begin{equation}
\label{eq:error bound}
\begin{split}
\sum_{k \ge 2} \sum_{Y_i \subset \hat H_\ve'(k)} (k-1) \hat \mu_\ve(Y_i) 
& \le \sum_{k \ge 2} \sum_{t \ge N(k-1)} \sum_{\substack{Y_i \in \hat H_\ve'(k) \\ T_i = t}}
C e^{- \bar \alpha t} (k-1) \hat m_\ve(\hatf^tY_i) \\
& \le \sum_{k \ge 2} \sum_{t \ge N(k-1)} C e^{-(\bar\alpha - \xi) t} (k-1) (t+L)^3 m_\vf(H_\ve) \\
& \le C' m_\vf(H_\ve) \sum_{k \ge 2} e^{-\alpha N(k-1)} (k-1) 
\le C'' \mu_\vf(H_\ve) e^{-\alpha N} \, ,
\end{split}
\end{equation}
where in the last step we have used Lemma~\ref{lem: low dens Hold}.
Combining this estimate with \eqref{eq:unwrap} and dividing through by $\hat \mu_\ve(\hat H_\ve)$ 
(using that $\hat \mu_\ve(\hat H_\ve) = \mu_\vf(H_\ve)$)
yields,
\[
\frac{\hat \mu_\ve(\hat H_\ve')}{\hat \mu_\ve(\hat H_\ve)} = 1
- \mathcal{O}\!\left(e^{-\alpha N}\right) \, .
\] 
Since $N(\ve) \to \infty$ as $\ve \to 0$, this completes the proof of the lemma.
\end{proof}

Our next lemma addresses the case in which $z$ is periodic with prime period $p$.

\begin{lemma}
\label{lem:holder per}
Suppose $z$ is a periodic point for $f$ of prime period $p$, and that $\vf$ is H\"older continuous.
\begin{itemize}
  \item[a)]  If  $\{ f^n(c) : c \in \Crit, n \ge 1 \} \cap \{ z \} = \emptyset$, then  
    $\displaystyle \lim_{\ve \to 0} \frac{-\log \lambda_\ve}{\mu_\vf(H_\ve)} = 1 - e^{S_p\vf(z)}$.
  \item[b)]  Suppose  $\{ f^n(c) : c \in \Crit, n \ge 1 \} \cap \{ z \} \neq \emptyset$.   If in addition,
  either $f^p$ is orientation preserving in a neighbourhood of $z$, or $\lim_{\ve\to 0} \frac{m_\vf(z+\ve, z)}{m_\vf(z, z-\ve)} = 1$, then
  $\displaystyle \lim_{\ve \to 0} \frac{-\log \lambda_\ve}{\mu_\vf(H_\ve)} = 1 - e^{S_p\vf(z)}$.  
\end{itemize}
\end{lemma}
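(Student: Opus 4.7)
The plan is to reduce the lemma to computing the third limit in \eqref{eq:three amigos}: once I establish $\lim_{\ve\to 0} \hat\mu_\ve(\hat H_\ve')/\hat\mu_\ve(\hat H_\ve) = 1 - e^{S_p\vf(z)}$, combining this with the limits already established in Theorem~\ref{thm:induced limit} and Proposition~\ref{prop:ratio} via the identity \eqref{eq:string} will yield the claim.

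First I would revisit the identity \eqref{eq:unwrap}, which is independent of whether $z$ is periodic:
\[
\hat\mu_\ve(\hat H_\ve) - \hat\mu_\ve(\hat H_\ve') = \sum_{k\geq 2}\sum_{Y_i\in\hat H_\ve'(k)}(k-1)\,\hat\mu_\ve(Y_i).
\]
Since $z$ has prime period $p$ and is not a critical point, for $\ve$ small the orbit of any $y \in \hat H_\ve$ tracks the orbit of $z$ for at least $p$ iterates; combined with \eqref{eq:eps0 choice} and the prime-period condition, such an orbit neither re-enters $\hat H_\ve$ nor returns to $Y$ at any intermediate time in $(0,p)$. Consequently, for a 1-cylinder $Y_i \in \hat H_\ve'(k)$, the $k$ visits to $\hat H_\ve$ during its excursion occur at consecutive times $T_i, T_i+p, \ldots, T_i+(k-1)p$. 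Applying the $\hatf_\ve$-invariance of $\hat\mu_\ve$ to each visit image and reindexing by (visit number, remaining visits) will give
\[
\sum_{k\geq 2}\sum_{Y_i\in \hat H_\ve'(k)}(k-1)\,\hat\mu_\ve(Y_i) = \hat\mu_\ve\big(\hat H_\ve \cap \hatf_\ve^{-p}(\hat H_\ve)\big)(1+o(1)) = \mu_\vf(H_\ve \cap f^{-p}H_\ve)(1+o(1)),
\]
where the last equality uses $\pi_*\hat\mu_\ve = \mu_\vf$ and $\hat H_\ve = \pi^{-1}(H_\ve)$. The task thus reduces to showing $\mu_\vf(H_\ve \cap f^{-p}H_\ve)/\mu_\vf(H_\ve) \to e^{S_p\vf(z)}$.

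For part~(a), $z$ is disjoint from the post-critical set, so $f^p$ is a $C^3$ local diffeomorphism at $z$ with $|Df^p(z)|>1$ (periodic points of multimodal maps in $\F$ are uniformly expanding). Thus for small $\ve$, $f^p:H_\ve\cap f^{-p}(H_\ve)\to H_\ve$ is a bijection, and by the $\vf$-conformality of $m_\vf$ combined with H\"older continuity of $\vf$,
\[
m_\vf(H_\ve) = \int_{H_\ve\cap f^{-p}(H_\ve)} e^{-S_p\vf}\,dm_\vf = e^{-S_p\vf(z)(1+O(\ve^\eta))}\, m_\vf(H_\ve\cap f^{-p}(H_\ve)).
\]
The density $g_\vf = d\mu_\vf/dm_\vf$ is H\"older continuous at points disjoint from the post-critical set (by spectral theory for the transfer operator, cf.\ Lemma~\ref{lem:reg}) and bounded away from zero by Lemma~\ref{lem: low dens Hold}, so the asymptotic transfers to $\mu_\vf$ and yields $\mu_\vf(H_\ve\cap f^{-p}H_\ve)/\mu_\vf(H_\ve)\to e^{S_p\vf(z)}$, as required.

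For part~(b), $z$ lies in the post-critical orbit, so $g_\vf$ may fail to be continuous at $z$, with distinct one-sided limits $g_L$ and $g_R$. The map $f^p$ remains a local diffeomorphism (since $z$ is not itself critical) but may be orientation reversing. Decomposing $H_\ve$ into its two halves and applying conformality piece by piece, the orientation-reversing case gives
\[
\frac{\mu_\vf(H_\ve\cap f^{-p}(H_\ve))}{\mu_\vf(H_\ve)} = e^{S_p\vf(z)}\cdot\frac{g_L\,m_\vf(z,z+\ve) + g_R\, m_\vf(z-\ve,z)}{g_L\,m_\vf(z-\ve,z) + g_R\,m_\vf(z,z+\ve)}\,(1+o(1)),
\]
while the ratio equals $e^{S_p\vf(z)}(1+o(1))$ identically in the orientation-preserving case, regardless of the relation between $g_L$ and $g_R$. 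The fraction above converges to $1$ precisely when $m_\vf(z,z+\ve)/m_\vf(z-\ve,z)\to 1$. The principal technical obstacle is thus the possible discontinuity of $g_\vf$ at post-critical periodic points; the two alternative hypotheses in part~(b) correspond exactly to the two ways of neutralising this obstruction, namely either avoiding the swap between the left and right halves of $H_\ve$ under $f^p$, or forcing the two halves to be asymptotically equal so that the swap becomes invisible.
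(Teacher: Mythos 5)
Your overall skeleton is the right one (reduce to the third limit in \eqref{eq:three amigos}, identify the excess $\hat\mu_\ve(\hat H_\ve)-\hat\mu_\ve(\hat H_\ve')$ with period-$p$ self-returns of the hole, and explain the hypotheses of part (b) via the half-hole/orientation dichotomy), but there are two genuine gaps, one of which is the technical heart of the matter. The serious one is your passage from the $m_\vf$-asymptotic to the $\mu_\vf$-asymptotic: you assert that $g_\vf=d\mu_\vf/dm_\vf$ is H\"older continuous at $z$ in case (a) ``by spectral theory, cf.\ Lemma~\ref{lem:reg}'', and in case (b) that it has finite one-sided limits $g_L,g_R$ at $z$. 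Lemma~\ref{lem:reg} gives log-H\"older regularity of the \emph{induced} densities $g_\ve,\hg_\ve$ on $Y$, and by construction $Y\cap\pi^{-1}\big((z-\ve_0,z+\ve_0)\big)=\emptyset$, so it says nothing about $g_\vf$ near $z$. Moreover, $z\notin\{f^n(c):c\in\Crit,\,n\ge1\}$ does not give continuity of $g_\vf$ at $z$: the post-critical set is countable but typically dense (the maps are leo), so $z$ may be accumulated by post-critical points, where $g_\vf$ can oscillate; and in case (b) the existence of one-sided limits at a post-critical periodic point is a nontrivial structural claim you do not prove. This regularity issue is exactly what the paper's proof is engineered to avoid: there the whole computation is done upstairs, expressing $\hat\mu_\ve(\hat H_\ve)$ through cylinder measures $\hat\mu_\ve(Y_i)$ (where the induced density \emph{is} log-H\"older, Lemma~\ref{lem:reg}) and then using only $\hat m_\ve$-conformality of the excursion branches onto the (half-)hole components, as in \eqref{eq:ratio}, \eqref{eq:conf hole} and \eqref{eq:whole hole}; no upper bound or modulus of continuity for $g_\vf$ at $z$ is ever needed (cf.\ the remark after Theorem~\ref{thm:zerohole_geom}). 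As written, your part (a) does not go through without supplying this missing regularity, or without reorganising the argument in the lifted form.

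The second, more repairable, gap is in your identification
$\sum_{k\ge2}\sum_{Y_i\in\hat H_\ve'(k)}(k-1)\hat\mu_\ve(Y_i)=\hat\mu_\ve\big(\hat H_\ve\cap\hatf_\ve^{-p}\hat H_\ve\big)(1+o(1))$.
Your justification claims that an orbit starting in $\hat H_\ve$ ``neither re-enters $\hat H_\ve$ nor returns to $Y$ at any intermediate time in $(0,p)$''; shadowing the orbit of $z$ controls proximity to the hole but says nothing about returns to $Y$ in the Hofbauer extension (the lifted images of a hole component can drop to a low level and meet $Y$ before time $p$), so the claim is unjustified and in general false. What is actually needed is that two error terms are $o\big(\mu_\vf(H_\ve)\big)$: (i) consecutive visits within an excursion that are \emph{not} $p$-separated (these force an entry time $>pN_0$ after the first visit and are killed by the conformality/complexity estimates behind \eqref{eq:error bound}, exactly the paper's $N_0$-truncation), and (ii) distance-$p$ pairs straddling a return to $Y$, which do occur with positive measure and must be estimated, e.g.\ by bounded distortion at the return time together with invariance, giving a contribution $O\big(\mu_\vf(H_\ve)^2\big)$. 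Neither point is addressed in your sketch; once they are, your intermediate identity is correct and your orientation-reversing computation mirrors the paper's treatment of the left/right half-holes, but the density-regularity gap above still blocks the final step.
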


\begin{proof}
Fix $N_0$ arbirarily large.  Due to \eqref{eq:eps0 choice}, we may choose $\ve'>0$ sufficiently small so that
for all $\ve < \ve'$, the following properties hold.
\begin{itemize}
  \item[(i)]  If $y \in H_\ve(z)$, then for all $j = 1, \ldots, pN_0$, $f^j(y) \in H_\ve(z)$ only if $j = kp$ for some $k = 1, \ldots N_0$.
  \item[(ii)]  If $y \in H_\ve(z)$ and there exists $k_1 \le N_0$ such that $f^{k_1p}(y) \in H_\ve$, then $f^{kp}(y) \in H_\ve(z)$
  for all $k = 1, \ldots, k_1$.
  \item[(iii)]  Each 1-cylinder $Y_i \subset \hat H_\ve'$ whose first entry time $\ell$ to $\hat H_\ve$ is less than $N_0$ is contained
  in an interval $Z_j \subset Q \in \mathcal{Q}$ such that $\hatf^\ell(Z_j)$ maps injectively onto a connected component
  of $\hat H_\ve$, which we will denote by $\hat H_\ve(Z_j)$.
  \item[(iv)] $\hatf^{pN_0}$ is injective and continuous on each connected component of 
$\hat H_\ve \cap \hat f^{-p N_0}(\hat H_\ve)$ that occurs below level $N_0$ in $\hatI_{z, \ve_0, \ve}$. 
\end{itemize}

Properties (i) and (ii) follow from the periodicity of $z$ and the uniform continuity of $f^n$ for each orbit segment of
length $n \le pN_0$.
To deduce Property (iii), since $f^k(z \pm \ve_0) = z$ is not allowed by choice of $\ve_0$ in \eqref{eq:eps0 choice}, it suffices to choose
\[
\ve' \le \tfrac 12 \min \{ d(z, f^k(w)) : w \in \Crit_{z, \ve_0}, f^k(w) \neq z, k \le N_0 \} \, .
\] 
With this choice of $\ve'$, no boundary points of $\hat I_{z, \ve_0, \ve}$ for $\ve < \ve'$ may fall in the interior of a connected 
component of $\hat H_\ve$ with a first entry time less than $N_0$.  Finally, Property (iv) holds since
the orbit of $z$ must be disjoint from $\Crit$; otherwise $f$ would have an attracting periodic orbit, which is forbidden
in our class of maps $\mathcal{F}$.  Thus we may choose
\[
\ve' \le |Df^{pN_0}|_\infty^{-1} \min \{ d(f^k(z), \Crit) : k = 0, \ldots, p-1 \} \, ,
\]
in order to guarantee (iv).

Starting from \eqref{eq:meas project}, we group the 1-cylinders  $Y_i \subset \hat H_\ve'$ as follows.  Let $\ell_i \in \mathbb{N}$ denote the
greatest $\ell \le N_0$ such that $\hatf^{\ell p}(Y_i) \subset \hat H_\ve$.  By (i) and (ii) above, if $j \le pN_0$, then 
$\hatf^j(Y_i) \subset \hat H_\ve$ if and only
if $j = \ell p$ for some $\ell \le \ell_i$.  Recalling \eqref{eq:Hk}, we let $\hat H_\ve'(k, N_0)$ denote the set of $Y_i \subset \hat H_\ve'(k)$
such that the first entry of $Y_i$ to $\hat H_\ve$ occurs before time $N_0$, while $\hat H_\ve'(k, \sim ) = \hat H_\ve'(k) \setminus \hat H_\ve'(k,N_0)$.  Moreover, $\hat H_\ve'(*, N_0) := \cup_{k \ge 1} \hat H_\ve'(k, N_0)$. Then,
\begin{equation}
\label{eq:period expand}
\hat\mu_\ve(\hat H_\ve) = \sum_{\ell=1}^{N_0} \sum_{\substack{Y_i \subset \hat H_\ve'(*,N_0) \\ \ell_i = \ell }} \ell \hat\mu_\ve(Y_i) 
+ \sum_{k > N_0} \sum_{Y_i \in \hat H_\ve'(k, N_0)} (k-\ell_i) \hat\mu_\ve(Y_i)
+ \sum_{k \ge 1} \sum_{Y_i \in \hat H_\ve'(k, \sim )} k \hat\mu_\ve(Y_i) \, .
\end{equation}
Since the entry times to $\hat H_\ve$ are greater than $N_0$ for each of the sets counted in the second and third sums above,
we may use \eqref{eq:entry} and \eqref{eq:error bound} to estimate that these two sums are of order
$\mathcal{O}(e^{-\alpha N_0} \hat\mu_\ve(\hat H_\ve))$.  It remains to estimate the first sum above.
We rewrite \eqref{eq:period expand} as,
\begin{equation}
\label{eq:expand}
\hat \mu_\ve(\hat H_\ve) = \sum_{\ell = 1}^{N_0} \sum_{\substack{Y_i \subset \hat H_\ve'(*, N_0) \\ \ell_i \ge \ell}} \hat\mu_\ve(Y_i)
+ \mathcal{O}\!\left( e^{-\alpha N_0} \hat \mu_\ve(\hat H_\ve) \right) \, .
\end{equation}
For $\ell = 1$, we have simply,
\[
\sum_{\substack{Y_i \subset \hat H_\ve'(*, N_0) \\ \ell_i \ge 1}} \hat\mu_\ve(Y_i) = \hat \mu_\ve(\hat H_\ve') + 
\mathcal{O}\!\left( e^{-\alpha N_0} \hat \mu_\ve(\hat H_\ve) \right) \, ,
\]
since any $Y_i \subset \hat H_\ve'$ not counted in the sum for $\ell = 1$ has first entry time to $\hat H_\ve$ greater than $N_0$.

To estimate the contribution for the terms corresponding to $\ell = 2$, we use property (iii) above.  If $Y_i \subset \hat H_\ve(*, N_0)$
with $\ell_i \ge 2$, then $Y_i$ is contained in an interval $Z_j$ such that $\hatf^k(Z_j)$ maps injectively onto a connected 
component of $\hat H_\ve$ (for the first time) at some time $k = k(Z_j) \le N_0$.  Let us denote this component of $\hat H_\ve$ by $\hat H_\ve(Z_j)$.
Let $A_j$ denote those indices $i$ for which $Y_i \subset Z_j$.  Then,
\[
\sum_{\substack{Y_i \subset \hat H_\ve'(*, N_0) \\ \ell_i \ge 2}} \hat\mu_\ve(Y_i)
=\sum_{Z_j} \hat\mu_\ve(Z_j) \sum_{i \in A_j} \frac{\hat\mu_\ve(Y_i)}{ \hat\mu_\ve(Z_j)} \, .
\] 
Notice that since $\ell = 2$, for each $i \in A_j$, $\hatf^{k(Z_j)}(Y_i) \subset \hat H_\ve(Z_j) \cap \hatf^{-p}(\hat H_\ve)$.
Recalling Lemma~\ref{lem:reg} and the conformality of $\hat m_\ve$ (recall that $\hat m = \hat m_\ve$
depends on $\ve$ on $\hat I_{z, \ve_0, \ve} \setminus Y$), we estimate,\footnote{We use the notation
$a = C^{\pm 1} b$ to mean $C^{-1} b \le a \le Cb$ for some constant $C\ge 1$.}
\begin{equation}
\label{eq:ratio}
\begin{split}
\sum_{i \in A_j} \frac{\hat\mu_\ve(Y_i)}{ \hat\mu_\ve(Z_j)} 
& = \sum_{i \in A_j} e^{\pm 2C_d {\scriptsize \diam}(Z_j)^\eta} \frac{ \int_{\hatf^k(Y_i)} e^{S_k\hat\vf \circ \hatf_j^{-k}} \, d\hat m_\ve }
{\int_{\hatf^k(Z_j)} e^{S_k\hat\vf \circ \hatf_j^{-k}} \, d\hat m_\ve } \\
& = e^{\pm 2C_d {\scriptsize \diam}(Z_j)^\eta} P_\ve(S_k\hat\vf(Z_j))^{\pm 2} \frac{\hat m_\ve\left(  \hat H_\ve(Z_j) \cap \hatf^{-p}(\hat H_\ve)\right)}{\hat m_\ve( \hat H_\ve(Z_j))}  \,
\end{split}
\end{equation}
where $\hat f_j^{-k}$ is the inverse branch of $\hat f^k|_{Z_j}$ and 
\[
P_\ve(S_k\hat \vf(Z_j)) = \sup_{x, y \in Z_j} e^{S_k \hat\vf(x) - S_k \hat\vf(y)} \, .
\]

Recall that since we cut at $f^{-1}(z)$ and $f^{-1}(z \pm \ve)$, during our construction of $\hat I_{z, \ve_0, \ve}$, 
$\hat H_\ve(Z_j)$ must satisfy either $\pi(\hat H_\ve(Z_j)) = (z, z+\ve)$ or $\pi(\hat H_\ve(Z_j)) = (z-\ve, z)$.
Let us denote these intervals above half the hole by $\hat H_\ve(Z_j)^+$ or $\hat H_\ve(Z_j)^-$,
accordingly.  Since $\hat f^p$ is continuous and injective on $\hat H_\ve(Z_j)$ by (iv), 
$\hatf^p(\hat H_\ve(Z_j) \cap \hatf^{-p}(\hat H_\ve))$ contains a full interval in the fibre above half the hole 
(possibly different from $\hat H_\ve(Z_j)$), which we can
also denote by $+$ or $-$ as appropriate.
Note that the conformal measure of all the lifts of the right half hole $(z, z+\ve)$ have the same measure, and so do all the lifts
of the left half hole.

We proceed to prove item (b) of the lemma first.
If $f^p$ is orientation preserving at $z$, then using conformality and bounded distortion, we have on 
either half of the hole,
\begin{equation}
\label{eq:conf hole}
\hat m_\ve (  \hat H_\ve(Z_j)^\pm \cap \hatf^{-p}(\hat H_\ve)) = P_\ve(S_p \vf(H_\ve(z)))^{\pm 1} e^{S_p\vf(z)} \hat m_\ve( \hat H_\ve(Z_j)^\pm) \, ,
\end{equation}
where we have used the fact that $S_p\hat\vf(z) = S_p \vf(z)$.  On the other hand, if $f^p$ is orientation reversing at $z$,
then we are left with, for example, the right half hole mapping onto the left half,
\[
\hat m_\ve (  \hat H_\ve(Z_j)^+ \cap \hatf^{-p}(\hat H_\ve)) = P_\ve(S_p \vf(H_\ve(z)))^{\pm 1} e^{S_p\vf(z)} \hat m_\ve( \hat H_\ve(Z_j)^-) \, ,
\]
and so to conclude the desired cancellation in \eqref{eq:ratio}, we use the assumption 
$\lim_{\ve \to 0} \frac{m_\vf(z, z+\ve)}{m_\vf(z-\ve,z)} = 1$.  

Thus under either alternative in item (b), we combine the estimates in \eqref{eq:ratio} to write,
\[
\begin{split}
&\sum_{\substack{Y_i \subset \hat H_\ve'(*, N_0) \\ \ell_i \ge 2}} \hat\mu_\ve(Y_i)
 =\sum_{Z_j} \hat\mu_\ve(Z_j) e^{\pm 2C_d \diam(Z_j)^\eta}    P_\ve(S_k\hat \vf(Z_j))^{\pm 2}  P_\ve(S_p \vf(H_\ve(z)))^{\pm 1}   e^{S_p\vf(z)} \\
& = e^{\pm 2C_d \max_j \diam(Z_j)^\eta} P_\ve(S_k\hat \vf(Z_j))^{\pm 2}  P_\ve(S_p \vf(H_\ve(z)))^{\pm 1}  e^{S_p\vf(z)} \hat\mu_\ve(H_\ve') + 
\mathcal{O}\!\left( e^{-\alpha N_0} \hat \mu_\ve(\hat H_\ve) \right) \, .
\end{split}
\]
Analogous estimates follow for each $\ell \ge 3$.  Then using that $e^{S_{\ell p}\vf(z)} = e^{\ell S_p(z)}$, we
estimate \eqref{eq:expand} 
\begin{equation}
\label{eq:ell sum}
\begin{split}
\hat\mu_\ve(\hat H_\ve) = & \sum_{\ell = 1}^{N_0} e^{\pm 2C_d \max_j \diam(Z_j)^\eta} P_\ve(S_k\hat \vf(Z_j))^{\pm 2}  
P_\ve(S_{\ell p} \vf(H_\ve(z)))^{\pm 1}  e^{\ell S_p\vf(z)} \hat\mu_\ve(H_\ve') \\ 
& +  
\mathcal{O}\!\left( N_0 e^{-\alpha N_0} \hat \mu_\ve(\hat H_\ve) \right) \, .
\end{split}
\end{equation}
Since $N_0$ is fixed, $Z_j \subset Y$ and the first entry of $Z_j$ to $\hat H_\ve$ occurs before time $N_0$, 
we have $\max_j \diam(Z_j) \to 0$ as $\ve \to 0$. 
In addition, both $P_\ve(S_k\hat \vf(Z_j))$ and $P_\ve(S_{\ell p} \vf(H_\ve(z)))$
approach 1 as $\ve \to 0$ since the lengths of the orbit segments are uniformly bounded by $pN_0$ and $\hat\vf$ is 
continuous along each orbit segment. 
Dividing by $\hat\mu_\ve(H_\ve)$ and taking the limit $\ve \to 0$ yields for each $N_0 > 0$, 
\begin{equation}
\label{eq:partial sum}
1 = \sum_{\ell = 1}^{N_0} e^{\ell S_p\vf(z)} \lim_{\ve \to 0} \frac{\hat \mu_\ve(\hat H_\ve')}{\hat \mu_\ve(\hat H_\ve)} + 
\mathcal{O}\!\left( N_0 e^{-\alpha N_0}  \right) \, .
\end{equation}
Finally, taking $N_0 \to \infty$ proves item (b) of the lemma.

The proof of item (a) proceeds similarly starting from \eqref{eq:ratio}.  Now, however, since
$z$ is disjoint from the post-critical orbit, we may choose $\ve>0$ sufficiently small that $f^n(c) \notin H_\ve$ for
all $n \le pN_0$ and $c \in \Crit$.  Then
the interval
$Z_j$ from (iii) can be chosen so that $\pi(\hat f^{k(Z_j)}(Z_j)) = (z-\ve, z+\ve)$, i.e. $\hat f^{k(Z_j)}(Z_j)$ covers
a level of the fibre above the full hole.  Thus we may combine the left and right halves of the hole to obtain
the analogue of \eqref{eq:conf hole} in this case,
\begin{equation}
\label{eq:whole hole}
\hat m_\ve (  \hat H_\ve(Z_j) \cap \hatf^{-p}(\hat H_\ve)) = P_\ve(S_p \vf(H_\ve(z)))^{\pm 1} e^{S_p\vf(z)} \hat m_\ve( \hat H_\ve(Z_j)) \, ,
\end{equation}
and the orientation preseving character of $f^p$ at $z$ is irrelevant.  The proof of item (a) of the lemma is then
complete, following \eqref{eq:ell sum} and \eqref{eq:partial sum} precisely as written.
\end{proof}

Now Lemmas~\ref{lem:holder aperiodic} and \ref{lem:holder per}, together with Theorem~\ref{thm:induced limit}
and Proposition~\ref{prop:ratio}, complete the proof of Theorem~\ref{thm:zerohole_Holder}, via \eqref{eq:string}.


\subsection{Final step of the proof of Theorem~\ref{thm:zerohole_geom}: the geometric case } 
\label{ssec:geo limit}

In this section, we prove the third limit in \eqref{eq:three amigos} in the case when $\vf = -t \log |Df| - p_t$,
$t \in (t^-, t_1)$, where $t_1$ is defined by \eqref{eq:t1}.  We assume the slow approach condition
\eqref{eq:slow approach} as well as the polynomial growth condition on the derivative along the post-critical 
orbit \eqref{eq:Dn}, formulated in Section~\ref{sssec:geo}.

We first prove an analogue of Lemma~\ref{lem: low dens Hold} in this case.

\begin{lemma}
If $f\in \F$, $\vf=-t\log|Df|-p_t$ and $z$ satisfies \eqref{eq:slow approach} with $t\in (t^-, t_1)$, then 
there exists $\zeta > 0$ such that for all $\ve >0$ sufficiently small, 
$\inf_{x \in H_\ve(z)} \frac{d\mu_\vf}{dm_\vf}(x) \ge \zeta$, where $\mu_\vf$ and $m_\vf$ are the relevant invariant and conformal measures.
\label{lem: low dens geom}
\end{lemma}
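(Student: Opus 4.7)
The plan is to use the invariance identity
\begin{equation*}
g_\vf(x) \;=\; \Lp_\vf^{n_0} g_\vf(x) \;=\; \sum_{y \in f^{-n_0}(x)} g_\vf(y)\,|Df^{n_0}(y)|^{-t}\,e^{-n_0 p_t}
\end{equation*}
for a fixed $n_0$ independent of $\ve$, and to single out, for each $x \in H_\ve(z)$, one preimage $y$ whose contribution to the sum is bounded below uniformly in $x$ and $\ve$. The two factors $g_\vf(y)$ and $e^{S_{n_0}\vf(y)}=|Df^{n_0}(y)|^{-t}e^{-n_0 p_t}$ will be controlled separately.

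As a first step I would show that $g_\vf \ge c_1 > 0$ on $\pi(Y)$, where $Y = \hat{I}'_{z,\ve_0}(L)$ is as in Section~\ref{ssec:puncture}. By (the argument of) Lemma~\ref{lem:reg}, the $F_0$-invariant density $g_0$ satisfies $\inf_Y g_0 \ge c_0 > 0$, and Kac's formula~\eqref{eq:Kac} identifies the corresponding $\hat f_{z,\ve_0}$-invariant density on the Hofbauer extension as $\hat g|_Y = \hat\mu(Y)\,g_0$, with $\hat\mu(Y)>0$ bounded below independently of $\ve$ via Theorem~\ref{thm:tails}. Since $\mu_\vf = \pi_*\hat\mu$ and $\hat m$ coincides with $m_\vf$ on each domain, the projection formula $g_\vf(x) = \sum_{\hat x \in \pi^{-1}(x)}\hat g(\hat x)$, together with the fact that every $x \in \pi(Y)$ has at least one lift in $Y$, yields $g_\vf \ge c_1 := \hat\mu(Y)\,c_0$ on $\pi(Y)$.

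Next I would construct a good inverse branch. Pick a compact interval $U \subset \pi(Y) \setminus \Crit$. By the leo property there exists $n_0 \in \N$, depending only on $U$, such that $f^{n_0}(U) = I \ni z$. The slow approach condition~\eqref{eq:slow approach} gives $d(z, f^j(c)) \ge \delta_z\gamma_j^{1-\theta} > 0$ for every $j = 1,\ldots,n_0$ and $c \in \Crit$, so $z$ is not a critical value of $f^{n_0}$; hence a branch $\phi:W\to V$ of $f^{-n_0}$ is well-defined on some open interval $W \ni z$ with $V \subset U$. Since $\phi$ is a branch, $f^j(V)\cap\Crit = \emptyset$ for every $j = 0,\ldots,n_0-1$, and by compactness there exists $\delta_2 > 0$ with $d(f^j(\bar V),\Crit) \ge \delta_2$ for those $j$. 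For all sufficiently small $\ve$, $H_\ve(z) \subset W$.

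Combining the two steps, for each $x \in H_\ve(z)$ set $y = \phi(x) \in V \subset \pi(Y)$; then $g_\vf(y) \ge c_1$, and with $M = \sup_I|Df|$ and $m_* = \inf\{|Df(u)|:d(u,\Crit)\ge\delta_2\}>0$, one has $m_*^{n_0} \le |Df^{n_0}(y)| \le M^{n_0}$, so $|Df^{n_0}(y)|^{-t} \ge K_t := (\max\{M,m_*^{-1}\})^{-|t|n_0} > 0$, and
\begin{equation*}
g_\vf(x) \;\ge\; g_\vf(y)\,|Df^{n_0}(y)|^{-t}\,e^{-n_0 p_t} \;\ge\; c_1 K_t e^{-n_0 p_t} \;=:\; \zeta \;>\; 0,
\end{equation*}
as required. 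The main difficulty is step two: the existence of a non-degenerate branch of $f^{-n_0}$ on a fixed neighborhood of $z$ that still contains $H_\ve(z)$ for small $\ve$. This is precisely where the slow approach condition~\eqref{eq:slow approach} is essential, as it separates $z$ from every finite piece of the post-critical orbit.
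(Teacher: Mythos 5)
Your proposal is correct and follows essentially the same route as the paper: the paper likewise uses the fixed-point identity $g_\vf=\Lp_\vf^{n}g_\vf$, a uniform lower bound for $g_\vf$ on (the projection of) $Y$ coming from Lemma~\ref{lem:reg}, the leo property to cover $I$ from that set, and the slow approach condition~\eqref{eq:slow approach} to keep the relevant length-$n$ backward orbits of points in $H_\ve$ uniformly away from $\Crit$, so that $e^{S_n\vf}$ is bounded below even when $t<0$. Your only cosmetic deviation is isolating a single inverse branch through a preimage of $z$ rather than summing over all preimages landing in the good set, which changes nothing essential.
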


\begin{proof}
The proof is nearly identical to that of Lemma~\ref{lem: low dens Hold}:
While for the geometric potential with $t<0$ it may be that 
$\inf_{x \in I} S_n\vf(x)=-\infty$, the slow approach condition \eqref{eq:slow approach} ensures that 
for $x \in H_\ve$, there is a finite lower bound on $S_n\vf(x)$ that is uniform in $\ve$,
since $n$ is fixed and independent of $\ve$ in \eqref{eq:lower g}.
\end{proof}

In order to prove the required convergence for geometric potentials, we will use the setup and notation of \cite{Henk}.  
It follows from \eqref{eq:Dn} and our choice of $\gamma_n$, that
\[
\sum_n \gamma_n < \infty \quad \mbox{ and } \quad 
\sum_{n \ge 1} \big(\gamma_n^{\order-1} D_n(c)\big)^{-1/\order} < \infty \quad \mbox{for all } c \in \Crit \, .
\]
This is precisely the condition\footnote{Indeed, this condition is equivalent to the simpler condition,
$\sum_{n \ge 1} D_n(c)^{-1/(2\order-1)} < \infty$ \cite[Lemma~2.1]{Henk}, but we use the formulation above
in order to directly apply the results of \cite{Henk}.  Our condition \eqref{eq:Dn} is slightly stronger and generalises the 
exponent to values of $s_t < 1$.}  required of $f$ in \cite{Henk}.

We will not need the full strength of the results from \cite{Henk}; rather, we will use the estimates on the recovery times
for expansion for orbits that pass close to the set $\Crit$.  To this end, for a value of $\delta>0$ to be specified later, we define 
$B_\delta(\Crit) = \cup_{c \in \Crit} (c-\delta, c+\delta)$, for $\delta > 0$.  
A key estimate of \cite{Henk}
is the following.

\begin{lemma}
\cite[Lemma~2.4]{Henk}
\label{lem:Henk}
For $\delta>0$ sufficiently small, there exist constants $C_\delta, \beta_\delta >0$ such that every orbit segment $\{ f^i(x) \}_{i=0}^{k-1}$ such that
$\{ f^i(x) \}_{i=0}^{k-1} \cap B_\delta(\Crit) = \emptyset$, we have
\[
|Df^k(x)| \ge C_\delta e^{\beta_\delta k} \, .
\]
If, in addtion, $f^k(x) \in B_\delta(\Crit)$, then there exists $\kappa>0$ independent of $\delta$ such that,
\[
|Df^k(x)| \ge \max \{ \kappa, C_\delta e^{\beta_\delta k} \} \, .
\]
\end{lemma}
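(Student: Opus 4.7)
The plan is to derive this as a consequence of Ma\~n\'e's classical hyperbolicity theorem for multimodal interval maps, together with a short local computation near the critical set.

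For the first inequality, observe that our class $\F$ satisfies the hypotheses of Ma\~n\'e's theorem: $f$ is $C^3$ with nonflat critical points and negative Schwarzian, and the condition $|Df^n(f(c))|\to\infty$ for $c \in \Crit$, combined with the leo assumption, rules out any neutral or attracting periodic orbit. I would then run the standard contradiction argument: if no uniform $C_\delta, \beta_\delta$ existed, one could extract sequences $x_n$ and $k_n\to\infty$ with $\{f^i(x_n)\}_{i=0}^{k_n-1} \cap B_\delta(\Crit) = \emptyset$ and $|Df^{k_n}(x_n)|^{1/k_n}\to 1$. Passing to a weak-$*$ limit of the empirical measures along these orbits produces an $f$-invariant probability measure $\mu$ supported in the compact set $K_\delta := I \setminus B_\delta(\Crit)$ with $\lambda(\mu)=0$. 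Applying Lemma~\ref{lem:Koebe} to returns to $K_\delta$ and using the cross-ratio estimates coming from negative Schwarzian, such a $\mu$ must be concentrated on a neutral periodic orbit, contradicting our standing assumptions.

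For the second inequality, factor $|Df^k(x)| = |Df^{k-1}(x)|\cdot|Df(f^{k-1}(x))|$. When $k \ge 2$ the first factor is bounded below by $C_\delta e^{\beta_\delta(k-1)}$ by the first part, giving the exponential lower bound after absorbing $e^{-\beta_\delta}$ into $C_\delta$. For the uniform lower bound $\kappa$, note that $f^{k-1}(x) \in f^{-1}(B_\delta(\Crit)) \setminus B_\delta(\Crit)$; since the hypothesis $|Df^n(f(c))|\to\infty$ forces $f^j(c) \notin \Crit$ for every $c \in \Crit$ and $j \ge 1$ (otherwise some factor $Df(f^j(c))$ in $Df^n(f(c))$ would vanish), every preimage of a critical point is itself noncritical. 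Hence for $\delta$ small the set $f^{-1}(B_\delta(\Crit))\setminus B_\delta(\Crit)$ is a finite union of intervals on each of which $|Df|$ is bounded below by a positive constant $\kappa$ depending only on $f$. The case $k=1$ is immediate by applying the same observation directly to $x$.

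The main technical obstacle is the distortion control in the Ma\~n\'e step: orbits constrained to $K_\delta$ may still approach $\partial K_\delta$ arbitrarily closely between long excursions, and obtaining the exponential lower bound requires showing that such close approaches are sufficiently infrequent. The cleanest route is through the cross-ratio estimates provided by negative Schwarzian; alternatively, one can follow the Benedicks--Carleson-style argument of \cite{Henk}, which combines the quantitative derivative growth \eqref{eq:Dn} along the post-critical orbit with bound-period estimates to recover expansion after each close approach to $\Crit$, bypassing the need for negative Schwarzian in this specific step.
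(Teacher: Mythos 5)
The paper does not prove this statement at all: it is quoted verbatim from \cite[Lemma~2.4]{Henk}, so your proposal is really an attempt to reprove the cited result rather than something to compare with an in-paper argument. For the first inequality, the reduction to Ma\~n\'e's hyperbolicity theorem is the right move and is essentially what \cite{Henk} does; your verification of the hypotheses is correct (leo together with negative Schwarzian excludes attracting and neutral cycles, so all periodic orbits are hyperbolic repelling, and the $C^2$ version of Ma\~n\'e's theorem in \cite[Theorem~III.5.1]{MelStr93} applies). However, the ``standard contradiction argument'' you then sketch is not a proof of that theorem: after extracting an invariant measure $\mu$ supported in $I\setminus B_\delta(\Crit)$ with $\lambda(\mu)\le 0$, the assertion that $\mu$ must be concentrated on a neutral periodic orbit is precisely the hard content of Ma\~n\'e's theorem. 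A negative exponent does yield an attracting cycle by standard one-dimensional Pesin-type arguments, but excluding a \emph{zero}-exponent ergodic measure supported away from $\Crit$ is not a consequence of Lemma~\ref{lem:Koebe} or cross-ratio estimates as invoked; as written this step is circular. Either cite Ma\~n\'e's theorem as a black box (perfectly acceptable here) or supply that argument in full.

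The genuine gap is in the second inequality, and it concerns exactly the feature the paper needs: $\kappa$ must be independent of $\delta$ (in the proof of Lemma~\ref{lem:t aperiodic}, $\delta$ is chosen \emph{after} $\kappa$, so that $Df^{b_\delta}_{\min}(c_i)\ge 2K_0/\kappa$; a $\delta$-dependent $\kappa$ would make that choice circular). Your factorisation $|Df^k(x)|=|Df^{k-1}(x)|\cdot|Df(f^{k-1}(x))|$ controls only the last factor: the observation that $f^{-1}(B_\delta(\Crit))$ stays a definite distance from $\Crit$ for small $\delta$ (since no critical point maps to a critical point) does give $|Df(f^{k-1}(x))|\ge\kappa'$, but the first factor is bounded below only by $C_\delta e^{\beta_\delta(k-1)}$, and $C_\delta$ degenerates as $\delta\to 0$. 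An orbit segment that merely avoids $B_\delta(\Crit)$ may start arbitrarily close to $\partial B_\delta(c)$, where $|Df|\asymp\delta^{\order-1}$, and may make several further near-misses of $\Crit$ at distance comparable to $\delta$ before its final entry into $B_\delta(\Crit)$; each such passage costs a factor of order $\delta^{\order-1}$, so $|Df^{k-1}(x)|$ admits no $\delta$-independent lower bound, and your argument only reproduces the exponential estimate with another $\delta$-dependent prefactor, not the uniform $\kappa$. (It does settle $k=1$, and with extra work any fixed $k$, but not $k$ in the intermediate range where $C_\delta e^{\beta_\delta k}<1$.) Recovering the derivative lost at such near-misses is exactly what the binding-period machinery is for --- growth along the post-critical orbit as in \eqref{eq:Dn} together with estimates of the type in Lemma~\ref{lem:bound growth} --- and that is how \cite{Henk} obtains a constant independent of $\delta$; your closing paragraph gestures at this machinery, but attaches it to the Ma\~n\'e step rather than to the $\kappa$ bound, which is where it is actually indispensable.
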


Next, we define the notion of binding period recalling the sequence $(\gamma_n)_{n \in \mathbb{N}}$
from \eqref{eq:Dn}.  If $x \in B_\delta(\Crit)$, then
\[
b(x) = \max \{ b \in \mathbb{N} : |f^k(x) - f^k(c)| \le \gamma_k|f^k(c) - \Crit| \quad \forall \, k \le b-1 \} \, ,
\]
and $b(x) = 0$ if $x \notin B_\delta(\Crit)$.  Let $I_b = \{ x \in I : b(x) = b \}$ denote the level sets of $b$.
The binding period will be useful in estimating the following important quantity, defined for each $c \in \Crit$,
\[
Df^b_{\min}(c) := \min \{ |Df^b(x)| : x \in I_b \cap B_\delta(c) \},
\]
which governs the minimum rate of growth in expansion along orbit segments.

Note that $b_\delta = \min \{ b(x) : x \in B_\delta(\Crit)  \}$ tends to $\infty$ as $\delta \to 0$.
This fact is used in \cite{Henk} to make $Df^{b_\delta}_{\min}(c)$ arbitrarily large by choosing $\delta>0$ sufficiently small.

Each orbit of length $n$ is assigned an itinerary $(\nu_1, b_1), (\nu_2, b_2), \ldots, (\nu_k, b_k)$,
where each $\nu_i = \nu_i(x)$ represents the first time larger than $\nu_{i-1} + b_{i-1}$ such that the orbit of $x \in I$ makes a return to $B_\delta(\Crit)$.
Each return $i$ is called a deep return and placed in a set $S_d = S_d(x)$ if the orbit enters $B_\delta(\Crit)$ at time $\nu_i$; it
is called a shallow return and placed in a set $S_s(x)$ if the orbit does not enter $B_\delta(\Crit)$ at 
time $\nu_i$, but is part of a
dynamically defined interval that intersects $B_\delta(\Crit)$.

The key estimates from \cite{Henk} using the information from binding periods are as follows.

\begin{lemma} \cite[Lemmas~2.5 and 3.2]{Henk}
\label{lem:bound growth}
\begin{itemize}
  \item[a)]  There exists $C_0 >0$ independent of $\delta >0$ such that for all $c \in \Crit$ and 
  $b \ge b_\delta$ with $I_b \neq \emptyset$,
\[
Df^b_{\min}(c) \ge C_0 (\gamma_b^{\order -1}D_b(c))^{1/\order} \, .
\]
  \item[b)]There exist $K_0 > 0$ and $\rho \in (0,1)$, independent of $\delta$, such that for an orbit $\{ f^i(x) \}_{i=0}^{n-1}$
  with a given sequence $(\nu_1,b_1), \ldots, (\nu_k,b_k)$ at time $n \ge \nu_k + b_k$, we have
  \[
  |Df^n(x)| \ge  \max \left\{ C_\delta^{\#S_d} e^{\beta_\delta (n - \sum_{i=0}^k b_i)},
   \Big(\frac{\kappa}{K_0}\Big)^{\#S_d} \rho^{-\# S_s} \right\} \prod_{i\in S_d} Df^{b_i}_{\min}(c_i) \, ,
  \]
where $c_i$ is the critical point associated to the return at time $\nu_i$.
\end{itemize}
\end{lemma}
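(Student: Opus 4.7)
Both parts are classical expansion-estimate computations in the Benedicks--Carleson tradition for interval maps with non-flat critical points, built on two workhorses: (i) the non-flat expansion $|Df(x)| \asymp |x-c|^{\order-1}$ near each $c \in \Crit$, and (ii) the Koebe distortion estimate from Lemma~\ref{lem:Koebe} with the margin provided by the $\gamma_k$-binding window, which permits us to compare the derivative along the orbit of a perturbed point with that along the critical orbit to within a bounded factor. The remaining work is then essentially bookkeeping.

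For part (a), I would fix $c \in \Crit$ and $x \in I_b \cap B_\delta(c)$. By the chain rule $|Df^b(x)| = |Df(x)|\cdot|Df^{b-1}(f(x))|$; non-flatness gives $|Df(x)| \asymp |x-c|^{\order -1}$, and the binding inequality $|f^k(x) - f^k(c)| \le \gamma_k\, d(f^k(c),\Crit)$ for $0 \le k < b$ is precisely the hypothesis needed to apply Lemma~\ref{lem:Koebe} to a neighbourhood of the critical orbit $f(c),\dots,f^{b-1}(c)$, yielding $|Df^{b-1}(f(x))| \asymp D_{b-1}(c)$. Hence $|Df^b(x)| \asymp |x-c|^{\order -1} D_{b-1}(c)$. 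A lower bound on $|x-c|$ comes from the failure of binding at step $b$: using non-flatness once at the first iterate and Koebe distortion thereafter, $|f^b(x) - f^b(c)| \asymp D_{b-1}(c)\,|x-c|^{\order}$, and this quantity exceeds $\gamma_b\, d(f^b(c),\Crit)$. The distance $d(f^b(c),\Crit)$ converts to a derivative via non-flatness at $f^b(c)$: $d(f^b(c),\Crit) \asymp (D_b(c)/D_{b-1}(c))^{1/(\order - 1)}$. Solving for $|x-c|$ and substituting back produces the claimed bound $C_0(\gamma_b^{\order -1}D_b(c))^{1/\order}$ after elementary algebra, with a constant depending only on $f$ (not on $\delta$) because every Koebe factor is uniform on the $\gamma_k$-scale.

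For part (b), I would decompose $\{f^i(x)\}_{i=0}^{n-1}$ into (i) binding windows $[\nu_i, \nu_i + b_i)$ attached to each deep return and (ii) the complementary free segments, which by definition avoid $B_\delta(\Crit)$. On each binding window with $i \in S_d$, part (a) supplies the factor $Df^{b_i}_{\min}(c_i)$; the deep return at $\nu_i$ itself contributes an absolute factor $\kappa$ from Lemma~\ref{lem:Henk} (with the distortion constant $K_0$ absorbed in the denominator), producing the prefactor $(\kappa/K_0)^{\#S_d}$. On the free segments, the first part of Lemma~\ref{lem:Henk} provides uniform expansion at rate $C_\delta e^{\beta_\delta\cdot\text{length}}$; multiplying over all free segments and keeping one power of $C_\delta$ per deep return to absorb edge effects gives $C_\delta^{\#S_d} e^{\beta_\delta(n - \sum_i b_i)}$. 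Each shallow return $i \in S_s$ corresponds to the orbit lying in a dynamically defined interval that intersects $B_\delta(\Crit)$ while $f^{\nu_i}(x)$ stays outside $B_\delta$; the worst-case distortion loss per such event is a fixed factor $\rho \in (0,1)$, producing $\rho^{-\#S_s}$. The $\max$ in the statement reflects the dual bookkeeping: one may either keep the free-segment exponentials (useful when $n-\sum b_i$ is large) or drop them in favour of the cruder tally $(\kappa/K_0)^{\#S_d}\rho^{-\#S_s}$ (useful when nearly all the time is spent in binding).

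\textbf{Main obstacle.} The principal technical point is guaranteeing that all Koebe distortion constants appearing during binding windows are uniformly bounded, and in particular independent of $\delta$, so that the constants $C_0$, $K_0$, $\rho$, $\kappa$ depend only on $(f,\order)$. This relies on the summable-margin property encoded in the choice of $(\gamma_n)$ via \eqref{eq:Dn}: because $\sum_k \gamma_k < \infty$, the enlarged neighbourhood $U' \supset U$ required by Lemma~\ref{lem:Koebe} can be constructed with a uniform $\epsilon$ even as $b$ grows. The role of $\delta$ is then confined to enlarging $b_\delta$ so that $Df^{b_\delta}_{\min}(c)$ can be pushed above any prescribed threshold in subsequent applications; the distortion constants themselves are already pinned down by $f$ and $(\gamma_n)$. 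A secondary subtlety is the correct identification of shallow returns with dynamically defined intervals meeting $B_\delta(\Crit)$, so that the $\rho^{-\#S_s}$ loss is attributable to genuine bounded-distortion excursions rather than to legitimate expansion; this is what forces the careful itinerary decomposition in the first place.
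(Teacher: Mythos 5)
First, a point of reference: the paper does not prove this lemma at all. It is quoted verbatim from \cite[Lemmas~2.5 and 3.2]{Henk} (Bruin--Luzzatto--van Strien), and the authors only use its conclusions in Section~\ref{ssec:geo limit}. So there is no in-paper proof to compare against; what you have written is a reconstruction of the argument in the cited source. For part (a) your outline is essentially the correct classical computation: write $|Df^b(x)|=|Df(x)|\,|Df^{b-1}(f(x))|$, use non-flatness at $c$, compare $Df^{b-1}$ along the orbit of $f(x)$ with $D_{b-1}(c)$ by bounded distortion during the binding window, and convert the failure of binding at time $b$ together with $|Df(y)|\asymp d(y,\Crit)^{\order-1}$ into a lower bound on $|x-c|$; the algebra then yields $(\gamma_b^{\order-1}D_b(c))^{1/\order}$. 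One caveat: the distortion control during binding in the source is not literally Lemma~\ref{lem:Koebe} (which needs a diffeomorphic extension with definite Koebe space), but the elementary one-dimensional estimate $\bigl|\log\frac{Df^{b-1}(f(x))}{Df^{b-1}(f(c))}\bigr|\lesssim\sum_k|f^k(x)-f^k(c)|/d(f^k(c),\Crit)\le\sum_k\gamma_k<\infty$, which is exactly where summability of $(\gamma_k)$ enters; your ``Main obstacle'' paragraph gestures at this, so the idea is in place even if the tool is misnamed.

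Part (b), however, contains a genuine gap: your treatment of the shallow returns is backwards. In the stated inequality $\rho\in(0,1)$, so $\rho^{-\#S_s}\ge 1$ is a \emph{guaranteed expansion factor of at least $\rho^{-1}>1$ per shallow return} appearing in a lower bound; it is not a ``worst-case distortion loss'' of size $\rho$ to be absorbed. If each shallow return genuinely cost you a factor $\rho$, the correct bookkeeping would multiply the lower bound by $\rho^{+\#S_s}$, and the second member of the max, $(\kappa/K_0)^{\#S_d}\rho^{-\#S_s}\prod_{i\in S_d}Df^{b_i}_{\min}(c_i)$, could not be derived from your accounting. The actual mechanism in \cite{Henk} is that at a shallow return the point itself stays outside $B_\delta(\Crit)$ while only the dynamically defined interval it belongs to meets $B_\delta(\Crit)$, and the free-period estimates (of the type in Lemma~\ref{lem:Henk}) then yield a uniform multiplicative gain $\rho^{-1}$, with $\rho$ and $K_0$ independent of $\delta$; the factor $\kappa/K_0$ per deep return comes from the free segment terminating in $B_\delta(\Crit)$ (the $\kappa$ in Lemma~\ref{lem:Henk}) divided by a distortion-type constant. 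So the architecture of your decomposition (binding windows via part (a), free segments via Lemma~\ref{lem:Henk}, per-return factors) matches the source, but as written the shallow-return step would not close, and the derivation of the second term in the max is missing rather than merely sketched.
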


With these key estimates recalled, we are ready to begin our proofs of the relevant limits.  As in 
Section~\ref{sec:holder}, we begin with the aperiodic case.

\begin{lemma}
\label{lem:t aperiodic}
Suppose $f \in \mathcal{F}_\order$ and $\phi = -t \log |Df|$ for $t \in (t^-, t_1)$ satisfies \eqref{eq:Dn}.
Let $z$ be an aperiodic point for $f$ satisfying \eqref{eq:slow approach}. 
Then
\[
\lim_{\ve \to 0} \frac{\hat \mu_\ve(\hat H_\ve')}{\hat \mu_\ve(\hat H_\ve)} = 1 \, .
\]
\end{lemma}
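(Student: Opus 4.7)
The plan is to adapt the proof of Lemma~\ref{lem:holder aperiodic} to the geometric setting, keeping the overall structure but replacing the pointwise bound on the potential with an averaged bound that exploits the slow approach condition \eqref{eq:slow approach} and the binding-period machinery from Lemmas~\ref{lem:Henk} and \ref{lem:bound growth}. I would start from the decomposition \eqref{eq:unwrap}:
\[
\hat\mu_\ve(\hat H_\ve) = \hat\mu_\ve(\hat H_\ve') + \sum_{k \ge 2} \sum_{Y_i \in \hat H_\ve'(k)} (k-1)\hat\mu_\ve(Y_i),
\]
and the goal reduces to showing the double sum is $o(\hat\mu_\ve(\hat H_\ve))$ as $\ve \to 0$.

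The key estimate to establish is the analogue of \eqref{eq:entry}: for $Y_i \in \hat H_\ve'(k)$ with $k$-th entry time $T_i$ to $\hat H_\ve$,
\[
\hat m_\ve(Y_i) \le C e^{-\bar\alpha T_i} \hat m_\ve\bigl(\hatf^{T_i}Y_i\bigr)
\]
for some $\bar\alpha > 0$ independent of $\ve$. By conformality of $\hat m_\ve$ and the bounded distortion in Lemma~\ref{lem:distort}, this amounts to $e^{S_{T_i}\hat\vf(y_i)} \le C e^{-\bar\alpha T_i}$ uniformly in $y_i \in Y_i$. For $\hat\vf = -t\log|Df\circ\pi| - p_t$ this is not available from a pointwise bound; instead, I would apply Lemma~\ref{lem:bound growth}(b) to the orbit segment of $\pi(y_i)$ of length $T_i$, using the binding-period itinerary. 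The slow approach condition \eqref{eq:slow approach} ensures that any deep returns incurred by the orbit of $y_i$ before entering $\hat H_\ve$ are controlled (their contribution is summable via the choice of $r$ in $\gamma_n = n^{-r}$ and \eqref{eq:Dn}), so that $|Df^{T_i}(\pi y_i)| \ge C_\star e^{\beta_\star T_i}$ for some $\beta_\star>0$ uniform in $y_i$. Combined with the upper bound $|Df^{T_i}| \le |Df|_\infty^{T_i}$ relevant to the case $t < 0$, we obtain $S_{T_i}\hat\vf(y_i) \le -\bar\alpha T_i + \mathcal O(1)$ with $\bar\alpha>0$ as long as $t \in (t^-, t_1)$.

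With this estimate in hand, the remaining steps mirror the H\"older case. Aperiodicity of $z$ and continuity of $f$ yield $N(\ve) \to \infty$ as $\ve \to 0$ with $\hatf^j(\hat H_\ve) \cap \hat H_\ve = \emptyset$ for $j < N(\ve)$, hence $T_i \ge (k-1)N(\ve)$ for $Y_i \in \hat H_\ve'(k)$. The complexity bound \eqref{eq:card} from \cite[Lemma~4.6]{DobTod15} counts the number of lifts $Y_j$ with $T_j = T_i$ and $\hatf^{T_j}Y_j \cap \hatf^{T_i}Y_i \ne \emptyset$ by at most $Ce^{\xi T_j}$ with $\xi < \bar\alpha$. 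Summing as in \eqref{eq:error bound}, the double sum is at most $C\,m_\vf(H_\ve)e^{-\alpha N(\ve)}$, and the lower density bound from Lemma~\ref{lem: low dens geom} converts $m_\vf(H_\ve) \le \zeta^{-1}\mu_\vf(H_\ve) = \zeta^{-1}\hat\mu_\ve(\hat H_\ve)$, so the sum is $\mathcal O(\hat\mu_\ve(\hat H_\ve)e^{-\alpha N(\ve)})$. Dividing by $\hat\mu_\ve(\hat H_\ve)$ and sending $\ve\to 0$ gives the claim. The main obstacle is the middle paragraph: obtaining a uniform lower bound on $|Df^{T_i}|$ along orbits that begin in $Y$ and first hit $\hat H_\ve$ at time $T_i$, which is where the slow approach condition on the post-critical set (via the Hofbauer Markov structure encoding those orbits that spend long in high levels) and the quantitative binding estimates of \cite{Henk} combine.
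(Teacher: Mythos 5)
Your skeleton (the decomposition \eqref{eq:unwrap}, aperiodicity giving $T_i \ge (k-1)N(\ve)$, the complexity bound \eqref{eq:card}, and the lower density bound of Lemma~\ref{lem: low dens geom}) matches the paper's, but the middle step --- the claim that \emph{every} $Y_i$ entering $\hat H_\ve$ at time $T_i$ satisfies $|Df^{T_i}(\pi y_i)| \ge C_\star e^{\beta_\star T_i}$, so that the analogue of \eqref{eq:entry} holds with a single uniform $\bar\alpha>0$ --- is a genuine gap, and is in fact false as stated. Lemma~\ref{lem:bound growth}(b) only applies at times $n \ge \nu_k + b_k$, i.e.\ when the orbit is free; if the orbit of $y_i$ passes extremely close to a critical point shortly before $T_i$ and is still inside the corresponding binding period when it enters $\hat H_\ve$, the derivative at time $T_i$ has not yet recovered and admits no lower bound of the form $e^{\beta_\star T_i}$ (the deeper the approach to $\Crit$, the worse the drop, with no uniform control in $T_i$). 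The slow approach condition \eqref{eq:slow approach} cannot repair this: it constrains the distance of the \emph{post-critical} orbit from $z$, not the approach of the orbit of $y_i$ to $\Crit$, so it yields no expansion at bound times. You correctly flag this as the main obstacle, but the proposed resolution (summability of deep returns via the choice of $r$ and \eqref{eq:Dn}) does not produce the claimed uniform exponential lower bound.

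The paper resolves exactly this point by splitting the cylinders $Y_i$ into those that are \emph{free} and those that are \emph{bound} at time $T_i$. For free pieces your argument is essentially the paper's: Lemma~\ref{lem:bound growth}(b), a case analysis on how much of the orbit segment is occupied by binding periods, and the rate condition $t\beta_1+p_t>0$ defining $t_1$ in \eqref{eq:t1}, leading to the estimate \eqref{eq:error bound free}. For bound pieces no expansion is claimed at all; instead \eqref{eq:slow approach} is used in the opposite direction: if $\hatf^{T_i}(Y_i)\subset\hat H_\ve$ while the piece is still bound to $c$, then $f^{T_i-\nu_i}(c)$ lies within $\gamma_{T_i-\nu_i}+\tfrac12|H_\ve(z)|$ of $z$, and the slow approach condition forces $T_i-\nu_i \gtrsim (\delta_z/|H_\ve(z)|)^{1/(r(1-\theta))}$, hence $R_i$ is larger than a negative power of $|H_\ve(z)|$; the exponential tail bound of Theorem~\ref{thm:tails} on $\hat m(R_\ve=j)$ then shows that the total contribution of bound pieces is $o(|H_\ve(z)|^{s})$ for every $s>0$, hence $o(\hat\mu_\ve(\hat H_\ve))$, as in \eqref{eq:error bound bound}. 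Without this free/bound dichotomy the estimate of the double sum in \eqref{eq:unwrap} does not go through, so the proposal as written does not prove the lemma.
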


\begin{proof}
We will follow the strategy of the proof of Lemma~\ref{lem:holder aperiodic}, using the same
notation defined there.  Following \eqref{eq:unwrap}, we must show as before that
\begin{equation}
\label{eq:o}
\sum_{k \ge 2} \sum_{Y_i \subset \hat H'_\ve(k)} (k-1) \hat \mu_\ve(Y_i) = o(\hat \mu_\ve(\hat H_\ve) ) \, .
\end{equation}
However, the estimate in this case is not so simple since the analogous expression to
\eqref{eq:entry} does not enjoy uniform exponential contraction in $T_i$.  Rather, we split 
$Y_i \subset \hat H'_\ve(k)$ into those cylinders which are `bound' (i.e., in the midst of a binding period)
at the time $T_i$ of their $k^{th}$ 
entry to $\hat H_\ve$, and those cylinders which are not bound, which we call `free.'

As before, we fix $N \in \mathbb{N}$ and choose $\ve$ sufficiently small that
$\hat f^j(\hat H_\ve) \cap \hat H_\ve = \emptyset$, for all $j < N$.

\medskip
{\em Estimate on free pieces.}
To estimate the contribution to \eqref{eq:o} from cylinders that are free at time $T_i$, we begin
as in \eqref{eq:entry},
\begin{equation}
\label{eq:conformal entry}
\hat \mu_\ve(Y_i) = C^{\pm 1} \hat m_\ve(Y_i) = C^{\pm 1} e^{S_{T_i}\hat \vf(y_i)} \hat m_\ve(\hat f^{T_i}Y_i)
= C^{\pm 1} |Df^{T_i}(y_i)|^{-t} e^{- p_t T_i} \hat m_\ve(\hat f^{T_i}Y_i) \, .
\end{equation}

We estimate the above expression differently depending whether $t<1$ or $t \ge 1$.
In all cases, we fix $\delta>0$ sufficiently small that $Df^{b_\delta}_{\min}(c_i) \ge 2K_0/\kappa$.

If $t<1$, then we consider the following two cases, depending on the itinerary
$(\nu_1, b_1), \ldots, (\nu_{k_i}, b_{k_i})$ associated to $y_i$ from time 0 until time $T_i$.  
Since $\hat f^{T_i}(Y_i)$ is free, we have $T_i \ge \nu_{k_i} + b_{k_i}$ so we may apply
Lemma~\ref{lem:bound growth}.
Choose $\epsilon > 0$ sufficiently small that $\epsilon \le \min \{ \frac 14, \frac{-\beta_\delta b_\delta}{4 \log C_\delta} \}$.

{\em Case 1. $\sum_{j=0}^{k_i} b_j > \epsilon T_i$.}
Using the second estimate in Lemma~\ref{lem:bound growth}(b), we have by choice of $\delta$,
\[
|D\hat f^{T_i}(y_i)| \ge 2^{\# S_d} = 2^{k_i} \, .
\]

{\em Case 2. $\sum_{j = 0}^{k_i} b_j < \epsilon T_i$.} 
It follows that $\# S_d(y_i) = k_i \le \epsilon T_i/b_\delta$.  Thus using the first estimate in 
Lemma~\ref{lem:bound growth}(b), we have by our choice of $\epsilon$,
\[
|D\hat f^{T_i}(y_i)| \ge C_\delta^{\epsilon T_i/b_\delta} e^{\beta_\delta T_i(1-\epsilon)} \ge e^{\beta_\delta T_i/2} \, .
\]

In either case, our estimate in \eqref{eq:conformal} for $t<1$ becomes,
\[
\hat m(Y_i) \le C e^{-b_t T_i} \hat m(\hat f^{T_i}Y_i) \, .
\]

In the statement of Theorem~\ref{thm:zerohole_geom} we only consider the case $t \ge 1$ under a (CE) condition along the critical orbits:
\[
\exists C, \gamma> 0 \mbox{ s.t. } D_n(c) \ge C e^{\gamma n} \, .
\]
In this case, it suffices that $\gamma_n \ge e^{-\gamma n/(2(\order-1))}$, so that
$Df^b_{\min}(c) \ge C_0 e^{\gamma b/(2\order)}$ by Lemma~\ref{lem:bound growth}(a).
For this range of $t \ge 1$, we consider two slightly different cases.
Using the same choice of $\delta$ as above, we choose
$\epsilon = - \bar\beta/ \log (C_\delta C_0)$, where 
$\bar\beta = \frac 12 \min \{ \beta_\delta, \gamma/(2 \order) \}$.

{\em Case 1. $k_i = \# S_d(y_i) \ge \epsilon T_i$.}  Using the second estimate in
Lemma~\ref{lem:bound growth}(b) and our choice of $\delta$,
\[
|D\hat f^{T_i}(y_i)| \ge \Big( \frac{\kappa}{K_0} \Big)^{k_i} \prod_{j \in S_d(y_i)} Df^{b_j}_{\min}(c_j)
\ge 2^{\epsilon T_i} \, .
\]

{\em Case 2. $k_i \le \epsilon T_i$.}  Using the first estimate in Lemma~\ref{lem:bound growth}(b),
we have using our choice of $\epsilon$,
\[
|D\hat f^{T_i}(y_i)| \ge C_\delta^{\epsilon T_i} e^{\beta_\delta(T_i - \sum_{j=0}^{k_i} b_j)} C_0^{\epsilon T_i} 
e^{\gamma \sum_{j=0}^{k_i} b_j/(2\order)} \ge e^{\bar \beta T_i} \, .
\] 

In either case, our estimate in \eqref{eq:conformal} for $t \ge 1$ becomes,
\[
\hat m(Y_i) \le C e^{-(t\beta_1 + p_t) T_i} \hat m(\hat f^{T_i}Y_i) \, ,
\]
where $e^{\beta_1} = \min \{ e^{\bar\beta}, 2^\epsilon \}$.

To unify notation, set $\hat\alpha = p_t$ when $t<1$ in all cases,  and $\hat\alpha = t\beta_1 + p_t$ in the (CE) case when $t \ge 1$.  Recall that we defined $t_1=1$ in the non-(CE) case; in the (CE) case set
\begin{equation}
\label{eq:t1}
t_1 := \sup \{t  \in (1, t^+) : t \beta_1 + p_t > 0 \}  \, ,
\end{equation}
noting that since $p_1 = 0$, such a $t_1 > 1$ exists by continuity of $p_t$.

Now the above estimates in conjunction with the complexity estimate \eqref{eq:card} yield by
\eqref{eq:error bound},
\begin{equation}
\label{eq:error bound free}
\sum_{k \ge 2} \sum_{\substack{Y_i \subset \hat H_\ve'(k) \\ Y_i \, \mbox{\scriptsize free}}} (k-1) \hat \mu_\ve(Y_i) 
\le \sum_{k \ge 2} \sum_{j \ge N(k-1)} C e^{-(\hat\alpha - \xi) j} (k-1) (j+L)^3 m_\vf(H_\ve) \\
\le C' \hat \mu_\ve(\hat H_\ve) e^{-(\hat\alpha - \xi)N} \, ,
\end{equation}
where we may choose $L$ sufficiently large that $\xi < \hat\alpha$, and in the second inequality
we have used Lemma~\ref{lem: low dens geom} and the fact that $\hat\mu_\ve(\hat H_\ve) = \mu_\vf(H_\ve)$.

\medskip
{\em Estimate on bound pieces.}  Next we estimate the contribution to \eqref{eq:o} from cylinders
$Y_i$ which are undergoing a bound period at time $T_i$.  Let $\nu_i$ denote the time that
$Y_i$ enters this bound period.  By assumption, $\nu_i \le T_i < \nu_i + b_i$.  
Let $x \in \hat f^{\nu_i}(Y_i) \subset B_\delta(c)$.  Then using the slow approach condition
\eqref{eq:slow approach} and the definition of $b$,
\[
\delta_z \gamma_{T_i - \nu_i}^{1-\theta} \le |\hat f^{T_i-\nu_i}(c) - z|
\le |\hat f^{T_i-\nu_i}(c) - \hat f^{T_i-\nu_i}(x)| + |\hat f^{T_i-\nu_i}(x) - z| 
\le \gamma_{T_i-\nu_i} + \tfrac 12 |H_\ve(z)| \, .
\]
This implies that 
\[
\delta_z \le 2 \max \{ \gamma_{T_i - \nu_i}^\theta, \tfrac 12 |H_\ve(z)| \gamma_{T_i-\nu_i}^{\theta - 1} \} \, .
\]
We consider the ways in which this can be satisfied.  First,
\[
\delta_z \le 2 \gamma_{T_i-\nu_i}^{\theta} \implies \gamma_{T_i - \nu_i} \ge (\delta_z/2)^{1/\theta} \, .
\]
Since $\gamma_n$ is summable, this condition can be satisfied by only finitely many values of
$T_i - \nu_i$, that depend only on $\gamma_n$, $\delta_z$ and $\theta$.  Indeed, we can render this
set empty since \eqref{eq:slow approach} implies $\{ f^n(c) \}_{n \ge 0} \cap \{ z \} = \emptyset$.
So by choosing $\ve, \delta >0$ sufficiently small, we can make $f^k(B_\delta(\Crit))$ disjoint from $H_\ve$
for these finitely many iterates.

Next, the second possibility is that 
\begin{equation}
\label{eq:gamma cond}
\delta_z \le |H_\ve(z)| \gamma_{T_i-\nu_i}^{\theta -1}
\implies \gamma_{T_i - \nu_i} \le \Big( \frac{|H_\ve(z)|}{\delta_z} \Big)^{1/(1-\theta)} \, .
\end{equation}
Recall from Section~\ref{sssec:geo} that we defined
$\gamma_n = n^{-r}$ for some $r > \frac{1}{s_t(1-\theta)}$. 
Then \eqref{eq:gamma cond} implies
\[
T_i - \nu_i \ge \Big( \frac{\delta_z}{|H_\ve(z)|} \Big)^{1/r(1-\theta)} \, .
\]
This implies that the return time to $Y$ for $Y_i$ satisfies 
\[
R_i \ge \max \{ (k-1)N, T_i \} \ge \tfrac 12 \left( (k-1)N + \Big( \frac{\delta_z}{|H_\ve(z)|} \Big)^{1/r(1-\theta)}
\right) =: \tau_\ve ,
\]
where the first condition comes from the fact that $Y_i \subset \hat H_\ve'(k)$ and $N$ comes from
the aperiodicity condition on $z$.  Thus using Theorem~\ref{thm:tails},
\begin{equation}
\label{eq:error bound bound}
\begin{split}
\sum_{k \ge 2} & \sum_{\substack{Y_i \subset \hat H_\ve'(k) \\ Y_i \, \mbox{\scriptsize bound}}} (k-1) \hat \mu_\ve(Y_i) 
\le \sum_{k \ge 2} \sum_{j \ge \tau_\ve} 
\sum_{R_i = j} (k-1) \hat \mu_\ve(Y_i) 
\le \sum_{k\ge 2} \sum_{j \ge \tau_\ve} C (k-1) e^{- \alpha j}
\\
& \le \sum_{k \ge 2} (k-1) C' e^{- \alpha (k-1)N/2} e^{- \frac \alpha 2 (\frac{\delta_z}{|H_\ve(z)|} )^{1/r(1-\theta)}} = o(e^{-\alpha N/2} |H_\ve(z)|^s) = o(\hat \mu_\ve(\hat H_\ve)) \, ,
\end{split}
\end{equation}
where $s>0$ represents any positive power, and the switch to $\hat \mu_\ve(H_\ve)$ is possible due to the
scaling exponent $s_t$ for the conformal measure $m_\vf$ as well as Lemma~\ref{lem: low dens geom}. 

Combining \eqref{eq:error bound free} and \eqref{eq:error bound bound} proves
\eqref{eq:o}, which by \eqref{eq:unwrap} completes the proof of the lemma.
\end{proof}

Next, we address the case when $z$ is periodic with prime period $p$.  
We continue to assume the slow approach condition \eqref{eq:slow approach}.

\begin{lemma}
\label{lem:t per}
Suppose $f \in \mathcal{F}_\order$ and $\phi = -t \log |Df|$ for $t \in (t^-, t_1)$ satisfies \eqref{eq:Dn}.
Let $z$ be a periodic point for $f$ of prime period $p$ satisfying \eqref{eq:slow approach}.
Then
\[
\lim_{\ve \to 0} \frac{\hat\mu_\ve(\hat H_\ve')}{\hat\mu_\ve(\hat H_\ve)} = 1 - e^{S_p\vf(z)} \, .
\]
\end{lemma}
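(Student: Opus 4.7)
The plan is to combine the periodic expansion of Lemma~\ref{lem:holder per} with the free/bound decomposition of Lemma~\ref{lem:t aperiodic}. The slow approach condition \eqref{eq:slow approach}, together with the periodicity of $z$, forces the orbit of $z$ to be disjoint from the post-critical set: if $f^n(c) = f^j(z)$ for some $c \in \Crit$, $n \ge 1$ and $0 \le j < p$, then $f^{n+p-j}(c) = z$, which contradicts \eqref{eq:slow approach}. Consequently, for $\ve_0$ chosen as in \eqref{eq:eps0 choice} and $\ve$ small, properties (i)--(iv) from the proof of Lemma~\ref{lem:holder per} still hold.

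With $N_0 \in \mathbb{N}$ fixed, I reproduce the decomposition \eqref{eq:period expand} and analyse its three sums. For the main sum corresponding to $\ell \in [1,N_0]$, the argument following \eqref{eq:ratio} in Lemma~\ref{lem:holder per} carries over: bounded distortion for $\hatf^{\ell p}$ on the fixed-time inverse branch $Z_j$ (which stays a bounded distance from $\Crit$ for a bounded number of iterates, so that the H\"older control on $\hat\vf$ from Lemma~\ref{lem:distort}(a) is uniform) combined with the conformality identity \eqref{eq:whole hole} — valid since $z$ is disjoint from the post-critical orbit — produces a factor $e^{(\ell-1)S_p\vf(z)}$ at each level, and summing yields
\[
\sum_{\ell=1}^{N_0} \sum_{\substack{Y_i \subset \hat H_\ve'(*,N_0) \\ \ell_i \ge \ell}} \hat\mu_\ve(Y_i) = \hat\mu_\ve(\hat H_\ve') \cdot \frac{1 - e^{N_0 S_p\vf(z)}}{1 - e^{S_p\vf(z)}} \, (1 + o_\ve(1)),
\]
up to errors of order $\mathcal{O}(N_0 e^{-\alpha N_0}\hat\mu_\ve(\hat H_\ve))$.

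The main obstacle is to control the two remaining sums of \eqref{eq:period expand}, namely those over $\hat H_\ve'(k,N_0)$ with $k>N_0$ and over $\hat H_\ve'(k,\sim)$. For these, the uniform exponential decay \eqref{eq:entry} used in Lemma~\ref{lem:holder per} fails when orbits pass near $\Crit$, so I will adapt the free/bound analysis from the proof of Lemma~\ref{lem:t aperiodic}. Each $Y_i$ in either sum either has its first entry to $\hat H_\ve$ after time $N_0$, or has additional entries beyond an initial run of $\ell_i \le N_0$ consecutive periodic returns; in both situations there is an excursion of large total length during which the orbit leaves a neighbourhood of $\{z, f(z), \ldots, f^{p-1}(z)\}$. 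Splitting the excursion into its free and bound portions and invoking Lemma~\ref{lem:bound growth} gives the same exponential contraction at rate $\hat\alpha$ as in \eqref{eq:error bound free}; any bound piece whose terminal point lies in $H_\ve$ forces, by \eqref{eq:slow approach} applied with $\gamma_n = n^{-r}$, a lower bound on the return time of the polynomial form $(\delta_z/|H_\ve(z)|)^{1/r(1-\theta)}$, as in \eqref{eq:error bound bound}. Together with Theorem~\ref{thm:tails} and Lemma~\ref{lem: low dens geom}, this bounds both remaining sums by $o(\hat\mu_\ve(\hat H_\ve))$ as $\ve \to 0$.

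Dividing by $\hat\mu_\ve(\hat H_\ve)$ and passing to the limit $\ve \to 0$ with $N_0$ fixed gives
\[
1 = \lim_{\ve \to 0} \frac{\hat\mu_\ve(\hat H_\ve')}{\hat\mu_\ve(\hat H_\ve)} \cdot \frac{1 - e^{N_0 S_p\vf(z)}}{1 - e^{S_p\vf(z)}} + \mathcal{O}(N_0 e^{-\alpha N_0}),
\]
and then sending $N_0 \to \infty$, using $S_p\vf(z) < 0$ (a consequence of the pressure gap $P(\vf)=0$ together with the positive Lyapunov exponent at $z$ from \cite[Appendix~A]{PR}), yields the claimed limit $1 - e^{S_p\vf(z)}$.
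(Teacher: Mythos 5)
Your proposal follows essentially the same route as the paper: the periodic decomposition \eqref{eq:period expand} and the whole-hole conformality identity \eqref{eq:whole hole} from item (a) of Lemma~\ref{lem:holder per} (justified because \eqref{eq:slow approach} keeps $z$ off the post-critical orbit), with the second and third sums controlled by the free/bound estimates \eqref{eq:error bound free} and \eqref{eq:error bound bound} from the proof of Lemma~\ref{lem:t aperiodic}, then $\ve \to 0$ followed by $N_0 \to \infty$ using $S_p\vf(z)<0$. The only small quibble is your appeal to Lemma~\ref{lem:distort}(a), which is stated for H\"older potentials: for the geometric potential the convergence $P_\ve(S_k\hat\vf(Z_j)) \to 1$ should be justified, as you in fact indicate and as the paper does, by the finitely many iterates of $Z_j$ avoiding a neighbourhood of $\Crit$ (and the trimming of $L$-cylinders in the construction of $Y$), not by that lemma itself.
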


\begin{proof}
We follow the proof of Lemma~\ref{lem:holder per}, which needs few modifications
now that we have recorded the relevant estimates over free and bound pieces.

Fix $N_0 > 0$ and choose
$\ve>0$ sufficiently small that properties (i)-(iv) enumerated at the start of the proof of 
Lemma~\ref{lem:holder per} hold.  We expand
$\hat \mu_\ve(\hat H_\ve)$ precisely as in \eqref{eq:period expand}. 
First, we must show that
the second and third sums in
that expression are the error terms in the expansion.

As in the proof of Lemma~\ref{lem:t aperiodic}, we call each $Y_i$ bound or free depending on
whether $\hat f^{T_i}(Y_i)$ is undergoing a bound period at time $T_i$ or not.  When
summing over the free pieces, \eqref{eq:error bound free} implies that both sums are of order
$\mathcal{O}(\hat \mu_\ve(\hat H_\ve) e^{-(\hat \alpha - \xi)N_0})$ since the entry time for each such $Y_i$
to $\hat H_\ve$ is greater than $N_0$.  Similarly, we estimate the second and third sums in
\eqref{eq:period expand} over bound pieces $Y_i$ using the slow approach condition
\eqref{eq:slow approach} so that by \eqref{eq:error bound bound}, these sums are 
$\mathcal{O}(\hat \mu_\ve(\hat H_\ve) e^{-\alpha N_0/2})$.  We thus arrive at equation
\eqref{eq:expand} as before.

Next, we derive \eqref{eq:ratio} as before
since that uses only property (iii) and the uniform $\log$-H\"older property of the invariant density
 $g_\ve$ (Lemma~\ref{lem:reg}); so we obtain the same expressions with the same definition of 
 $P_\ve(S_k\hat \vf(Z_j))$.
 
Since the slow approach condition \eqref{eq:slow approach} implies that $z$ is disjoint from the
post-critical orbit, we may choose
$\ve$ sufficiently small such that $f^k(c) \notin H_\ve(z)$ for $k \le pN_0$ 
and all $c \in \Crit$.
Thus we may follow the proof of the simpler item (a) of Lemma~\ref{lem:holder per},
without having to consider the left and right halves of the hole separately.
We use \eqref{eq:whole hole} to estimate the ratio in \eqref{eq:ratio} and so arrive at
\eqref{eq:ell sum} precisely as before.
  
Now, $\hat\vf = - t \log |Df| \circ \hat \pi - P(- t \log |Df|)$.  
Although $\hat\vf$ is not continuous on $\hat I_{z, \ve_0, \ve}$, it is still true on each $Y_i$ and
for each orbit segment of length at most $pN_0$, that $S_k\hat\vf$ is continuous with bounded 
ratio on $Y_i$
and each component of $\hat H_\ve'$ on level at most $pN_0$.  This follows since we have trimmed
$L$-cylinders in our construction of $Y = \hat I'(L)$.  This extends to $Z_j$ 
since 
$\hat f^{-pN_0}(\hat c) \cap Z_j = \emptyset$ for each $Z_j$ by choice of $\ve$, and so 
$P_\ve(S_k\hat \vf(Z_j)) \to 1$ as $\ve \to 0$.

We thus arrive at \eqref{eq:partial sum} with error term $\mathcal{O}(N_0 e^{-\tilde \alpha N_0})$ and 
$\tilde \alpha = \min \{ \hat \alpha - \xi, \alpha/2 \}$,
and taking $N_0 \to \infty$ completes the proof of the lemma. 
\end{proof}

Finally, Lemmas~\ref{lem:t aperiodic} and \ref{lem:t per} together with Theorem~\ref{thm:induced limit} and
Proposition~\ref{prop:ratio} complete the proof of Theorem~\ref{thm:zerohole_geom}, using \eqref{eq:string}.


\end{document}